\documentclass[12pt]{article}
\newcommand{\function}[2]{\colon #1 \rightarrow #2}
\newcommand{\injection}[2]{\colon #1 \rightarrowtail #2}
\newcommand{\interpret}[2]{\colon #1 \leadsto #2}
\newcommand{\of}[1]{\left( #1 \right)}
\newcommand{\set}[2]{\left\{\hspace{0.2ex} #1 \;\middle\vert\; #2 \right\}}
\usepackage{bm}
\newcommand{\rn}{\bm}
\newcommand{\df}{\stackrel{\rm def}{=}}
\newcommand{\given}{\;\middle\vert\;}

\usepackage{theorem}
% Theorem-like structures are numbered within SECTION units
\newtheorem{theorem}{Theorem}[section]
\newtheorem{lemma}[theorem]{Lemma}
\newtheorem{corollary}[theorem]{Corollary}
\newtheorem{proposition}[theorem]{Proposition}

% All other stuff is numbered consecutively (except for Definition)
%and printed in the normal font
{\theorembodyfont{\rmfamily}
\newtheorem{definition}[theorem]{Definition}
\newtheorem{remark}{Remark}

\newtheorem{example}{Example}
}

% Proof-making commands and environments
\newcommand{\beginproof}{\medskip\noindent{\bf Proof.~}}
\newcommand{\beginproofof}[1]{\medskip\noindent{\bf Proof of #1.~}}
\newcommand{\beginproofdotless}{\medskip\noindent{\bf Proof}}
\newcommand{\finishproof}{\hspace{0.2ex}\rule{1ex}{1ex}}
\newenvironment{proof}{\beginproof}{\unskip\nolinebreak\finishproof\par\medskip}
\newenvironment{proofof}[1]{\beginproofof{#1}}{\unskip\nolinebreak
\finishproof\par\medskip}

\usepackage{amsfonts}
\usepackage{amsmath,amscd}
\usepackage{amssymb}

\usepackage{mathtools} %For extensible math symbols
\usepackage{array} %For special commands in tables

\usepackage{tikz} %For tikzfigures
\usetikzlibrary{calc} %For coordinate calculations
\usetikzlibrary{arrows} %For different types of arrows
\usetikzlibrary{cd} %For commutative diagrams and extra arrows
\usepackage{subcaption} % For captions in subfloats

\usepackage{enumitem}
\usepackage{microtype}

% Common operators
\def\Dopen{D_{\operatorname{open}}}
\def\PDopen{\operatorname{PD}_{\operatorname{open}}}
\def\ind{\operatorname{ind}}
\def\inj{\operatorname{inj}}
\def\tind{t_{\ind}}
\def\tinj{t_{\inj}}
\def\Tind{T_{\ind}}
\def\Tinj{T_{\inj}}
\def\comp{\mathbin{\circ}}
\def\symmdiff{\mathbin{\triangle}}
\def\Aut{\operatorname{Aut}}
\def\Tr{\operatorname{Tr}}
\def\im{\operatorname{im}}

% Common environments
\newenvironment{functiondef}[1][1]%
{\begin{tabular}{*{2}{>{$\displaystyle}r<{$}}*{#1}{>{$\displaystyle}c<{$}}>{$\displaystyle}l<{$}}}%
{\end{tabular}}

% Common constructions
\newcommand{\prob}[1]{\mathbb{P}\left[#1\right]}
\newcommand{\condprob}[2]{\mathbb{P} \left[#1 \middle\vert \hspace{0.5mm} #2 \right]}
\newcommand{\indprob}[2]{\mathbb{P}_{#1} \left[#2\right]}
\newcommand{\expect}[1]{\mathbb{E}\left[#1\right]}
\newcommand{\indexpect}[2]{\mathbb{E}_{#1}\left[#2\right]}

\newcommand{\plainHom}[1]{\operatorname{Hom}^+(#1, \mathbb{R})}
\newcommand{\Hom}{\plainHom{\mathcal{A}}}
\newcommand{\HomT}[1]{\plainHom{\mathcal{A}[#1]}}

\newcommand{\place}{\mathord{-}}
\newcommand{\Forb}[2]{\operatorname{Forb}_{#1}(#2)}
\newcommand{\Forbp}[2]{\operatorname{Forb}^+_{#1}(#2)}

\title{Semantic Limits of Dense Combinatorial Objects}
\author{Leonardo Nagami Coregliano\thanks{University of Chicago, {\tt lenacore@uchicago.edu}} \and Alexander A.~Razborov\thanks{University of Chicago,
{\tt razborov@math.uchicago.edu}, Steklov Mathematical Institute, Moscow, {\tt razborov@mi-ras.ru} and Toyota Technological Institute, Chicago}
}
\begin{document}
\maketitle

\begin{abstract}
The theory of limits of discrete combinatorial objects has been thriving for the last decade or so. The syntactic, algebraic
approach to the subject is popularly known as ``flag algebras'', while the semantic, geometric one is often associated with
the name ``graph limits''. The language of graph limits is generally more intuitive and expressible, but a price that one
has to pay for it is that it is better suited for the case of ordinary graphs than for more general combinatorial objects.
Accordingly, there have been several attempts in the literature, of varying degree of generality, to define limit objects
for more complicated combinatorial structures.

This paper is another attempt at a workable
general theory of dense limit objects.
Unlike previous efforts in this direction (with notable exception of~\cite{ArC}),
we base our account on the same concepts from the first-order logic and the model
theory as in the theory of flag algebras.

We show how our definition naturally encompasses a host of previously considered
cases (graphons, hypergraphons, digraphons, permutons, posetons, colored graphs, etc.), and we extend the
fundamental properties of existence and uniqueness to this more
general case. We also give an intuitive general proof of the continuous version
of the Induced Removal Lemma based on the completeness theorem for propositional
calculus. We capitalize on the notion of an open
interpretation that often allows to transfer methods and results from one
situation to another. Again, we show that some previous arguments can be quite
naturally framed using this language.
\end{abstract}

\section{Introduction}
Extremely large objects and structures have greatly expanded from their natural
habitat in mathematics (asymptotic constructions in analysis, combinatorics,
etc.), statistics and statistical physics, and these days they are
ubiquitous. Many of them naturally include or even are entirely
comprised of continuous data; those are not considered here. However,
even after taking numerical data structures out of the picture, there remains
a significant body of huge objects that are completely discrete and
combinatorial in their nature, at least a priori. We refer the reader
to~\cite[Part~1]{Lov4} for a lovely discussion of this paradigm accompanied by
many examples for ordinary graphs.

The philosophical question that has given rise to a virtually new discipline
that can be provisionally called ``continuous combinatorics'' is this. If we
begin with a very large {\em combinatorial} structure, can it still be studied
using analytical tools or is its ``discreteness'' a natural inhibition to it?
It should be noted here, of course, that there is a plethora of numerical
characteristics associated with combinatorial objects that are important and
interesting for their own sake, those have been studied in combinatorics
for centuries. So, we would like to stress that we mean a
reasonably general and coherent theory in good mathematical sense; with its internal logic
and structure, natural and preferably unexpected connections between different
parts, connections to other mathematical disciplines and theoretical computer
science, etc.

\smallskip
By now it has become reasonably clear that we have a satisfactory answer to this question, at least in the dense setting to
which the forthcoming discussion is confined. It has turned out that the ``primary'', ``basic'' set of numerical characteristics responsible for many a priori unrelated properties of a combinatorial structure $M$ is made by densities with which small ``templates'', that is, fixed size structures of the same kind, occur in $M$.
Note that it is absolutely not obvious a priori why this set of parameters should be any better and any more universal than,
say, the chromatic number of a graph or the dominance number in a tournament. But this claim is strongly supported by
ample ``empirical'' evidence gathered in this young area.

Once we know what are the basic properties of huge combinatorial objects making the backbone of the theory, there are,
as it often happens, two complementary approaches to the task.

The ``semantical'' or ``geometric'' approach asks if it is possible to find the actual (limit) object on which these numerical parameters are ``imprinted'', and then it naturally proceeds to studying these objects. This approach is collectively known as ``graph limits'', and it has achieved quite a spectacular success in the case of ordinary graphs, with very beautiful, deep and elegant structural results involving many ideas and concepts from other areas of mathematics. We refer the reader to Parts~2 and~3 of the monograph~\cite{Lov4} for a comprehensive (that is, at the moment of its release) account of the subject.

One drawback of this theory, however, is that it tends to be tied to ordinary graphs. Extensions of graph limits to several other kinds of combinatorial structures are known and, let us note in the brackets, have been very inspirational for this work. Still, it would be fair to say (cf.~\cite[Part~5]{Lov4}) that, in contrast to the elaborated theory for graphons, most of them tend to be on somewhat ad hoc side. The only attempts at a completely general theory we are
aware of were undertaken in~\cite{Aus,ArC}.%%  based on the previous work~\cite{AuT}. It uses a specific language of finite palettes introduced
%% specifically for that purpose.

The approach often called ``flag algebras''~\cite{flag} has precisely the opposite set of features. It is manifestly minimalist: we do not even try to define limit objects, but instead argue about the densities of occurrences of small templates in purely syntactic, algebraic way. The immediate advantage is the generality of the theory: its abstract techniques apply to arbitrary combinatorial structures in the same uniform way. Also, the lighter and in a sense single-purpose notational system makes it much better suited for proving \emph{concrete} results in extremal combinatorics; we refer the reader to the survey~\cite{flag_survey} for a comprehensive (again, at the time of its release) list of such results. The disadvantages of flag algebras are also clear, of course. Limit objects certainly {\em are} extremely interesting and natural entities to study in their own right, and the theory that does not even address their existence is necessarily single-minded. Another issue is that even when a structural result or a construction can be formulated in the restricted language of flag algebras, its purely syntactic proofs can often be awkward, see e.g.\ most of~\cite[\S 3-4]{flag}. A workable semantics would have made these proofs
straightforward or, as the very least, more natural.

\medskip
\noindent
{\bf Our contributions.}
In this paper we attempt to develop a general definition (and a vocabulary) of limit objects suitable for combinatorial structures of
arbitrary type based on the same formalism that was used in flag algebras. More precisely, we adapt models of universal first-order theories as our preferred language of working with general combinatorial structures. An added benefit is that we can use  the
concept of {\em interpretation} well established in the mathematical logic for relating structures of different (or the same) kinds. It should
be noted that a principal possibility of this approach was briefly sketched in~\cite[\S 4.3]{Aus} and~\cite{ArC} but we make it significantly more
systematic, and we do emphasize the importance of working with arbitrary universal first-order theories (as opposed to pure first-order
calculus) and their open interpretations.

\smallskip
We begin in Section~\ref{sec:prel} with a brief overview of those parts of mathematical logic, graph limits and flag algebras that
are needed for our purposes.

In Section~\ref{sec:peonstheons} we first define limit objects for the case
when our theory consists of a single predicate $P$ without any additional
constraints; following the well-established pattern, we call such objects
{\em $P$-ons} ({\em pe-ons}, Definition~\ref{def:peons}). In analogy with model theory, we define the notion of an Euclidean structure in a language $\mathcal{L}$ as a list of $P$-ons with $P$ ranging in $\mathcal{L}$. Then we generalize
this definition to arbitrary universal theories $T$, that results in our main
definition of {\em $T$-ons} ({\em the-ons}, Definition~\ref{def:theons}).
There are two reasonable ways to define $T$-ons: by requiring that additional
axioms of the theory $T$ are satisfied almost everywhere ({\em weak theons}), or by
demanding that they are satisfied everywhere ({\em strong theons}). The equivalence
of these two alternatives is the content of the {\em Induced Euclidean
Removal Lemma} (Theorem~\ref{thm:ierl}). Also, in this section we state our
main results: the existence of theons (Theorem~\ref{thm:theoncryptomorphism})
and their uniqueness (Theorems~\ref{thm:TheonUniqueness} and~\ref{thm:TheonUniquenessSecond}). Collectively, these
results deliver what is the main technical contribution of our work: for an arbitrary
universal theory $T$ without constants and function symbols, $T$-ons are
categorically equivalent (in Lov\'{a}sz's terminology, ``cryptomorphic'')
to convergent sequences and flag-algebraic homomorphisms
$\HomT{T}$.

The next three sections are devoted to proofs. We start with the Induced Euclidean Removal Lemma in Section~\ref{sec:removal}, primarily because it is simpler. Its proof essentially uses the axiom of choice, although we prefer to disguise this usage as the completeness theorem for propositional calculus (with uncountably many variables). Without the axiom of choice we can only show the corresponding result for the case of Horn theories (Theorem~\ref{thm:herl}), which in particular implies a non-induced version (Corollary~\ref{cor:erl}) of the general theorem. We also include an ad hoc constructive (that is, Borel and without the axiom of choice) proof for the theory of linear orders that, arguably, is the most prominent non-Horn theory (Theorem~\ref{thm:strongLinOrder}).

In Section~\ref{sec:existence-uniqueness} we prove the existence and
uniqueness of $T$-ons. This is the most difficult part of the paper, and we
base our proofs on the Aldous--Hoover--Kallenberg theory of exchangeable
arrays, a connection that was apparently for the first time pointed out
independently by Diaconis and Janson~\cite{DiJa} and~Austin~\cite{Aus}. For
an ultraproduct version of these proofs see~\cite{ArC}.

In Section~\ref{sec:othercrypto} we give an ergodic characterization of
theons that generalizes a corresponding result for graphons.

In Section~\ref{sec:otherobjects} we develop a
formalism that allows us to represent in the unique ``theonic'' framework
several ad hoc limit objects previously considered in the literature. We exemplify this
approach with permutons, posetons, limits of interval graphs and limits of
subsets in finite vector spaces (all other dense limit objects previously
considered in the literature fit the framework straightforwardly).

The paper is concluded with a brief discussion and a few open problems
in Section~\ref{sec:conclusion}.

\medskip
Before we begin the technical part, three more remarks are in place. Firstly, this text, like virtually everything about
``continuous combinatorics'', touches, even if sometimes marginally, upon many areas of mathematics and theoretical
computer science. We cannot assume our readers to be versatile in all of them (neither we presumptuously deem ourselves
qualifying for this position). For this reason we try to go at a rather low speed and interlace the formal account with as many examples and references and as much intuition, informal explanations, etc.\ as possible.

Another remark, somewhat derivative from the first, concerns the novelty of our results. As turned out in the course of this work (and had been absolutely unclear before we had started), for a proper generalization we basically need to look at the existing literature under an
appropriate angle, and properly combine different pieces in it together. While we will try to give proper credit as we go along, we also feel it would be appropriate already at this early point to list some particularly inspirational sources, in no particular order and without attempting to be
comprehensive:
\begin{itemize}
\item Graph Limits~\cite{Lov4};

\item Flag Algebras~\cite{flag};

\item Exchangeable Arrays~\cite{Ald,Kall,Aus};

\item Quasi-Randomness~\cite{Tho2,CGW,ChGr2};

\item Hypergraphons~\cite{ElS};

\item Digraphons~\cite{DiJa};

\item Permutons~\cite{HKM*};

\item Removal Lemmas~\cite{AuT,Petr,ArC}.
\end{itemize}

Finally, let us remark that the theory developed in this work concerns only the dense setting: in our
framework, a limit object of a sparse sequence is trivial. In the sparse setting, it is
not even clear that convergence of densities of small templates is the correct notion of
convergence. Alternate notions of convergence such as convergence in cut-norm or convergence of quotients seem
to be much more aligned with applications of sparse graph limit theory, and, unlike in the dense setting,
these notions are distinct from convergence of densities. However, let us also note that concentration results
about densities in the Erd\H{o}s--R\'{e}nyi graph in the sparse setting, such as~\cite{ChDe}, suggest that theories
of quasi-randomness and of limits with respect to density convergence are also possible. For further details we refer
the interested reader to~\cite{BoRi09, BoRi11, BCCZ14, BCCZ18}.

\section{Preliminaries} \label{sec:prel}

We use throughout the standard combinatorial notation $[n]\df \{1,2,\ldots,n\}$ and $(n)_m\df n(n-1)\cdots (n-m+1)$.
Also, for a set $X$ we let $2^X$ be the collection of all its subsets. The notation $\rightarrowtail$ will always presume
that the mapping in question is injective. For a set $V$, $(V)_k$ will denote the set of all injective functions $\alpha$
of the form $\alpha\injection{[k]}{V}$. Random variables will be typed in the $\rn{math\ bold\ face}$. For two random
variables with values in the same $\sigma$-algebra, $\rn X\sim \rn Y$ will mean that $\rn X$ and $\rn Y$ are equidistributed.
We let $\mathbb N\df \{0,1,2,3,\ldots\}$ and $\mathbb N_+\df \{1,2,3,\ldots\}$. $S_n$ is the group of permutations on $n$
elements.

\subsection{Theories and models} \label{sec:model_theory}

As we noted in the introduction, our preferred way to represent combinatorial objects is based on rudimentary notions from first-order logic and model theory; we will try to stick to the notation of~\cite{ChKr,Barw} as much as possible.

A (first-order) {\em language}\footnote{Sometimes also called {\em signatures} or {\em vocabularies}, first-order languages may in general contain individual constants and function symbols. Those are not considered here.} is a finite set
$\mathcal L$ of {\em predicate symbols}. Each symbol $P\in\mathcal L$ comes along with a positive integer $k(P)$ that is called its {\em arity} and designates the number of variables $P$ depends on. Given our restrictions on the language $\mathcal L$ (no constants or
function symbols), {\em atomic formulas} may only have the form $P(x_{i_1},\ldots,x_{i_k})$ or $x_{i_1}=x_{i_2}$ (we do allow equality), and {\em open formulas} are made from atomic formulas using standard propositional connectives $\neg,\lor,\land,\to,\equiv$, etc. A {\em universal formula} is a formula of the form $\forall x_1\cdots\forall x_n F(x_1,\ldots,x_n)$, where $F$ is open.

\begin{remark} \label{rem:locally_finite} All or almost all notions and results in this text readily generalize to the case when the language $\mathcal L$ is {\em locally finite}, by which we mean that it contains only finitely many symbols in every fixed arity. But since we have only one interesting motivating example for this generalization (see Section~\ref{sec:lineons}), we prefer to keep things simpler.
\end{remark}

A {\em universal first-order theory $T$} in a language $\mathcal L$ is a set of universal formulas called {\em axioms}; universal quantifiers in front of the axioms are usually omitted. In most cases the set of axioms will also be finite, but it is not formally required in our framework. Universal first-order theories will be often called simply {\em theories} as we do not consider any others in this paper.

A {\em structure} $M$ in a language $\mathcal L$ is a set $V(M)$ (whose elements, in the recognition of the combinatorial nature of our work, will be usually called {\em vertices}), equipped with a mapping that assigns to every $P\in\mathcal L$ a $k(P)$-ary relation $R_{P,M}\subseteq V(M)^{k(P)}$. A structure $M$ is a {\em model} of a theory $T$ in the language $\mathcal L$ iff all axioms of $T$ are universally true on $M$ (see any textbook in the mathematical logic for a formal definition). It is extremely important to us that for any model $M$ of $T$ and any set of its vertices $V\subseteq V(M)$, after restricting all relations $R_{P,M}$ to $V$ we again obtain a model of $T$. It is called the {\em induced submodel} and denoted by $M\vert_V$. One important consequence is this. Let us say that $T$ {\em proves} or {\em entails} a universal formula $\forall\vec x F(\vec x)$, denoted by $T\vdash \forall\vec x F(\vec x)$ if it does so in the first-order logic. By the Completeness Theorem, it is equivalent to saying that $\forall \vec x F(\vec x)$ is true in any model of $T$. The submodel property allows us to conclude also that $T\vdash \forall x F(\vec x)$ if and only if this formula is true in any {\em finite} model of $T$. This does not hold in general.

Our restrictions on the language $\mathcal L$ and the theory $T$ are quite severe from the point of view of mathematical logic. Nonetheless, they turn out to be precisely right to capture the kind of combinatorial structures to which much of the previously developed machinery applies. The rest of this section is devoted to various examples.

\begin{example}[graphs, etc.] \label{ex:graphs}
The language $\mathcal L$ consists of a single binary predicate $E$. The theory $T_{\operatorname{Graph}}$ has the axioms
\begin{align*}
\neg E(x,x); & & E(x,y) & \equiv E(y,x).
\end{align*}
Thus, it is the theory of {\em simple} graphs: we forbid loops, all edges are undirected and have multiplicity one. Removing the axiom $E(x,y)\equiv E(y,x)$, we arrive at the theory of {\em directed} graphs $T_{\operatorname{Digraph}}$, while replacing it with
\begin{equation} \label{eq:orgraph}
E(x,y) \to \neg E(y,x),
\end{equation}
we get the theory of oriented graphs\footnote{In~\cite{flag}, it was called $T_{\operatorname{Digraph}}$.} $T_{\operatorname{Orgraph}}$.
Strengthening the axiom~\eqref{eq:orgraph} to
$$
x\neq y\to (E(x,y)\equiv \neg E(y,x)),
$$
we arrive at the theory of tournaments $T_{\operatorname{Tournament}}$.
\end{example}

\begin{example}[uniform hypergraphs]
Let $k\in\mathbb{N}_+$ be a fixed constant. The language $\mathcal L$ consists of a single $k$-ary predicate $E$, and the theory $T_{k\operatorname{-Hypergraph}}$ has the axioms
\begin{gather}
  \neg E(x,y,\ldots, t) \quad (\text{the tuple}\ (x,y,\ldots, t)\ \text{contains repeated variables}) \notag\\
  E(x_1,\ldots,x_k) \equiv E(x_{\sigma(1)},\ldots,x_{\sigma(k)}) \quad (\sigma\in S_k).\label{eq:symmetryaxiomhypergraph}
\end{gather}

Of course, the theory of graphs $T_{\operatorname{Graph}}$ is the same as $T_{2\operatorname{-Hypergraph}}$.
\end{example}

\begin{example}[colorings] \label{ex:colorings}
Assume that $c\in\mathbb{N}_+$ is a fixed constant and that the language $\mathcal L$ consists of $c$ unary predicates $\chi_0,\ldots,\chi_{c-1}$. The theory $T_{c\operatorname{-Coloring}}$ of vertex colorings in $c$ colors has the axioms
\begin{align*}
  \neg \chi_i(x) \lor \neg \chi_j(x) & \quad (0\leq i<j\leq c-1); &
  \chi_0(x)\lor\cdots \lor \chi_{c-1}(x). &
\end{align*}
Likewise, the theory $T_{c\operatorname{-ColoredGraph}}$ has $c$ {\em binary} predicates $E_0,E_1,\ldots,E_{c-1}$ in its language, and it has the axioms
\begin{align*}
  \neg E_i(x,x); & &
  E_i(x,y)\equiv E_i(y,x); & &
  \neg E_i(x,y) \lor \neg E_j(x,y) & \quad\! (0\leq i<j\leq c-1).
\end{align*}
The theory $T_{c\operatorname{-ColoredComplete}}$ is the theory of colorings of the edges of a {\em complete} graph in
$c$ colors. It is obtained from $T_{c\operatorname{-ColoredGraph}}$ by adding the axiom
$$
x\neq y \to (E_0(x,y)\lor\cdots\lor E_{c-1}(x,y)).
$$

Note that in our framework these definitions are only valid when the number of
colors $c$ is finite and known in advance. The language of coloring into an unbounded number of distinguishable colors is not even {\em locally} finite (see Remark~\ref{rem:locally_finite}), so some of our conclusions will hold for it, but most will fall apart. What we, however, {\em can} do is to mimic the intended coloring by the associated equivalence relation that still allows us to capture many properties we are interested in. See Example~\ref{ex:extra_axioms} below for more details.
\end{example}

\begin{example}[orders] \label{ex:orders}
The language of the theory of {\em partial orders} $T_{\operatorname{Order}}$ has
only one binary predicate that will be denoted by\footnote{For
technical reasons that will become transparent soon, it is more convenient to work with strict order.} $x\prec y$, and it has the axioms
\begin{align}
\neg (x & \prec x); \notag\\
x\prec y & \to \neg (y\prec x); \label{eq:asymmetry}\\
(x\prec y \land y\prec z) & \to x\prec z. \notag
\end{align}

Strengthening~\eqref{eq:asymmetry} to $x\neq y \to (x\prec y \equiv \neg
(y\prec x))$, we get the theory $T_{\operatorname{LinOrder}}$ of linear
orders. A unique feature of this theory is that it has only one model (up
to isomorphism) in any given finite cardinality $n$; in model-theoretical terms this means that it is {\em $n$-categorical}.

The theory of \emph{cyclic orders} $T_{\operatorname{CycOrder}}$ has only one ternary predicate $C$ and has the following axioms.
\begin{align*}
  \neg C(x,y,z) & \quad \text{if the tuple}\ (x,y,z)\ \text{contains repeated variables};\\
  C(x,y,z) & \to C(y,z,x);\\
  x\neq y\land x\neq z\land y\neq z & \to (C(x,y,z)\equiv \neg C(x,z,y));\\
  C(x,w,y)\land C(x,y,z) & \to C(x,w,z).
\end{align*}
\end{example}

\medskip
Let us now review two general constructions allowing us to obtain new theories
from already existing ones.

\begin{example}[extra axioms] \label{ex:extra_axioms}
For a theory $T$ in a language $\mathcal L$, we can always obtain a stronger
theory $T'$ in the same language by adding extra axioms. Viewed this way, our
Examples~\ref{ex:graphs} and~\ref{ex:orders} lead to Figure~\ref{fig:containments}. More
generally, given a finite
model $M$ of a theory $T$ with, say, vertex set $V(M)=\{v_1,v_2,\ldots,v_m\}$, its {\em
  open diagram} $\Dopen(M)$ is the conjunction of all formulas of the form
\begin{eqnarray}
\nonumber x_i\neq x_j && (i\neq j);\\
 \nonumber P(x_{i_1},\ldots,x_{i_k}) &\text{if}& (v_{i_1},\ldots,v_{i_k}) \in R_{P,M};\\
\label{eq:negative} \neg P(x_{i_1},\ldots,x_{i_k}) &\text{if}& (v_{i_1},\ldots,v_{i_k})
\notin R_{P,M};
\end{eqnarray}
 where $P\in\mathcal L$ runs over all $k$-ary predicate symbols in our language, $i_1,\ldots,i_k\in
[n]$, and $R_{P,M}$ is the interpretation of $P$ in $M$. Adding to $T$ the
axiom $\neg \Dopen(M)$ we get the theory obtained from $T$ by
forbidding {\em induced} submodels isomorphic to $M$. In many situations
(particularly when working with graphs) it is also natural to forbid
submodels $M$ that are {\em not} necessarily induced. In our logical setting
this is achieved simply by leaving out negated atomic formulas~\eqref{eq:negative}, which leads to the notion of the {\em positive open
diagram} $\PDopen(M)$.

\begin{figure}[ht]
  \begin{center}
    \begin{tikzpicture}
\def\shorten{0.4cm}

\begin{scope}[scale=1.5]
\coordinate (Digraph) at (0cm,0cm);
\coordinate (Graph) at (-1cm,-1cm);
\coordinate (Orgraph) at (1cm,-1cm);
\coordinate (Order) at (0cm,-2cm);
\coordinate (Tournament) at (2cm,-2cm);
\coordinate (LinOrder) at (1cm,-3cm);
\end{scope}

\foreach \name in {%
  Digraph,
  Graph,
  Orgraph,
  Order,
  Tournament,
  LinOrder%
}
\node at (\name) {$T_{\text{\name}}$};

\foreach \a/\b in {%
  Digraph/Graph,
  Digraph/Orgraph,
  Orgraph/Order,
  Orgraph/Tournament,
  Order/LinOrder,
  Tournament/LinOrder%
}
\draw[shorten <=\shorten, shorten >=\shorten] (\a) -- (\b);
\end{tikzpicture}

%% Local Variables:
%% mode: latex
%% tab-width: 2
%% auto-fill-function: nil
%% End:
    \caption{Poset of theories presented in Examples~\ref{ex:graphs} and~\ref{ex:orders}. The theories that are further below are stronger in the sense that they can be obtained by adding more axioms.}
    \label{fig:containments}
  \end{center}
\end{figure}
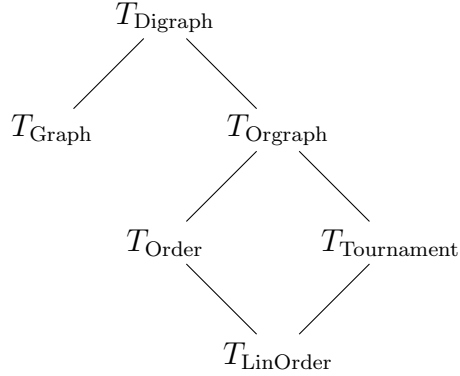

A host of natural examples of this sort is provided by the field of extremal
combinatorics and, in particular, so-called {\em Tur\'an density problems};
here we list only a few of them. $T_{\operatorname{TF-Graph}}$ is the theory of
triangle-free graphs, and forbidding in $T_{\operatorname{Graph}}$ induced copies of
$P_3$, a path on three vertices, we arrive at the theory $T_{\operatorname{EqRel}}$ of equivalence relations\footnote{Like in Example~\ref{ex:orders}, we
replace the reflexivity axiom by its negation and replace the transitivity axiom by
$(x\neq z\land x\prec y \land y\prec z\to x\prec z)$ (this may seem strange at first,
but for technical reasons, we want to keep the anti-reflexiveness).}. In graph-theoretic terms,
its models can be viewed as unions of vertex-disjoint cliques {\em without} any a priori bound on their number. This is practically the same as the theory of vertex colorings into an unspecified number of colors, cf.~Example~\ref{ex:colorings}. The theory
$T_{\operatorname{\text{Tur\'an}}}$ (named after~\cite{Tur}) is the extension of
$T_{3\operatorname{-Hypergraph}}$ with the axiom forbidding independent sets of size
four, and $T_{\operatorname{CH}}$ (named after~\cite{CaH}) is the extension of
$T_{\operatorname{Orgraph}}$ asserting that the oriented graph in question has girth at least
$4$, or, in other words, forbidding oriented cycles $\vec C_3$. The
theory $T_{\operatorname{FDF}}$ (named after~\cite{Fon}) is the extension of
$T_{\operatorname{Orgraph}}$ forbidding {\em induced} copies of $\vec C_4$.

We need not restrict ourselves to just one extra axiom, of course. Given a
theory $T$ and a set $\mathcal F$ of structures in its language, let
$\Forb{T}{\mathcal F}$ [$\Forbp{T}{\mathcal F}$] be the theory obtained from
$T$ by appending axioms $\neg\Dopen(M)$ [$\neg\PDopen(M)$, respectively] for
all $M\in \mathcal F$. It should be clear at this point that in principle
{\em every} theory can be obtained along these lines, namely by taking as its
axioms the formulas $\neg \Dopen(M)$ for all those structures $M$ that are
{\em not} its models. What is not clear, however, is whether the resulting
``brute-force'' description would be necessarily instructive. Also, there is
no easy way to tell in advance whether the theory in question is finitely
axiomatizable or even ``reasonably'' axiomatizable. For example, the theory
$T_{\operatorname{Bipartite}} =
\Forbp{T_{\operatorname{Graph}}}{\{C_{2\ell+1} \mid \ell\in\mathbb{N}_+\}}$
of bipartite graphs is obtained from the theory of graphs by simultaneously
forbidding all odd cycles (induced or not), and, by the same token, the
theory of directed {\em acyclic} graphs $T_{\operatorname{DAG}} =
\Forbp{T_{\operatorname{Digraph}}}{\{\vec C_\ell \mid \ell\geq 2\}}$ can be
obtained from $T_{\operatorname{Digraph}}$ by forbidding all finite directed
cycles: on Figure~\ref{fig:containments}, the latter theory would be located
between $T_{\operatorname{Order}}$ and $T_{\operatorname{Orgraph}}$. These
theories are not finitely axiomatizable. On the other hand, the theory
$T_{\operatorname{ThreshGraph}}$ of induced subgraphs of the graph
$\set{(v,w)\in\mathbb R^2}{v\neq w\land v+w>0}$, called {\em threshold
graphs}, turns out to be axiomatizable by adding to
$T_{\operatorname{Graph}}$ just one axiom:
\begin{align*}
  \bigl(E(x,y) & \land E(u,z)\bigr) \to
  \Bigl(\bigl(E(x,u)\land E(x,z)\bigr)\lor \bigl(E(y,u)\land E(y,z)\bigr)\\
  & \lor \bigl(E(u,x)\land E(u,y)\bigr)\lor \bigl(E(z,x)\land E(z,y)\bigr)\Bigr).
\end{align*}
\end{example}
The theory of limits of threshold graphs was studied in~\cite{DHJ08}; balanced finite models of this theory
are called half-graphs and they have recently found unexpected applications in model theory (see e.g.~\cite{MaSh}).

\begin{example}[mix-and-match] \label{ex:mix}
 For any two theories $T_1,T_2$ in disjoint languages $\mathcal L_1,\mathcal
L_2$ we can form their {\em disjoint union}\footnote{This construction will be
generalized in Section~\ref{sec:interpretations}.} $T_1\cup T_2$ in the language
$\mathcal L_1\mathbin{\stackrel\cdot\cup} \mathcal L_2$ by putting together the axioms of
$T_1$ and $T_2$. Let us see a few prominent examples.

For any theory $T$ and $c>0$, we denote by $T^c\df T\cup T_{c\operatorname{-Coloring}}$ the
theory of models of $T$ colored in $c$ distinguishable colors. As we noted, we
may not fully handle in our framework the case when the number of colors
is unspecified or infinite. However, the theory $T\cup T_{\operatorname{EqRel}}$
corresponds to models of $T$ that are vertex-colored in an unspecified number
of indistinguishable colors. Likewise, we let $T^<\df T\cup T_{\operatorname{LinOrder}}$
be the theory of {\em linearly ordered} models of $T$, which is
essentially the same as {\em labeled models}. The theory $T^{\operatorname{Cyc}}\df T\cup T_{\operatorname{CycOrder}}$
of \emph{cyclically ordered} models of $T$ is obtained similarly. These theories have recently gained
considerable attention~\cite{BKV,PaT,Tar}; we will return to them in Example~\ref{ex:ESS}.

The theories $T_{\operatorname{Graph}}^c$ and $T_{\operatorname{Graph}}^<$
have been (implicitly) instrumental in the study of quasi-random graphs since
the pioneering papers~\cite{Tho2,CGW}. Likewise, the theory
$T_{\operatorname{Tournament}}^<$ was very useful for the case of
quasi-random tournaments~\cite{ChGr2}; we will comment more on these
connections in the next section. A very interesting case is the theory
$T_{\operatorname{Perm}}\df T_{\operatorname{LinOrder}}\cup
T_{\operatorname{LinOrder}}$ of two linear orders $<_1$ and $<_2$ on the
same ground set. As one can see on Figure~\ref{fig:permutations}, its finite
models are in one-to-one natural correspondence with permutations of the set
$[n]$. It is this connection that (again, implicitly) underlines the theory
of {\em permutons}~\cite{HKM*} and makes an example of a combinatorial
structure that a priori does not fit our framework (remember that function
symbols are disallowed!) but can be made amenable to it after a small twist.
We will return to this in Section~\ref{sec:permutons}.
\end{example}

\begin{figure}[ht]
  \begin{center}
    \begin{tikzpicture}

\coordinate (left) at (0cm,0cm);
\coordinate (right) at (1cm,0cm);

\draw[arrows={latex-latex}] (left) -- (right);

\node[left] at (left) {%
  $\displaystyle\left(
  \begin{tabular}{*{4}{>{$\displaystyle}c<{$}}}
    1 & 2 & \cdots & n\\
    \sigma(1) & \sigma(2) & \cdots & \sigma(n)
  \end{tabular}
  \right)$%
};

\node[right] at (right) {%
  $\displaystyle
  \begin{dcases*}
    1 <_1 2 <_1 \cdots <_1 n &\\
    \sigma^{-1}(1) <_2 \sigma^{-1}(2) <_2 \cdots <_2 \sigma^{-1}(n)&
  \end{dcases*}
  $%
};

\end{tikzpicture}

%% Local Variables:
%% mode: latex
%% tab-width: 2
%% auto-fill-function: nil
%% End:
    \caption{Correspondence between permutations and models of $T_{\operatorname{Perm}}$.}
    \label{fig:permutations}
  \end{center}
\end{figure}

All theories we have seen so far share the property that their predicates $P$ are always false on any tuple $v_1,\ldots,v_k$ containing repeated vertices. In Section~\ref{sec:interpretations} we will see why this property can (and will) be assumed without loss of generality, and after that we will see in Section~\ref{sec:densities} why it is very useful. Right now we just make a definition.

\begin{definition} \label{def:canonical}
A theory $T$ in a language $\mathcal L$ is {\em canonical} if for every $P\in\mathcal L$ of arity $k$ and for every
$1\leq i<j\leq k$, the theory $T$ entails\footnote{Recall from Section~\ref{sec:model_theory} that our assumptions allow
us not to distinguish between provability in the first-order logic and validity on finite models.} the formula
\begin{equation} \label{eq:canonical}
x_i=x_j\to \neg P(x_1,\ldots,x_k).
\end{equation}
\end{definition}

We finish this section with two examples of non-canonical theories.

\begin{example}\label{ex:edgeorderedgraphs}
  The language of the theory of \emph{edge ordered graphs} $T_{\operatorname{EdgeOrderedGraph}}$ consists of a binary predicate symbol $E$ encoding adjacency and a quaternary predicate symbol $P$ encoding the edge order, and it has the following axioms (since we are not aiming at canonicity here, we axiomatize non-strict order of the edges).
  \begin{align}
 \nonumber   \neg E(x,x); & \\
 \nonumber   E(x,y) & \to E(y,x);\\
  \nonumber  P(x_1,y_1,x_2,y_2)\land P(x_2,y_2,x_1,y_1) & \to (x_1 = x_2 \land y_1 = y_2)\lor (x_1 = y_2\land y_1 = x_2);\\
   \label{eq:P_transitivity} P(x_1,y_1,x_2,y_2)\land P(x_2,y_2,x_3,y_3) & \to P(x_1,y_1,x_3,y_3);\\
  \nonumber  E(x_1,y_1)\land E(x_2,y_2) & \equiv P(x_1,y_1,x_2,y_2)\lor P(x_2,y_2,x_1,y_1);\\
  \label{eq:P_symmetry}  P(x_1,y_1,x_2,y_2) & \to P(y_1,x_1,x_2,y_2)\land P(x_1,y_1,y_2,x_2).
  \end{align}
  Extremal problems for this theory have also received attention in the recent years (see~\cite{Tar}).
\end{example}

\begin{example} \label{ex:greybow}
The theories $T_{c\operatorname{-ColoredGraph}}$ and $T_{c\operatorname{-ColoredComplete}}$ (see Example~\ref{ex:colorings}) are
sufficiently popular in extremal combinatorics, but they are often redundant and, as a consequence, bulky. For
example, the theory $T_{c\operatorname{-ColoredComplete}}$ has 25506 models
on 6 vertices that makes it prohibitive for flag-algebraic calculations. To study rainbow-type problems for such structures,~\cite{BHL*} circumvented this drawback by considering {\em color-blind} isomorphisms,
i.e., those that are also allowed to permute colors.

In our language, the corresponding theory $T_{c\operatorname{-Greybow}}$
is given by an equivalence relation $P(x_1,y_1,x_2,y_2)$ with at most $c$ classes on the edges of a complete (for simplicity) graph. It has the axioms
\begin{align*}
P(x,y,x,y); &\\
P(x_1,y_1,x_2,y_2) & \equiv P(x_2,y_2,x_1,y_1); \\
\text{\eqref{eq:P_transitivity}, \eqref{eq:P_symmetry}}; &\\
\bigwedge_{0\leq i\leq c} x_i\neq y_i &\to \bigvee_{0\leq i<j\leq c} P(x_i,y_i,x_j,y_j).
\end{align*}

The theory $T_{\operatorname{Tournament}}\cup T_{c\operatorname{-ColoredComplete}}$ was the main tool in the recent solution~\cite{ErdosHajnal} of one of the Erd\H{o}s--Hajnal problems.
\end{example}

\subsection{Interpretations} \label{sec:interpretations}

Loosely speaking, interpretations allow us to define structures of one type from structures of another type. In mathematical
logic, this general paradigm is usually specialized by the concept of {\em first-order interpretations}, but given our
restrictions on syntax, we must go one step further down and, like in~\cite[\S 2.3]{flag}, consider only {\em open}
interpretations. The definition in~\cite{flag} aimed to embrace several different situations at once and, as a result, it was
a bit heavy and technical. In this paper we only give its lighter version called in~\cite[\S 2.3.3]{flag} ``global interpretations''.

\begin{definition}
Let $\mathcal L_1$ and $\mathcal L_2$ be finite languages containing only predicate symbols. A {\em translation} of
$\mathcal L_1$ into $\mathcal L_2$ is a mapping $I$ that takes every predicate symbol $P(x_1,\ldots,x_k)\in\mathcal L_1$ to an open formula $I(P)(x_1,\ldots,x_k)$ in the language $\mathcal L_2$ with the same variables. The translation $I$ is extended to open formulas of the language $\mathcal L_1$ in an obvious way, by declaring that it commutes with logical connectives. Let $T_1$ and $T_2$ be (as usual, universal) theories in the languages $\mathcal L_1$ and $\mathcal L_2$, respectively. The translation $I$ is an {\em open interpretation} of $T_1$ in $T_2$, denoted $I\interpret{T_1}{T_2}$, if for every axiom $\forall \vec x A(x_1,\ldots,x_n)$ of the theory $T_1$, we have $T_2\vdash \forall \vec x I(A)(x_1,\ldots,x_n)$.
\end{definition}

Before giving concrete examples, let us do a bit of abstract nonsense.

Theories and open interpretations make a category that becomes particularly natural if we identify ``indistinguishable'' interpretations. Namely, let us call two interpretations $I_1\interpret{T_1}{T_2}$ and $I_2\interpret{T_1}{T_2}$ {\em equivalent} if for any $P\in\mathcal L_1$ of arity $k$, we have $T_2\vdash \forall\vec x(I_1(P)(x_1,\ldots,x_k)\equiv I_2(P)(x_1,\ldots,x_k))$. This is clearly an equivalence relation, so we let {\sc Int} denote the corresponding factor-category. Two theories $T_1$ and $T_2$ are {\em isomorphic} if they are isomorphic in the category {\sc Int} or, in other words, if there exist open interpretations $I_1\interpret{T_1}{T_2}$ and $I_2\interpret{T_2}{T_1}$ such that both $I_2I_1$ and $I_1I_2$ are equivalent to the identity interpretations of $T_1$ and $T_2$ respectively.

Given an open interpretation $I\interpret{T_1}{T_2}$ and a model $M$ of $T_2$, we can naturally define a model $I(M)$ of $T_1$ with the same set of vertices. This gives a contravariant functor from {\sc Int} to the category of sets ($I(T)$ being the set of all finite models of $T$ up to isomorphism), and we will see several more natural and quite useful functors from {\sc Int} in the forthcoming sections. Collectively, these observations strongly suggest that open interpretations allow us to transfer a great deal of useful structure from one situation to another. In particular, {\em isomorphic theories are indistinguishable for all practical purposes}.

Since we mostly regard open interpretations as a handy tool, we did not attempt a serious study of the structure of the category {\sc Int} itself. There is, however, one property that we would like to highlight, namely that it allows {\em amalgamated sums} (otherwise known as pushouts, fibred coproducts, etc.) In other words, for every two open interpretations $I_1\interpret{T}{T_1}$ and $I_2\interpret{T}{T_2}$ there exist another theory $\widehat T$ and open interpretations $\widehat I_1$ and $\widehat I_2$ such that the diagram
\begin{equation} \label{eq:amalgamated}
  \begin{tikzcd}
    T\arrow[r, "I_1"]\arrow[d, "I_2", swap] & T_1\arrow[d, "\widehat I_2"]\\
    T_2\arrow[r, "\widehat I_1"] & \widehat T
  \end{tikzcd}
\end{equation}
is commutative and has the standard universality property. As usual, the latter implies the uniqueness of amalgamated sums up to isomorphism provided they exist, and we prove their existence as follows. By renaming predicate symbols if necessary we can assume that the languages $\mathcal L_1$ and $\mathcal L_2$ of the theories $T_1$ and $T_2$ are disjoint. The required theory $\widehat T$ is the theory $T_1\cup T_2$ with added axioms $\forall \vec x(I_1(P)(x_1,\ldots,x_n)\equiv I_2(P)(x_1,\ldots,x_n))$, one for every predicate symbol $P$ of the language of $T$. Checking that the diagram~\eqref{eq:amalgamated} is commutative is straightforward (recall that in {\sc Int} we identify equivalent interpretations!), and equally straightforward is the universality property.
\begin{example}[extra axioms, cntd.]
If a theory $T'$ is obtained from a theory $T$ in the same language by adding extra axioms, then the identity translation is an interpretation of $T$ in $T'$ called the \emph{axiom-adding interpretation}. If $I\interpret{T}{T_1}$ and $I\interpret{T}{T_2}$ are two interpretations of this sort, then their amalgamated sum is simply obtained by simultaneously adding to $T$ both sets of axioms. For example, the square on Figure~\ref{fig:containments} is an amalgamated sum.
\end{example}

\begin{example}[mix-and-match, cntd.] If $T_0$ is the empty theory in the empty language (that is, the initial object of {\sc Int}) then the amalgamated sum of trivial interpretations $T_0\leadsto T_1$ and $T_0\leadsto T_2$ is simply the disjoint union $T_1\cup T_2$. Interestingly enough, sometimes the theories $T_1\cup T_2$ and $T_1'\cup T_2$ may turn out to be isomorphic even if $T_1$ and $T_1'$ are not. For example, the theories $T_{\operatorname{Graph}}^<$ and $T_{\operatorname{Tournament}}^<$ are isomorphic: the ``feedback arc'' interpretation $I\interpret{T_{\operatorname{Graph}}^<}{T_{\operatorname{Tournament}}^<}$ translates the order $<$ by itself, and translates the edge predicate $E$ as $I(E)(x,y) \df (x < y \land E(y,x)) \lor (y < x\land E(x,y))$; it is easy to see that it is invertible. It is this isomorphism that was (implicitly) used in~\cite{ChGr2} for reducing questions about quasi-random tournaments to questions about quasi-random graphs. On the other hand, the theory $T$ obtained from $T_{\operatorname{Graph}}^2$ by additionally requiring that vertices of the same color are non-adjacent, is {\em not} isomorphic to $T_{\operatorname{Bipartite}}$. The interpretation $T_{\operatorname{Bipartite}}\leadsto T$ is trivial, but, for several good reasons, it does not have an inverse. As a consequence, there are certain results obtained via flag algebras for which one has to use 2-{\em colored} (as opposed to 2-{\em colorable}) graphs. For isomorphic theories this could have never happened.
\end{example}

\begin{example}[structure-erasing interpretations]\label{ex:strucerase} All interpretations of the form $T_1\leadsto T_1\cup T_2$ that act as identity on $T_1$ can be viewed as {\em structure-erasing} in the sense that they take a model of the theory $T_1\cup T_2$ and erase from it all information about predicate symbols from $\mathcal L_2$. Two important examples are the {\em color-erasing} interpretation $T\leadsto T^c$ and the {\em order-erasing} interpretation $T\leadsto T^<$. But interpretations of this nature may also arise in other situations. For example, the {\em orientation-erasing interpretation} $I\interpret{T_{\operatorname{Graph}}}{T_{\operatorname{Orgraph}}}$ is given by $I(E)(x,y)\df (E(x,y)\lor E(y,x))$. Another (edge) color-erasing interpretation $I\interpret{T_{\operatorname{Graph}}}{T_{c\text{-ColoredGraph}}}$ is given by $I(E)(x,y)\df E_0(x,y)\lor\cdots\lor E_{c-1}(x,y)$. Finally, the edge order-erasing interpretation $I\interpret{T_{\operatorname{Graph}}}{T_{\operatorname{EdgeOrderedGraph}}}$ is given by $I(E) = E$ (erasing all information on the order $P$ of edges).
\end{example}

\begin{remark}\label{rmk:strucerase}
  Up to isomorphism, every open interpretation can be seen as the composition of a structure-erasing interpretation and an axiom-adding interpretation. More specifically, given an open interpretation $I\interpret{T_1}{T_2}$, let $T$ be the theory obtained from $T_1\cup T_2$ by adding the axioms $P(\vec x) \equiv I(P)(\vec x)$ for every predicate symbol $P$ of $T_1$. It is easy to see then that the interpretation $J\interpret{T_2}{T}$ that acts identically on the predicate symbols of $T_2$ is an isomorphism (its inverse $J^{-1}$ acts identically on $T_2$ and acts as $I$ on $T_1$) and for the structure-erasing interpretation $S\interpret{T_1}{T_1\cup T_2}$ and the axiom-adding interpretation $A\interpret{T_1\cup T_2}{T}$, the diagram
  \begin{equation*}
    \begin{tikzcd}
      T_1\arrow[r, "I"]\arrow[d, "S", swap] & T_2\arrow[d,"J"]\\
      T_1\cup T_2\arrow[r, "A"] & T
    \end{tikzcd}
  \end{equation*}
  commutes.
\end{remark}

\begin{example}[unusual $k$-graphs]\label{ex:triangleinterpret} We may consider interpretations like
$I\interpret{T_{3\operatorname{-Hypergraph}}}{T_{\operatorname{Graph}}}$ given by
$I(E)(x,y,z)\df (E(x,y)\land E(x,z)\land E(y,z))$, i.e., we declare a triple of vertices to be a hyperedge iff it is a triangle
in the original (ordinary) graph. Interpretations of this sort, i.e., when we define higher-dimensional objects in terms of
low-dimensional ones, are the principal source of examples illustrating why fundamental results about graphs (and graphons)
cannot be always directly generalized to higher-order structures, see Examples~\ref{ex:HypergraphBadLimitConstant}
and~\ref{ex:RandomHypergraphons} and an excellent exposition in~\cite{Gow2}.

More generally, for any theory $T$ and any symmetric open formula $F(x_1,\ldots,x_k)$, there is a natural interpretation
$T_{k\operatorname{-Hypergraph}}\leadsto T$. This interpretation, with $T = T_{\operatorname{Tournament}}\cup T_{{k\choose 2}-\text{ColoredComplete}}$
was another major tool in solving the Erd\H{o}s--Hajnal problem mentioned in Example~\ref{ex:greybow}.
\end{example}

\begin{example}[Tur\'an's $(3,4)$-problem] Recall (see Example~\ref{ex:extra_axioms}) that $T_{\operatorname{\text{Tur\'an}}}$ is the extension of $T_{3\operatorname{-Hypergraph}}$ forbidding independent sets on four vertices.
Determining (even asymptotically) the minimum edge density of its models, often called {\em Tur\'an's $(3,4)$-problem}, is an outstanding open problem (see e.g.\ the survey~\cite{Kee}), and it is believed that perhaps a major
source of its difficulty is that the set of conjectured extremal examples in this case is extremely complex. Using the language of interpretations, we can at least conveniently highlight the internal structure of this set; the material below is borrowed from~\cite{Fon,fdf,Kee}.

Recall that $T_{\operatorname{FDF}}$ is the theory $T_{\operatorname{Orgraph}}$ augmented with the axiom forbidding induced copies of $\vec C_4$. The {\em Fon-der-Flaass interpretation} $\text{FDF}\interpret{T_{\operatorname{\text{Tur\'an}}}}{T_{\operatorname{FDF}}}$ is given by
\begin{align*}
  & \operatorname{FDF}(E)(x_0,x_1,x_2) \df \bigwedge_{a\neq b\in {\mathbb Z}_3}(x_a\neq x_b) \land \left(\bigvee_{a\in {\mathbb Z}_3}(E(x_a,x_{a+1})\land E(x_a,x_{a-1}))\right.
  \\ & \qquad
  \lor \left.\bigvee _{a\in {\mathbb Z}_3}(\neg E(x_a,x_{a+1})\land \neg E(x_a,x_{a-1})\land \neg E(x_{a-1},x_a)\land \neg E(x_{a+1},x_a))\right).
\end{align*}
In plain English (originally Russian), we declare a triple of vertices to form a 3-edge if and only if in the oriented graph spanned by these vertices we either have an isolated vertex or a vertex of out-degree 2.

We can further interpret $T_{\operatorname{FDF}}$ in $T_{\operatorname{ThreshGraph}}^3$ as follows:
$$
I(E)(x,y) \df \bigvee_{a\in \mathbb Z_3}(\chi_a(x)\land \chi_{a-1}(y)\land \neg E(x,y)) \lor \bigvee_{a\in \mathbb Z_3}(\chi_a(x)\land \chi_{a+1}(y)\land E(x,y)).
$$
It is routine to check that these two translations are indeed interpretations of respective theories, and it turns out that the set of conjectured extremal examples for Tur\'an's $(3,4)$-problem ``asymptotically coincides''\footnote{A precise meaning of this term will become clear soon.}, via the consecutive application of these two interpretations, with those models of $T_{\operatorname{ThreshGraph}}^3$ in which the 3-coloring is balanced and ``independent'' from the threshold graph.
\end{example}

The toolkit of useful interpretations can be substantially expanded if we additionally allow fixed vertices or restrictions of the domain, but, as we said before, we prefer to keep our exposition lighter. Instead, let us show that the restriction of canonicity (Definition~\ref{def:canonical}) is not very restrictive by proving that every theory can be ``subdivided'' into a canonical theory; cf.~a similar argument in~\cite[\S 7.1]{Kall}.

\begin{theorem} \label{thm:canonical}
  For any universal theory $T$ in a first-order language $\mathcal L$ containing only predicate symbols there exists a canonical theory isomorphic to it.
\end{theorem}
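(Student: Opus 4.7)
The plan is to introduce, for each predicate symbol $P\in\mathcal{L}$ of arity $k$, one fresh symbol $P_\pi$ for every partition $\pi$ of $[k]$, where the intended meaning of $P_\pi(y_1,\ldots,y_m)$ is ``$P$ holds on the tuple obtained by substituting, into the $i$-th block of $\pi$, the distinct value $y_i$.'' The resulting language $\mathcal{L}'$ will let us record separately the behavior of $P$ on each ``shape'' of equalities among its arguments, and inside the symbol $P_\pi$ we can then force all arguments to be distinct — exactly the canonicity condition. Formally, write $\pi\function{[k]}{[m]}$ for the function sending $j$ to the index of its block (blocks listed, say, by minimum element), and let $r_i=\min\{j : \pi(j)=i\}$. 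Define the canonical theory $T'$ to have as axioms:
\begin{enumerate}
    \item for every $P_\pi\in\mathcal{L}'$ and every $1\leq i<i'\leq m$, the canonicity axiom $y_i=y_{i'}\to \neg P_\pi(y_1,\ldots,y_m)$;
    \item for every axiom $A$ of $T$, the formula $I(A)$, where $I$ is the translation defined in the next paragraph.
\end{enumerate}

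The two interpretations are:
\begin{align*}
I(P)(x_1,\ldots,x_k) &\df \bigvee_{\pi} \Bigl(\bigwedge_{j\neq j'}(x_j=x_{j'}\equiv \pi(j)=\pi(j'))\;\land\; P_\pi(x_{r_1},\ldots,x_{r_m})\Bigr),\\
J(P_\pi)(y_1,\ldots,y_m) &\df \bigwedge_{i\neq i'} y_i\neq y_{i'}\;\land\; P(y_{\pi(1)},\ldots,y_{\pi(k)}).
\end{align*}
Both $I$ and $J$ are translations since the formulas on the right are open, and each axiom of $T'$ is universal by construction.

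The verification splits into four steps. First, $I\interpret{T}{T'}$ holds because each $I(A)$ is literally an axiom of $T'$. Second, to show $J\interpret{T'}{T}$, one checks that $J$ turns each canonicity axiom of $T'$ into a propositional tautology (since $J(P_\pi)$ already asserts pairwise distinctness), and that $J(I(A))$ is provably equivalent to $A$ in $T$ — which follows from the third step below. Third, $JI$ is equivalent to the identity on $T$: given $x_1,\ldots,x_k$, exactly one disjunct in $I(P)(\vec x)$ can hold, namely the one corresponding to the actual equality-pattern $\pi$ of $\vec x$, and once we apply $J$ inside that disjunct the inequality conjunction is automatic and the substitution $y_i\mapsto x_{r_i}$ combined with $x_j=x_{r_{\pi(j)}}$ reassembles $P(x_1,\ldots,x_k)$. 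Fourth, $IJ$ is equivalent to the identity on $T'$: in $J(P_\pi)(\vec y)$ the $y_i$ are forced distinct, so the partition of $(y_{\pi(1)},\ldots,y_{\pi(k)})$ is exactly $\pi$, hence only the $\pi$-disjunct of the outer $I$ survives and reproduces $P_\pi(\vec y)$ after the canonical choice of representatives $r_i$.

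The conceptual part is entirely contained in the definitions; the real work, and the main obstacle, is the combinatorial bookkeeping in the two composition calculations, where one must keep careful track of the partition $\pi$, the block-index function $\pi\function{[k]}{[m]}$, and the representative function $r\function{[m]}{[k]}$, and verify on the nose that the equalities enforced by the outer disjunction (in $JI$) or by the distinctness conjunction inside $J$ (in $IJ$) collapse the compound formula to the intended atomic one. Once these two equivalences are established in the respective theories, the interpretations become mutually inverse in {\sc Int}, giving the desired isomorphism.
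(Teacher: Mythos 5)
Your proposal is correct and takes essentially the same approach as the paper's sketch: partitions $\pi$ of $[k]$ play the role of the paper's equivalence relations $\approx$, your conjunction $\bigwedge_{j\neq j'}(x_j=x_{j'}\equiv\pi(j)=\pi(j'))$ is exactly the paper's $D_\approx$, and your translations $I$ and $J$ coincide with the paper's (you merely fix the representative to be the block minimum where the paper allows an arbitrary fixed choice). Your four-step verification spells out what the paper leaves as "straightforward to check."
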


\begin{proof}(sketch) For any $P(x_1,\ldots,x_k)\in\mathcal L$ and any equivalence relation $\approx$ on $[k]$ we introduce a new predicate symbol $P_\approx$ of arity that is equal to the number of equivalence classes in $\approx$. Let $\mathcal L'$ be the language consisting of all these symbols, and let $D_\approx(x_1,\ldots,x_k)$ be the formula $\bigwedge_{i\approx j}(x_i=x_j) \land \bigwedge_{i\not\approx j} (x_i\neq x_j)$. We define the translation $I$ of the language $\mathcal L$ in $\mathcal L'$ as follows:
$$
I(P)(x_1,\ldots,x_k) \equiv \bigvee_\approx (D_\approx(x_1,\ldots,x_k) \land P_\approx(x_{i_1},\ldots,x_{i_\ell}))
$$
(which is also equivalent to $\bigwedge_\approx (D_\approx(x_1,\ldots,x_k) \to P_\approx(x_{i_1},\ldots,x_{i_\ell}))$), where $i_\nu$ is an arbitrary representative in the $\nu$th class of the relation $\approx$; we assume that those are enumerated in an arbitrary but fixed order. We let $T'$ consist of all canonicity axioms~\eqref{eq:canonical}, along with all formulas of the form $\forall\vec x I(A)(x_1,\ldots,x_n)$, where $A(x_1,\ldots,x_n)$ is an axiom of $T$. Then $I$ is automatically an interpretation of $T$ in $T'$.

In the opposite direction, we translate the predicate symbols $P_\approx$ as follows:
$$
J(P_\approx)(x_1,\ldots,x_\ell) \equiv \bigwedge_{1\leq\nu<\mu\leq \ell}(x_\nu\neq x_\mu) \land P(x_{\nu_1},\ldots,x_{\nu_k}),
$$
where $\nu_i$ is the equivalence class of $i$. It is straightforward to check that $J$ is an interpretation of $T'$ in $T$, and that $I$ and $J$ are inverse to each other.
\end{proof}

In categorical terms, the theorem above says that {\sc Int} is equivalent to its subcategory made by canonical universal theories.

\begin{example} Consider the non-canonical theory obtained from $T_{\operatorname{Graph}}$ by dropping the axiom $\neg E(x,x)$, i.e., let us allow loops. It is isomorphic to the canonical theory $T_{\operatorname{Graph}}^2$ (cf.~Example~\ref{ex:mix}) in which we use the additional unary predicates to distinguish between those vertices that have a loop on them and those that do not.
\end{example}

\begin{example}\label{ex:edgeorderedgraphscanonical}
  Applying Theorem~\ref{thm:canonical} above to $T_{\operatorname{EdgeOrderedGraph}}$ of Example~\ref{ex:edgeorderedgraphs} gives a theory isomorphic to it with a total of $17$ predicate symbols. However, it is straightforward to get a canonical theory isomorphic to $T_{\operatorname{EdgeOrderedGraph}}$ with only three predicate symbols $E,P',P''$ whose translations are the following.
  \begin{align*}
    E(x,y) & \leadsto E(x,y);\\
    P'(x,y,z) & \leadsto
    P(x,y,x,z)\land y\neq z;\\
    P''(x_1,y_1,x_2,y_2) & \leadsto
    P(x_1,y_1,x_2,y_2)\land x_1\neq x_2\land x_1\neq y_2\land y_1\neq x_2\land y_1\neq y_2.
  \end{align*}
  A similar ``compactification'' can be also done to the theory $T_{c\operatorname{-Greybow}}$ from Example~\ref{ex:greybow}.
\end{example}

{\em From now on all theories will be assumed to be canonical unless mentioned otherwise}.

\subsection{Densities} \label{sec:densities}

What we have done so far amounts to some very basic facts about a rather restricted fragment of first-order logic and model theory. Before we completely switch gears, let us remark that we strongly believe there should be more
connections between the classical model theory and its, as it were, measure-oriented version this work is contributing to. One very good indication of this are the works~\cite{ElS,ArC}  that use ultrafilters in much the same way they are used in model theory and non-standard analysis. Another relevant topic is that of {\em finitely forcible graphons}~\cite{LoSz2} that is a clear analogy of finite axiomatizability in the first order logic. But, by far and large, at the moment this potential seems to be largely unexplored.

\medskip
In any case, in the absence of quantifiers, our basic primitive is {\em counting}, and we begin with introducing the necessary notation in the finite setting.

Let $M$ and $N$ be two models of the same (universal) theory $T$ with $m=\lvert V(M)\rvert\leq\lvert V(N)\rvert = n$. How do we count the ``density'' or ``frequencies'' with which $M$ occurs in $N$? The approach that turns out to be the most robust and context-independent is to simply count the number of different submodels, normally referred to in combinatorics as {\em induced substructures}, normalized by $\binom{n}{m}$. In other words, let $p(M,N)$ be the probability of the event that $N\vert_{\rn V}$ is {\em isomorphic} to $M$ (denoted $N\vert_{\rn{V}}\cong M$), where $\rn V$ is an $m$-element subset of $V(N)$ chosen uniformly at random. This definition fully accounts for symmetries existing in the model $M$, and for these reasons it is the one used in flag algebras where frugality is paramount. When the latter is less of an issue, it is often more convenient to count instead (induced) embeddings as follows. Pick uniformly at random an {\em injective} mapping $\rn{\alpha}\injection{V(M)}{V(N)}$ (there are $(n)_m$ of them), and define $\tind(M,N)$ to be the probability that $\rn\alpha$ is an {\em induced embedding} of $M$ into $N$. The latter condition means that for any $k$-ary symbol $P$ and every tuple of distinct\footnote{Remember that $T$ is canonical.} vertices $v_1,\ldots,v_k\in V(M)$, $R_{P,M}(v_1,\ldots,v_k)$ if and only if $R_{P,N}(f(v_1),\ldots,f(v_k))$.

Another way to interpret $\tind(M,N)$ is by assuming that the vertices of $M$ are identified (in an arbitrary way) with integers from $[m]$, and then this is exactly the density of {\em labeled} submodels of $N$ that are
identical to $M$. Let us note in the brackets that although labeled models are the same objects as types in flag algebras (and partially labeled models correspond to flags), they are used here for rather different purposes.
For this reason we will avoid the word ``type'', and will denote labeled models by letters like $L$ or $K$, in order to distinguish them from unlabeled ones. We will use these two kinds of models interchangeably, based upon the following obvious identity:
\begin{align}\label{eq:ind-p}
  \tind(M,N) & = \frac{\lvert\Aut(M)\rvert}{m!}p(M,N) = \frac{p(M,N)}{(S_m: \Aut(M))},
\end{align}
where $m\df\lvert V(M)\rvert$ and $\Aut(M)$ is the group of automorphisms of $M$.

Yet another way of interpreting the quantity $\tind(M,N)$ is as a normalized counting of how many assignments of variables to distinct vertices of $N$ satisfy the open diagram $\Dopen(M)$ of $M$. Another useful parameter is obtained by instead counting the assignments that satisfy the positive open diagram $\PDopen(M)$ as follows. Pick uniformly at random an {\em injective} mapping $\rn{\alpha}\injection{V(M)}{V(N)}$ and define $\tinj(M,N)$ to be the probability that $\rn\alpha$ is a {\em positive embedding} of $M$ into $N$. The latter condition means that for any $k$-ary symbol $P$ and every tuple of distinct vertices $v_1,\ldots,v_k\in V(M)$, if $R_{P,M}(v_1,\ldots,v_k)$ then $R_{P,N}(f(v_1),\ldots,f(v_k))$.

It is easy to recover $\tinj$ from $\tind$ via the following identity:
\begin{align}\label{eq:inj-ind}
  \tinj(M,N) & = \sum_{M'\supseteq M}\tind(M',N),
\end{align}
where the sum is over all models $M'$ of $T$ with $V(M')=V(M)$, and $M'\supseteq M$ means that $R_{M',P}\supseteq R_{M,P}$ for any
$P\in\mathcal L$ or, equivalently, that $M'$ satisfies the positive open diagram $\PDopen(M)$ of $M$.

Note that we can apply M\"{o}bius Inversion to~\eqref{eq:inj-ind}, and get a formula expressing $\tind(M,N)$ as a (finite) linear combination of $(\tinj(M',N))_{M'\supseteq M}$. But since positive embeddings play very little role in our exposition, we defer details to Appendix~\ref{sec:Mobius}.

Densities also behave well with respect to open interpretations: if $I\interpret{T_1}{T_2}$ is such an interpretation and $N$ is a model of $T_2$, then the densities $p(\place, I(N))$ can be expressed as linear combinations of densities $p(\place, N)$. We will give more details in Section~\ref{sec:flag}, in the context where these combinations allow quite a natural interpretation.

It will also be convenient for us to let
\begin{align*}
  \tinj(M,N) \df \tind(M,N) \df p(M,N) & \df 0.
\end{align*}
whenever $\lvert V(M)\rvert > \lvert V(N)\rvert$.

As a final remark before we go into examples, note that all these densities are invariant under isomorphisms, that is, if $M\cong M'$ and $N\cong N'$, then
\begin{align*}
  p(M,N) & \!=\! p(M',N'); &
  \tind(M,N) & \!=\! \tind(M',N'); &
  \tinj(M,N) & \!=\! \tinj(M',N').
\end{align*}

\begin{example}[graphs]
   We denote by $K_\ell$ and $P_\ell$ the complete graph and the (undirected) path on $\ell$ vertices, respectively. We let $\overline{G}$ be the complement of the graph $G$, that is
$V(G)=V(\overline G)$ and the edges of $\overline{G}$ are non-edges of $G$ and vice versa. Finally, we denote by $I_\ell\df \overline K_\ell$ the empty graph on $\ell$ vertices.

  With this notation, the edge density of a graph $G$ is given by $p(K_2,G)$, and its triangle density is given by $p(K_3,G)$. In fact, for complete graphs and empty graphs, we have
  \begin{align*}
    p(K_\ell,G) & = \tind(K_\ell,G) = \tinj(K_\ell,G);\\
    p(I_\ell,G) & = \tind(I_\ell,G);\\
    \tinj(I_\ell,G) & = 1;
  \end{align*}
  for all graphs $G$ with at least $\ell$ vertices.

  For less trivial examples, for every $\ell\geq 3$ we have
  \begin{align*}
    p(K_2,P_\ell) = \tind(K_2,P_\ell) = \tinj(K_2,P_\ell) & = \frac{2}{\ell};
    \\
    p(P_3,P_\ell) & = \frac{6}{\ell(\ell-1)};
    \\
    \tind(P_3,P_\ell) = \tinj(P_3,P_\ell) & = \frac{2}{\ell(\ell-1)};
    \\
    p(P_3,K_\ell) = \tind(P_3,K_\ell) & = 0;
    \\
    \tinj(P_3,K_\ell) & = 1.
  \end{align*}

  The complementation operation behaves very well with respect to the densities $p$ and $\tind$, as it satisfies $p(H,G) = p(\overline{H},\overline{G})$ and $\tind(H,G) = \tind(\overline{H},\overline{G})$
  for all graphs $H$ and $G$.
The same cannot be said about $\tinj$ as we e.g.\ have $\tinj(I_\ell,G) = 1$ for every graph $G$ with at least $\ell$ vertices
while $\tinj(K_\ell,G) = 1$ if and only if $G$ is a complete graph. This inherent asymmetry (that comes up quite naturally in many applications of ordinary graphs) is one of the primary reasons why in the general case we prefer to work with induced densities.
\end{example}

\begin{example}[tournaments]
  In the theory of tournaments $T_{\operatorname{Tournament}}$ induced and non-induced embeddings are clearly the same, and we have $\tind(M,N) = \tinj(M,N)$. Let us do a few concrete calculations. Let $\Tr_\ell$ denote the transitive tournament on $\ell$ vertices (i.e., the only model of $T_{\operatorname{LinOrder}}$), let $\vec C_3$ denote the $3$-cycle (i.e., the only non-transitive tournament on $3$ vertices) and let $W_4$ and $L_4$ denote the uniquely defined tournaments on $4$-vertices whose outdegree sequences are $(3,1,1,1)$ and $(2,2,2,0)$ respectively. Then we have
  \begin{align*}
    p(\Tr_3,W_4) = p(\Tr_3,L_4) & = \frac{3}{4};
    \\
    \tind(\Tr_3,W_4) = \tind(\Tr_3,L_4) & = \frac{1}{8};
    \\
    p(\vec C_3,W_4) = p(\vec C_3,L_4) & = \frac{1}{4};
    \\
    \tind(\vec C_3,W_4) = \tind(\vec C_3,L_4) & = \frac{1}{8}.
  \end{align*}
\end{example}

\begin{example}[permutations]
  Recall that the theory of permutations is defined in our language as $T_{\operatorname{Perm}} = T_{\operatorname{LinOrder}}\cup T_{\operatorname{LinOrder}}$. Identifying a permutation $\sigma\function{[n]}{[n]}$ with the list of its values $(\sigma(1)\sigma(2)\cdots\sigma(n))$, we have
  \begin{align*}
    p(123,14235) & = \frac{5}{10} = \frac{1}{2};\\
    p(132,14235) & = \frac{1}{5};\\
    p(213,14235) & = \frac{1}{5};\\
    p(231,14235) & = 0;\\
    p(312,14235) & = \frac{1}{10};\\
    p(321,14235) & = 0.
  \end{align*}
\end{example}

\subsection{Convergent sequences} \label{sec:convergence}

As we mentioned in the introduction, there are two kinds of approaches to studying large, and eventually infinite, models of a theory:
semantic and syntactic. We begin with the
``neutral'' setting from which one can easily explore in either direction.

\medskip

The reader may have noticed that we used the term ``limit object'' in the introduction without specifying convergence; it is our first order of business now.

\begin{definition}
  Let $T$ be a (canonical) theory in the language $\mathcal{L}$. Let us denote by $\mathcal{M}_n[T]$ the set of all (unlabeled) finite models of $T$ up to isomorphism on $n$ vertices and let $\mathcal{M}[T] \df \bigcup_{n\in\mathbb{N}}\mathcal{M}_n[T]$ be the set of all finite models of $T$ up to isomorphism. Whenever $T$ is clear from context, we will drop $[T]$ from the notation.

  A sequence of models $(N_n)_{n\in\mathbb{N}}$ of $T$ is called \emph{increasing} if $\lvert V(N_n)\rvert < \lvert V(N_{n+1})\rvert$ for every $n\in\mathbb{N}$.

  The theory $T$ is called \emph{non-degenerate} if it has an increasing sequence of models, or, equivalently, if it has an infinite model.

  Let $d$ be one of $p$, $\tind$ or $\tinj$. An increasing sequence of models $(N_n)_{n\in\mathbb{N}}$ is called \emph{convergent} if $\lim_{n\to\infty}d(M,N_n)$ exists for every fixed model $M$ of $T$.
\end{definition}

A priori, we have three notions of convergence, but the proposition below says that they are equivalent.

\begin{proposition}
  If $(N_n)_{n\in\mathbb{N}}$ is an increasing sequence of models of a (canonical) theory, then the following are equivalent.
  \begin{itemize}
  \item The limit $\lim_{n\to\infty}p(M,N_n)$ exists for every fixed model $M$ of $T$;
  \item The limit $\lim_{n\to\infty}\tind(M,N_n)$ exists for every fixed model $M$ of $T$;
  \item The limit $\lim_{n\to\infty}\tinj(M,N_n)$ exists for every fixed model $M$ of $T$.
  \end{itemize}
\end{proposition}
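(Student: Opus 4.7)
The plan is to show that all three quantities are related by finite linear combinations whose coefficients depend only on the fixed model $M$ (and not on $N_n$), so that the existence of limits transfers back and forth among the three notions.

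First, for the equivalence of $p$-convergence and $\tind$-convergence, I would use identity~\eqref{eq:ind-p} directly: since $\tind(M,N) = \lvert\Aut(M)\rvert/m!\cdot p(M,N)$ and the proportionality constant depends only on $M$, the limit $\lim_n p(M,N_n)$ exists if and only if $\lim_n \tind(M,N_n)$ exists, and they differ by this same constant. This gives $p\Leftrightarrow\tind$ immediately.

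Next, for $\tind\Rightarrow\tinj$, I would invoke identity~\eqref{eq:inj-ind}: $\tinj(M,N) = \sum_{M'\supseteq M}\tind(M',N)$, where the sum ranges over models $M'$ of $T$ with $V(M')=V(M)$. Because $\lvert V(M)\rvert$ is fixed and the language $\mathcal{L}$ is finite, there are only finitely many such $M'$. Hence if each $\tind(M',N_n)$ converges as $n\to\infty$, then $\tinj(M,N_n)$ converges as a finite sum of convergent sequences. For the reverse direction $\tinj\Rightarrow\tind$, I would apply M\"obius inversion to~\eqref{eq:inj-ind} (as mentioned in the excerpt and deferred to Appendix~\ref{sec:Mobius}) on the finite poset of models $M'\supseteq M$ with $V(M')=V(M)$, obtaining an expression of $\tind(M,N)$ as a finite linear combination $\sum_{M'\supseteq M} c_{M,M'}\tinj(M',N)$ with coefficients depending only on $M$ and $M'$. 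Convergence of each $\tinj(M',N_n)$ then implies convergence of $\tind(M,N_n)$.

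Combining the two chains $p\Leftrightarrow\tind$ and $\tind\Leftrightarrow\tinj$ yields the desired triple equivalence. The only subtle point I anticipate is verifying that the sum in~\eqref{eq:inj-ind} is genuinely finite; this is guaranteed because $\mathcal{L}$ has finitely many predicate symbols of each arity and $V(M)$ is a fixed finite set, so there are only finitely many possible choices for the relations $R_{P,M'}$. Nothing in the argument depends on any structural property of $T$ beyond it being canonical (which ensures that densities and the sum in~\eqref{eq:inj-ind} are well-defined in the form stated). Hence the result is essentially a purely combinatorial linear-algebraic triviality once the two finite-linear-combination identities are in place, and no genuine obstacle should arise.
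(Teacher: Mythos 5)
Your argument is exactly the paper's: the paper's proof likewise reduces the equivalence to the multiplicative-constant relation~\eqref{eq:ind-p} between $p$ and $\tind$, and to the fact that $\tind$ and $\tinj$ are mutually expressible as finite linear combinations via~\eqref{eq:inj-ind} and its M\"obius inverse. The proposal is correct and takes essentially the same route, just spelled out in more detail.
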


\begin{proof}
  Follows from the fact that $p(M,N_n)$ and $\tind(M,N_n)$ differ only by a multiplicative constant and that $\tind(M,N_n)$ can be written as a (finite) linear combination in terms of $(\tinj(M',N_n))_{M'\in\mathcal{M}}$ and vice-versa.
\end{proof}

By the same token, convergence behaves well with respect to open interpretations: if $I\interpret{T_1}{T_2}$ is such an interpretation and $(N_n)_{n\in\mathbb{N}}$ is a convergent sequence of $T_2$-models then $(I(N_n))_{n\in\mathbb{N}}$ is a convergent sequence of models of the theory $T_1$.

If $d$ is one of $p$, $\tind$ or $\tinj$, any model $N\in\mathcal{M}$ gives rise to a functional
\begin{align*}
  \begin{functiondef}
    d(\place,N)\colon & \mathcal{M} & \longrightarrow & [0,1]\\
    & M & \longmapsto & d(M,N),
  \end{functiondef}
\end{align*}
which in turn can be seen as an element of $[0,1]^{\mathcal{M}}$. Now the definition of a convergent sequence of models is
simply a sequence of models that is convergent as elements of $[0,1]^{\mathcal{M}}$ in the usual product topology, in which we
require $\lim_{n\to\infty} d(M,N_n)$ to exist for every fixed $M$. No uniformity conditions or assumptions on the rate of
convergence are imposed.

Note that since $\mathcal{M}$ is countable, the space $[0,1]^{\mathcal{M}}$ is metrizable. One possible metric is
\begin{equation} \label{eq:distance}
  \operatorname{dist}((x_M)_{M\in\mathcal{M}}, (y_M)_{M\in\mathcal{M}})  = \sum_{m\in\mathbb{N}}\frac{\lvert x_{M_m} - y_{M_m}\rvert}{2^m},
\end{equation}
for a fixed ordering $(M_m)_{m\in\mathbb{N}}$ of $\mathcal{M}$. However, since this metric is rather arbitrary, it is rarely
used directly (the important property is that the space is metrizable {\em somehow}, cf.~\cite[\S 3.2]{flag}).

\begin{proposition}\label{prop:compact}
  Every increasing sequence of models of a theory has a convergent subsequence.
\end{proposition}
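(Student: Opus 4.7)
The plan is to view the proposition as an instance of sequential compactness of the countable product $[0,1]^{\mathcal{M}}$, and realize it by a Cantor-style diagonal argument.

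First I would fix one of the density parameters, say $d = p$ (the proposition above shows that the choice does not matter). To each model $N$ I associate the point $d(\place,N) \in [0,1]^{\mathcal{M}}$, so that an increasing sequence $(N_n)_{n\in\mathbb{N}}$ of models of $T$ gives rise to a sequence $(d(\place,N_n))_{n\in\mathbb{N}}$ in $[0,1]^{\mathcal{M}}$. Since $\mathcal{M}$ is countable, this product space is a countable product of compact metric spaces, hence is itself compact and metrizable (for example by the metric displayed in~\eqref{eq:distance}; alternatively, it is homeomorphic to the Hilbert cube). By sequential compactness the sequence $(d(\place,N_n))_{n\in\mathbb{N}}$ has a subsequence convergent in the product topology, that is, convergent coordinate-wise; passing to the corresponding subsequence $(N_{n_k})_{k\in\mathbb{N}}$ of models, we get that $\lim_{k\to\infty} d(M,N_{n_k})$ exists for every fixed $M\in\mathcal{M}$.

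If one prefers to avoid invoking sequential compactness as a black box, the same conclusion can be obtained by a direct diagonal argument. Enumerate $\mathcal{M} = \{M_1, M_2, \ldots\}$. Since $d(M_1,N_n)\in [0,1]$, by the Bolzano--Weierstrass theorem one can extract a subsequence $(N_n^{(1)})_{n\in\mathbb{N}}$ of $(N_n)_{n\in\mathbb{N}}$ along which $d(M_1, N_n^{(1)})$ converges. Inductively, from $(N_n^{(k)})_{n\in\mathbb{N}}$ extract a further subsequence $(N_n^{(k+1)})_{n\in\mathbb{N}}$ along which $d(M_{k+1},N_n^{(k+1)})$ converges. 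The diagonal subsequence $(N_n^{(n)})_{n\in\mathbb{N}}$ is then eventually a subsequence of each $(N_n^{(k)})_{n\in\mathbb{N}}$, so $\lim_{n\to\infty} d(M_k,N_n^{(n)})$ exists for every $k$.

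Finally, I would observe that any subsequence of an increasing sequence of models is again increasing (the cardinalities $\lvert V(N_n)\rvert$ are strictly increasing in $n$), so the extracted subsequence is a convergent sequence in the sense of the definition. There is no genuine obstacle here: the argument is a completely standard compactness/diagonalization, and the only thing one must keep in mind is that no uniformity in $M$ is being claimed, which is exactly what is built into the product topology on $[0,1]^{\mathcal{M}}$.
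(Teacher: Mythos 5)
Your proof is correct and takes essentially the same route as the paper: the paper also reduces the claim to compactness of $[0,1]^{\mathcal{M}}$ (via Tychonoff), and your remarks about metrizability and the diagonal argument just spell out why compactness of this space yields sequential compactness.
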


\begin{proof}
  Follows from the fact that $[0,1]^{\mathcal{M}}$ is compact which, in turn, follows
from Tychonoff's Theorem.
\end{proof}

\begin{example}[sequences of sparse hypergraphs] \label{ex:empty}
  In the theory of $k$-uniform hypergraphs $T_{k\operatorname{-Hypergraph}}$, the sequence of empty hypergraphs (i.e., hypergraphs without any edges) of increasing sizes $(I^{(k)}_n)_{n\in\mathbb{N}}$ is convergent, since
  \begin{align*}
    \lim_{n\to\infty}p(H,I^{(k)}_n)
    & =
    \begin{dcases*}
      1, & if $H$ is an empty hypergraph;\\
      0, & otherwise;
    \end{dcases*}
  \end{align*}
  for every $H\in\mathcal M[T_{k\operatorname{-Hypergraph}}]$.

  More generally, if $(H_n)_{n\in\mathbb{N}}$ is an increasing sequence of {\em sparse} hypergraphs, that is, such that the hyperedge density $p(K^{(k)}_k,H_n)$ is $o(1)$, then $(H_n)_{n\in\mathbb{N}}$ converges to the same limit:
  \begin{align*}
    \lim_{n\to\infty}p(H,H_n)
    & =
    \begin{dcases*}
      1, & if $H$ is an empty hypergraph;\\
      0, & otherwise.
    \end{dcases*}
  \end{align*}
\end{example}

\begin{example}[transitive tournaments] \label{ex:transitive}
  In the theory $T_{\operatorname{Tournament}}$, the sequence of transitive tournaments of increasing sizes $(\Tr_n)_{n\in\mathbb{N}}$ is a convergent sequence, since
  \begin{align*}
    \lim_{n\to\infty}p(M,\Tr_n)
    & =
    \begin{dcases*}
      1, & if $M$ is a transitive tournament;\\
      0, & otherwise
    \end{dcases*}
  \end{align*}
  for every tournament $M$.

  More generally, if $(N_n)_{n\in\mathbb{N}}$ is an increasing sequence of tournaments such that $p(\vec C_3,N_n) = o(1)$, then $(N_n)_{n\in\mathbb{N}}$ still converges to the same limit
  \begin{align} \label{eq:transitive}
    \lim_{n\to\infty}p(M,N_n)
    & =
    \begin{dcases*}
      1, & if $M$ is a transitive tournament;\\
      0, & otherwise.
    \end{dcases*}
  \end{align}
\end{example}

\begin{example} \label{ex:substitutional}
Let $\ell\geq 1$ be an integer, and let us define the theory of ordinary graphs forbidding even cycles
$C_{2\ell}$, not necessarily induced. If we do it naively, by appending to $T_{\operatorname{Graph}}$ the axiom
$\neg\of{\bigwedge_{i\in\mathbb Z_{2\ell}} E(x_i, x_{i+1})}$, we immediately realize that the
instance of this formula obtained by the substitution $x_i\mapsto x_{i\bmod 2}$ is simply $\neg E(x_0,x_1)$ and
what we get is the theory of empty graphs.

Thus, we have to be careful and explicitly forbid variable collisions (which we already did on appropriate
occasions before) as, say,
\begin{equation} \label{eq:c_2l}
\of{\bigwedge_{i\neq j} (x_i\neq x_j)} \longrightarrow \of{\neg\of{\bigwedge_{i\in\mathbb Z_{2\ell}} E(x_i, x_{i+1})}}.
\end{equation}
Then this theory certainly has (quite) non-trivial models of arbitrary size. Nonetheless, the celebrated Erd\H{o}s--Rado Theorem
in extremal graph theory implies that the edge density in every increasing sequence of models is $o(1)$. Hence, from
the perspective of our framework, limits of the theory $T_{\operatorname{Graph}}$ + \text{\eqref{eq:c_2l}} is just as trivial as the ones from
theories considered in the two previous examples.
\end{example}

\begin{remark} \label{rem:induced}
Examples~\ref{ex:empty}-\ref{ex:substitutional} pertain to a prominent topic in combinatorics called {\em {\rm (}Induced{\rm )} Removal Lemmas} or \emph{Property Testability}. Questions of this kind can be asked in two different forms as follows. Let $T_1$ and $T_2$ be two theories in the same language $\mathcal L$ such that $T_2$ extends $T_1$ by appending extra axioms to it, as in Example~\ref{ex:extra_axioms}. Let $(N_n)_{n\in\mathbb{N}}$ be a convergent sequence of $T_1$-models such that $\lim_{n\to\infty}p(M,N_n)=0$ for every $M\in \mathcal M[T_1]\setminus \mathcal M[T_2]$. Can $(N_n)_{n\in\mathbb{N}}$ be replaced by a sequence of models $(N'_n)_{n\in\mathbb{N}}$ {\em of the theory $T_2$} such that:
\begin{description}
\item[Version~1] We can obtain $N_n'$ from $N_n$ by altering an $o(1)$-fraction of values in the relations $R_{P,N_n}\ (P\in \mathcal L)$;

\item[Version~2] The sequence $(N'_n)_{n\in\mathbb{N}}$ converges to the same limit, that is, we have $\lim_{n\to\infty}p(M,N_n') = \lim_{n\to\infty\to}p(M,N_n)$ for all $M\in\mathcal M[T_1]$.
\end{description}
(Version~2 is clearly weaker than Version~1.)

Version~1 is the standard induced removal lemmas re-cast in the logical language. In the context of Example~\ref{ex:empty} it is obvious, but already for almost transitive tournaments (Example~\ref{ex:transitive}) it requires a non-trivial argument to prove that tournaments in a sequence with the property~\eqref{eq:transitive} can be made transitive by reverting a fraction $o(1)$ of arcs (Y.~Makarychev and I.~Mezhirov, personal communications). For the pair $T_1 = T_{\operatorname{Graph}}, T_2 = T_{\operatorname{TF-Graph}}$ it constitutes the famous {\em Triangle Removal Lemma}. For the theory $T_1=T_{k\operatorname{-Hypergraph}}$ (and arbitrary $T_2$), the first proof came from a non-trivial generalization of the Graph Regularity Lemma to hypergraphs, see~\cite{RoSc}. For general pairs $(T_1, T_2)$, the proofs due to Austin--Tao~\cite{AuT} and Aroskar--Cummings~\cite{ArC} use completely different methods. We will prove a continuous analogue of this statement
(Theorem~\ref{thm:ierl}).

Remarkably, even the much weaker Version~2 is not entirely obvious. It easily follows from either a syntactic description of limit objects (Theorem~\ref{thm:cryptomorphism}) or a semantic one (Theorem~\ref{thm:theoncryptomorphism}). But we are not aware of any entirely ``local'', ``finite'' proof of that statement.
\end{remark}

\begin{example}[Tur\'{a}n graphs]\label{ex:TuranGraphs}
  In the theory of graphs $T_{\operatorname{Graph}}$, the \emph{Tur\'{a}n graph} $T_{n,\ell}$ is the complete $\ell$-partite graph on $n$ vertices with parts as equal as possible. It is easy to see that for any fixed $\ell\in\mathbb{N}_+$, the sequence $(T_{n,\ell})_{n\in\mathbb{N}}$ is a convergent sequence and
  \begin{align*}
    \lim_{n\to\infty} \tinj(G,T_{n,\ell}) & = \frac{P_G(\ell)}{\ell^{\lvert V(G)\rvert}},
  \end{align*}
  where $P_G$ is the chromatic polynomial of $G$, that is, $P_G(\ell)$ is the number of proper vertex colorings of $G$ with (at most) $\ell$ colors.
\end{example}

\begin{example}[Erd\H{o}s--R\'{e}nyi random graphs]\label{ex:ErdosRenyi}
  The \emph{Erd\H{o}s--R\'{e}nyi random graph model} is the random graph $\rn{G_{n,p}}$ on $n$ vertices in which each edge is independently present with probability $p$.

  It is a straightforward exercise in distribution concentration (see e.g.~\cite[Theorem~4.4.5]{AlS}) to prove that the sequence $(\rn{G_{n,p}})_{n\in\mathbb{N}}$ is convergent with probability $1$ for every fixed $p\in[0,1]$ and
  \begin{equation}\label{eq:QRGraph}
    \begin{aligned}
    \lim_{n\to\infty}\tind(H,\rn{G_{n,p}}) & = p^\ell(1-p)^{\binom{m}{2}-\ell};\\
    \lim_{n\to\infty}\tinj(H,\rn{G_{n,p}}) & = p^\ell;
    \end{aligned}
  \end{equation}
  with probability $1$ for every graph $H$ with $m$ vertices and $\ell$ edges.

  A (deterministic) increasing sequence of graphs satisfying~\eqref{eq:QRGraph} (for every graph $H$) is called {\em quasi-random}. Beginning with seminal papers~\cite{Tho2,CGW} that identified several a priori different properties equivalent to quasi-randomness, it has become a very prominent area of combinatorial research. Perhaps one of the most impressive of these properties is that the non-induced version of~\eqref{eq:QRGraph}
  for just $K_2$ and $C_4$ implies that it holds for every $H$. The original proof of this fact in~\cite{CGW} is completely syntactic, but with the theory of graphons at our disposal (which was nonexistent at the time), a simpler semantic
  proof can be extracted from the much earlier paper~\cite{DiFr81}. More examples on how the semantic theory of graphons can be used to simplify syntactic proofs of graph quasi-randomness can be found in~\cite{Jan11}.

  It is worth noting that independent samples from $\rn{G_{n,p}}$ are very far apart in the {\em edit distance}
  (see~\cite[\S 8.1]{Lov4} for details of the definition), even if they are very close with respect to densities.
  This, among other things, demonstrates that the phenomenon of removal lemmas (Version~1 in Remark~\ref{rem:induced}) is
  quite unique and depends on the fact that the density of the models $N$ we are interested in is actually $o(1)$. No
  useful analogue of induced removal lemmas seems to be possible without this restriction.
\end{example}

\begin{example}[$3$-uniform random hypergraphs]\label{ex:RandomHypergraphs}
  Consider the random $3$-uniform hypergraph $\rn{H_{n,p}}$ obtained in a fashion similar to the Erd\H{o}s--R\'{e}nyi random model, that is, it is the random hypergraph on $n$ vertices in which each hyperedge is independently present with probability $p$.

  Again, it is a straightforward exercise to prove that the sequence $(\rn{H_{n,p}})_{n\in\mathbb{N}}$ is convergent with probability $1$ for every fixed $p\in[0,1]$ and
  \begin{align*}
    \lim_{n\to\infty}\tind(H,\rn{H_{n,p}}) & = p^\ell(1-p)^{\binom{m}{3}-\ell};\\
    \lim_{n\to\infty}\tinj(H,\rn{H_{n,p}}) & = p^\ell;
  \end{align*}
  with probability $1$ for every $3$-uniform hypergraph $H$ with exactly $m$ vertices and $\ell$ hyperedges.

  On the other hand, we can consider the $3$-uniform hypergraph $\rn{H'_{n,p}}$ obtained directly from $\rn{G_{n,p}}$ by
  declaring the hyperedges of $\rn{H'_{n,p}}$ to correspond to triangles of $\rn{G_{n,p}}$ (that is, $\rn{H'_{n,p}} = I(\rn{G_{n,p}})$ for the open interpretation from Example~\ref{ex:triangleinterpret}). Again it is straightforward to check that the sequence $(\rn{H'_{n,p}})_{n\in\mathbb{N}}$ is convergent with probability $1$ for every fixed $p\in[0,1]$.

  Let us now put $p = q^3$. Then
  \begin{align*}
    \lim_{n\to\infty}p(K_3^{(3)},\rn{H_{n,p}}) & = \lim_{n\to\infty}p(K_3^{(3)},\rn{H'_{n,q}}) = p,
  \end{align*}
  with probability $1$ for the $3$-uniform hypergraph $K_3^{(3)}$ corresponding to one hyperedge, i.e., the hypergraphs $\rn{H_{n,p}}$ and $\rn{H'_{n,p}}$ asymptotically have the
  same (hyper)edge density, just as expected. However, for $p\in(0,1)$, these sequences are quite different in terms of other densities. For example,
  let $K_4^-$ be the $3$-uniform hypergraph on $4$ vertices with exactly $3$ hyperedges. Then we have
  \begin{align*}
    \lim_{n\to\infty}\tind(K_4^-,\rn{H_{n,p}}) & = p^3(1-p); &
    \lim_{n\to\infty}\tind(K_4^-,\rn{H'_{n,q}}) & = 0;
  \end{align*}
  with probability $1$.
\end{example}

\begin{example}[Erd\H{o}s--Stone--Simonovits theorem]\label{ex:ESS}
  Given a family of non-empty graphs $\mathcal F$, let
  \begin{align*}
    \pi(\mathcal F)
    & \df
    \lim_{n\to\infty} \max\{p(K_2,G) \mid \lvert V(G)\rvert = n\land\forall F\in \mathcal F,\tinj(F,G) = 0\}.
  \end{align*}
  From Proposition~\ref{prop:compact}, it follows that $\pi(\mathcal F)$ is the same as the maximum of $\lim_{n\to\infty} p(K_2,G_n)$ over all convergent sequences $(G_n)_{n\in\mathbb{N}}$ in $\Forbp{T_{\operatorname{Graph}}}{\mathcal F}$ (cf.~Example~\ref{ex:extra_axioms}). The celebrated Erd\H{o}s--Stone--Simonovits Theorem says that
  \begin{align*}
    \pi(\mathcal F) & = 1 - \frac{1}{\inf_{F\in \mathcal F} \chi(F) - 1},
  \end{align*}
  where $\chi(F)$ is the chromatic number of the graph $F$.

  Remarkably, this theorem extends to the setting of ordered graphs~\cite[Theorem 1]{PaT} as follows: consider the order-erasing interpretation $I\interpret{T_{\operatorname{Graph}}}{T_{\operatorname{Graph}}^<}$ (cf.~Example~\ref{ex:strucerase}) and for a family of non-empty ordered graphs $\mathcal F$ let
  \begin{align*}
    \pi_<(\mathcal F)
    & \df
    \lim_{n\to\infty} \max\{p(K_2,I(G)) \mid \lvert V(G)\rvert = n\land\forall F\in \mathcal F,\tinj(F,G) = 0\}
    \\
    & =
    \max\{\lim_{n\to\infty} p(K_2,I(G_n)) \mid (G_n)_{n\in\mathbb{N}} \text{ is a convergent sequence in }
    \Forbp{T_{\operatorname{Graph}}^<}{\mathcal F}\}.
  \end{align*}
  Then we still have
  \begin{align*}
    \pi_<(\mathcal F) & = 1 - \frac{1}{\inf_{F\in \mathcal F} \chi_<(F) - 1},
  \end{align*}
  where $\chi_<(F)$ is the \emph{interval chromatic number} of $F$, that is, the smallest $k$ such that there exists a proper vertex coloring of $F$ with $k$ colors, each color class being an interval of the order of the vertices.
The analogous result~\cite[Theorem~1]{BKV} for cyclically ordered graphs ($\pi_{\operatorname{Cyc}}(\mathcal{F})$) holds using the \emph{cyclic chromatic number} $\chi_{\operatorname{Cyc}}(F)$, which is the smallest $k$ such
that there exists a proper vertex coloring of $F$ with $k$ colors, each color class being an interval of the cyclic order of the vertices.
In contrast with the usual chromatic number, which is NP-hard, both the interval and the cyclic chromatic numbers are easily computable in polynomial time with a greedy algorithm.
\end{example}

\subsection{Flag algebras -- the syntax} \label{sec:flag}

In one sentence, the theory of flag algebras can be summarized as the study of relations that the coordinates of
$\phi\in[0,1]^{\mathcal{M}}$ must satisfy if $\phi$ is obtained as the limit of functionals $p(\place, N_n)$ for a
converging sequence of models $(N_n)_{n\in\mathbb{N}}$ for its own sake, without any explicit references to the actual
limit object. In this section we present a lightweight\footnote{The main difference is that for our purposes here, we
only need to work with models without labels. In particular, we completely skip
all material pertaining to non-trivial ``types''.} introduction to the basic concepts of flag algebras: all theorems
of this section are simplified versions of~\cite{flag} and we refer the interested reader to the aforementioned work
for more thorough treatment.

The first kind of relations that the coordinates of a limit $\phi\in[0,1]^{\mathcal{M}}$ must respect is given by the so-called chain rule.

\begin{lemma}[chain rule]\label{lem:chain}
  If $M,N\in\mathcal{M}$ are models of a theory $T$ and $\lvert V(M)\rvert\leq\ell\leq\lvert V(N)\rvert$, then
  \begin{align*}
    p(M,N) & = \sum_{M'\in\mathcal{M}_\ell} p(M,M')p(M',N).
  \end{align*}
\end{lemma}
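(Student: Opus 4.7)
The plan is a standard two-stage sampling / conditional probability argument. Write $m \df |V(M)|$ and $n \df |V(N)|$, and recall that by definition $p(M,N) = \prob{N|_{\rn V} \cong M}$ where $\rn V$ is a uniformly random $m$-subset of $V(N)$. The key observation is that drawing $\rn V$ directly is equivalent to the two-step procedure: first choose $\rn W$ uniformly among $\ell$-subsets of $V(N)$, and then choose $\rn V$ uniformly among $m$-subsets of $\rn W$. This is a routine symmetry calculation: for any fixed $m$-subset $V_0 \subseteq V(N)$, the probability of drawing $V_0$ in the two-stage procedure is $\binom{n-m}{\ell-m}/\binom{n}{\ell} \cdot 1/\binom{\ell}{m} = 1/\binom{n}{m}$, matching the one-stage distribution.

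Next, I would condition on the isomorphism type of $N|_{\rn W}$. For each $M' \in \mathcal{M}_\ell$, by definition $\prob{N|_{\rn W} \cong M'} = p(M', N)$. Conditioned on the event $N|_{\rn W} \cong M'$, the induced submodel $N|_{\rn V}$ is isomorphic to $M'|_{\rn V'}$ where $\rn V'$ is a uniformly random $m$-subset of $V(M')$ (via the transport of $\rn V$ through any isomorphism); since densities $p(M, \place)$ are isomorphism-invariant, this conditional probability equals exactly $p(M, M')$.

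Combining the two steps via the law of total probability yields
\begin{equation*}
p(M,N) \;=\; \sum_{M' \in \mathcal{M}_\ell} \prob{N|_{\rn W} \cong M'} \cdot \condprob{N|_{\rn V} \cong M}{N|_{\rn W} \cong M'} \;=\; \sum_{M' \in \mathcal{M}_\ell} p(M',N)\, p(M, M'),
\end{equation*}
which is the claimed identity.

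There is no real obstacle here beyond bookkeeping. The only point requiring a moment of care is verifying that the two-stage uniform sampling produces the correct marginal distribution on $\rn V$, and that conditioning on the isomorphism class (rather than on the set $\rn W$ itself) is legitimate — both of these rely only on the fact that $p(\place,\place)$ is invariant under isomorphism of the ambient model, which follows directly from the definition.
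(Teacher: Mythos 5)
Your proof is correct, and it is the standard two-stage sampling argument that appears in the flag algebra literature the paper cites for this lemma (the paper itself defers the proof to~\cite{flag}). The marginal check and the observation that conditioning on the isomorphism class of $N|_{\rn W}$ is legitimate (since the conditional distribution of the type of $N|_{\rn V}$ given $\rn W = W$ depends only on the isomorphism type of $N|_W$) are exactly the right points to address, and both are handled correctly.
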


This means that if we extend $\phi\in[0,1]^{\mathcal{M}[T]}$ to a linear functional on the space $\mathbb{R}\mathcal{M}[T]$ of formal linear combinations of finite models by
\begin{align*}
  \phi\left(\sum_{M\in\mathcal{M}}c_M M \right) & \df \sum_{M\in\mathcal{M}} c_M\phi(M),
\end{align*}
then the linear subspace $\mathcal{K}[T]$ generated by elements of the form
\begin{align*}
  M - \sum_{M'\in\mathcal{M}_\ell} p(M,M')M',
\end{align*}
for $\ell\geq\lvert V(M)\rvert$ is contained in the kernel of $\phi$. In other words, defining $\mathcal{A}[T] \df \mathbb{R}\mathcal{M}[T]/\mathcal{K}[T]$, we can think of $\phi$ as a linear functional on $\mathcal{A}[T]$.

Note for the record that similar identities hold for $\tind, \tinj$:
$$
 \tind(M,N) = \sum_{M'\in\mathcal{M}_\ell} \tind(M,M')p(M',N)
$$
and
\begin{equation} \label{eq:chain_inj}
 \tinj(M,N) = \sum_{M'\in\mathcal{M}_\ell} \tinj(M,M')p(M',N),
\end{equation}
whenever $\ell\geq\lvert V(M)\rvert$. We see that induced densities appear quite naturally even if
we are interested in graph homomorphisms/positive embeddings.

The next step is to study what sort of relations must be satisfied by products $\phi(M)\phi(N)$ of coordinates of a limit $\phi\in[0,1]^{\mathcal{M}}$. For that, we need to extend the definition of density to more than one model.

\begin{definition} \label{def:mult}
  Let $m_1,m_2,\ldots,m_t,n\in\mathbb{N}$ be non-negative integers such that $\sum_{i=1}^tm_i\leq n$ and
  let $M_1,M_2,\ldots,M_t,N\in\mathcal{M}$ be models of a theory $T$ such that $\lvert V(M_i)\rvert = m_i$,
  for every $i\in[t]$ and $\lvert V(N)\rvert = n$. We define the quantity $p(M_1,M_2,\ldots,M_t;N)$ via the following probabilistic experiment. We pick {\em pairwise disjoint} subsets
  $(\rn{V_1},\rn{V_2},\ldots,\rn{V_t})$ of $V(N)$ uniformly at random and set
  \begin{align*}
    p(M_1,M_2,\ldots,M_t;N)
    & \df
    \prob{\forall i\in[t], N\vert_{\rn V_i} \cong M_i}.
  \end{align*}
\end{definition}

\begin{lemma}[chain rule]\label{lem:chain2}
  If $M_1,M_2,\ldots,M_t,N\in\mathcal{M}$ are models of a theory $T$ and $\sum_{i=1}^t\lvert V(M_i)\rvert\leq\ell\leq\lvert V(N)\rvert$, then
  \begin{align*}
    p(M_1,M_2,\ldots,M_t;N) & = \sum_{M\in\mathcal{M}_\ell} p(M_1,M_2,\ldots,M_t;M)p(M,N).
  \end{align*}
\end{lemma}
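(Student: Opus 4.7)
The plan is to mimic the proof of the single-model chain rule (Lemma~\ref{lem:chain}) by introducing a two-stage sampling procedure on $V(N)$ and applying the tower property of conditional probabilities.

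More concretely, I would define the following joint random experiment on $V(N)$. First, sample an $\ell$-subset $\rn W\subseteq V(N)$ uniformly at random. Then, \emph{conditionally} on $\rn W$, sample pairwise disjoint subsets $(\rn{V_1},\ldots,\rn{V_t})$ of $\rn W$ uniformly at random with $|\rn{V_i}|=m_i$. The first step is to check that the marginal distribution of $(\rn{V_1},\ldots,\rn{V_t})$ obtained this way coincides with the one used in Definition~\ref{def:mult}, i.e., it is uniform over ordered tuples of pairwise disjoint subsets of $V(N)$ with prescribed sizes. This is a routine counting check: for any fixed pairwise disjoint $(V_1,\ldots,V_t)$ with $|V_i|=m_i$, the probability that $\rn W$ contains $V_1\cup\cdots\cup V_t$ is $\binom{n-\sum_i m_i}{\ell-\sum_i m_i}/\binom{n}{\ell}$, and conditioned on that event the chance of choosing exactly $(V_1,\ldots,V_t)$ inside $\rn W$ is $m_1!\cdots m_t!(\ell-\sum_i m_i)!/\ell!$; these combine into $\binom{n}{m_1,\ldots,m_t,n-\sum_i m_i}^{-1}$, which is the desired uniform probability.

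Having established the coupling, I would partition the event $\{\forall i,\; N\vert_{\rn{V_i}}\cong M_i\}$ according to the isomorphism type of $N\vert_{\rn W}$. By the tower property,
\begin{align*}
p(M_1,\ldots,M_t;N)
&=\sum_{M\in\mathcal{M}_\ell}\prob{N\vert_{\rn W}\cong M}\cdot\condprob{\forall i,\;N\vert_{\rn{V_i}}\cong M_i}{N\vert_{\rn W}\cong M}.
\end{align*}
The first factor is exactly $p(M,N)$ by definition. For the conditional factor, note that once we condition on $\rn W$ and on the isomorphism class of $N\vert_{\rn W}$, the events $\{N\vert_{\rn{V_i}}\cong M_i\}$ depend only on the \emph{relative} positions of $\rn{V_1},\ldots,\rn{V_t}$ inside $\rn W$ and on the induced structure on $\rn W$; since those relative positions are uniform over ordered tuples of pairwise disjoint subsets of $\rn W$ of the right sizes, this conditional probability equals $p(M_1,\ldots,M_t;M)$, which (crucially) does not depend on the particular realization of $\rn W$. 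Substituting yields the claimed identity.

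The main ``obstacle'' is really only the bookkeeping in the first step, namely verifying that the two-stage sampling produces the same marginal law on $(\rn{V_1},\ldots,\rn{V_t})$ as the one-shot sampling of Definition~\ref{def:mult}. Once this invariance is in hand, the proof is a clean instance of the tower property, and it specializes to Lemma~\ref{lem:chain} upon taking $t=1$. No additional hypothesis on the theory $T$ beyond canonicity (already standing) is used, and the identity is purely combinatorial.
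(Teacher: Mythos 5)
Your proof is correct, and it's the standard argument for this kind of chain rule. The paper itself does not include a proof of Lemma~\ref{lem:chain2} (it is stated as a simplified version of results from~\cite{flag}), but your two-stage sampling construction---first pick the $\ell$-window $\rn W$, then pick $(\rn{V_1},\dots,\rn{V_t})$ inside it, verify the marginal on $(\rn{V_1},\dots,\rn{V_t})$ is still uniform, and apply the law of total probability over the isomorphism type of $N\vert_{\rn W}$---is exactly the canonical argument one would give, and your counting check of the marginal is correct (both sides reduce to $m_1!\cdots m_t!(n-\sum_i m_i)!/n!$). One small point worth making explicit: the conditional probability $\condprob{\forall i,\; N\vert_{\rn{V_i}}\cong M_i}{N\vert_{\rn W}\cong M}$ should really be evaluated by first conditioning on the finer event $\rn W = W$ for a specific $W$ with $N\vert_W\cong M$, noting that the answer $p(M_1,\dots,M_t;N\vert_W)=p(M_1,\dots,M_t;M)$ is the same for all such $W$ by isomorphism-invariance of $p$, and then averaging; you gesture at this (``does not depend on the particular realization of $\rn W$'') but spelling it out makes the tower-property step airtight.
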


\begin{example}[graphs]
  In the theory of graphs $T_{\operatorname{Graph}}$, for every $\ell\geq 4$, we have
  \begin{align*}
    p(K_2,K_2;K_\ell) & = 1;\\
    p(K_2,K_2;P_\ell) & = \frac{4}{\ell(\ell-1)};\\
    p(K_2,\overline{K}_2;P_\ell) & = \frac{2(\ell-3)}{\ell(\ell-1)}.
  \end{align*}
\end{example}

\begin{example}[permutations]
  In the theory of permutations, we have
  \begin{align*}
    p(12,12;14235) & = \frac{18}{30};\\
    p(12,21;14235) & = \frac{6}{30};\\
    p(21,21;14235) & = \frac{0}{30} = 0.
  \end{align*}
\end{example}

Definition~\ref{def:mult} may seem not very natural at first, since we compute densities avoiding collisions. But this is precisely what turns out to be necessary (and sufficient) to formally capture the ``infiniteness'' of our object: collisions have zero probability of occurring. This leads to what is called the \emph{flag algebra} of the theory $T$.

\begin{lemma}
  The bilinear mapping $\mathbb{R}\mathcal{M}\times\mathbb{R}\mathcal{M}\to\mathcal{A}$ defined by
  \begin{align*}
    M_1\cdot M_2 & \df \sum_{N\in\mathcal{M}_n}p(M_1,M_2;N)N,
  \end{align*}
  for every $M_1,M_2\in\mathcal{M}$ and every $n\geq\lvert V(M_1)\rvert+\lvert V(M_2)\rvert$ does not depend on the choice
  of $n$ and induces a symmetric bilinear mapping $\mathcal{A}\times\mathcal{A}\to\mathcal{A}$.

  Furthermore, if $T$ is non-degenerate, then this induced mapping endows the vector space $\mathcal{A}$ with the structure
  of a commutative associative algebra whose identity element $1$ is the (equivalence class of the) unique model on $0$ vertices.
\end{lemma}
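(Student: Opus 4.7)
The plan is to verify the five assertions --- independence of $n$, well-definedness modulo $\mathcal{K}$, symmetry, associativity, and the identity element --- using the chain rules (Lemmas~\ref{lem:chain} and~\ref{lem:chain2}) together with elementary two-step sampling arguments. For independence of $n$, I would take $n\leq n'$ both at least $\lvert V(M_1)\rvert+\lvert V(M_2)\rvert$ and rewrite $N \equiv \sum_{N'\in\mathcal{M}_{n'}} p(N,N')\,N' \pmod{\mathcal{K}}$ for each $N\in\mathcal{M}_n$ via Lemma~\ref{lem:chain}. Interchanging the order of summation and recognizing the inner sum as $p(M_1,M_2;N')$ via Lemma~\ref{lem:chain2} then shows that the two formulas for $M_1\cdot M_2$ agree modulo $\mathcal{K}$.

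For well-definedness on $\mathcal{A}\times\mathcal{A}$, I need to check that right-multiplication by $M_2$ sends the generators of $\mathcal{K}$ into $\mathcal{K}$; concretely, for any $M$ and any $\ell\geq \lvert V(M)\rvert$, I need the mixed chain rule
\[
p(M,M_2;N) = \sum_{M'\in\mathcal{M}_\ell} p(M,M')\,p(M',M_2;N)
\]
whenever $\lvert V(N)\rvert = \ell + \lvert V(M_2)\rvert$. I would prove this by comparing two ways of sampling a pair $(\rn{V_1},\rn{V_2})$ of disjoint subsets of $V(N)$ of sizes $\lvert V(M)\rvert$ and $\lvert V(M_2)\rvert$: directly, as in Definition~\ref{def:mult}; or by first sampling disjoint $(\rn{V'},\rn{V_2})$ of sizes $\ell$ and $\lvert V(M_2)\rvert$ and then sampling $\rn{V_1}\subseteq\rn{V'}$ uniformly of size $\lvert V(M)\rvert$. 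A straightforward binomial identity shows the two marginal distributions coincide, and conditioning on $N\vert_{\rn{V'}}\cong M'$ yields the identity. Symmetry is then immediate from $p(M_1,M_2;N)=p(M_2,M_1;N)$.

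For associativity, I would extend both Definition~\ref{def:mult} and the sampling argument above to three models, obtaining
\[
(M_1\cdot M_2)\cdot M_3 \equiv \sum_{N\in\mathcal{M}_n} p(M_1,M_2,M_3;N)\,N \pmod{\mathcal{K}}
\]
for any $n\geq \lvert V(M_1)\rvert + \lvert V(M_2)\rvert + \lvert V(M_3)\rvert$, where $p(M_1,M_2,M_3;N)$ is the obvious three-argument generalization of Definition~\ref{def:mult}. The same formula holds for $M_1\cdot(M_2\cdot M_3)$, so associativity follows from the evident symmetry of this expression in $M_1,M_2,M_3$. For the identity, taking $n = \lvert V(M)\rvert$ in the definition of $M\cdot 1$ gives $M\cdot 1 = M$ at once, since $p(M,1;N)$ equals $1$ when $N\cong M$ and $0$ otherwise. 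Non-degeneracy of $T$ is needed only to show $1\neq 0$ in $\mathcal{A}$: an increasing convergent sequence $(N_k)_{k\in\mathbb{N}}$ produced by Proposition~\ref{prop:compact} induces a functional $\phi(M)\df\lim_k p(M,N_k)$ that vanishes on $\mathcal{K}$ by Lemma~\ref{lem:chain} and satisfies $\phi(1)=1$, so $1\notin\mathcal{K}$.

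The main obstacle is the mixed chain rule used in the well-definedness step and its three-argument analogue used for associativity: the underlying probabilistic statement is intuitive, but translating it into a clean combinatorial identity requires some care with binomial coefficients. Once these identities are established, the remaining claims follow essentially by bookkeeping.
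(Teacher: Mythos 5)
Your proposal is correct, and the sampling/mixed-chain-rule argument you outline is the standard one. Note that the paper does not actually supply a proof here: the surrounding Section~\ref{sec:flag} is explicitly described as a lightweight summary of~\cite{flag}, and the lemma is stated with a reference to that work for the details; your plan reconstructs essentially the same probabilistic argument that appears there.

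One small remark: the mixed chain rule $p(M,M_2;N)=\sum_{M'\in\mathcal{M}_\ell}p(M,M')\,p(M',M_2;N)$ in fact holds for every $\lvert V(N)\rvert\geq\ell+\lvert V(M_2)\rvert$, not only for $\lvert V(N)\rvert=\ell+\lvert V(M_2)\rvert$ (your two-step sampling $(\rn V',\rn V_2)\rightsquigarrow(\rn V_1,\rn V_2)$ reproduces the uniform distribution on disjoint pairs for any such $N$), so you do not even need to first invoke independence of $n$ to reduce to the boundary case. This is a simplification, not a correction.
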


The next lemma quantitatively refines the remark about collisions made above.

\begin{lemma}\label{lem:flaglowcollision}
  If $M_1\in\mathcal{M}_{m_1},M_2\in\mathcal{M}_{m_2},\ldots,M_t\in\mathcal{M}_{m_t}$ and $N\in\mathcal{M}_n$, then
  \begin{align*}
    \left\lvert p(M_1,M_2,\ldots,M_t;N) - \prod_{i=1}^tp(M_i,N)\right\rvert
    & \leq
    \frac{(m_1+m_2+\cdots+m_t)^{O(1)}}{n}.
  \end{align*}
\end{lemma}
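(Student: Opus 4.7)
The plan is a direct coupling argument comparing two sampling procedures, one for each side of the inequality. Let $m \df m_1+m_2+\cdots+m_t$. If $m > n$ then $m^2/n \geq 1$, so the inequality is trivial; henceforth assume $m \leq n$.

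First I would set up two random experiments on the same model $N$. In the \emph{independent} experiment, we pick uniformly at random mutually independent subsets $\rn{W_1},\ldots,\rn{W_t}$ of $V(N)$ with $|\rn{W_i}| = m_i$; by definition, $\prod_{i=1}^t p(M_i,N) = \prob{\forall i,\ N|_{\rn{W_i}}\cong M_i}$. In the \emph{disjoint} experiment, we pick $(\rn{V_1},\ldots,\rn{V_t})$ uniformly at random among $t$-tuples of pairwise disjoint subsets of $V(N)$ with $|\rn{V_i}|=m_i$; this gives $p(M_1,\ldots,M_t;N)$.

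The key observation is that, by symmetry, conditioned on the event $\mathcal{E}$ that $\rn{W_1},\ldots,\rn{W_t}$ are pairwise disjoint, the joint distribution of $(\rn{W_1},\ldots,\rn{W_t})$ coincides with that of $(\rn{V_1},\ldots,\rn{V_t})$. Writing $A_i$ for the event $N|_{\rn{W_i}}\cong M_i$ and conditioning on $\mathcal{E}$ gives
\begin{equation*}
  \prod_{i=1}^t p(M_i,N) \;=\; \prob{\mathcal{E}}\cdot p(M_1,\ldots,M_t;N) \;+\; \prob{\neg\mathcal{E}}\cdot \condprob{\bigcap_i A_i}{\neg\mathcal{E}},
\end{equation*}
and, since both $p(M_1,\ldots,M_t;N)$ and the conditional probability lie in $[0,1]$,
\begin{equation*}
  \left\lvert p(M_1,\ldots,M_t;N) - \prod_{i=1}^t p(M_i,N)\right\rvert \;\leq\; \prob{\neg\mathcal{E}}.
\end{equation*}

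Finally, I would estimate $\prob{\neg\mathcal{E}}$ by a union bound over the $\binom{t}{2}$ pairs and over vertices: for any $i\neq j$ and $v\in V(N)$, $\prob{v\in \rn{W_i}\cap \rn{W_j}} = \frac{m_i m_j}{n^2}$, so
\begin{equation*}
  \prob{\neg\mathcal{E}} \;\leq\; \sum_{i<j}\prob{\rn{W_i}\cap\rn{W_j}\neq\emptyset} \;\leq\; \sum_{i<j}\sum_{v\in V(N)} \frac{m_i m_j}{n^2} \;=\; \frac{1}{n}\sum_{i<j}m_i m_j \;\leq\; \frac{m^2}{n},
\end{equation*}
which yields the desired $(m_1+\cdots+m_t)^{O(1)}/n$ bound with explicit exponent $2$. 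There is essentially no hard step; the only point requiring mild care is the conditional-distribution identification in the coupling, which is immediate from symmetry.
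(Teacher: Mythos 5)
Your proof is correct. Note that the paper itself gives no proof of Lemma~\ref{lem:flaglowcollision} --- Section~\ref{sec:flag} states its results as lightweight versions of~\cite{flag} and defers proofs there --- so there is no in-paper argument to compare against; the coupling-plus-union-bound approach you use is the standard one. Every step checks out: the identification of the conditional law of $(\rn{W_1},\ldots,\rn{W_t})$ given $\mathcal{E}$ with the disjoint-sampling law of Definition~\ref{def:mult} is valid precisely because the unconditional law is uniform over all tuples of the prescribed sizes; the conditioning identity then gives $\lvert p(M_1,\ldots,M_t;N)-\prod_i p(M_i,N)\rvert\leq\prob{\neg\mathcal{E}}$ since both terms lie in $[0,1]$; and the double union bound yields $\prob{\neg\mathcal{E}}\leq m^2/n$, making the $O(1)$ exponent explicitly $2$. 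The dispatch of the edge case $m>n$ is also sound.
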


In particular, if $(N_n)_{n\in\mathbb{N}}$ is a sequence converging to $\phi\in [0,1]^{\mathcal{M}}$, then the
functionals $p(\place,N_n)$ look more and more like algebra homomorphisms from $\mathcal{A}$ to $\mathbb{R}$, hence in the limit $\phi$ must be an algebra homomorphism.

One more property that a limit $\phi$ must satisfy is that $\phi(M)\geq 0$ for every model $M\in\mathcal{M}$.

\begin{definition}
  In a non-degenerate theory $T$, the set of \emph{positive homomorphisms} $\HomT{T}$ is the set of all algebra homomorphisms
  $\phi\function{\mathcal{A}[T]}{\mathbb R}$ such that $\phi(M)\geq 0$ for every $M\in\mathcal{M}[T]$.
\end{definition}

Now, the next (relatively simple) result says that the set of constraints we have imposed on $\phi$ is both sound and complete.

\begin{theorem}[Lov\'{a}sz--Szegedy~\cite{LoSz}, Razborov~\cite{flag}]\label{thm:cryptomorphism}
  If $(N_n)_{n\in\mathbb{N}}$ is a convergent sequence of models, then $\lim_{n\to\infty} p^{N_n}\!\in\!\Hom$. Conversely, if $\phi\in\Hom$, then there exists a convergent sequence of models $(N_n)_{n\in\mathbb{N}}$ such that $\lim_{n\to\infty}p^{N_n} = \phi$.
\end{theorem}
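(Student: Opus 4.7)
(proposal) I will establish the two directions separately.

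\textbf{Forward direction.} Suppose $(N_n)_{n\in\mathbb{N}}$ is convergent, and set $\phi(M) \df \lim_{n\to\infty} p(M, N_n)$ for every $M \in \mathcal{M}$. The chain rule of Lemma~\ref{lem:chain} gives, for every fixed $M$ and every $\ell \geq \lvert V(M) \rvert$, the identity $p(M, N_n) = \sum_{M' \in \mathcal{M}_\ell} p(M, M') p(M', N_n)$. Passing to the limit, $\phi$ satisfies the same linear relation, hence vanishes on $\mathcal{K}$ and descends to a linear functional on $\mathcal{A}$. Positivity $\phi(M) \geq 0$ is immediate from $p(M, N_n) \geq 0$, and $\phi(1) = 1$. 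To verify multiplicativity, fix $M_1, M_2 \in \mathcal{M}$ and write $\phi(M_1 \cdot M_2) = \lim_n \sum_{N \in \mathcal{M}_n} p(M_1, M_2; N) p(N, N_n) = \lim_n p(M_1, M_2; N_n)$ using the chain rule of Lemma~\ref{lem:chain2}; by Lemma~\ref{lem:flaglowcollision}, this limit equals $\lim_n p(M_1, N_n) p(M_2, N_n) = \phi(M_1) \phi(M_2)$, since the gap tends to $0$ as $\lvert V(N_n) \rvert \to \infty$.

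\textbf{Backward direction, deterministic sequence from a probabilistic construction.} Given $\phi \in \Hom$, the idea is to use $\phi$ itself as a probability distribution on models of each size. For fixed $n$, applying the chain rule to $1 \in \mathcal{M}_0$ yields $1 = \phi(1) = \sum_{M \in \mathcal{M}_n} p(1, M) \phi(M) = \sum_{M \in \mathcal{M}_n} \phi(M)$, while positivity of $\phi$ gives $\phi(M) \geq 0$, so $\phi$ restricted to $\mathcal{M}_n$ is a probability distribution. Let $\rn{N}_n$ be a random model of $T$ sampled from this distribution. For any fixed $M$ with $\lvert V(M) \rvert \leq n$, the chain rule together with linearity gives the exact identity
\[
\expect{p(M, \rn{N}_n)} = \sum_{N \in \mathcal{M}_n} \phi(N) p(M, N) = \phi\left(\sum_{N \in \mathcal{M}_n} p(M, N) N\right) = \phi(M).
\]
For the variance, the same computation yields $\expect{p(M, \rn{N}_n)^2} = \sum_N \phi(N) p(M, N)^2$, while $\phi(M)^2 = \phi(M \cdot M) = \sum_N \phi(N) p(M, M; N)$. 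Lemma~\ref{lem:flaglowcollision} bounds the integrand-wise difference by $\lvert V(M) \rvert^{O(1)}/n$, so $\var{p(M, \rn{N}_n)} = O(\lvert V(M) \rvert^{O(1)}/n)$.

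\textbf{Passing from random to deterministic.} Enumerate $\mathcal{M} = \{M_1, M_2, \ldots\}$. By Chebyshev's inequality, for each fixed $k$, $\condprob{\lvert p(M_k, \rn{N}_n) - \phi(M_k) \rvert > 1/k}{} \to 0$ as $n \to \infty$. Choose an increasing function $n(k)$ fast enough so that for $n \geq n(k)$, all of the first $k$ models $M_1, \ldots, M_k$ satisfy this bound with probability at least $1 - 1/k$, by a union bound. Then along the (independent across $k$) sampled sequence $\rn{N}_{n(k)}$, the event ``$\lvert p(M_j, \rn{N}_{n(k)}) - \phi(M_j) \rvert \leq 1/k$ for all $j \leq k$'' occurs with probability at least $1 - 1/k$; in particular it occurs for some deterministic choice $N_{n(k)}$. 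Inserting arbitrary models of the required intermediate sizes and passing to this subsequence yields an increasing deterministic sequence $(N_n)_{n\in\mathbb{N}}$ with $p(M_j, N_n) \to \phi(M_j)$ for every $j$, as required. The main technical obstacle is the variance estimate, which rests squarely on the ``low collision'' bound of Lemma~\ref{lem:flaglowcollision}; the rest of the argument is bookkeeping via Chebyshev and a diagonal selection.
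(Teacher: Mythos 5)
The paper does not give its own proof of Theorem~\ref{thm:cryptomorphism}---it is stated as a cited result (Lov\'asz--Szegedy, Razborov), and the only in-text proof infrastructure that touches it is Lemma~\ref{lem:convexpect}, which reuses the same low-collision/method-of-moments idea in a more general (random) setting. Your proof is the standard one from those references and is essentially correct: forward direction by passing chain-rule identities and Lemma~\ref{lem:flaglowcollision} to the limit, backward direction by sampling $\rn{N_n}\in\mathcal{M}_n$ with $\prob{\rn{N_n}=N}=\phi(N)$ (the chain rule with $1\in\mathcal{M}_0$ makes $\phi|_{\mathcal{M}_n}$ a probability distribution), showing first moments match $\phi$ exactly and second moments match up to the $O(1/n)$ error of Lemma~\ref{lem:flaglowcollision}, then concentrating via Chebyshev and a union bound.

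One step as written does not work: ``Inserting arbitrary models of the required intermediate sizes'' would destroy convergence, since $p(M_j,N_n)$ at those intermediate indices could be anything. You do not need to fill in gaps at all---the definition of an increasing sequence in Section~\ref{sec:convergence} only requires $\lvert V(N_k)\rvert < \lvert V(N_{k+1})\rvert$, not that it ranges over all integers. Simply re-index the subsequence $(N_{n(k)})_{k\in\mathbb{N}}$ as your sequence and you are done. Two smaller cosmetic points: in the forward direction's multiplicativity argument the letter $n$ does double duty as both the size parameter in $\mathcal{M}_n$ and the sequence index; and the parenthetical ``independent across $k$'' is a non-sequitur, since for each $k$ you only need the existence of a single deterministic witness in $\mathcal{M}_{n(k)}$, not any probabilistic coupling between different $k$. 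With the subsequence fix and those clarifications, the argument is complete.
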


In other words, the above theorem says that convergent sequences of models are cryptomorphic to flag algebra positive homomorphisms.

\medskip

In Section~\ref{sec:interpretations} we saw that an open interpretation $I\interpret{T_1}{T_2}$
gives us a natural way of creating a model $I(M)\in\mathcal{M}[T_1]$ from a model $M\in\mathcal{M}[T_2]$.
Given the ``intended'' meaning (vaguely suggested by Theorem~\ref{thm:cryptomorphism}) of $\HomT T$ as the set of
``infinite'' models of the theory $T$, it is natural to expect that $I$ should also give rise to a mapping
$\HomT{T_2}\to\HomT{T_1}$, and that this latter mapping can be described by simple syntactical means. It indeed
turns out to be the case.

\begin{theorem}\label{thm:flagpi}
  Let $T_1$ and $T_2$ be non-degenerate theories and $I\interpret{T_1}{T_2}$ be an open interpretation. Then the linear mapping $\mathbb{R}\mathcal{M}[T_1]\to\mathcal{A}[T_2]$ defined by
  \begin{align*}
    \pi^I(M_1) & \df \sum\{M_2\in\mathcal{M}[T_2] \mid I(M_2)\cong M_1\},
  \end{align*}
  for every $M_1\in\mathcal{M}[T_1]$, satisfies $\pi^I(\mathcal{K}[T_1]) = 0$ and hence induces a mapping $\pi^I\function{\mathcal{A}[T_1]}{\mathcal{A}[T_2]}$. This mapping  is a positive algebra homomorphism, which in particular implies that if $\phi\in\HomT{T_2}$, then $\phi\comp\pi^I\in\HomT{T_1}$.
\end{theorem}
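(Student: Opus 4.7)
The plan is to verify in sequence that $\pi^I$ respects the chain-rule subspace $\mathcal{K}[T_1]$ (so that it descends to $\mathcal{A}[T_1]$), that it is multiplicative, that it sends $1 \in \mathcal{A}[T_1]$ to $1 \in \mathcal{A}[T_2]$, and that it is positive. The assertion $\phi \circ \pi^I \in \HomT{T_1}$ for $\phi \in \HomT{T_2}$ is then immediate. Note first that the defining sum is automatically finite, because $V(I(M_2)) = V(M_2)$, so every $M_2$ with $I(M_2) \cong M_1$ has exactly $\lvert V(M_1)\rvert$ vertices.

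The single structural fact driving every step is that \emph{open interpretations commute with taking induced submodels}: for any model $N$ of $T_2$ and any $V \subseteq V(N)$, one has $I(N)\vert_V = I(N\vert_V)$. This is immediate because the truth value of each atomic formula $I(P)(v_1,\ldots,v_k)$ on a tuple from $V$ is determined by the atomic formulas of $T_2$ on the same tuple, and the latter are unaffected by passing to $N\vert_V$.

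Step 1 (descent to $\mathcal{A}[T_1]$). Fix $M \in \mathcal{M}[T_1]$ and $\ell \geq \lvert V(M)\rvert$. I compare $\pi^I(M)$ and $\sum_{M' \in \mathcal{M}_\ell[T_1]} p(M,M') \pi^I(M')$ inside $\mathcal{A}[T_2]$ coefficient by coefficient on some $N \in \mathcal{M}_\ell[T_2]$. Applying Lemma~\ref{lem:chain} in $\mathcal{A}[T_2]$ to each summand of $\pi^I(M)$, the coefficient of $N$ becomes $\sum_{M_0} p(M_0, N)$, where $M_0$ ranges over models of $T_2$ on $\lvert V(M)\rvert$ vertices with $I(M_0) \cong M$; this equals the probability that $I(N\vert_{\rn V}) \cong M$ for a uniformly random $\lvert V(M)\rvert$-subset $\rn V \subseteq V(N)$. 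On the other side, exactly one $M' \in \mathcal{M}_\ell[T_1]$ satisfies $M' \cong I(N)$, so the coefficient of $N$ is $p(M, I(N))$, which is the probability that $I(N)\vert_{\rn V} \cong M$. The two match by the commutativity identity above.

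Step 2 (multiplicativity and unit). Fix $M_1, M_1' \in \mathcal{M}[T_1]$ and $\ell = \lvert V(M_1)\rvert + \lvert V(M_1')\rvert$. An identical coefficient comparison on $N \in \mathcal{M}_\ell[T_2]$, this time using Definition~\ref{def:mult} in place of $p(\place,\place)$, expresses the coefficient of $N$ in $\pi^I(M_1 \cdot M_1')$ as $p(M_1, M_1'; I(N))$ and the coefficient of $N$ in $\pi^I(M_1)\cdot\pi^I(M_1')$ as $\sum p(M_2, M_2'; N)$ summed over $M_2, M_2'$ with $I(M_2) \cong M_1$ and $I(M_2') \cong M_1'$. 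Both quantities evaluate the same joint probability, that two disjoint random subsets $\rn V_1, \rn V_2 \subseteq V(N)$ of sizes $\lvert V(M_1)\rvert, \lvert V(M_1')\rvert$ satisfy $I(N\vert_{\rn V_j}) \cong M_1^{(j)}$, once the commutativity with restriction is applied. For the unit, the only model on $0$ vertices in either theory is the empty one, and it is its own unique preimage under $I$, so $\pi^I(1) = 1$.

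Step 3 (positivity and conclusion). By construction $\pi^I(M)$ is a non-negative integer combination of elements of $\mathcal{M}[T_2]$, so $\phi(\pi^I(M)) \geq 0$ for every $\phi \in \HomT{T_2}$ and every $M \in \mathcal{M}[T_1]$. Combined with the previous steps this gives $\phi \circ \pi^I \in \HomT{T_1}$. I foresee no genuine obstacle in this proof; once the elementary commutativity $I(N)\vert_V = I(N\vert_V)$ is recorded, everything reduces to disciplined coefficient bookkeeping against the chain rule and the definition of the flag-algebra product.
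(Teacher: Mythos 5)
Your proof is correct and complete; the paper states this theorem without supplying a proof (it cites~\cite{flag} for the original treatment), so there is no in-text argument to compare against, but yours is exactly the natural one. The structural fact you isolate at the outset — that open interpretations commute with induced submodels, $I(N)\vert_V = I(N\vert_V)$, which holds because $I(P)$ is an open formula in the variables of $V$ only — is indeed the crux, and the level-$\ell$ coefficient comparison is the standard and efficient route to verifying both the descent through $\mathcal{K}[T_1]$ and multiplicativity.
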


Before concluding with a few examples, let us interpret the theorem above in categorical terms.

Let \textsc{POAlg} be the category of {\em partially ordered associative commutative $\mathbb R$-algebras}. Its objects are pairs $(A,\leq)$, where $A$ is an
associative commutative algebra,
and $\leq$ is a partial order on $A$ compatible with algebra operations. By this we mean that $x\leq y\Longrightarrow x+z\leq y+z$, $(x\geq 0 \land y\geq 0)\Longrightarrow xy\geq 0$, and the restriction of $\leq$ onto $\mathbb R$ is the standard linear order. Morphisms $f\function{(A_1,\leq_1)}{(A_2,\leq_2)}$ of \textsc{POAlg} are algebra homomorphisms $f\function{A_1}{A_2}$ such that $x\leq y\Longrightarrow f(x)\leq f(y)$.

Let now\footnote{We use the symbol $\ll$ since $\leq$ was already reserved in~\cite{flag} for a much stronger semantic version.} $\ll_T$ be the partial order on $\mathcal A[T]$ defined as follows: we have $f\ll_T g$ if and only
if $(g-f)$ can be expressed (in $\mathcal A[T]$) as $\sum_i c_iM_i$ with $M_i\in\mathcal M[T]$ and $c_i\geq 0$. It is straightforward to check that this order is compatible with the algebra operation, that is, the pair $(\mathcal A[T],\ll_T)$ is an object of \textsc{POAlg}. Then Theorem~\ref{thm:flagpi} provides a functor $\pi$ from \textsc{Int} to \textsc{POAlg} given by
$$
\pi(T_1\xrightarrow{I}T_2) \df (\mathcal{A}[T_1],\ll_{T_1}) \xrightarrow{\pi^I} (\mathcal{A}[T_2],\ll_{T_2}).
$$
Composing it with the contravariant functor
$\operatorname{Hom}(\place,(\mathbb{R},\leq))$ from \textsc{POAlg} to
\textsc{Set}, we get the contravariant functor $\pi^\ast$ from \textsc{Int}
to \textsc{Set} given by $\pi^\ast(T)\df \HomT T$ and
$\pi^\ast(T_1\xrightarrow{I}T_2)(\phi)\df \phi\comp\pi^I$ for every $\phi\in\HomT{T_2}$. It is compatible with the action of $I$ on convergent sequences
(Section~\ref{sec:convergence}), etc.

\begin{example}[extra axioms]
  If a theory $T'$ is obtained from a theory $T$ in the same language by adding extra axioms and $I\interpret{T}{T'}$ is the identity translation, then for every model $M\in\mathcal{M}[T]$ of $T$, we have
  \begin{align*}
    \pi^I(M)
    & =
    \begin{dcases*}
      M, & if $M$ is a model of $T'$;\\
      0, & otherwise.
    \end{dcases*}
  \end{align*}
  Furthermore $\mathcal A[T']$ is a factor-algebra of $\mathcal A[T]$, and hence $\pi^\ast(I)$ is
  injective.
\end{example}

\begin{example}[color-erasing and orientation-erasing] \label{ex:erasing}
  If $T$ is a non-degenerate theory and $I\interpret{T}{T^c}$ is the color-erasing interpretation, then
  \begin{align*}
    \pi^I(M)
    & =
    \sum\{M'\in\mathcal{M}[T^c] \mid M'\text{ is a coloring of } M \text{ with }c \text{ colors}\},
  \end{align*}
  for every model $M\in\mathcal{M}[T]$ of $T$. Note that this sum is {\em unweighted}, that is
  it ignores the number of ways in which $M'$ can be obtained from $M$ even if $M$ possesses
  non-trivial automorphisms.

  Similarly, if $I\interpret{T_{\operatorname{Graph}}}{T_{\operatorname{Orgraph}}}$ is the orientation-erasing interpretation, then
  \begin{align*}
    \pi^I(G)
    & =
    \sum\{G'\in\mathcal{M}[T_{\operatorname{Orgraph}}] \mid G'\text{ is an orientation of } G\},
  \end{align*}
  for every graph $G$.

  In both cases $\pi^I$ is an injective algebra homomorphism, but
  $\pi^\ast(I)$ is very far from being injective: if, for example, we apply
  the orientation-erasing interpretation to an arbitrary ``tournamon'' (i.e.,
  an element of $\HomT{T_{\operatorname{Tournament}}}$), we
   get the same complete graphon. It is not hard to see, though, that
  in both cases $\pi^\ast(I)$ is surjective: it basically says that every graph can be colored or oriented in at least one way.
\end{example}

\begin{example}
Let us now review under this angle the ``triangular'' interpretation
$I\interpret{T_{\operatorname{3-Hypergraph}}}{T_{\operatorname{Graph}}}$ given by $I(E)(x,y,z)\equiv
(E(x,y)\land E(y,z)\land E(x,z))$ from Example~\ref{ex:triangleinterpret}.
First, the algebra homomorphism $\pi^I$ is not injective since $\pi^I(K_4^-)=0$ (cf.~Example~\ref{ex:RandomHypergraphs}). By the same token, the induced map $\pi^\ast(I)$ is not
surjective: any $\phi\in\HomT{T_{\operatorname{3-Hypergraph}}}$ in its image must
necessarily satisfy $\phi(K_4^-)=0$. The map $\pi^\ast(I)$ is also not injective since all $\phi\in\HomT{T_{\operatorname{Graph}}}$ with $\phi(K_3)=0$ lead to the same (empty) 3-graph. As an immediate consequence, the algebra homomorphism $\pi^I$ cannot be surjective:
say, $K_2$ is not in its range.
\end{example}

We finish this section with an example of an application of open interpretations. To the best of our knowledge, no statement precisely in the form~\eqref{eq:exI} below is present in the literature.

\begin{example}[Erd\H{o}s--Stone--Simonovits theorem, cntd.]
  Further generalizing Example~\ref{ex:ESS}, suppose we are given an interpretation $I\interpret{T_{\operatorname{Graph}}}{T}$ of the theory of graphs in a non-degenerate theory $T$.
  Let\footnote{Unfortunately, there is an unavoidable collision of notation here: both the extremal value $\pi_I$ and the flag algebra homomorphism
  $\pi^I\function{\mathcal{A}[T_{\operatorname{Graph}}]}{\mathcal{A}[T]}$ use the letter $\pi$.}
  \begin{align*}
    \pi_I
    & \df
    \lim_{n\to\infty} \max\{p(K_2,I(N)) \mid N\in\mathcal{M}_n[T]\}
    \\
    & =
    \max\{\lim_{n\to\infty} p(K_2,I(N_n)) \mid (N_n)_{n\in\mathbb{N}} \text{ is a convergent sequence in }
    \mathcal{M}[T]\}
    \\
    & =
    \max\{\phi(\pi^I(K_2)) \mid \phi\in\HomT{T}\}
  \end{align*}
  (the maximum in the third line exists since $\HomT{T}$ is compact, and the second line is equal to the
  third due to Theorem~\ref{thm:cryptomorphism}).
  Recall that $T_{n,\ell}$ denotes the Tur\'{a}n graph (see Example~\ref{ex:TuranGraphs}) and let
  \begin{align} \label{eq:chi_def}
    \chi(I)
    & \df
    \sup\{\ell  \mid
    \forall n\in\mathbb{N}, \exists N\in\mathcal{M}_n[T], I(N)\supseteq T_{n,\ell}\}+1.
  \end{align}
  Note that since $T$ is non-degenerate, it follows that $\chi(I)\geq 2$.

  Let us offer a simple proof that
  \begin{align} \label{eq:ESS_general}
    \pi_I & = 1 - \frac{1}{\chi(I) - 1}.
  \end{align}

  If $\chi(I)=\infty$, then for every $n\in\mathbb{N}$, there exists $N\in\mathcal{M}_n[T]$ such that $I(N)\supseteq T_{n,n}\cong K_n$, so~\eqref{eq:ESS_general} follows trivially (with the right-hand side evaluating to $1$).

  Suppose then that $\chi(I)<\infty$. Then the definition of $\chi(I)$ implies that there exists an increasing sequence $(N_n)_{n\in\mathbb{N}}$ in $\mathcal{M}_n[T]$ satisfying $I(N_n)\supseteq T_{n,\chi(I)-1}$, so the right-hand side of~\eqref{eq:ESS_general} is a lower bound for $\pi_I$.

  For the other direction, suppose for a contradiction that there exists $\phi\in\HomT{T}$ such that $\phi(\pi^I(K_2)) > 1 - 1/(\chi(I)-1)$. Then for every $n\in\mathbb N$ we apply the ordinary Erd\H{o}s--Stone--Simonovits theorem,
  in the form given by Example~\ref{ex:ESS}, to $\mathcal F\df \{T_{n,\chi(I)}\}$ and any sequence of graphs converging to $\phi\comp \pi^I$. Taking into account the conversion formulas~\eqref{eq:ind-p}, \eqref{eq:inj-ind}, we conclude
  that there exists some $G_n\supseteq T_{n,\chi(I)}$ such that $\phi(\pi^I(G_n))>0$. This in particular implies that there exists $N_n\in\mathcal{M}[T]$ such that $\phi(N_n) > 0$ and $I(N_n) = G_n\supseteq T_{n,\chi(I)}$,
  contradicting the definition of $\chi(I)$. Therefore, the right-hand side of~\eqref{eq:ESS_general} is also an upper bound for $\pi_I$.

  \medskip

Let us note a few interesting special cases. First,
  let $\mathcal{F}$ be a family of graphs such that $T\df \Forb{T_{\operatorname{Graph}}}{\mathcal{F}}$ is non-degenerate, i.e., there are arbitrarily large graphs missing all $F\in\mathcal F$ as {\em induced} subgraphs. Let $I$ be the axiom-adding interpretation (i.e., $I$ acts identically on the predicate symbol $E$). Then~\eqref{eq:ESS_general} becomes the {\em induced} version of Erd\H{o}s--Stone--Simonovits:
\begin{equation} \label{eq:exI}
\pi_{\text{ind}}(\mathcal F) = 1 - \frac{1}{\chi_{\text{ind}}(\mathcal F) - 1}.
\end{equation}
Here $\pi_{\text{ind}}(\mathcal F)$ is the maximal possible (asymptotically) density of a graph that does not contain {\em induced} copies of graphs in
$\mathcal F$, and $\chi_{\text{ind}}(\mathcal F)$ is given by~\eqref{eq:chi_def}, where $N$ runs over all $\mathcal F$-free graphs. As we mentioned before, we have not seen this statement in the literature in this generality.

But we should also remark that the quantity $\chi_{\text{ind}}(\mathcal F)$ is not as well-behaving as the ordinary chromatic number; in fact, a priori it is not even clear that it is computable. As yet another indication
let us note that principality does not hold in the induced setting. For example, $\chi_{\text{ind}}(\{P_3\})=\infty$ (as $K_n$ does not contain induced copies of $P_3$) and $\chi_{\text{ind}}(\{K_3\})= \chi(K_3)=3$ but $\chi_{\text{ind}}(\{P_3,K_3\})=\chi(P_3)=2$.

Next, let $\mathcal F$ be a family of non-empty graphs [ordered graphs, cyclically ordered graphs] and let $I_{\mathcal{F}}\interpret{T_{\operatorname{Graph}}}{\Forbp{T_{\operatorname{Graph}}}{\mathcal{F}}}$ [$I^<_{\mathcal{F}}\interpret{T_{\operatorname{Graph}}}{\Forbp{T_{\operatorname{Graph}}^<}{\mathcal{F}}}$, $I^{\operatorname{Cyc}}_{\mathcal{F}}\interpret{T_{\operatorname{Graph}}}{\Forbp{T_{\operatorname{Graph}}^{\operatorname{Cyc}}}{\mathcal{F}}}$, respectively] again act identically on the predicate symbol $E$. Then the extremal values of Example~\ref{ex:ESS} can be obtained as
  \begin{align*}
    \pi(\mathcal{F}) & = \pi_{I_{\mathcal{F}}}; &
    \pi_<(\mathcal{F}) & = \pi_{I^<_{\mathcal{F}}}; &
    \pi_{\operatorname{Cyc}}(\mathcal{F}) & = \pi_{I^{\operatorname{Cyc}}_{\mathcal{F}}}.
  \end{align*}
For these particular cases we also have principality (see e.g.~\cite[Theorem~1]{PaT} and~\cite[Theorem~1]{BKV}):
  \begin{align*}
    \chi(I_{\mathcal{F}}) & = \inf_{F\in\mathcal{F}} \chi(F); &
    \chi(I^<_{\mathcal{F}}) & = \inf_{F\in\mathcal{F}} \chi_<(F); &
    \chi(I^{\operatorname{Cyc}}_{\mathcal{F}}) & = \inf_{F\in\mathcal{F}} \chi_{\operatorname{Cyc}}(F).
  \end{align*}

Let us remark, however, that this is not true in general even in the
non-induced setting. For example, consider the theory
$T_{\operatorname{Graph}}^2$ of graphs with vertex coloring into two colors
(the coloring need not be proper) and for a family $\mathcal{F}$ of
non-empty colored graphs, let
$I^2_{\mathcal{F}}\interpret{T_{\operatorname{Graph}}}{\Forbp{T_{\operatorname{Graph}}^2}{\mathcal{F}}}$
act as identity on $E$. If
$F_1,F_2,F_3\in\mathcal{M}_2[T_{\operatorname{Graph}}^2]$ are the three
models such that $I_2(F_i)\cong K_2$, then we have $\chi(I^2_{\{F_i\}}) =
\infty$ (as we can color all vertices of $T_{n,\ell}$ with the same color so
as to avoid $F_i$) but $\chi(I^2_{\{F_1,F_2,F_3\}}) = 2$ (as $\pi^I(K_2) =
F_1 + F_2 + F_3$).

  Just as in the case of the classic Erd\H{o}s--Stone--Simonovits Theorem, when $\chi(I)=2$, \eqref{eq:ESS_general} does not say anything useful about the asymptotic behavior of the maximum number of edges of models of $T$ as the graphs in the image of the interpretation are necessarily sparse. We refer the interested reader to~\cite{BKV,PaT,Tar} for results in this sparse setting for $T_{\operatorname{Graph}}^<$ and $T_{\operatorname{Graph}}^{\operatorname{Cyc}}$.
\end{example}

\subsection{Graphons} \label{sec:graphons}

In this section we present the most successful case of the semantical approach
so far: the limit objects of $T_{\operatorname{Graph}}$. Again, we give only a
few most basic facts about graphons; for more details we
refer the interested reader to~\cite{Lov4}.

\begin{definition}[Graphons]
  A \emph{graphon} is a symmetric (Lebesgue) measurable function $W\function{[0,1]^2}{[0,1]}$, that is, a measurable function
  such that $W(x,y)=W(y,x)$ for every $x,y\in[0,1]$.

  If $W$ is a graphon and $H$ is a graph, then we define
  \begin{equation}\label{eq:graphondensities}
    \begin{aligned}
      \tinj(H,W)
      & \df
      \int_{[0,1]^{V(H)}}\prod_{\{v,w\}\in E(H)}W(x_v,x_w)dx;
      \\
      \tind(H,W)
      & \df
      \int_{[0,1]^{V(H)}}
      \;\quad\prod_{\mathclap{\{v,w\}\in E(H)}}\;\;
      W(x_v,x_w)
      \;\;\prod_{\mathclap{\{v,w\}\in E(\overline{H})}}\;\;
      (1-W(x_v,x_w))dx;
      \\
      p(H,W)
      & \df
      \frac{\lvert V(H)\rvert!}{\lvert\Aut(H)\rvert}\tind(H,W),
    \end{aligned}
  \end{equation}
  where $E(H) = \{\{v,w\} \mid (v,w)\in R_{E,H}\}$ denotes the set of edges of $H$ and $\overline{H}$ denotes the complement of $H$.
\end{definition}

The intuition behind the notion of a graphon is that it is a graph whose vertices are points of
$[0,1]$ and $(v,w)\in[0,1]^2$ is a weighted edge of weight $W(v,w)$.
Respectively, the formulas~\eqref{eq:graphondensities} are identical to those
introduced in Section~\ref{sec:densities}, except that we replace averaging
over a finite domain by integration (that can be also viewed as averaging over $[0,1]$).

The next theorem says that graphons capture the limits of
convergent sequences of graphs.
\begin{theorem}[Lov\'{a}sz--Szegedy~\cite{LoSz}]\label{thm:graphoncryptomorphism}
  If $(G_n)_{n\in\mathbb{N}}$ is a convergent sequence of graphs, then there exists a graphon $W$ such that
  \begin{equation}\label{eq:graphoncryptomorphism}
    \begin{aligned}
      \lim_{n\to\infty}\tinj(H,G_n) & = \tinj(H,W);\\
      \lim_{n\to\infty}\tind(H,G_n) & = \tind(H,W);\\
      \lim_{n\to\infty}p(H,G_n) & = p(H,W);
    \end{aligned}
  \end{equation}
  for every fixed graph $H$. Conversely, if $W$ is a graphon, then there exists a convergent sequence of graphs $(G_n)_{n\in\mathbb{N}}$ such that~\eqref{eq:graphoncryptomorphism} holds for every fixed graph
  $H$.
\end{theorem}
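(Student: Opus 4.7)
The plan is to establish the two directions of the cryptomorphism separately, with the harder direction going through an infinite exchangeable random array and the Aldous--Hoover representation theorem.

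For the easier direction (graphon yields convergent sequence), I would construct the \emph{$W$-random graph} $\rn{G}(n,W)$ as follows: sample $\rn{x}_1,\ldots,\rn{x}_n$ i.i.d.\ uniformly from $[0,1]$, and then, independently for each pair $\{i,j\}$, include the edge with probability $W(\rn{x}_i,\rn{x}_j)$. A direct calculation unwinding~\eqref{eq:graphondensities} yields $\expect{\tind(H,\rn{G}(n,W))}=\tind(H,W)$ for every fixed graph $H$ with at most $n$ vertices. A variance or Azuma--Hoeffding argument, using that $\rn{G}(n,W)$ is a function of the $n$ independent vertex samples $\rn{x}_i$ together with $\binom{n}{2}$ independent edge coins, each of which perturbs $\tind(H,\rn{G}(n,W))$ only by $O(1/n)$ and $O(1/n^2)$ respectively, shows concentration at rate $O(n^{-1/2})$. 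A Borel--Cantelli union bound over the countable set $\mathcal{M}[T_{\operatorname{Graph}}]$ then extracts a deterministic sequence $(G_n)_{n\in\mathbb{N}}$ for which~\eqref{eq:graphoncryptomorphism} holds simultaneously for every fixed $H$; the corresponding statements for $\tinj$ and $p$ follow from~\eqref{eq:ind-p} and~\eqref{eq:inj-ind}.

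For the harder direction, let $(G_n)_{n\in\mathbb{N}}$ be convergent and set $t(H)\df\lim_{n\to\infty}\tind(H,G_n)$. I would first lift this convergence to the level of an infinite random graph: for each $n$ and each $k\leq\lvert V(G_n)\rvert$, pick a uniformly random injection $\rn{\phi}_n\injection{[k]}{V(G_n)}$ and define $\rn{X}^{(n,k)}_{ij}\df\mathbf{1}\of{\{\rn{\phi}_n(i),\rn{\phi}_n(j)\}\in E(G_n)}$. By the definition of $\tind$, the law of $\rn{X}^{(n,k)}$ on $\{0,1\}^{\binom{[k]}{2}}$ converges as $n\to\infty$ to an exchangeable probability measure whose labeled induced graph densities are prescribed by $t$; consistency across $k$ is automatic, and Kolmogorov extension yields an exchangeable random symmetric $\{0,1\}$-array $\rn{X}^{(\infty)}$ on $\mathbb{N}$ realizing these finite-dimensional distributions.

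Next I would invoke the Aldous--Hoover representation theorem: every exchangeable symmetric $\{0,1\}$-valued array admits a representation $\rn{X}^{(\infty)}_{ij}=f(\rn{\xi}_\emptyset,\rn{\xi}_i,\rn{\xi}_j,\rn{\xi}_{\{i,j\}})$ with i.i.d.\ uniform $\rn{\xi}$'s and a measurable $f$ symmetric in its two middle arguments. The key observation is that the array is \emph{dissociated}: for disjoint finite $A,B\subset\mathbb{N}$, the restrictions $\rn{X}^{(\infty)}|_A$ and $\rn{X}^{(\infty)}|_B$ are independent. This follows from Lemma~\ref{lem:flaglowcollision}, according to which $p(H_1,H_2;G_n)$ converges to the product $t(H_1)\cdot t(H_2)$ as $n\to\infty$, so disjoint marginals factorize in the limit. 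Dissociation is equivalent to ergodicity of $\rn{X}^{(\infty)}$ under the $S_\infty$-action, which forces $\rn{\xi}_\emptyset$ to be almost surely constant; hence one obtains a representation $\rn{X}^{(\infty)}_{ij}=g(\rn{\xi}_i,\rn{\xi}_j,\rn{\xi}_{\{i,j\}})$ with $g$ symmetric in its first two arguments. Setting $W(x,y)\df\expect{g(x,y,\rn{\xi}_{\{1,2\}})}$ yields the desired symmetric measurable $W\function{[0,1]^2}{[0,1]}$, and comparing $\tind(H,W)$ computed via~\eqref{eq:graphondensities} against the finite-dimensional law of $\rn{X}^{(\infty)}$ recovers $\tind(H,W)=t(H)$.

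The main obstacle is the absorption step that eliminates the global coordinate $\rn{\xi}_\emptyset$, i.e.\ the passage from a general Aldous--Hoover representation to a single graphon. Without dissociation one only gets a \emph{mixture} of graphons, which a priori need not match the density vector of any single graphon. The deterministic character of the limits $t(H)$ via Lemma~\ref{lem:flaglowcollision} is what makes this step go through, and it is the principal point to verify in detail. In the general theon setting of Section~\ref{sec:existence-uniqueness}, this same obstacle reappears in substantially more intricate form: the richer symmetry groups acting on $k$-ary predicates force an iterative extraction of the Aldous--Hoover--Kallenberg functional representation, and the combinatorial bookkeeping of which coordinates can be absorbed becomes the heart of the argument.
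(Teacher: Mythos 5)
Your proposal is correct and follows essentially the same path as the paper takes for the general existence theorem (Theorem~\ref{thm:theoncryptomorphism}), of which the graphon case is the specialization to $T = T_{\operatorname{Graph}}$: lift the convergent sequence to an exchangeable infinite array, derive dissociation from the multiplicativity of the limiting densities (Lemma~\ref{lem:flaglowcollision}, Proposition~\ref{prop:extremelocal}), invoke the Aldous--Hoover--Kallenberg representation with no global randomness source (Theorem~\ref{thm:localexchrepr}), and read off the limit object, your final integration over $\rn{\xi}_{\{1,2\}}$ being exactly the $2$-hypergraphon-to-graphon conversion of Section~\ref{sec:hypergraphons}. The paper itself cites this graphon statement from~\cite{LoSz} as background rather than reproving it directly.
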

Combining this theorem with Theorem~\ref{thm:cryptomorphism}, we get
\begin{corollary} \label{cor:graph_cryptomorphisms}
If $\phi\in\HomT{T_{\operatorname{\rm Graph}}}$, then there exists a graphon $W$ such
that $\phi(H)=p(H,W)$ for every fixed graph $H$. Conversely, for every graphon $W$,
the functional $p(\place,W)\in[0,1]^{\mathcal{M}[T_{\operatorname{\rm Graph}}]}$ defines an
element of $\HomT{T_{\operatorname{\rm Graph}}}$.
\end{corollary}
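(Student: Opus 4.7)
The plan is to deduce this corollary by using convergent sequences of finite graphs as an intermediate bridge between flag-algebraic homomorphisms and graphons, exploiting the two cryptomorphism theorems already established. Both directions reduce to chaining these theorems together, and there should be no genuine difficulty; the only care needed is to check that the isomorphism between the density functionals $p(\place,W)$ and $\lim_n p(\place,G_n)$ is fully compatible with the passage to elements of $\HomT{T_{\operatorname{Graph}}}$.

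For the forward direction, starting from $\phi\in\HomT{T_{\operatorname{Graph}}}$, I would apply Theorem~\ref{thm:cryptomorphism} to obtain a convergent sequence of graphs $(G_n)_{n\in\mathbb{N}}$ with $\lim_{n\to\infty} p(H,G_n) = \phi(H)$ for every fixed $H\in\mathcal{M}[T_{\operatorname{Graph}}]$. Then Theorem~\ref{thm:graphoncryptomorphism} supplies a graphon $W$ satisfying $\lim_{n\to\infty} p(H,G_n) = p(H,W)$ for every $H$, and combining the two equalities yields $\phi(H) = p(H,W)$ as desired.

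For the converse direction, starting from a graphon $W$, the second half of Theorem~\ref{thm:graphoncryptomorphism} produces a convergent sequence of graphs $(G_n)_{n\in\mathbb{N}}$ with $\lim_{n\to\infty} p(H,G_n) = p(H,W)$ for every $H$. In particular the pointwise limit $\lim_{n\to\infty} p(\place,G_n)\in[0,1]^{\mathcal{M}[T_{\operatorname{Graph}}]}$ exists, so the first half of Theorem~\ref{thm:cryptomorphism} applies and tells us this limit lies in $\HomT{T_{\operatorname{Graph}}}$. Since that limit is exactly $p(\place,W)$, the functional $p(\place,W)$ is itself a positive algebra homomorphism.

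The main thing to verify in both steps is that the identifications are genuinely pointwise over all $H\in\mathcal{M}[T_{\operatorname{Graph}}]$, i.e.\ that we are using convergence in the product topology on $[0,1]^{\mathcal{M}}$ in the same sense in both Theorem~\ref{thm:cryptomorphism} and Theorem~\ref{thm:graphoncryptomorphism}; but this is clear from the discussion in Section~\ref{sec:convergence}. There is no real obstacle here, and the corollary should be stated and proved in roughly the compact form above, since all the actual content resides in the two preceding theorems.
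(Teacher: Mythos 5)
Your proposal is correct and matches the paper's approach exactly: the paper derives the corollary by "combining" Theorem~\ref{thm:graphoncryptomorphism} with Theorem~\ref{thm:cryptomorphism}, using convergent graph sequences as the intermediary, which is precisely the chaining you spell out.
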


The examples below illustrate that when a sequence of graphs has a good
structure, it is fairly easy to ``guess'' the limit graphon.
\begin{example}[$\ell$-disjoint cliques and Tur\'{a}n graphons]
  Let $\ell\geq 1$ be fixed and let $G_n$ be the graph on $\ell n$ vertices consisting of $\ell$ disjoint cliques of $n$ vertices each. Then $(G_n)_{n\in\mathbb{N}}$ is convergent and the natural limit graphon of this sequence is the step-function $W_\ell\function{[0,1]^2}{[0,1]}$ given by
  \begin{align*}
    W_\ell(x,y) & =
    \begin{dcases}
      1, & \text{ if there exists } i\in[\ell]
      \text{ such that } x,y\in\left[\frac{i-1}{\ell},\frac{i}{\ell}\right);\\
      0, & \text{ otherwise.}
    \end{dcases}
  \end{align*}

  Following up on Example~\ref{ex:TuranGraphs}, for every $\ell\in\mathbb{N}_+$ the sequence $(T_{n,\ell})_{n\in\mathbb{N}}$ of Tur\'{a}n graphs converges to $1 - W_\ell$.
\end{example}

Note that in the example above, we have $\overline{G_n} = T_{\ell n,\ell}$. This is a special case of a more general fact: if $(G_n)_{n\in\mathbb{N}}$ is a sequence of graphs converging to some graphon $W$, then $(\overline{G_n})_{n\in\mathbb{N}}$ is also convergent and converges to the graphon $1-W$.

\begin{example}[Erd\H{o}s--R\'{e}nyi random model]
  For every $p\in[0,1]$, let $W_p\equiv p$ be the constant graphon with value $p$. Then $(\rn{G_{n,p}})_{n\in\mathbb{N}}$ converges to $W_p$ with probability $1$.
\end{example}

The next natural question to ask is when two graphons are equivalent in the sense that they represent the limit of the same convergent sequences of graphs. It is expected that if we permute the elements of $[0,1]$, then the graphon should still represent the same limit. However, since we must preserve measurability of the graphon, the correct way of ``permuting'' the elements of a graphon is to use measure preserving functions. The next theorem characterizes this notion of graphon equivalence.

\begin{theorem}[{{\cite[Corollary~10.35a]{Lov4}}}]
\label{thm:graphon_uniqueness}
  Let $W,W'$ be two graphons. The following are equivalent.
  \begin{itemize}
  \item For every graph $H$, we have $\tinj(H,W) = \tinj(H,W')$, that is,
      $W$ and $W'$ correspond to the same element of
      $\HomT{T_{\operatorname{Graph}}}$ in the sense of Corollary~\ref{cor:graph_cryptomorphisms}.
  \item There exist measure preserving functions $f,g\function{[0,1]}{[0,1]}$ such that $W(f(x),f(y)) = W'(g(x),g(y))$ for almost every $(x,y)\in[0,1]^2$.
  \end{itemize}
\end{theorem}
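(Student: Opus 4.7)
First, $(2) \Rightarrow (1)$ is a direct change-of-variables argument. For any measure-preserving $f\function{[0,1]}{[0,1]}$ and any nonnegative measurable $\Psi$ on $[0,1]^n$,
\begin{equation*}
\int_{[0,1]^n} \Psi(f(x_1), \ldots, f(x_n))\, dx = \int_{[0,1]^n} \Psi(x_1, \ldots, x_n)\, dx.
\end{equation*}
Applied with $\Psi(x_1,\ldots,x_n) = \prod_{\{v,w\} \in E(H)} W(x_v, x_w)$, this yields $\tinj(H, W) = \tinj(H, W \circ (f,f))$, and analogously $\tinj(H, W') = \tinj(H, W' \circ (g,g))$. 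Combined with $W(f(x),f(y)) = W'(g(x),g(y))$ almost everywhere, this forces $\tinj(H, W) = \tinj(H, W')$ for every graph $H$.

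For the nontrivial direction $(1) \Rightarrow (2)$, the plan is to encode both graphons as infinite exchangeable random graphs and invoke the uniqueness clause of the Aldous--Hoover--Kallenberg representation theorem. Given a graphon $W$, I define $\rn{G}(W)$ to be the random graph on $\mathbb{N}$ obtained by sampling $\rn{x}_1, \rn{x}_2, \ldots$ i.i.d.\ uniformly in $[0,1]$ and, conditionally on the $\rn{x}_i$, placing each edge $\{i,j\}$ independently with probability $W(\rn{x}_i, \rn{x}_j)$. The resulting $\{0,1\}$-array is exchangeable, and the probability that its restriction to $[n]$ equals a prescribed graph $F$ is an explicit finite linear combination of the densities $\tinj(H, W)$ with $V(H) = [n]$, obtained via M\"obius inversion as in~\eqref{eq:inj-ind}. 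Hence hypothesis $(1)$ forces $\rn{G}(W)$ and $\rn{G}(W')$ to be identically distributed.

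The Aldous--Hoover--Kallenberg theorem (cf.~\cite{Ald,Kall,Aus}) represents any exchangeable $\{0,1\}$-array on pairs as $\rn{A}_{ij} = h(\rn{U}_i, \rn{U}_j, \rn{U}_{\{i,j\}})$ for a symmetric measurable kernel $h$ and independent uniform $\rn{U}_\cdot$'s. Both $W$ and $W'$ provide such kernels in a canonical way, treating the edge coordinate as a threshold variable against $W(u,v)$. The uniqueness clause of the theorem asserts that two kernels producing the same exchangeable array are related by measure-preserving reparametrizations of their arguments; marginalizing out the edge coordinate then extracts measure-preserving $f, g\function{[0,1]}{[0,1]}$ with $W(f(x), f(y)) = W'(g(x), g(y))$ a.e., delivering $(2)$.

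The main obstacle is the uniqueness clause of Aldous--Hoover--Kallenberg itself, which is considerably harder than the existence clause. One standard route first ``purifies'' each graphon by identifying points $x, y$ with $W(x,\cdot) = W(y,\cdot)$ almost everywhere, after which the isomorphism theorem for standard Borel probability spaces produces the reparametrizations. A self-contained alternative, in the spirit of Lov\'asz--Szegedy and avoiding exchangeable arrays, is to apply a weak regularity lemma to approximate both graphons by common step-function refinements, align the step values using the density equalities on finitely many graphs per partition, and pass to the limit via a martingale convergence argument. In either approach, the delicate point is ensuring the measurability of the resulting maps $f$ and $g$.
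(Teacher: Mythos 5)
The paper does not give its own proof of Theorem~\ref{thm:graphon_uniqueness}; it cites~\cite[Corollary~10.35a]{Lov4} and instead proves the far-reaching generalization, Theorem~\ref{thm:TheonUniqueness}, in Section~\ref{sec:existence-uniqueness}. Your $(1)\Rightarrow(2)$ argument, via encoding both graphons as exchangeable random graphs $\rn{G}(W)$, $\rn{G}(W')$, establishing their equidistribution from matching densities, and invoking the uniqueness clause of the Aldous--Hoover--Kallenberg theory, is exactly the route the paper takes for the general theon uniqueness theorem: there, one builds local exchangeable random structures from the two $T$-ons, verifies they are equidistributed, and then applies the Hoover--Kallenberg equivalence-of-representations theorem (Theorem~\ref{thm:exchuniqueness}) to extract the symmetric measure-preserving reparametrizations. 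Your ``marginalizing out the edge coordinate'' step is the graphon specialization discussed in Remark~\ref{rmk:graphon_uniqueness}: the level-two functions $f_2,g_2$ are measure preserving on the highest-order argument and so are averaged away by the passage from the 2-hypergraphon $\{x\in\mathcal E_2 : x_{\{1,2\}}\leq W(x_{\{1\}},x_{\{2\}})\}$ back to $W$ itself.

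One genuine gap worth flagging: the Hoover--Kallenberg uniqueness theorem you need (Theorem~\ref{thm:exchuniqueness}) represents exchangeable arrays with an extra ``global'' variable $\rn\eta$, and the reparametrizations it hands you a priori depend on that global coordinate as well. Even though your kernels derived from $W$ and $W'$ do not themselves use $\rn\eta$, nothing in the uniqueness statement immediately forces $f$ and $g$ to be free of it either. The paper resolves this with a short Fubini argument in the proof of Theorem~\ref{thm:TheonUniqueness}: one first observes that the construction~\eqref{eq:locality} is local in the fiber variable, so that $\chi_d(\widehat f_d(x,y))$ reduces to an expression depending on $x$ only through $f_0(x)$, and then Fubini picks a full-measure set of $x_0$ for which the fiber maps $f^{x_0}_d,\,g^{x_0}_d$ are measurable and the a.e.\ identity persists. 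Your outline should either invoke the dissociated-array form of the theorem explicitly (where there is no $\rn\eta$ to begin with) or include this fiber-selection step; without it the measurability and independence-from-$\rn\eta$ of the extracted $f,g$ is left unjustified, which is precisely the ``delicate point'' you correctly anticipate at the end.
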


In this context, the main purpose of our work can be summarized as follows:
find a natural and convenient (and certainly well-behaving with respect to
open interpretations) generalization of graphons to arbitrary theories so
that Corollary~\ref{cor:graph_cryptomorphisms} and Theorem~\ref{thm:graphon_uniqueness} still hold. Before embarking on the project, let us briefly
review one prominent generalization of graphons that has been known before
and that is quite important to our work. It introduced much of the language we
will be using.

\subsection{Hypergraphons} \label{sec:hypergraphons}

In this section we present the first case of a limit object of a theory with predicates of arity larger than $2$: hypergraphons~\cite{ElS}.

In analogy with graphons, one might conjecture that the correct way to define a $k$-uniform hypergraphon would be as a symmetric measurable function $W\function{[0,1]^k}{[0,1]}$ and define $\tinj$, $\tind$ and $p$ in analogy with~\eqref{eq:graphondensities}. However, the example below shows that this does not work.

\begin{example}\label{ex:HypergraphBadLimitConstant}
  Following up on Example~\ref{ex:RandomHypergraphs}, we know that the sequence of random $3$-uniform
  hypergraphs $(\rn{H'_{n,p}})_{n\in\mathbb{N}}$ is convergent with probability $1$. Also, since all vertices are
  equal in this random model, we would ``expect'' the limit hypergraphon to be a constant function. However,
  any constant $W\function{[0,1]^3}{[0,1]}$ does not work as shown by the same calculation
  with the 3-graph $K_4^-$ as in Example~\ref{ex:RandomHypergraphs}.
\end{example}

As a matter of fact, the limit of this sequence cannot be written as {\em any} symmetric measurable function $W\function{[0,1]^3}{[0,1]}$ whatsoever: this easily follows from the uniqueness theorem for
hypergraphons~\cite[Theorem~9]{ElS} that we will also recover below (Theorem~\ref{thm:TheonUniqueness}). The reason why symmetric measurable functions $W\function{[0,1]^3}{[0,1]}$ do not work is that we are missing degrees of freedom associated to pairs of vertices (say, all ``non-trivial'' elements in the image of $\pi^\ast(I)$, where $I\interpret{T_{\operatorname{3-Hypergraph}}}{T_{\operatorname{Graph}}}$ is a ``non-trivial'' interpretation are bound to not be covered).

Before we go into the definition of hypergraphons, let us first fix some notation that we will also use in the following sections.
To the reader familiar with hypergraphons let us remark that our notation does {\em not}
a priori assume any symmetry.

\begin{definition}\label{def:EV}
  For a set $V$, let $r(V)$ denote
  the collection of all non-empty finite subsets of $V$ and let
  \begin{align*}
    \mathcal{E}_V & \df [0,1]^{r(V)}.
  \end{align*}
  In particular, if $V$ is finite, then the set $\mathcal{E}_V$ is a hypercube of dimension
  $2^{\lvert V\rvert}-1$ where each coordinate is indexed by a non-empty subset of $V$. We endow $\mathcal E_V$ with the standard Lebesgue measure $\lambda$, which turns it into a probability space.

  We define the (right) action of the symmetric group $S_V$ over $V$ on $\mathcal{E}_V$ by letting
  \begin{align*}
    (x\cdot\sigma)_A \df x_{\sigma(A)}
  \end{align*}
  for a permutation $\sigma\function{V}{V}$ and a point $x=(x_A)_{A\in r(V)}\in\mathcal{E}_V$.

   As a shorthand, when $V=[k]$, we will write $r(k)$, $\mathcal{E}_k$ and $S_k$ instead of $r([k])$, $\mathcal{E}_{[k]}$ and $S_{[k]}$ respectively. Elements in $r(k)$ are ordered as follows: a set $A$ precedes a set $B$ if and only if $\lvert A\rvert <\lvert B\rvert$ or $\lvert A\rvert = \lvert B\rvert$ and $A>B$ in the lexicographic order. This determines a natural identification between $\mathcal E_k$ and $[0,1]^{2^k-1}$: for example, the point $(a,b,c,d,e,f,g)\in [0,1]^7$ corresponds to the point $x\in\mathcal E_3$ given by $x_{\{1\}}=a,\ x_{\{2\}}=b,\ x_{\{3\}}=c,\ x_{\{1,2\}}=d,\ x_{\{1,3\}}=e,\ x_{\{2,3\}}=f,\ x_{\{1,2,3\}}=g$.

  For an injective function $\alpha\injection{[k]}{V}$ we denote, with slight abuse of notation, the induced function $\alpha\injection{r(k)}{r(V)}$ using the same letter, that is, for every $A\in r(k)$, we have
  \begin{align*}
    \alpha(A) & \df \{\alpha(i) \mid i\in A\},
  \end{align*}
  and we let $\alpha^*\function{\mathcal{E}_V}{\mathcal{E}_k}$ be the natural projection given by
  \begin{align*}
    \alpha^\ast(x)_A \df x_{\alpha(A)}
  \end{align*}
  for every $x=(x_B)_{B\in r(V)}\in\mathcal{E}_V$ and $A\in r(k)$. This notation is consistent with the
  previously introduced action of $S_k$.
\end{definition}

\begin{definition}[Hypergraphons] \label{def:hypergraphons}
  Let $k>0$ be a fixed constant. A \emph{$k$-hypergraphon} is an $S_k$-invariant measurable subset $\mathcal H$ of $\mathcal{E}_k$.

  For a $k$-uniform hypergraph $G$, we let
  \begin{align*}
    R(G) & \df \{\alpha\injection{[k]}{V(G)} \mid \im(\alpha)\in E(G)\};
    \\
    \overline{R}(G)
    & \df
    \{\alpha\injection{[k]}{V(G)} \mid \im(\alpha)\notin E(G)\}
   \end{align*}
be ``symmetrizations'' of the sets of its edges and non-edges, respectively. Assume now that we also have a $k$-hypergraphon $\mathcal H\subseteq\mathcal E_k$. We let
    \begin{align*}
    \Tinj(G,\mathcal{H}) & \df \bigcap_{\alpha\in R(G)}(\alpha^*)^{-1}(\mathcal{H}) \subseteq \mathcal{E}_{V(G)};
    \\
    \Tind(G,\mathcal{H})
    & \df
    \Tinj(G,\mathcal{H})
    \cap
    \bigcap_{\alpha\in\overline{R}(G)}(\alpha^\ast)^{-1}(\mathcal{E}_k\setminus\mathcal{H}) \subseteq \mathcal{E}_{V(G)}.
    \end{align*}
The intuition behind these definitions is as follows. An ``induced copy'' of $G$
in $\mathcal{H}$ is a point $x\in\mathcal{E}_{V(G)}$ such that for every
hyperedge tuple $\alpha\in R(G)$, the induced function $\alpha^\ast$ maps $x$ to a point inside
$\mathcal{H}$ (i.e., a ``hyperedge'' of $\mathcal{H}$) and for every
non-hyperedge tuple $\alpha\in\overline{R}(K)$, the induced function $\alpha^\ast$ maps $x$ to a
point outside $\mathcal{H}$ (i.e., a ``non-hyperedge'' of $\mathcal{H}$). A
(non-induced) copy of $G$ is obtained by dropping the second requirement.
This makes $\Tinj(G,\mathcal{H})$ and $\Tind(G,\mathcal{H})$ intuitively
correspond to the set of non-induced and induced copies of $G$ in
$\mathcal{H}$ respectively.

Now we define induced and non-induced densities straightforwardly, just as in Sections~\ref{sec:densities} and~\ref{sec:graphons}:
    \begin{align*}
    \tinj(G,\mathcal{H})
    & \df
    \lambda(\Tinj(G,\mathcal{H}));
    \\
    \tind(G,\mathcal{H})
    & \df
    \lambda(\Tind(G,\mathcal{H}));
    \\
    p(G,\mathcal{H})
    & \df
    \frac{\lvert V(G)\rvert!}{\lvert\Aut(G)\rvert}\tind(G,\mathcal{H}).
  \end{align*}
\end{definition}

The correspondence between graphons and 2-hypergraphons is not entirely straightforward. If $\mathcal H$ is a 2-hypergraphon, then by Fubini's Theorem, the set $\{p\in [0,1]\mid (u,v,p)\in \mathcal H\}$ is measurable for almost all $(u,v)\in [0,1]^2$, and $W(u,v)\df \lambda(\set{p\in [0,1]}{(u,v,p)\in\mathcal H})$, extended arbitrarily at singular points, is also measurable. This gives us the graphon associated with $\mathcal H$ that gives rise to the same element of $\HomT{T_{\operatorname{Graph}}}$ as $\mathcal H$. Conversely, if $W\function{[0,1]^2}{[0,1]}$ is a graphon, then we can turn it into a 2-hypergraphon by letting
$$
\mathcal H\df \set{x\in\mathcal E_2}{x_{\{1,2\}}\leq W(x_{\{1\}}, x_{\{2\}})}.
$$

Analogously to Theorem~\ref{thm:graphoncryptomorphism}, the next theorem says that $k$-hypergraphons capture precisely the limits of convergent sequences of $k$-uniform hypergraphs.

\begin{theorem}[Elek--Szegedy~\cite{ElS}]\label{thm:hypergraphoncryptomorphism}
  For every convergent sequence of $k$-uniform hypergraphs $(H_n)_{n\in\mathbb{N}}$, there exists a $k$-hypergraphon $\mathcal{H}$ such that
  \begin{equation}\label{eq:hypergraphoncryptomorphism}
    \begin{aligned}
      \lim_{n\to\infty}\tinj(G,H_n) & = \tinj(G,\mathcal{H});\\
      \lim_{n\to\infty}\tind(G,H_n) & = \tind(G,\mathcal{H});\\
      \lim_{n\to\infty}p(G,H_n) & = p(G,\mathcal{H});
    \end{aligned}
  \end{equation}
  for every fixed $k$-uniform hypergraph $G$. Conversely, if $\mathcal{H}$ is a $k$-hypergraphon, then there exists a convergent sequence of $k$-uniform hypergraphs $(H_n)_{n\in\mathbb{N}}$ such that~\eqref{eq:hypergraphoncryptomorphism} holds for every fixed $k$-uniform hypergraph $G$.
\end{theorem}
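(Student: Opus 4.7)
The plan is to prove both directions via the Aldous--Hoover--Kallenberg representation theorem for exchangeable arrays, along the lines the authors themselves flag for Section~\ref{sec:existence-uniqueness}.

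For the forward direction, given a convergent sequence $(H_n)_{n\in\mathbb{N}}$ of $k$-uniform hypergraphs, I would first encode each $H_n$ as a symmetric $\{0,1\}$-valued array indexed by the $k$-subsets of $V(H_n)$. Drawing an injection $\rn{\alpha_n}\injection{\mathbb{N}}{V(H_n)}$ uniformly at random (say by iteratively sampling without replacement, padding arbitrarily once $V(H_n)$ is exhausted) and transporting the array to $k$-subsets of $\mathbb{N}$ produces a sequence of laws on $\{0,1\}^{\binom{\mathbb{N}}{k}}$. By weak compactness and a diagonal argument over all finite templates, these converge (along a subsequence if necessary) to a limiting law $\mathcal{L}$ that is exchangeable under the natural $S_{\mathbb{N}}$-action and $S_k$-symmetric within each edge slot. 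By construction the finite marginals of $\mathcal{L}$ encode exactly the limiting induced densities $\lim_{n\to\infty}\tind(G,H_n)$.

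Now I would invoke Aldous--Hoover--Kallenberg: every such exchangeable symmetric $\{0,1\}$-valued array is equal in distribution to $(f(\alpha_A^\ast(\rn{U})))_A$, where $f\function{\mathcal{E}_k}{\{0,1\}}$ is measurable, $\rn{U}\in\mathcal{E}_{\mathbb{N}}$ is uniform under Lebesgue, and $\alpha_A\injection{[k]}{\mathbb{N}}$ is the order-preserving injection onto the $k$-subset $A$. The $S_k$-symmetry of the array lets me replace $f$ by its $S_k$-averaged indicator, so $\mathcal{H}\df f^{-1}(\{1\})$ is an $S_k$-invariant measurable subset of $\mathcal{E}_k$, i.e.\ a $k$-hypergraphon in the sense of Definition~\ref{def:hypergraphons}. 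A Fubini calculation then matches the events defining $\Tind(G,\mathcal{H})$ with the probabilities describing $\mathcal{L}$, yielding $\tind(G,\mathcal{H})=\lim_{n\to\infty}\tind(G,H_n)$; the corresponding identities for $\tinj$ and $p$ follow from~\eqref{eq:inj-ind} and~\eqref{eq:ind-p}. Conversely, given any hypergraphon $\mathcal{H}$, I would define a random hypergraph $\rn{H_n}$ on $[n]$ by sampling $\rn{U}\in\mathcal{E}_{[n]}$ uniformly and declaring a $k$-subset $A\subseteq [n]$ to be a hyperedge iff $\alpha_A^\ast(\rn{U})\in\mathcal{H}$. A direct Fubini calculation gives $\mathbb{E}[\tind(G,\rn{H_n})]\to\tind(G,\mathcal{H})$ for every fixed $G$, and a standard second-moment bound of order $O(1/n)$ for exchangeable hypergraph statistics, combined with Borel--Cantelli along a fast enough subsequence and a diagonal argument over the countable set of templates $G$, extracts a deterministic sequence with the required convergence almost surely.

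The main obstacle is invoking Aldous--Hoover--Kallenberg at the correct level of generality. This is precisely what forces the coordinate space to be the full ``upper-triangular'' cube $\mathcal{E}_k=[0,1]^{r(k)}$ rather than the naive $[0,1]^k$, since representing a $k$-ary exchangeable symmetric array genuinely requires an auxiliary random variable indexed by every non-empty $B\subseteq [k]$; this is also what resolves the paradox of Example~\ref{ex:HypergraphBadLimitConstant}, where a constant symmetric function $[0,1]^3\to[0,1]$ cannot capture the limit of $(\rn{H'_{n,p}})$. Once the representation theorem is granted, both the measure-theoretic matching of densities and the concentration step in the reverse direction are essentially routine.
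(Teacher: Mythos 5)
Your outline follows the paper's own proof of the more general Theorem~\ref{thm:theoncryptomorphism} (of which the Elek--Szegedy theorem is the specialization to $T=T_{k\operatorname{-Hypergraph}}$): pass from the convergent sequence to an exchangeable random structure on $\mathbb{N}_+$, apply an Aldous--Hoover--Kallenberg representation, read off the hypergraphon from the representing function, and use exchangeability plus a second-moment/Borel--Cantelli argument for the converse.

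There is, however, a gap at the crucial step. The representation you invoke, $\rn X_\alpha \sim f(\alpha^\ast(\rn U))$ with no additional randomizing variable, holds only for \emph{local} (dissociated) exchangeable arrays. The general Hoover theorem (the paper's Theorem~\ref{thm:exchrepr}) produces $\chi_{\lvert\alpha\rvert}(\rn\eta,\alpha^\ast(\rn\xi))$ with an extra global uniform $\rn\eta$, and it is Theorem~\ref{thm:localexchrepr} that removes $\rn\eta$, under the explicit hypothesis of locality. A weak limit of exchangeable laws need not be local: a mixture of two Erd\H{o}s--R\'enyi laws with different edge probabilities is exchangeable, yet its restrictions to disjoint vertex sets remain correlated through the mixing variable, and for such a law your $\eta$-free representation simply does not exist. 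In your situation locality does hold, but only because $(H_n)$ is a single deterministic convergent sequence, so that densities of vertex-disjoint templates asymptotically factor; the quantitative input is Lemma~\ref{lem:flaglowcollision}, which makes the limiting $\phi$ an algebra homomorphism, and the paper then deduces locality via Corollary~\ref{cor:cryptoexch} and Proposition~\ref{prop:extremelocal}. You need to state and prove this dissociation; without it the step from $\mathcal{L}$ to $(f(\alpha_A^\ast(\rn U)))_A$ is unjustified. The remaining ingredients --- the $S_k$-symmetrization of $f$ (which, for a $\{0,1\}$-valued $f$, one carries out by redefining $f$ on the null set of $S_k$-orbits where it is non-constant rather than by literal averaging), the Fubini identification with $\Tind(G,\mathcal H)$, and the concentration argument in the converse --- are sound and match the paper's proof.
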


Thus, we get that $k$-hypergraphons are also cryptomorphic to elements of $\HomT{T_{k\operatorname{-Hypergraph}}}$ (cf.~Corollary~\ref{cor:graph_cryptomorphisms}).

\begin{example}[$3$-uniform random hypergraphs, cntd.]\label{ex:RandomHypergraphons}
  Following up on Example~\ref{ex:RandomHypergraphs}, if we define the $3$-hypergraphons $\mathcal{H}_p$ and $\mathcal{H}'_p$ by
  \begin{align*}
    \mathcal{H}_p
    & =
    \{x\in\mathcal{E}_3 \mid x_{\{1,2,3\}}\leq p\};
    \\
    \mathcal{H}'_p
    & =
    \{x\in\mathcal{E}_3 \mid \max\{x_{\{1,2\}},x_{\{1,3\}},x_{\{2,3\}}\} \leq p\};
  \end{align*}
  then with probability $1$, the sequences $(\rn{H_{n,p}})_{n\in\mathbb{N}}$ and $(\rn{H'_{n,p}})_{n\in\mathbb{N}}$ converge to $\mathcal{H}_p$ and $\mathcal{H}'_p$ respectively.
\end{example}

As one can imagine, since hypergraphons are somewhat more complicated than graphons, the question of equivalence for
hypergraphons (i.e., when they represent limits of the same sequences) is also more intricate. Elek and Szegedy define
for this purpose so-called structure preserving maps~\cite[\S 4.1]{ElS}, but since in this paper we adapt a different
(and, arguably, simpler) language, we defer further discussion until the next section in which we will formulate much more general Theorem~\ref{thm:TheonUniqueness}.

\section{Peons and theons}
\label{sec:peonstheons}

In this section we present our main definitions of peons and theons and
formulate the main results. It is very important from this point on that all
theories we are considering are canonical (Definition~\ref{def:canonical});
if we want to apply these notions to a non-canonical theory it should be
subdivided first as explained in Theorem~\ref{thm:canonical}. Otherwise,
although all our definitions are set up in such a way that formally they
work for non-canonical theories, the information about the behavior on the
diagonal will be completely lost.

\begin{definition}\label{def:peons}
  For a predicate symbol $P$ of arity $k$, a \emph{$P$-on} is a Lebesgue measurable subset of $\mathcal{E}_{k}$. We use the name \emph{peon} when we do not specify the predicate symbol $P$.

  Let now $\mathcal L$ be a language, as always finite and with predicate symbols only. An {\em Euclidean structure} in the language $\mathcal L$ is a function $\mathcal{N}$ that maps each predicate symbol $P\in\mathcal{L}$ to a $P$-on $\mathcal{N}_P\subseteq\mathcal{E}_{k(P)}$.

  If $M$ is an (ordinary) structure in the language $\mathcal L$ and $P\in\mathcal L$ then, in analogy with
  Definition~\ref{def:hypergraphons}, we let\footnote{We prefer to introduce a slightly different notation since $R_{P,M}\subseteq V(M)^{k(P)}$ was defined in Section~\ref{sec:model_theory} for arbitrary $\alpha$, not necessarily injective.}
  \begin{align*}
  R_P(M) & \df \set{\alpha\injection{[k(P)]}{V(M)}}{\alpha\in R_{P,M}}
  \intertext{and}
  \overline R_P(M) & \df \set{\alpha\injection{[k(P)]}{V(M)}}{\alpha\notin R_{P,M}}.
  \end{align*}

  Now, for an Euclidean structure $\mathcal N$ we give essentially the same chain of definitions as in Section~\ref{sec:hypergraphons}:
  \begin{align*}
    \Tinj(M,\mathcal{N})
    & \df
    \bigcap_{P\in\mathcal{L}}\bigcap_{\alpha\in R_P(M)}(\alpha^*)^{-1}(\mathcal{N}_P)
    \subseteq\mathcal{E}_{V(M)};
    \\
    \Tind(M,\mathcal{N})
    & \df
    \Tinj(M,\mathcal{N})\cap
    \bigcap_{P\in\mathcal{L}}\bigcap_{\alpha\in \overline{R}_{P}(M)}(\alpha^*)^{-1}(\mathcal E_{k(P)}\setminus \mathcal{N}_P);
    \\
    \tinj(M,\mathcal{N}) & \df \lambda(\Tinj(M,\mathcal{N}));
    \\
    \tind(M,\mathcal{N}) & \df \lambda(\Tind(M,\mathcal{N}));
    \\
    \phi_{\mathcal{N}}(M) \df p(M,\mathcal{N})
    & \df \frac{\lvert V(M)\rvert!}{\lvert\Aut(M)\rvert}\tind(M,\mathcal{N}).
  \end{align*}
\end{definition}

Again, the intuition behind these definitions is the same as in the
hypergraphon case: an ``induced copy'' of $M$ in $\mathcal{N}$ is a point
$x\in\mathcal{E}_{V(M)}$ such that for every $P\in\mathcal L$ and every tuple $\alpha\in R_{P}(M)$,
the induced function $\alpha^\ast$ maps $x$ to a point inside $\mathcal{N}_P$ (i.e., a point of
$\mathcal{N}$ that ``satisfies'' the predicate $P$) and for every tuple
$\alpha\in\overline{R}_{P}(M)$, the induced function $\alpha^\ast$ maps $x$ to a point outside
$\mathcal{N}_P$ (i.e., a point of $\mathcal{N}$ that ``falsifies'' $P$). A
(non-induced) copy of $M$ is again obtained by dropping the
$\overline{R}_{P}(M)$ requirements. This makes $\Tind(M,\mathcal{N})$ and
$\Tinj(M,\mathcal{N})$ correspond to the set of induced and
non-induced copies of $M$ in $\mathcal{N}$ respectively, except that we
totally ignore the values $P(v_1,\ldots,v_k)$ for which $v_1,\ldots,v_k$ are
not pairwise distinct.

\begin{definition} \label{def:theons}
  Let $T$ be a (canonical) theory in a language $\mathcal{L}$. A structure $M$ is {\em canonical} if it satisfies all axioms~\eqref{eq:canonical}, that is, the predicate $P(v_1,\ldots,v_k)$ is always false in $M$ whenever the tuple $(v_1,\ldots,v_k)$ contains repeated entries.
  A \emph{weak $T$-on} is an Euclidean structure $\mathcal{N}$ in $\mathcal L$ such that $\tind(M,\mathcal{N})= 0$ for every canonical structure $M$ that is {\em not} a model of $T$.

  The \emph{diagonal} of $\mathcal{E}_V$ is the closed set
  \begin{align*}
    \mathcal{D}_V & \df \{x\in\mathcal{E}_V \mid \exists i,j\in V(i\neq j\land x_{\{i\}}=x_{\{j\}})\}.
  \end{align*}
  Again we use $\mathcal{D}_k$ as a shorthand for $\mathcal{D}_{[k]}$.

  A \emph{strong $T$-on} is an Euclidean structure $\mathcal{N}$ such that
  \begin{align*}
    \Tind(M,\mathcal{N}) & \subseteq \mathcal{D}_{V(M)}
  \end{align*}
  for every canonical structure $M$ that is not a model of $T$.
We will use the name \emph{theon} when the theory $T$ is clear from the context.

  Finally, a theon $\mathcal{N}$ is {\em Borel} if $\mathcal{N}_P$ is a Borel set for every predicate symbol $P\in\mathcal{L}$
  in our language.
\end{definition}

Thus, Definition~\ref{def:theons} generalizes $k$-hypergraphons (which are precisely strong $T_{k\operatorname{-Hypergraph}}$-ons) in three different ways:
\begin{itemize}
\item The symmetry condition is removed (which leads to peons);

\item Different combinatorial structures on the same ground set can be combined together at no extra cost (this gives us Euclidean structures and weak theons);

\item The resulting object can even be assumed to fully retain the combinatorial structure possessed by ordinary models of $T$, except for the diagonal (strong theons).

      One good reason why we are not attempting to control the behavior on the diagonal are highly asymmetric theories like
      $T = T_{\operatorname{Tournament}}$. For example, both peons $\set{x\in\mathcal E_2}{x_1<x_2}$ and
      $\set{x\in\mathcal E_2}{x_1\leq x_2}$ are weak $T$-ons representing the limit of transitive tournaments from Example~\ref{ex:transitive}. Which of these two is the ``right'' strong $T$-on is completely arbitrary, and, as we said before,
      if for whichever reasons one needs to consider tournaments with loops, the ``right'' way of doing this is by appending to
      the language a separate unary predicate.
\end{itemize}

While the first item in the above list is more of cosmetic nature, the last two seem to be
somewhat novel, and their importance is clearly determined by
whether weak and strong theons can be shown to exist. So without further ado we formulate our central results addressing
that question.

\begin{theorem}[Induced Euclidean Removal Lemma]\label{thm:ierl}
  If $T$ is a theory in a language $\mathcal{L}$ and $\mathcal{N}$ is a weak $T$-on, then there exists a strong $T$-on $\mathcal{N}'$ such that
  \begin{equation} \label{eq:difference}
    \lambda(\mathcal{N}_P\symmdiff\mathcal{N}'_P) = 0,
  \end{equation}
  for every predicate symbol $P\in\mathcal{L}$.
\end{theorem}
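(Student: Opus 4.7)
The plan is to invoke the Completeness Theorem for propositional calculus (equivalent to the Boolean Prime Ideal Theorem, which is how the axiom of choice enters the picture, as the introduction hints). I introduce uncountably many propositional variables $X_{P,y}$ indexed by predicate symbols $P\in\mathcal{L}$ and points $y\in\mathcal{E}_{k(P)}\setminus\mathcal{D}_{k(P)}$, the intended meaning being $X_{P,y}=1 \Leftrightarrow y\in\mathcal{N}'_P$. To encode that $\mathcal{N}'$ is a strong $T$-on, for every axiom $\forall\vec x\, A(x_1,\ldots,x_n)$ of $T$ and every $z\in\mathcal{E}_n\setminus\mathcal{D}_n$, include in a propositional theory $\Sigma$ the formula $A^z$ obtained from $A$ by replacing each atom $P(x_{i_1},\ldots,x_{i_k})$ with $X_{P,\pi^\ast(z)}$ for the map $\pi\colon j\mapsto i_j$ (by canonicity, use $\bot$ if $\pi$ is not injective) and replacing $x_i=x_j$ with $\top$ or $\bot$ depending on whether $i=j$. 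To additionally force $\mathcal{N}'_P$ to be a null modification of $\mathcal{N}_P$, fix, for each $P$, a carefully chosen co-null measurable subset $G_P\subseteq\mathcal{E}_{k(P)}\setminus\mathcal{D}_{k(P)}$ and include in $\Sigma$ the literal $X_{P,y}$ or $\lnot X_{P,y}$ for every $y\in G_P$ according to whether $y\in\mathcal{N}_P$.

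The central step is to prove that $\Sigma$ is consistent, for which by compactness it suffices to verify that every finite $\Sigma_0\subseteq\Sigma$ is satisfiable. A finite $\Sigma_0$ mentions finitely many axiom formulas $A^{z_s}$ and finitely many value-fixing literals at points $y_j\in G_P$, and I aim to show that the ``natural'' assignment $X_{P,y}\mapsto[y\in\mathcal{N}_P]$ already satisfies $\Sigma_0$: the fixed literals hold by definition, and each $A^{z_s}$ holds provided the induced structure on $V(z_s)$ using $\mathcal{N}$ is a model of $T$. This is the main obstacle, since the weak $T$-on condition gives this only for \emph{almost every} $z_s$, whereas $\Sigma_0$ concerns \emph{specific} points. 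The remedy is to choose $G_P$ so that whenever all relevant projections $\pi^\ast(z)$ lie in the corresponding $G_P$'s, the induced structure is automatically a model; concretely, I take $G_P$ to consist of those $y$ for which, conditional on a random $\omega\in\mathcal{E}_V$ (for some distinguished large finite $V$) having $\iota^\ast(\omega)=y$ for the canonical injection $\iota$, the induced structure on $V$ is almost surely a model of $T$. A conditional version of Fubini shows $G_P$ is co-null, and unwinding the definition shows that at any $z$ all of whose projections lie in the relevant $G_P$'s, the natural assignment indeed satisfies every $A^z$, so each finite $\Sigma_0$ is satisfiable.

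Once consistency of $\Sigma$ is established, the Completeness Theorem yields a truth assignment $X^\ast$, and I set $\mathcal{N}'_P\df\{y\in\mathcal{E}_{k(P)}\setminus\mathcal{D}_{k(P)}\colon X^\ast_{P,y}=1\}\cup(\mathcal{N}_P\cap\mathcal{D}_{k(P)})$. Satisfaction of the axiom clauses translates directly into $\Tind(M,\mathcal{N}')\subseteq\mathcal{D}_{V(M)}$ for every canonical non-model $M$, so $\mathcal{N}'$ is a strong $T$-on; satisfaction of the value-fixing literals gives $\mathcal{N}_P\symmdiff\mathcal{N}'_P\subseteq(\mathcal{E}_{k(P)}\setminus G_P)\cup\mathcal{D}_{k(P)}$, which is a null set. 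Completeness of Lebesgue measure then implies that $\mathcal{N}'_P$ is measurable and that~\eqref{eq:difference} holds, completing the proof.
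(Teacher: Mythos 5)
Your overall strategy --- propositional variables for $\mathcal{N}'_P$, a propositional theory $\Sigma$ encoding strongness and closeness to $\mathcal{N}$, and compactness --- is the same as the paper's, but the core step of verifying finite satisfiability has a genuine gap. You claim the natural assignment $X_{P,y}\mapsto[y\in\mathcal{N}_P]$ satisfies any finite $\Sigma_0$; this assignment satisfies a clause $A^z$ precisely when $z\in T(A,\mathcal{N})$, which the weak-theon hypothesis guarantees only for almost every $z$, and an adversarial $\Sigma_0$ can include $A^z$ at an exceptional $z$. Your choice of $G_P$ at best addresses $z$'s all of whose atom-projections lie in $G_P$; nothing is said about a $z$ with some projection $\pi^\ast(z)\notin G_P$, and for such a $z$ merely declaring the variable $X_{P,\pi^\ast(z)}$ ``free'' does not make the natural assignment satisfy $A^z$ --- you would have to \emph{construct} a different assignment on the free variables, consistent across all clauses of $\Sigma_0$ that share them, and no such construction is given. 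Moreover, the proposed definition of $G_P$ (almost-sure modelhood of random extensions) does not imply what you use: knowing that a.e.\ extension of $y_j$ yields a model says nothing about the particular extension $z$, which is a single point in the fiber $\{\omega:\iota^\ast(\omega)=y_j\}$ and hence of measure zero there. (A Lebesgue-density definition of $G_P$ plus a union-bound argument would rescue the favorable case, but the unfavorable $z$'s would remain.)

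The paper closes exactly this gap with a random-perturbation construction rather than the natural assignment. Given a finite $\Sigma_0$ with points $z_1,\ldots,z_\ell$, it perturbs every coordinate of every $z_\nu$ inside a radius-$r$ box, sharing perturbations across coincident coordinates so that the random membership assignment $\rn{u^{(r)}}_{P,y}=1\equiv\rn{y^{(r)}}\in\mathcal{N}_P$ is well-defined, and then fixes an outcome $u$ with $\limsup_{r\to 0}\prob{\rn{u^{(r)}}=u}>0$. For each axiom clause, $\prob{\rn{z_\nu^{(r)}}\in T(F_\nu,\mathcal{N})}=1$ by the weak-theon hypothesis, while the Lebesgue density theorem controls the coordinates $y$ outside the bad set, so such a $u$ satisfies every constraint in $\Sigma_0$. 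The Consistency and Local Independence properties of the perturbation, and the $\limsup$ step selecting a single $u$, are precisely the ingredients missing from your argument.
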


Note that~\eqref{eq:difference} in particular implies that $p(M,\mathcal N)=p(M,\mathcal N')$ for every $M$, that is, the $T$-ons $\mathcal N$ and $\mathcal N'$ are indistinguishable in
the statistical framework.

The following theorem is a far-reaching generalization of Theorem~\ref{thm:hypergraphoncryptomorphism}.

\begin{theorem}[Existence]\label{thm:theoncryptomorphism}
  If $(N_n)_{n\in\mathbb{N}}$ is a convergent sequence of models of a theory $T$, then there exists a weak $T$-on $\mathcal{N}$ such that
  \begin{equation}\label{eq:theoncryptomorphism}
    \begin{aligned}
      \lim_{n\to\infty}p(M,N_n) & = p(M,\mathcal{N});\\
      \lim_{n\to\infty}\tind(M,N_n) & = \tind(M,\mathcal{N});\\
      \lim_{n\to\infty}\tinj(M,N_n) & = \tinj(M,\mathcal{N});
    \end{aligned}
  \end{equation}
  for every fixed model $M$ of $T$. Conversely, if $\mathcal{N}$ is a weak $T$-on, then there exists a convergent sequence $(N_n)_{n\in\mathbb{N}}$ of models of $T$ such that~\eqref{eq:theoncryptomorphism} holds for every fixed model $M$ of $T$.
\end{theorem}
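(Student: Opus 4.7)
The plan has two halves; the converse is the easier one, so I will address it first. Given a weak $T$-on $\mathcal{N}$, I would apply Theorem~\ref{thm:ierl} to replace $\mathcal{N}$ by an equivalent strong $T$-on $\mathcal{N}'$ (densities are preserved since $\mathcal{N}$ and $\mathcal{N}'$ differ on a null set). For each $n$, construct a random model $\rn{N}_n$ on the vertex set $[n]$ by sampling i.i.d.\ uniform variables $\rn{x}_A \in [0,1]$ for every $A \in r([n])$, and declaring $(v_1,\ldots,v_k) \in R_{P,\rn{N}_n}$ iff $(\rn{x}_{\alpha(B)})_{B \in r([k])} \in \mathcal{N}'_P$, where $\alpha\colon i \mapsto v_i$. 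Because $\mathcal{N}'$ is strong and the singleton coordinates $\rn{x}_{\{i\}}$ are a.s.\ distinct, $\rn{N}_n$ is a.s.\ a model of $T$. For each fixed $M$, $p(M,\rn{N}_n)$ is a $U$-statistic with expectation $p(M,\mathcal{N}')$, and the bounded differences inequality applied to the $\rn{x}_A$'s gives exponential concentration. Since $\mathcal{M}$ is countable, Borel--Cantelli produces a realization $(N_n)_{n \in \mathbb{N}}$ for which~\eqref{eq:theoncryptomorphism} holds simultaneously for all $M$.

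For the forward direction I would first extract an exchangeable limit distribution on infinite structures. For each $k$ and each $n \geq k$, choose a uniformly random injection $\rn{\alpha}_n\colon [k] \to V(N_n)$ and pull back the structure of $N_n$ to $[k]$; call the resulting distribution $\mu_k^{(n)}$. By definition $\mu_k^{(n)}$ assigns a labeled model $L$ on $[k]$ mass $\tind(L,N_n)$, so convergence of $(N_n)$ together with~\eqref{eq:ind-p} gives $\mu_k^{(n)} \to \mu_k$ for some $S_k$-invariant measure $\mu_k$ on labeled models on $[k]$. The family $(\mu_k)_k$ is consistent under coordinate restriction, so Kolmogorov extension yields a single $S_\infty$-exchangeable probability measure $\mu$ on structures on $\mathbb{N}$. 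Because $T$ is universal and each $N_n$ is a model of $T$, every finite induced substructure is $\mu$-a.s.\ a model of $T$, hence $\mu$ is supported on models of $T$.

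The crux is to apply the Aldous--Hoover--Kallenberg representation theorem to the $\{0,1\}$-valued exchangeable arrays $(\mathbf{1}[(v_1,\ldots,v_k) \in R_P])$ coming from $\mu$. For each predicate symbol $P$ of arity $k$ this yields a measurable function $f_P\colon \mathcal{E}_k \to \{0,1\}$ such that, if $(\rn{U}_A)_{A \in r(\mathbb{N})}$ are i.i.d.\ uniform on $[0,1]$, then the random structure defined by $R_P(v_1,\ldots,v_k) \df f_P((\rn{U}_{\alpha(B)})_{B \in r([k])}) = 1$ (with $\alpha\colon i \mapsto v_i$) has law $\mu$. Setting $\mathcal{N}_P \df f_P^{-1}(1)$ gives the desired Euclidean structure: by construction $p(M,\mathcal{N})$ is the $\mu$-probability that the canonical substructure on $[\lvert V(M)\rvert]$ is isomorphic to $M$, which yields~\eqref{eq:theoncryptomorphism}, and the weak $T$-on property follows because non-models of $T$ carry $\mu$-mass zero. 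The main obstacle is the AHK invocation itself, together with the technical point that $\mu$ should be dissociated (equivalently, one can dispense with a $U_\emptyset$ coordinate in the representation, since $r$ contains only non-empty subsets); this latter property I would derive from asymptotic independence of densities of disjoint random samples in the spirit of Lemma~\ref{lem:flaglowcollision}, applied to the deterministic sequence $(N_n)$.
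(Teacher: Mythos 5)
Your proposal is correct, and while the central engine is the same as in the paper (the dissociated form of the Aldous--Hoover representation, Theorem~\ref{thm:localexchrepr}), you route around the flag-algebra intermediary that the paper uses, giving a more self-contained argument. The paper's proof first invokes Theorem~\ref{thm:cryptomorphism} to replace the convergent sequence by a homomorphism $\phi\in\Hom$, then uses Theorem~\ref{thm:cryptoexch}, Corollary~\ref{cor:cryptoexch} and Proposition~\ref{prop:extremelocal} to obtain a local exchangeable structure, and only then applies the representation theorem; the converse is argued by running the same chain backwards. You instead build the exchangeable measure $\mu$ directly from the finite marginals $\tind(\place,N_n)$ via Kolmogorov extension, and derive locality (dissociativity) from Lemma~\ref{lem:flaglowcollision} applied to the deterministic sequence --- exactly the ingredient the paper packages inside the proof of Proposition~\ref{prop:extremelocal}. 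Your converse is genuinely more elementary: you sample a random model from the theon and get a deterministic realization by Azuma-type concentration plus Borel--Cantelli, essentially the Lov\'asz--Szegedy sampling argument for graphons generalized to arbitrary theories, whereas the paper again cycles through $\Hom$ and Theorem~\ref{thm:cryptomorphism}. The trade-off is that the paper's longer route establishes all the cryptomorphisms of Theorem~\ref{thm:allcrypto} in one pass, which is a stated goal of the paper, while your route proves only the statement at hand but avoids the detour. One small economy you could make in the converse: passing to a strong theon via Theorem~\ref{thm:ierl} is unnecessary, since for a weak theon the bad set $\bigcup_{M\text{ non-model}}\Tind(M,\mathcal N)$ has measure zero and a union bound over the finitely many $m$-subsets of $[n]$ already gives that $\rn{N}_n$ is a.s.\ a model of $T$; this spares you an appeal to the axiom of choice.
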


In other words, for every canonical theory $T$, weak $T$-ons are
cryptomorphic to elements of $\HomT T$, and we will denote by $\phi_{\mathcal N}$ the
element of $\HomT T$ corresponding\footnote{A cumulative summary of all the
cryptomorphisms mentioned in the text will be given in Theorem~\ref{thm:allcrypto}.} to a $T$-on $\mathcal N$. Note that
Theorems~\ref{thm:ierl} and~\ref{thm:theoncryptomorphism} together imply a
similar conclusion for strong theons. One reason why we prefer to keep weak
theons as an intermediate step is that the ideas behind the proofs of Theorems~\ref{thm:ierl} and~\ref{thm:theoncryptomorphism}
are rather disjoint, and, moreover, the first one is not even
constructive -- we do not know if strong {\em Borel} theons always exist (we
will provide more comments on this in the next section).

In the classical model theory, if we want to check whether a given structure is a model of a theory $T$, it suffices to perform this check for axioms only. We now show that the same is true for theons, both weak and strong.

\begin{definition} \label{def:truth}
Let $\mathcal L$ be a language and $\mathcal N$ be an Euclidean structure in $\mathcal L$. For an open formula
$F(x_1,\ldots,x_n)$ in the language $\mathcal L$ we define its {\em interpretation}\footnote{We use the same letter $T$ for ``truth'' as in Definition~\ref{def:peons} in the hope that this will not create confusion.} $T(F,\mathcal N)\subseteq \mathcal E_n$ as follows:
\begin{enumerate}
\item if $F$ is $P(x_{i_1},\ldots,x_{i_k})$ and $i_1,\ldots,i_k$ are not pairwise distinct, or $F$ is $(x_i=x_j)$ with $i\neq j$ then $T(F,\mathcal N)\df\emptyset$;

\item $T(x_i=x_i,\mathcal N)\df \mathcal E_n$;

\item if $F$ is $P(x_{i_1},\ldots,x_{i_k})$ and $i_1,\ldots,i_k$ are pairwise distinct, then $T(F,\mathcal N)\df (i^\ast)^{-1}(\mathcal N_P)$, where $i$ is viewed as a function $i\injection{[k]}{[n]}$;
  \label{it:truthmain}

\item $T(F,\mathcal N)$ commutes with propositional connectives (e.g., we have $T(F_1\lor F_2,\mathcal{N}) \df T(F_1,\mathcal{N})\cup T(F_2,\mathcal{N})$).
\end{enumerate}
\end{definition}

\begin{remark} \label{rmk:straightforward}
  A straightforward but very useful observation is that if $M$ is a canonical structure with $V(M)=[m]$ and $\mathcal{N}$
  is an Euclidean structure, then $T(\Dopen(M),\mathcal{N}) = \Tind(M,\mathcal{N})$ and
    $T(\PDopen(M),\mathcal{N})= \Tinj(M,\mathcal{N})$. Hence Definition~\ref{def:truth}
    can be viewed as a generalization of these notions to arbitrary open formulas.
\end{remark}

\begin{definition}\label{def:subsclosed}
For an open formula $F(x_1,\ldots,x_n)$ and an equivalence relation
$\approx$ on $[n]$ with $m$ classes we let $F_\approx(y_1,\ldots,y_m)\df F(y_{\nu_1},\ldots,y_{\nu_n})$,
where $\nu_i$ is the equivalence class of $i$ (cf.~the proof of Theorem~\ref{thm:canonical}). A theory $T$ is
{\em substitutionally closed} if for every axiom $\forall \vec
x F(x_1,\ldots,x_n)$ and any equivalence relation $\approx$ on $[n]$, $T$ proves $\forall \vec y F_\approx(\vec y)$
using only propositional rules and, possibly, {\em renaming} variables in its axioms (thus substitutions of the
same variable for two different variables are disallowed).
\end{definition}

\begin{remark}\label{rmk:substclosed}
  Note that $\forall\vec y F_\approx(\vec y)$ is always entailed by $\forall\vec x F(\vec x)$. Hence,
  to be on the safe side one can always make a universal theory substitutionally closed by adding axioms to
  it; in other words, this is a property of a particular {\em axiomatization} rather than of the theory itself
  viewed as a set of theorems. On the other hand, if the intention is on the contrary to {\em rule out} non-trivial
  substitutional instances, then the simplest way to do it is by explicitly planting in the additional assumption
$\bigwedge_{i\neq j}(x_i\neq x_j)$ as we did in Example~\ref{ex:substitutional} (and, prior to that, in several
appropriate places in Section~\ref{sec:model_theory}).
\end{remark}

\begin{example} \label{ex:substitional2}
  All concrete canonical theories considered so far have been substitutionally closed. Slightly developing on Example~\ref{ex:substitutional},
let us also consider the axiom
\begin{equation} \label{eq:induced}
 \neg\of{\bigwedge_{i\in\mathbb Z_{2\ell}} E(x_i,x_{i+1})\land \bigwedge_{\substack{i\neq j\in\mathbb Z_{2\ell}\\ \lvert i-j\rvert\geq 2}}
\neg E(x_i,x_j)}
\end{equation}
forbidding {\em induced} copies of $C_{2\ell}$. Then $T\df T_{\operatorname{Graph}}+\text{\eqref{eq:induced}}_\ell$ is substitutionally closed
when $\ell\geq 3$. The reason is simple: $C_{2\ell}$ does not contain twin vertices and hence any
attempt at identifying a pair of variables immediately leads to a propositional tautology. This substitutionally closed theory
is not trivial: e.g.\ any blow-up of $K_3$ is a $T$-on.
On the contrary, the theory $T_{\operatorname{Graph}}+\text{\eqref{eq:induced}}_2$ is not substitutionally closed and remains trivial
(the theory of empty graphs from Example~\ref{ex:empty}).
\end{example}

\begin{theorem} \label{thm:characterization}
Let $T$ be a canonical substitutionally closed theory and $\mathcal N$ be an
Euclidean structure in the same language $\mathcal L$. Then $\mathcal N$ is a
weak {\rm [}strong{\rm ]} $T$-on if and only if for every axiom $\forall\vec
x F(x_1,\ldots,x_n)$ of the theory $T$ we have $\lambda(T(F,\mathcal
N))=1$ {\rm [}$T(F,\mathcal N) \supseteq \mathcal E_n\setminus\mathcal D_n$,
respectively{\rm ]}.
\end{theorem}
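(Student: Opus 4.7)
The plan is to use a dictionary between $\mathcal E_m$ and canonical $\mathcal L$-structures on $[m]$. For $x\in\mathcal E_m$, let $M_x$ be the canonical structure on $[m]$ in which $R_{P,M_x}$ contains the tuple $(v_1,\ldots,v_k)$ iff the $v_j$ are pairwise distinct and $\beta^*(x)\in\mathcal N_P$, where $\beta\injection{[k]}{[m]}$ is $\beta(j)=v_j$. A structural induction on open formulas, checking atomic cases directly against the clauses of Definition~\ref{def:truth}, yields that for every open $F(x_1,\ldots,x_n)$ and every injection $\gamma\injection{[n]}{[m]}$ we have $M_x\models F(\gamma(1),\ldots,\gamma(n))$ iff $\gamma^*(x)\in T(F,\mathcal N)$; the atomic base cases are forced because canonicity of $M_x$ matches the $\emptyset$ that $T(\cdot,\mathcal N)$ assigns to predicate atoms with repeated indices, while the separate treatment of $x_i=x_i$ versus $x_i=x_j$ matches the $\mathcal E_n$ versus $\emptyset$ clauses. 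Note also that $\{x\in\mathcal E_m : M_x=M\}=\Tind(M,\mathcal N)$ by Remark~\ref{rmk:straightforward}.

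The forward direction follows immediately. Since there are only finitely many canonical structures on $[n]$, the set $\{x\in\mathcal E_n : M_x\text{ is not a model of }T\}=\bigcup_M \Tind(M,\mathcal N)$ is a finite union that is null (if $\mathcal N$ is weak) or contained in $\mathcal D_n$ (if $\mathcal N$ is strong). For every other $x$, $M_x$ satisfies the axiom $\forall\vec y\,F(\vec y)$, so in particular $M_x\models F(1,\ldots,n)$ and the dictionary with $\gamma=\mathrm{id}$ gives $x\in T(F,\mathcal N)$.

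For the converse, fix a canonical structure $M$ on $[m]$ that is not a model of $T$ and pick a witnessing axiom $\forall\vec y\,A(y_1,\ldots,y_n)$ together with an assignment $\gamma\function{[n]}{[m]}$ such that $M\not\models A(\gamma(1),\ldots,\gamma(n))$. Factor $\gamma=\gamma'\circ\nu$, where $\nu$ is the quotient by the equivalence relation $\approx$ induced on $[n]$ and $\gamma'\injection{[n']}{[m]}$ is injective; then $A(\gamma(1),\ldots,\gamma(n))$ is literally $A_\approx(\gamma'(1),\ldots,\gamma'(n'))$, and the dictionary yields
\[ \Tind(M,\mathcal N)\;\subseteq\;(\gamma'^*)^{-1}\bigl(\mathcal E_{n'}\setminus T(A_\approx,\mathcal N)\bigr). \]
Since $\gamma'$ is injective, $\gamma'^*$ is a coordinate projection: the preimage of a null set is null, and the preimage of a subset of $\mathcal D_{n'}$ lies in $\mathcal D_m$. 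So everything reduces to deducing $\lambda(T(A_\approx,\mathcal N))=1$ (respectively $T(A_\approx,\mathcal N)\supseteq\mathcal E_{n'}\setminus\mathcal D_{n'}$) from the corresponding hypothesis on the axioms themselves.

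This last reduction is the main obstacle and the only place where substitutional closedness is used. By Definition~\ref{def:subsclosed}, $A_\approx$ admits a proof from finitely many axioms $A_1,\ldots,A_r$ of $T$ by bijective variable renamings followed by purely propositional reasoning. Bijective renaming into a common variable set $\vec w$ via injections $f_i$ transports the hypothesis verbatim because $T(\text{renamed }A_i,\mathcal N)=(f_i^*)^{-1}(T(A_i,\mathcal N))$, and $f_i^*$ is again a coordinate projection. For the propositional step, at every $x$ in the common space each atomic formula has a definite truth value prescribed by Definition~\ref{def:truth} and $T(\cdot,\mathcal N)$ commutes with the logical connectives, so any propositional tautology $\bigwedge_i A_i\to A_\approx$ forces $\bigcap_i T(A_i,\mathcal N)\subseteq T(A_\approx,\mathcal N)$. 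A finite intersection of full-measure sets (respectively, of sets containing $\mathcal E_s\setminus\mathcal D_s$) has the same property, completing the argument.
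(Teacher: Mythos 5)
Your proof is correct and follows essentially the same route as the paper's: your dictionary ``$M_x\models F(\gamma(1),\ldots,\gamma(n))\iff\gamma^*(x)\in T(F,\mathcal N)$'' is the same fact as the paper's partition identity $T(F,\mathcal N)=\mathop{\stackrel\cdot\bigcup}_{K\models F}\Tind(K,\mathcal N)$, and the converse direction in both cases factors the offending assignment through the quotient $\approx$, invokes substitutional closedness, and transports the hypothesis along the coordinate projection $\gamma'^*$ (resp.\ $\alpha^*$). The one place you are slightly more careful than the paper is in explicitly bringing the renamed axioms and $A_\approx$ into a common variable set $\vec w$ and tracking the behavior of $T(\cdot,\mathcal N)$ under the injections $f_i^*$; the paper folds this into the phrase ``possibly up to renaming variables'' without spelling out the measure-theoretic bookkeeping.
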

\begin{proof}
Note that for two different canonical structures $M$ and $M'$ on the same
vertex set, the sets $R_P(M)$ and $R_P(M')$ are different for at least one
$P\in\mathcal L$ and hence $\Tind(M,\mathcal N)$ and $\Tind(M',\mathcal N)$
are disjoint. Fix $n>0$ and let $\mathcal K_n$ be the set of all (labeled)
canonical structures on the vertex set $\{v_1,\ldots,v_n\}$. The above remark
readily implies that the sets $\set{\Tind(K,\mathcal N)}{K\in\mathcal K_n}$
form a (measurable) partition of $\mathcal E_n$. Now it is easy to prove, by a
straightforward induction on the construction of the formula $F$, that (cf.~Remark~\ref{rmk:straightforward})
\begin{equation} \label{eq:phi_expression}
  T(F,\mathcal N) = \mathop{\stackrel\cdot{\bigcup}}_{\substack{K\in\mathcal K_n \\ K\models F(v_1,\ldots,v_n)}}
  \Tind(K,\mathcal N).
\end{equation}
The ``only if'' part follows.

In the opposite direction, let $M$ be a canonical structure that is not a
model of $T$, i.e., we have $M\models \neg F(w_1,\ldots,w_m)$ for at least one
tuple $w_1,\ldots,w_m\in V(M)$ and an axiom $\forall \vec x F(\vec x)$ of $T$. Let $\approx$ be the equivalence
relation on
$[m]$ defined by $i\approx j$ iff $w_i=w_j$. Then $M\models
\neg F_\approx(v_1,\ldots,v_n)$ for {\em pairwise distinct}
$v_1,\ldots,v_n\in V(M)$ with $\{v_1,\ldots,v_n\} = \{w_1,\ldots,w_m\}$. Since $T$ is
substitutionally closed, we know that $F_\approx(y_1,\ldots,y_n)$ is a {\em propositional} consequence of some axioms
$A_1(y_1,\ldots,y_n),\ldots, A_r(y_1,\ldots,y_n)$ of the theory $T$, possibly up to renaming variables.
Let $N$ be the submodel of $M$ induced by $\alpha\df (v_1,\ldots,v_n)$, then by our assumption
we have $\lambda(T(A_i, \mathcal N))=1$ ($T(A_i, \mathcal N) \supseteq \mathcal E_n\setminus\mathcal D_n$ in the strong case).
Since $T$ commutes with propositional connectives (see Definition~\ref{def:truth}), we conclude that $\lambda(T(F_\approx, \mathcal N)) =1$
($T(F_\approx, \mathcal N)\supseteq\mathcal E_n\setminus\mathcal D_n$ in the strong case). Applying~\eqref{eq:phi_expression} to $F_\approx$ and noting that $N$
does not appear in the union, we see that $\lambda(T(N,\mathcal N))=0$ ($\Tind(N,\mathcal N)\subseteq\mathcal D_n$ in the strong case).
It only remains to note that according to our
definitions, we have $\Tind(M,\mathcal N)\subseteq
(\alpha^\ast)^{-1}(\Tind(N,\mathcal N))$ and $(\alpha^\ast)^{-1}(\mathcal D_n)\subseteq\mathcal D_{V(M)}$.
\end{proof}

\begin{example}
The restriction of being substitutionally closed is essential. Indeed, the exceptional theory $T_{\operatorname{Graph}}+\text{\eqref{eq:induced}}_2$ in
Example~\ref{ex:substitional2} is, as we observed, a peculiar axiomatization of the theory of empty graphs.
On the other hand, the second assumption in Theorem~\ref{thm:characterization} is satisfied by the complete graphon $\mathcal N=\mathcal E_2$.
\end{example}

\begin{remark}\label{rmk:theoninterpret}
Another application of this construction is that it easily allows us to
define the action of open interpretations on theons. Namely, let
$I\interpret{T_1}{T_2}$ be such an interpretation, where $T_\nu$ is in the
language $\mathcal L_\nu$, and let $\mathcal N$ be a $T_2$-on (weak or strong). For
every $P\in\mathcal L_1$, $I(P)$ is an open formula in the language $\mathcal L_2$ and thus
we may form a $P$-on $T(I(P),\mathcal N)\subseteq
\mathcal E_{k(P)}$ according to Definition~\ref{def:truth}. Then the Euclidean structure made by these $P$-ons
for $P\in\mathcal L_1$ is a $T_1$-on (weak or strong) that will be denoted by
$I(\mathcal N)$ and satisfies $\phi_{I(\mathcal{N})} = \phi_{\mathcal{N}}\comp\pi^I$ (cf.~Theorem~\ref{thm:flagpi}). The proof goes along the same lines as the proof of Theorem~\ref{thm:characterization}.
\end{remark}

Before we proceed to the rather technical statement of the uniqueness theorem, let us provide some intuition for operations that preserve densities of submodels in a theon.

For simplicity, let us consider the case of a single predicate $P$ of arity $3$. In Theorem~\ref{thm:graphon_uniqueness}
for graphons, we have seen one example of such an operation, ``permuting vertices''. Namely, let  $f_1\function{[0,1]}{[0,1]}$
be an arbitrary measure preserving function. If we let
\begin{align*}
  \mathcal{N}'
  & =
  \{x\in\mathcal{E}_3 \mid
  (f_1(x_{\{1\}}),f_1(x_{\{2\}}),f_1(x_{\{3\}}),x_{\{1,2\}},x_{\{1,3\}},x_{\{2,3\}},x_{\{1,2,3\}})\in\mathcal{N}\},
\end{align*}
then $\mathcal{N}$ and $\mathcal{N}'$ represent the same limit object (i.e., we have
$\phi_{\mathcal{N}}=\phi_{\mathcal{N}'}$). This is a complete triviality.

It is equally clear that in the same manner we can ``permute'' the variables indexed by sets of higher cardinalities.
Say, for a measure preserving function $f_2\function{[0,1]}{[0,1]}$, the $P$-on
\begin{align*}
  \mathcal{N}''
  & =
  \left\{x\in\mathcal{E}_3 \mid
  (x_{\{1\}},x_{\{2\}},x_{\{3\}},f_2(x_{\{1,2\}}),f_2(x_{\{1,3\}}),f_2(x_{\{2,3\}}),x_{\{1,2,3\}})
  \in\mathcal{N}\right\}
\end{align*}
also represents the same limit object as $\mathcal N$.

Let us now do something slightly more interesting and allow $f_2$ to depend on the vertices.
That is, we take a measurable function $f_2\function{\mathcal{E}_2}{[0,1]}$ such that for
every $(x_{\{1\}},x_{\{2\}})\in[0,1]^2$ the function $x_{\{1,2\}}\mapsto f_2(x_{\{1\}},x_{\{2\}},x_{\{1,2\}})$ is measure preserving and define
\begin{align*}
  \mathcal{N}^{(3)}
  =
  \Bigl\{x\in\mathcal{E}_3 \Big\vert
  & \bigl(
  x_{\{1\}},x_{\{2\}},x_{\{3\}},
  \\ & f_2(x_{\{1\}},x_{\{2\}},x_{\{1,2\}}),f_2(x_{\{1\}},x_{\{3\}},x_{\{1,3\}}),f_2(x_{\{2\}},x_{\{3\}},x_{\{2,3\}}),
  \\ & x_{\{1,2,3\}}\bigr)
  \in\mathcal{N}\Bigr\}.
\end{align*}

Then we will already need a consistency condition that, as it turns out, simply amounts to requiring that $f_2$
is symmetric. The reason is best illustrated by the following simple example; remarkably, the symmetry condition
is mostly needed when the underlying predicates are highly asymmetric.

\begin{example}
Let us for a moment switch from $\mathcal E_3$ to $\mathcal E_2$, i.e., to ordinary digraphons (cf.~\cite{DiJa}).
Then $\mathcal N \df \set{x\in\mathcal E_2}{x_{\{1,2\}}\leq 1/2}$ describes a random graph (viewed as a model of
$T_{\operatorname{Digraph}}$ in which a graph edge is replaced by anti-parallel edges of the digraph), while the digraphon $\mathcal N' \df \set{x\in\mathcal E_2}{x_{\{1,2\}}\leq 1/2 \equiv x_{\{1\}}\leq x_{\{2\}}}$ corresponds to
a random tournament. These are totally different combinatorial objects.

Nonetheless, the (non-symmetric) function
$$
f_2(x_{\{1\}},x_{\{2\}},x_{\{1,2\}}) =
\begin{dcases*}
  x_{\{1,2\}} & if $x_{\{1\}}\leq x_{\{2\}}$;\\
  1 - x_{\{1,2\}} & if $x_{\{1\}} > x_{\{2\}}$
\end{dcases*}
$$
maps $\mathcal N$ to $\mathcal N'$ a.e.\ and vice versa.
\end{example}

Naturally, we can go one step further and mix all these ``permutations'' as follows. If
$f_d\function{\mathcal{E}_d}{[0,1]}$ for $d=1,2,3$, then the $P$-on
\begin{equation}\label{eq:3peon}
  \begin{aligned}
    \mathcal{N}^{(4)}
    =
    \Bigl\{x\in\mathcal{E}_3 \Big\vert
    & \bigl(
    f_1(x_{\{1\}}),f_1(x_{\{2\}}),f_1(x_{\{3\}}),
    \\ & f_2(x_{\{1\}},x_{\{2\}},x_{\{1,2\}}),f_2(x_{\{1\}},x_{\{3\}},x_{\{1,3\}}),f_2(x_{\{2\}},x_{\{3\}},x_{\{2,3\}}),
    \\ & f_3(x_{\{1\}},x_{\{2\}},x_{\{3\}},x_{\{1,2\}},x_{\{1,3\}},x_{\{2,3\}},x_{\{1,2,3\}})\bigr)
    \in\mathcal{N}\Bigr\}
  \end{aligned}
\end{equation}
represents the same limit object as $\mathcal{N}$ as long as each $f_d$ is $S_d$-invariant and is measure preserving on
the highest order argument.

Finally, let us note that in general there may not exist any measure preserving transformation $f$ taking $\mathcal N$ to
$\mathcal N'$ directly. The following example is paradigmatic in this respect.

\begin{example} \label{ex:twoorderedtheons}
Recall from Example~\ref{ex:orders} that $T\df T_{\operatorname{LinOrder}}$ has only one model of each size. This
immediately implies that $\HomT{T}$ has only one element, hence all $T$-ons represent this unique limit object.
However, it is straightforward to check that for the $T$-ons
  \begin{align*}
    \mathcal{N} & \df \{x\in\mathcal{E}_2 \mid x_{\{1\}}\bmod (1/2) < x_{\{2\}} \bmod (1/2)\};\\
    \mathcal{N}' & \df \{x\in\mathcal{E}_2 \mid x_{\{1\}}\bmod (1/3) < x_{\{2\}} \bmod (1/3)\}
  \end{align*}
  there is no family of symmetric measure preserving functions $f$ taking one into another (see Figures~\ref{subfig:1/2} and~\ref{subfig:1/3}).

  The remedy is to employ a ``middle theon'' (cf.~\cite[Theorem~3.10]{Lov4}).
  Let $\mathcal N_0 \df \{x\in\mathcal{E}_2 \mid x_{\{1\}}\bmod (1/6) < x_{\{2\}} \bmod (1/6)\}$ (Figure~\ref{subfig:1/6})
  and consider measure preserving functions $f_1(x) = (3x)\bmod 1$,\ $g_1(x) = (2x)\bmod 1$. Then, suppressing the dummy argument
  $x_{\{1,2\}}$, we have
  $$
  x\in\mathcal N_0 \equiv (f_1(x_{\{1\}}), f_1(x_{\{2\}})) \in \mathcal N
  \equiv (g_1(x_{\{1\}}), g_1(x_{\{2\}})) \in \mathcal N'.
  $$

  Dually, and, perhaps, more naturally, we could instead ``flatten out'' the structure and consider the ``standard model'' $\mathcal N_1 \df \{x\in\mathcal{E}_2 \mid x_{\{1\}} < x_{\{2\}}\}$ (Figure~\ref{subfig:1}). Then, employing the same functions $f_1,g_1$ as above, we would have
  \begin{eqnarray*}
  x\in\mathcal N &\equiv& (g_1(x_{\{1\}}),g_1(x_{\{2\}})) \in \mathcal N_1;\\
  x\in\mathcal N' &\equiv& (f_1(x_{\{1\}}),f_1(x_{\{2\}})) \in \mathcal N_1.
  \end{eqnarray*}
  We would like to note, however, that we do not know how to extend this second approach to the general situation.% (we will return to this in Section~\ref{sec:conclusion}).
\end{example}

\begin{figure}[ht]
  \begin{center}
    \begingroup

\def\basesize{3.2}
\def\axissize{4}

\def\produceImage#1#2#3{%
  \begin{subfigure}[b]{0.4\textwidth}
    \begin{center}
      \begin{tikzpicture}
        \foreach \i in {0,...,#1}{
          \foreach \j in {0,...,#1}{
            \pgfmathsetmacro{\mult}{\basesize / #1};
            \coordinate (\i\j) at ($\mult*(\i,\j)$);
          }
        }

        \draw[dashed] (\basesize,0) -- (\basesize,\basesize) -- (0,\basesize);
        \draw[->] (0,0) -- (\axissize,0);
        \draw[->] (0,0) -- (0,\axissize);

        \node[below] at (\axissize,0) {$x_{\{1\}}$};
        \node[left] at (0,\axissize) {$x_{\{2\}}$};

        \node[below left] at (0,0) {$0$};
        \node[below] at (\basesize,0) {$1$};
        \node[left] at (0,\basesize) {$1$};

        \foreach \i in {1,...,#1}{
          \pgfmathtruncatemacro{\pi}{\i - 1};
          \foreach \j in {1,...,#1}{
            \pgfmathtruncatemacro{\pj}{\j - 1};

            \fill[gray] (\pi\pj) -- (\pi\j) -- (\i\j) -- cycle;
          }
        }

      \end{tikzpicture}
      \caption{$#2$}
      \label{#3}
    \end{center}
  \end{subfigure}
}

\produceImage{2}{\mathcal{N}}{subfig:1/2}
\quad
\produceImage{3}{\mathcal{N}'}{subfig:1/3}

\produceImage{6}{\mathcal{N}_0}{subfig:1/6}
\quad
\produceImage{1}{\mathcal{N}_1}{subfig:1}

\endgroup
    \caption{Projections of $T_{\operatorname{LinOrder}}$-ons of Example~\ref{ex:twoorderedtheons} on the coordinates $x_{\{1\}}$ and $x_{\{2\}}$.}
    \label{fig:twoorderedtheons}
  \end{center}
\end{figure}

Let us now proceed to formal definitions and statements.

\begin{definition}\label{def:hoa}
  For a finite set $V$, let $r(V)^* \df r(V)\setminus\{V\}$ and let $\mathcal{E}_V^* \df [0,1]^{r(V)^*}$. Again, as a shorthand, when $V=[k]$, we will write $r(k)^*$ and $\mathcal{E}_k^*$ instead of $r([k])^*$ and $\mathcal{E}_{[k]}^*$.

  Let $f\function{\mathcal{E}_V}{[0,1]}$. The function $f$ is said to be \emph{symmetric} if it is invariant under the action of $S_V$. Furthermore, the function $f$ is said to be \emph{measure preserving on the
  highest order argument} (h.o.a.) if it is measurable and for every $x^*\in\mathcal{E}_V^*$, the function
  \begin{align*}
    \begin{tabular}{>{$\displaystyle}r<{$}>{$\displaystyle}c<{$}>{$\displaystyle}l<{$}}
      [0,1]\cong [0,1]^{\{V\}} & \longrightarrow & [0,1]\\
      y & \longmapsto & f(x^*, y)
    \end{tabular}
  \end{align*}
  is measure preserving.

  Suppose now that $f = (f_1,\ldots,f_k)$ is a family of symmetric functions with $f_d\function{\mathcal{E}_d}{[0,1]}$. Then we define a new sequence $\widehat f =(\widehat f_1,\ldots,\widehat f_k)$ with
  $\widehat f_d\function{\mathcal E_d}{\mathcal E_d}$ by
  \begin{align*}
    \begin{functiondef}
      \widehat f_d(x)_A \df f_{\lvert A\rvert}(\alpha_A^\ast(x))\quad (A\in r(d)),
    \end{functiondef}
  \end{align*}
  where $\alpha_A\injection{[\lvert A\rvert]}{[d]}$ is a fixed injection with $\im(\alpha_A)=A$. This definition is independent of the choice of $\alpha_A$ since the function $f_{\lvert A\rvert}$ is symmetric, although in practice it is always convenient to take as $\alpha_A$ the enumeration of $A$ in the increasing order. As a consequence, $\widehat f_d$ is $S_d$-equivariant, and it is straightforward to check that all $\widehat f_d$'s are measure preserving in the ordinary sense. Note also that the diagram
  $$
  \begin{tikzcd}
    \mathcal E_D\arrow[r, "\widehat f_D"]\arrow[d, "\beta^\ast", swap] & \mathcal E_D\arrow[d, "\beta^\ast"]\\
    \mathcal E_d\arrow[r, "\widehat f_d"] & \mathcal E_d
  \end{tikzcd}
  $$
is commutative, where $1\leq d\leq D\leq k$ and $\beta\injection{[d]}{[D]}$ is an arbitrary injection. Hence (since $\beta^\ast$ is surjective) $\widehat f_1,\ldots,\widehat f_{k-1}$ are in principle completely determined by $\widehat f_k$. It is, however, more handy to keep all of them in the notation.
\end{definition}

\begin{theorem}[Uniqueness, first form]\label{thm:TheonUniqueness}
  Let $T$ be a canonical theory in a language $\mathcal{L}$, let $k\df\max\{k(P) \mid P\in\mathcal{L}\}$, and
  let $\mathcal{N}$ and $\mathcal{N}'$ be two weak $T$-ons. The following are equivalent.
  \begin{enumerate}[label={\arabic*.}, ref={\arabic*)}]
  \item We have $\phi_{\mathcal{N}}=\phi_{\mathcal{N}'}$, that is $\mathcal N$ and $\mathcal N'$ give
  rise to the same element of $\HomT T$;
    \label{it:densityequiv}
  \item There exist families $f = (f_1,\ldots, f_k)$ and $g = (g_1,\ldots,g_k)$ of symmetric functions
  measure preserving on h.o.a., $f_d\function{\mathcal{E}_d}{[0,1]}$ and $g_d\function{\mathcal{E}_d}{[0,1]}$ and a weak $T$-on $\mathcal N''$
with the property
    $$
     x\in\mathcal N_P'' \equiv \widehat{f}_{k(P)}(x)\in\mathcal{N}_P \equiv \widehat{g}_{k(P)}(x)\in\mathcal{N}'_P,
    $$
     for every $P\in \mathcal L$ and almost every $x\in\mathcal{E}_{k(P)}$.
     \label{it:fgequiv}
  \end{enumerate}
\end{theorem}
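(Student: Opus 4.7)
The plan is to prove the two implications separately, with very different techniques. The direction (\ref{it:fgequiv}) $\Rightarrow$ (\ref{it:densityequiv}) proceeds by change of variables, while the harder direction (\ref{it:densityequiv}) $\Rightarrow$ (\ref{it:fgequiv}) will require the Aldous--Hoover--Kallenberg representation theorem for exchangeable arrays, which matches the strategy flagged in the introduction for Section~\ref{sec:existence-uniqueness}.

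For the easy direction, I would first extend Definition~\ref{def:hoa} to an arbitrary finite vertex set $V$ by choosing, for each $A\in r(V)$, an injection $\alpha_A\injection{[\lvert A\rvert]}{V}$ with $\im\alpha_A = A$, and defining $\widehat f_V\function{\mathcal E_V}{\mathcal E_V}$ by $\widehat f_V(x)_A \df f_{\lvert A\rvert}(\alpha_A^\ast(x))$. Symmetry of each $f_d$ makes $\widehat f_V$ independent of these choices, and a Fubini argument peeling off subsets of $V$ in decreasing order of cardinality, using the h.o.a.\ measure-preserving property of each $f_d$, shows that $\widehat f_V$ is itself measure preserving. The analogue of the commuting diagram at the end of Definition~\ref{def:hoa} (now for arbitrary $\beta\injection{W}{V}$) yields $\beta^\ast\comp \widehat f_V = \widehat f_W\comp \beta^\ast$ almost everywhere. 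Combining these, for every canonical $M$ with $V(M)=V$ the preimage $\widehat f_V^{-1}(\Tind(M,\mathcal N))$ coincides (up to a null set) with $\Tind(M,\mathcal N'')$, so $\tind(M,\mathcal N'') = \tind(M,\mathcal N)$; the analogous statement for $g$ and $\mathcal N'$ gives $\phi_{\mathcal N}=\phi_{\mathcal N''}=\phi_{\mathcal N'}$.

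For the hard direction, I would associate to each weak $T$-on $\mathcal N$ a random countable structure $\rn M_{\mathcal N}$ on vertex set $\mathbb N_+$: sample $\rn x = (\rn x_A)_{A\in r(\mathbb N_+)}$ i.i.d.\ uniform on $[0,1]$, and for each $P\in\mathcal L$ of arity $k$ and each injection $\alpha\injection{[k]}{\mathbb N_+}$ declare $P(\alpha(1),\ldots,\alpha(k))$ to hold iff $\alpha^\ast(\rn x)\in\mathcal N_P$. The array $\rn M_{\mathcal N}$ is manifestly jointly exchangeable, and its induced finite submodel statistics compute to $\tind(\place,\mathcal N)$. In particular, $\phi_{\mathcal N}=\phi_{\mathcal N'}$ is equivalent to $\rn M_{\mathcal N}\sim \rn M_{\mathcal N'}$ by a standard argument combining the fact that finite-dimensional distributions determine the law of the array with the strong law of large numbers applied to densities. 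I would then invoke Hoover's uniqueness theorem for exchangeable arrays, which says that two representations of the same exchangeable array are coupled through a common representation mediated by measure preserving functions; unpacking the form of those functions, using canonicity of $T$ and a symmetrization step, produces the families $(f_d)_{d\leq k}$, $(g_d)_{d\leq k}$ and the middle theon $\mathcal N''$ with the stated properties.

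The main obstacle is the hard direction, specifically extracting the ``middle theon'' form (rather than a direct transformation) from Hoover's theorem while tracking measurability, symmetry, and the h.o.a.\ measure-preserving property of the functions produced by AHK. The classical AHK statement does not a priori guarantee symmetry of the coordinate functions, and one must use canonicity of $T$ together with an averaging/symmetrization to obtain functions defined on $\mathcal E_d$ rather than on $[0,1]^d$ with arbitrary permutation behavior on the highest order argument. This technical difficulty is precisely what makes routing through a middle theon $\mathcal N''$ unavoidable in general, as Example~\ref{ex:twoorderedtheons} already illustrates at small scale.
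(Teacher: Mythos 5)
Your high-level plan matches the paper's: derive the easy direction by a change-of-variables argument, and for the hard direction pass to an exchangeable random countable model $\rn{K}$ drawn from the theon, show $\rn{K}\sim\rn{K}'$ when $\phi_{\mathcal N}=\phi_{\mathcal N'}$, and invoke the Hoover--Kallenberg representation-uniqueness theorem. The construction of $\rn{M}_{\mathcal N}$ you describe is exactly the one used in the existence proof of Theorem~\ref{thm:theoncryptomorphism} and reused here, and you correctly note that the middle theon is unavoidable in principle (Example~\ref{ex:twoorderedtheons}).

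However, you have misidentified the main technical obstacle. You worry about symmetry of the coordinate functions and propose fixing it by ``canonicity plus averaging/symmetrization.'' This is not the issue: the version of Hoover--Kallenberg stated as Theorem~\ref{thm:exchuniqueness} already produces \emph{symmetric} functions measure preserving on h.o.a.\ with domains indexed by finite subsets of $\mathbb N_+$ (i.e., defined on $\mathcal E_d^+$), so no symmetrization step is needed, and canonicity of $T$ plays no role at that point. The genuine subtlety, which your sketch does not touch, is that Theorem~\ref{thm:exchuniqueness} is formulated for general (not necessarily dissociated) exchangeable arrays and therefore produces $f_d, g_d$ on $\mathcal E_d^+ = [0,1]\times \mathcal E_d$, with an extra ``mixing'' variable $\rn\eta$. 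The theon-derived arrays are local, so the representing maps $\chi_d$ do not depend on this first argument, but the $f_d$, $g_d$ a priori could. The paper removes this dependence by exploiting the locality of the $\widehat f_d$ construction (equation~\eqref{eq:locality}), which collapses the identity $\chi_d(\widehat f_d(x,y)) = \chi_d'(\widehat g_d(x,y))$ to a fiberwise statement $\zeta_d((\widehat{f^x})_d(y)) = \zeta_d'((\widehat{g^x})_d(y))$, and then applies Fubini to fix a good $x_0$ whose fibers $f_d^{x_0}, g_d^{x_0}$ work for all $d$ simultaneously. Without this argument the families you produce live on $\mathcal E_d^+$ rather than $\mathcal E_d$, and the statement is not yet proved. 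As a minor point, no strong law of large numbers is needed: the equivalence $\phi_{\mathcal N}=\phi_{\mathcal N'}\iff \rn K\sim \rn K'$ is the direct computation $\prob{\rn K\vert_{[\ell]}=L}=\frac{\ell!}{\lvert\Aut(L)\rvert}\phi_{\mathcal N}(L)$ for each labeled $L$, together with the fact that the law of $\rn K$ is determined by its finite marginals.
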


\begin{remark}\label{rmk:graphon_uniqueness}
  Upon closer inspection of graphon uniqueness (Theorem~\ref{thm:graphon_uniqueness}), the reader may have noticed that there is no analogue of the functions $f_2,g_2\function{\mathcal{E}_2}{[0,1]}$. The reason comes from the way that we represent $2$-hypergraphons as graphons (cf.~Section~\ref{sec:hypergraphons}): a $2$-hypergraphon $\mathcal{H}$ corresponds to the graphon $W(u,v) = \lambda(\{p\in[0,1]\mid (u,v,p)\in\mathcal{H}\})$ and since $f_2$ and $g_2$ are measure preserving on h.o.a., these functions do not affect $W$.
\end{remark}

Finally, let us present a slightly stronger but somewhat more technical version that will turn out to be
useful in Section~\ref{sec:otherobjects} (cf.~\cite[Theorem~7.1(vi)]{DiJa}). For an
intuition, note that the choice of $[0,1]$ as the probability space on which the intermediate $T$-on $\mathcal N''$ in Theorem~\ref{thm:TheonUniqueness} lives is rather arbitrary; we will further elaborate on this point in Section~\ref{sec:otherobjects}.
In particular, we can take as its ground space the square $\Omega=[0,1]^2$. Then the stronger version essentially says that
one of the two functions $f,g$ can be taken as (or, rather, induced from) the projection $\Omega\to [0,1]$.

\begin{definition}[Definition~\ref{def:hoa}, cntd.]\label{def:hoa2}
   Consider the product action of $S_V$ on $\mathcal{E}_V\times\mathcal{E}_V$ and
  let $h\function{\mathcal{E}_V\times\mathcal{E}_V}{[0,1]}$. Analogously to the previous case, the function $h$ is said to be \emph{symmetric} if it is invariant under the action of $S_V$. Furthermore, the function $h$
  is \emph{measure preserving on the highest order argument} (h.o.a.) if it is measurable and for every $(x^*, \widehat x^*) \in\mathcal{E}_V^*\times\mathcal{E}_V^*$, the function
  \begin{align*}
    \begin{tabular}{>{$\displaystyle}r<{$}>{$\displaystyle}c<{$}>{$\displaystyle}l<{$}}
      [0,1]^2\cong [0,1]^{\{V\}}\times[0,1]^{\{V\}} & \longrightarrow & [0,1]\\
      (y,\widehat y) & \longmapsto & h((x^*, y), (\widehat x^*, \widehat y))
    \end{tabular}
  \end{align*}
  is measure preserving.

  Likewise, if $h = (h_1,\ldots,h_k)$ is a family of symmetric functions with $h_d\function{\mathcal{E}_d\times\mathcal{E}_d}{[0,1]}$,
  then we define the tuple $\widehat h = (\widehat h_1,\ldots, \widehat h_k)$;  $\widehat h_d\function{\mathcal E_d\times\mathcal E_d}{\mathcal E_d}$ by
  \begin{align*}
    \begin{functiondef}
      \widehat h_d(x,\widehat x)_A \df h_{\lvert A\rvert}(\alpha_A^*(x), \alpha_A^*(\widehat x))\quad (A\in r(d)),
    \end{functiondef}
  \end{align*}
  where $\alpha_A$ is as before.
\end{definition}

\begin{theorem}[Uniqueness, second form]\label{thm:TheonUniquenessSecond}
  Let $T$ be a canonical theory in a language $\mathcal{L}$, let $k\df\max\{k(P) \mid P\in\mathcal{L}\}$, and
  let $\mathcal{N}$ and $\mathcal{N}'$ be two $T$-ons. The following are equivalent.
  \begin{enumerate}[label={\arabic*.}, ref={\arabic*)}]
  \item We have $\phi_{\mathcal{N}}=\phi_{\mathcal{N}'}$, that is $\mathcal N$ and $\mathcal N'$ give rise to the same element of $\HomT T$;
  \item There exists a family $h = (h_1,\ldots, h_k)$ of symmetric functions measure preserving on h.o.a.,
  $h_d\function{\mathcal{E}_d\times\mathcal{E}_d}{[0,1]}$ such that
    $$
     x\in\mathcal{N}_P\equiv \widehat{h}_{k(P)}(x,\widehat x)\in\mathcal{N}'_P,
    $$
    for every predicate symbol $P\in\mathcal{L}$ and for almost every $(x,\widehat x)\in\mathcal{E}_{k(P)}\times\mathcal{E}_{k(P)}$.
    \label{it:hequiv}
  \end{enumerate}
\end{theorem}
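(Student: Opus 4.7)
The plan is to reduce Theorem~\ref{thm:TheonUniquenessSecond} to Theorem~\ref{thm:TheonUniqueness} by producing a single family $h$ that combines the two families $f,g$ from the first form of uniqueness, using the extra copy $\widehat x$ of $\mathcal{E}_d$ to supply the randomness needed to ``invert'' $\widehat f_d$.

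The easy direction (2) $\Rightarrow$ (1) is Fubini. Given $h$, one checks by induction on $|A|$ that $\widehat h_n\function{\mathcal{E}_n\times\mathcal{E}_n}{\mathcal{E}_n}$ pushes Lebesgue to Lebesgue: conditioning on the coordinates at levels strictly below $|A|$, the $A$-coordinate $h_{|A|}(\alpha_A^\ast(x),\alpha_A^\ast(\widehat x))$ depends on $(x_A,\widehat x_A)$ in a measure-preserving way (by h.o.a.~measure preservation of $h_{|A|}$), and the various coordinates at the same level are conditionally independent given the lower levels. Symmetry of each $h_d$ then yields the equivariance $\beta^\ast\comp\widehat h_n=\widehat h_m\comp(\beta^\ast\times\beta^\ast)$ for every injection $\beta\injection{[m]}{[n]}$. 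Combined, these facts give that $\widehat h_n^{-1}(\Tind(M,\mathcal{N}'))$ coincides with $\Tind(M,\mathcal{N})\times\mathcal{E}_n$ modulo null sets, whence $\tind(M,\mathcal{N})=\tind(M,\mathcal{N}')$ for every canonical $M$, and (1) follows.

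For the hard direction (1) $\Rightarrow$ (2), first invoke Theorem~\ref{thm:TheonUniqueness} to obtain a weak $T$-on $\mathcal{N}''$ together with symmetric families $f,g$ measure preserving on h.o.a., witnessing the chain $\widehat f_{k(P)}(x)\in\mathcal{N}_P\equiv x\in\mathcal{N}''_P\equiv \widehat g_{k(P)}(x)\in\mathcal{N}'_P$ a.e. The plan is then to build a symmetric selector family $\widetilde\psi=(\widetilde\psi_d)_{d\leq k}$, $\widetilde\psi_d\function{\mathcal{E}_d\times\mathcal{E}_d}{\mathcal{E}_d}$, satisfying $\widehat f_d\comp\widetilde\psi_d(x,\widehat x)=x$ a.e.\ and the key consistency property that $\widetilde\psi_d(x,\widehat x)_A$ depends only on $(\alpha_A^\ast(x),\alpha_A^\ast(\widehat x))$, so that $\alpha_A^\ast\comp\widetilde\psi_d=\widetilde\psi_{|A|}\comp(\alpha_A^\ast\times\alpha_A^\ast)$. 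One constructs $\widetilde\psi_d(x,\widehat x)_A$ by induction on $|A|$: at level $1$, pick a measure-preserving $\rho_1\function{[0,1]^2}{[0,1]}$ that inverts $f_1$ (i.e., $f_1(\rho_1(t,\widehat t))=t$ a.e.) and set $\widetilde\psi_d(x,\widehat x)_{\{i\}}\df\rho_1(x_{\{i\}},\widehat x_{\{i\}})$; at level $\ell>1$, with the strictly lower coordinates of $\widetilde\psi_d$ already determined as functions of $(\alpha_A^\ast(x),\alpha_A^\ast(\widehat x))$, use the h.o.a.\ measure preservation of $f_\ell$ together with a measurable disintegration to choose the $A$-coordinate as a function of $(\alpha_A^\ast(x),\alpha_A^\ast(\widehat x))$ that inverts $f_\ell$ in its top argument, with the extra randomness supplied by $\widehat x_A$. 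Finally put $h_d(x,\widehat x)\df g_d(\widetilde\psi_d(x,\widehat x))$: symmetry of $h_d$ is inherited from $g_d$ and $\widetilde\psi_d$, while h.o.a.\ measure preservation of $h_d$ comes from composing the top-coordinate slice of $\widetilde\psi_d$ (measure preserving by construction) with the h.o.a.-preserving $g_d$. The consistency of $\widetilde\psi$ yields $\widehat h_d=\widehat g_d\comp\widetilde\psi_d$, and chaining the three equivalences above gives $x\in\mathcal{N}_P\equiv\widehat h_{k(P)}(x,\widehat x)\in\mathcal{N}'_P$ a.e., as required.

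The main obstacle will be performing the inductive measurable selection with the right symmetry and h.o.a.-preservation properties: at each level one has to invert a family of h.o.a.-measure-preserving maps parameterized by the lower coordinates, while simultaneously maintaining equivariance under $S_d$, compatibility across dimensions $d$, and preservation of measure on the top coordinate. This ultimately rests on standard-Borel disintegration arguments of the same flavor as those used in the proof of Theorem~\ref{thm:TheonUniqueness} itself via the Aldous--Hoover--Kallenberg machinery.
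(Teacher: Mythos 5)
Your proposal is correct, but it takes a genuinely different route from the paper. The paper treats Theorem~\ref{thm:TheonUniquenessSecond} exactly as Theorem~\ref{thm:TheonUniqueness}: it applies Theorem~\ref{thm:exchuniqueness} directly, using its \emph{third} bullet (the Hoover--Kallenberg formulation in terms of a single family $h$ on $\mathcal{E}^+_d\times\mathcal{E}^+_d$) rather than the second, sets up the same dummy-variable representation $\chi_d(x,y)=\zeta_d(y)$, and removes the extra $[0,1]$ coordinate by the same Fubini argument. Your proof instead reduces Theorem~\ref{thm:TheonUniquenessSecond} to Theorem~\ref{thm:TheonUniqueness}: you take the $f,g,\mathcal{N}''$ produced by the first uniqueness form, and then construct a consistent, $S_d$-equivariant, measure-preserving selector family $\widetilde\psi$ that inverts $\widehat f_d$ using the second copy of $\mathcal{E}_d$ as the source of disintegration randomness, so that $h_d\df g_d\comp\widetilde\psi_d$ does the job. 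In effect this re-derives, in the peon-specific setting, the implication from the $f,g$-form to the $h$-form of Kallenberg's representation-equivalence lemma; the paper instead cites that implication as already proved. What you gain is a derivation that stays closer to the theonic framework and avoids having to appeal separately to the third bullet of Theorem~\ref{thm:exchuniqueness}; what it costs is the inductive measurable-disintegration construction of $\widetilde\psi$ (consistent across arities, symmetric, and h.o.a.-preserving), which you rightly flag as the technical crux and leave at the level of a sketch. The sketch is sound: at each level one passes to the regular conditional law of $f_\ell$ given the already-constructed lower coordinates, picks a jointly measurable conditional quantile selector (which is automatically $S_\ell$-invariant because it depends on the lower-level argument only through the symmetric function $f_\ell$), and checks that measure preservation on the top coordinate propagates through $g_d$. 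This gives a correct alternative proof, more elementary in its ingredients than a direct appeal to the $h$-formulation of Hoover--Kallenberg, though heavier in measure-theoretic bookkeeping than the paper's one-line reference.
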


\begin{example}\label{ex:uniquenessnecessary}

  In the notation of Example~\ref{ex:twoorderedtheons}, we can set
  \begin{align*}
    h_1(x,\widehat x) & = \frac{(2x)\bmod 1}{3} + \frac{\lfloor 3\widehat x\rfloor}{3},
  \end{align*}
  which gives
  \begin{align*}
    x\in\mathcal{N} & \equiv\widehat{h}_2(x,\widehat x)\in\mathcal{N}',
  \end{align*}
  for almost every $(x,\widehat x)\in\mathcal{E}_2\times\mathcal{E}_2$.

  On the other hand, setting
  \begin{align*}
    h'_1(x,\widehat x) & = \frac{(3x)\bmod 1}{2} + \frac{\lfloor 2\widehat x\rfloor}{2}
  \end{align*}
  gives
  \begin{align*}
    \widehat{h}'_2(x,\widehat x)\in\mathcal{N} & \equiv x\in\mathcal{N}',
  \end{align*}
  for almost every $(x,\widehat x)\in\mathcal{E}_2\times\mathcal{E}_2$.
\end{example}

\section{Euclidean removal lemmas}
\label{sec:removal}

As a warm-up, we begin with proving a (much simpler and constructive)
version of Theorem~\ref{thm:ierl} for almost Horn theories, which we define below.

\begin{definition}\label{def:horn}
  A {\em literal} is either an atomic formula ({\em positive} literal) or its negation ({\em negative} literal). An \emph{almost Horn clause} is a disjunction of literals with at most one positive literal not involving equality\footnote{Thus, the difference with a Horn clause is that we allow any number of positive literals based on equality. For example, the formula $P(x,y)\lor\neg Q(x)\lor\neg R(y,z)\lor x=y\lor y=z\lor x\neq z$ is an almost Horn clause but not a Horn clause.}.

  Let us call a canonical theory an \emph{almost Horn theory} if all of its axioms are almost Horn clauses (note that the canonicity axioms~\eqref{eq:canonical} are equivalent to Horn clauses).
\end{definition}

By using variable substitution and renaming and arguments similar to Theorem~\ref{thm:canonical} we can re-axiomatize any almost Horn theory to have only three types of axioms.
\begin{enumerate}
\item \emph{Fact clauses}, which are of the form
  \begin{align}\label{eq:HornFact}
    \bigwedge_{1\leq i < j\leq n} x_i \neq x_j
    \to Q(x_1,\ldots,x_{k(Q)}),
  \end{align}
  where $k(Q)\leq n$.
\item \emph{Definite clauses}, which are of the form
  \begin{align}\label{eq:HornDefinite}
    \bigwedge_{1\leq i < j\leq n} x_i \neq x_j
    \land \bigwedge_{t=1}^T P_t(x_{i_{t,1}},\ldots,x_{i_{t,k(P_t)}})
    \to Q(x_1,\ldots,x_{k(Q)}),
  \end{align}
  where $T > 0$; $k(Q), i_{t,j}\leq n$ and for any $t$, $i_{t,1},\ldots, i_{t,k(P_t)}$ are pairwise distinct.
\item \emph{Goal clauses}, which are of the form
  \begin{align}\label{eq:HornGoal}
    \neg\left(\bigwedge_{1\leq i < j\leq n} x_i \neq x_j
    \land \bigwedge_{t=1}^T P_t(x_{i_{t,1}},\ldots,x_{i_{t,k(P_t)}})\right),
  \end{align}
  where $T > 0$; $i_{t,j}\leq n$ and for any $t$, $i_{t,1},\ldots, i_{t,k(P_t)}$ are pairwise distinct.
\end{enumerate}
Note that this axiomatization makes the theory substitutionally closed (cf.~Definition~\ref{def:subsclosed}): every non-trivial
substitution will trivialize all the axioms due to the presence of the term $\bigwedge_{1\leq i < j\leq n} x_i \neq x_j$.

\begin{example}\label{ex:Horn}
  Up to re-axiomatization, the theories $T_{\operatorname{Graph}}$, $T_{k\operatorname{-Hypergraph}}$, $T_{\operatorname{Order}}$ and $T_{\operatorname{EqRel}}$ are almost Horn theories. Furthermore, any theory obtained from an almost Horn theory $T$ by forbidding {\em non-induced} models (i.e., by adding goal clauses of the form $\neg\PDopen(M)$ for some $M\in\mathcal{M}[T]$) is also an almost Horn theory.
\end{example}

\begin{definition} \label{def:density}
  Let $A\subseteq [0,1]^d$ be a Lebesgue measurable set. A point $x\in [0,1]^d$ is a \emph{Lebesgue density point of $A$} if
  \begin{align}\label{eq:density}
    \lim_{r\to 0^+}\frac{\lambda(B(x,r)\cap A)}{\lambda(B(x,r)\cap [0,1]^d)} & = 1,
  \end{align}
  where $B(x,r)$ denotes the $\ell_\infty$-ball\footnote{In fact, one can use other norms to define Lebesgue density points and
  get an a.e.\ equivalent definition, but for us it will be slightly more convenient to use the $\ell_\infty$-norm.} of radius $r$ centered in $x$.

  We will denote the set of all Lebesgue density points of $A$ by $D(A)$.
\end{definition}

The property of Lebesgue density points below says that almost every point of a Lebesgue
measurable set is a density point of it and almost every point of its complement  is not a density point (see e.g.~\cite[I-5.8(ii)]{Bog} or~\cite[Theorem~3.21]{Oxt}).

\begin{proposition}\label{prop:Ldensityae}
  If $A$ is a Lebesgue measurable subset of $[0,1]^d$ then $D(A)$ is a Borel set such that $\lambda(A\symmdiff D(A))=0$ and $D(D(A))=D(A)$.
\end{proposition}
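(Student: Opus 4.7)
The plan is to handle the three assertions---Borel measurability of $D(A)$, the identity $\lambda(A\symmdiff D(A))=0$, and idempotence---separately, reducing the middle one to the classical Lebesgue Density Theorem alluded to in the statement.

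First I would pass to a Borel representative: since $A$ is Lebesgue measurable, write $A=B\cup N$ with $B$ Borel and $\lambda(N)=0$; then $\lambda(B(x,r)\cap A)=\lambda(B(x,r)\cap B)$ for every $x$ and $r$, so the density function is literally the same whether computed from $A$ or from $B$, and in particular $D(A)=D(B)$. So for the Borel measurability claim we may assume $A$ itself is Borel. For each fixed $r>0$ the function
$$
f_r(x)\df\frac{\lambda(B(x,r)\cap A)}{\lambda(B(x,r)\cap [0,1]^d)}
$$
is then continuous in $x$: if $\|x-y\|_\infty<\eta$ then $B(x,r)\symmdiff B(y,r)\subseteq B(x,r+\eta)\setminus B(x,r-\eta)$, whose measure $(2(r+\eta))^d-(2(r-\eta))^d$ tends to $0$, so both numerator and denominator are continuous in $x$; and the denominator is bounded below by a positive constant on $[0,1]^d$ (for $r\leq 1$), keeping the ratio well-defined. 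Moreover, for fixed $x$ the open cubes $B(x,r_k)$ increase to $B(x,r)$ whenever $r_k\uparrow r$, so $r\mapsto f_r(x)$ is left-continuous.

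The Borel identity I would then exhibit is
$$
D(A)=\bigcap_{n\geq 1}\bigcup_{m\geq 1}\bigcap_{r\in\mathbb{Q}\cap(0,1/m)}\{x\in[0,1]^d : f_r(x)>1-1/n\},
$$
whose right-hand side is a Borel set by continuity of each $f_r$. The nontrivial inclusion ``$\supseteq$'' uses left-continuity in $r$: an $x$ in the right-hand side automatically has $f_{r'}(x)\geq 1-1/n$ for \emph{every} real $r'\in(0,1/m)$, by approximating $r'$ from below by rationals, so $\liminf_{r\to 0^+}f_r(x)\geq 1-1/n$ for each $n$ and hence $\lim_{r\to 0^+}f_r(x)=1$. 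The reverse inclusion is immediate from the definition of the limit.

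The identity $\lambda(A\symmdiff D(A))=0$ is precisely the classical Lebesgue Density Theorem (proved via the Hardy--Littlewood maximal inequality or Vitali coverings) in the form cited at~\cite{Bog, Oxt}. Finally, since $\lambda(A\symmdiff D(A))=0$, the function $f_r$ is unchanged upon replacing $A$ by $D(A)$, so the set $D(D(A))$ equals $D(A)$ by the very definition; this gives idempotence tautologically. The only step with genuine analytic content is the classical density theorem; the rest is bookkeeping, and the one minor trick is the reduction of the uncountable intersection over $r\in(0,1/m)$ to a countable one over rationals via left-continuity in the radius.
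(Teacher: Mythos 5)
Your proof is correct. The paper itself does not prove Proposition~\ref{prop:Ldensityae}: it simply cites it to \cite[I-5.8(ii)]{Bog} and \cite[Theorem~3.21]{Oxt}, so there is no in-text argument to compare against. Your write-up supplies the standard self-contained argument, and all the moving parts check out: the reduction to a Borel representative $B\subseteq A$ with $\lambda(A\setminus B)=0$ does preserve the density function pointwise; for fixed $r\in(0,1]$ the denominator $\lambda(B(x,r)\cap[0,1]^d)$ is bounded below by $r^d$ (each coordinate interval $[x_i-r,x_i+r]\cap[0,1]$ has length at least $r$), so $f_r$ is a well-defined continuous function; left-continuity of $r\mapsto f_r(x)$ lets you pass from a countable intersection over rational radii to the real-radius limit, giving the Borel identity for $D(A)$; the middle assertion is the Lebesgue Density Theorem (applied to $A$ for one inclusion and to its complement for the other); and idempotence is immediate from the fact that $D$ depends only on the $\lambda$-equivalence class of its argument. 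The only cosmetic remark is that the proposition does not specify whether $B(x,r)$ is open or closed, but since the boundary of an $\ell_\infty$-ball has measure zero the left-continuity argument goes through either way.
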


\begin{theorem}[Horn Euclidean Removal Lemma]\label{thm:herl}
  Let $T$ be an almost Horn theory in a language $\mathcal{L}$. If $\mathcal{N}$ is a weak $T$-on, then setting
  \begin{equation} \label{eq:npprime}
    \mathcal{N}'_P  \df D(\mathcal{N}_P)
  \end{equation}
  for every predicate symbol $P\in\mathcal{L}$ yields a strong Borel $T$-on $\mathcal{N}'$ such that
  \begin{align}\label{eq:difference-Horn}
    \lambda(\mathcal{N}_P\symmdiff\mathcal{N}'_P) & = 0,
  \end{align}
  for every predicate symbol $P\in\mathcal{L}$. In particular, we have $\phi_{\mathcal{N}} = \phi_{\mathcal{N}'}$.
\end{theorem}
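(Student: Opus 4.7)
The plan is to verify the three claims in order of difficulty, extracting the easy measure-theoretic conclusions first and then upgrading from weak to strong via Lebesgue density. Proposition~\ref{prop:Ldensityae} immediately gives that each $\mathcal{N}'_P$ is Borel and that $\lambda(\mathcal{N}_P \symmdiff \mathcal{N}'_P) = 0$, which is exactly~\eqref{eq:difference-Horn}. Since each set $\Tind(M,\mathcal{N})$ is built from the $\mathcal{N}_P$'s by preimages under coordinate projections, intersections, and complements, replacing the $\mathcal{N}_P$'s by the $\mathcal{N}'_P$'s alters $\Tind(M,\mathcal{N})$ only on a null set; hence $\mathcal{N}'$ is automatically a weak $T$-on and $\phi_{\mathcal{N}} = \phi_{\mathcal{N}'}$. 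The substantive content is to show that $\mathcal{N}'$ is strong.

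For that I will re-axiomatize $T$ using only fact clauses~\eqref{eq:HornFact}, definite clauses~\eqref{eq:HornDefinite}, and goal clauses~\eqref{eq:HornGoal}, which (as noted in the excerpt) makes $T$ substitutionally closed and so brings Theorem~\ref{thm:characterization} into play. It then suffices to prove $T(F,\mathcal{N}') \supseteq \mathcal{E}_n \setminus \mathcal{D}_n$ for every such axiom $\forall \vec{x}\, F(\vec{x})$ in $n$ variables. The key observation is that since $\mathcal{N}$ is a weak $T$-on, Theorem~\ref{thm:characterization} applied in the other direction yields $\lambda(T(F,\mathcal{N})) = 1$, so every point of $\mathcal{E}_n$ is a Lebesgue density point of $T(F,\mathcal{N})$. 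Combined with the Fubini identity enabled by the $\ell_\infty$ product structure of balls, namely
\[
\frac{\lambda((\alpha^*)^{-1}(S) \cap B(x,r))}{\lambda(B(x,r) \cap \mathcal{E}_n)} = \frac{\lambda(S \cap B(\alpha^*(x), r))}{\lambda(B(\alpha^*(x), r) \cap \mathcal{E}_k)}
\]
for any injection $\alpha\colon [k]\to[n]$ and measurable $S \subseteq \mathcal{E}_k$, this translates membership $\alpha^*(x) \in D(\mathcal{N}_P)$ into a local density-$1$ condition for $(\alpha^*)^{-1}(\mathcal{N}_P)$ at $x$ and vice versa.

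With this engine, fix $x \in \mathcal{E}_n \setminus \mathcal{D}_n$ and treat the three clause types. For a fact clause with consequent $Q$, the identity $T(F,\mathcal{N}) = (\alpha^*)^{-1}(\mathcal{N}_Q)$ together with Fubini forces $\lambda(\mathcal{N}_Q)=1$, so $D(\mathcal{N}_Q) = \mathcal{E}_{k(Q)}$ and $x \in T(F,\mathcal{N}')$ automatically. For a definite clause with hypothesis predicates $P_t$ (with injections $\alpha_t$) and consequent $Q$ (with injection $\alpha$), I unfold $\mathcal{E}_n \setminus T(F,\mathcal{N}) = \bigcap_t (\alpha_t^*)^{-1}(\mathcal{N}_{P_t}) \setminus (\alpha^*)^{-1}(\mathcal{N}_Q)$, a null set; if some $\alpha_t^*(x) \notin D(\mathcal{N}_{P_t})$ then $x \in T(F,\mathcal{N}')$ already, while if all $\alpha_t^*(x) \in D(\mathcal{N}_{P_t})$ then subadditivity gives density $1$ for $\bigcap_t (\alpha_t^*)^{-1}(\mathcal{N}_{P_t})$ at $x$, and subtracting a null set forces $(\alpha^*)^{-1}(\mathcal{N}_Q)$ to have density $1$ at $x$, i.e., $\alpha^*(x) \in D(\mathcal{N}_Q)$. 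For a goal clause, $\lambda(\bigcap_t (\alpha_t^*)^{-1}(\mathcal{N}_{P_t})) = 0$; if all $\alpha_t^*(x) \in D(\mathcal{N}_{P_t})$ the same subadditive bound forces density $1$ of this intersection at $x$, a contradiction, so some $\alpha_t^*(x) \notin D(\mathcal{N}_{P_t})$ and $x \in T(F,\mathcal{N}')$.

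I expect the main obstacle to be the bookkeeping in the definite-clause subadditive estimate: several injections $\alpha_t\colon [k(P_t)] \to [n]$ may touch overlapping subsets of $r(n)$, so the Fubini identity must be applied simultaneously and combined with $\lambda(A_1 \cap \dots \cap A_m) \geq \lambda(A_1) - \sum_{i\geq 2}\lambda(\mathcal{E}_n \setminus A_i)$ rather than a product formula. Once the projection/density correspondence is set up cleanly, each individual density estimate is routine, and boundary issues for $x$ with some coordinate equal to $0$ or $1$ are absorbed automatically by the $\lambda(B(x,r) \cap [0,1]^d)$ denominator in Definition~\ref{def:density}.
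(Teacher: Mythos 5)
Your proposal is correct and follows essentially the same approach as the paper's proof: take Lebesgue density points, invoke Theorem~\ref{thm:characterization} after re-axiomatizing into fact/definite/goal clauses, and handle each clause type by scaling the density condition through the $\ell_\infty$-ball/projection Fubini identity together with a union bound over the hypotheses and a ``null set subtraction'' coming from $\lambda(T(F,\mathcal{N}))=1$. The only difference is cosmetic: the paper phrases the key estimate with an explicit $\epsilon/T$ budget per hypothesis whereas you state it as subadditivity, and the paper is slightly more explicit about controlling boundary coordinates via the set $G=\{A\in r(n)\mid 0<z_A<1\}$, a point you dispatch in one sentence; both come to the same thing.
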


\begin{proof}
  By our previous observations, we may re-axiomatize $T$ to be substitutionally closed and only have axioms of the forms~\eqref{eq:HornFact},~\eqref{eq:HornDefinite} and~\eqref{eq:HornGoal}.

  From Proposition~\ref{prop:Ldensityae}, $\mathcal{N}'$ satisfies~\eqref{eq:difference-Horn}, which in particular implies that $\mathcal{N}'$ is a Borel $T$-on satisfying $\phi_{\mathcal{N}}=\phi_{\mathcal{N}'}$; it only remains to prove that it is strong. By Theorem~\ref{thm:characterization}, it is enough to show that $T(F,\mathcal{N}')\supseteq\mathcal{E}_n\setminus\mathcal{D}_n$ for every axiom $\forall\vec{x}F(x_1,\ldots,x_n)$.

  Consider first a fact clause $F$ of the form~\eqref{eq:HornFact}. Since $T(F,\mathcal{N}) = \mathcal{N}_Q$, by the weak version of Theorem~\ref{thm:characterization}, we have $\lambda(\mathcal{N}_Q) = 1$, which implies
  $T(F,\mathcal{N}') = \mathcal{N}'_Q = D(\mathcal{N}_Q) = \mathcal{E}_{k(Q)}$.

  \smallskip

  Consider now a definite clause $F$ of the form~\eqref{eq:HornDefinite}. For every $t\in[T]$, let $\alpha_t\injection{[k(P_t)]}{[n]}$ be given by $\alpha_t(j) = i_{t,j}$. Let also $\iota\injection{[k(Q)]}{[n]}$ be the natural inclusion. Then we have
  \begin{align*}
    T(F,\mathcal{N}')
    & =
    (\iota^*)^{-1}(\mathcal{N}'_Q)\cup
    \left(\mathcal{E}_n\middle\backslash\bigcap_{t=1}^T (\alpha_t^*)^{-1}(\mathcal{N}'_{P_t})\right).
  \end{align*}
  Hence it is enough to show that
  $\bigcap_{t=1}^T(\alpha_t^*)^{-1}(\mathcal{N}'_{P_t})\setminus \mathcal D_n \subseteq(\iota^*)^{-1}(\mathcal{N}'_Q)$.

  Fix then $z$ in the first set and let $G \df \{A\in r(n) \mid 0 < z_A < 1\}$. Fix also $\epsilon > 0$ and let $r_0 > 0$ be small enough such that for every $A\in G$ we have $(z_A - r_0, z_A + r_0)\subseteq[0,1]$ and for every $r\in(0,r_0)$ and every $t\in[T]$ we have
  \begin{align*}
    \frac{\lambda(B(\alpha_t^*(z),r)\cap\mathcal{N}_{P_t})}{\lambda(B(\alpha_t^*(z),r)\cap\mathcal{E}_{k(P_t)})}
    & \geq
    1 - \frac{\epsilon}{T}.
  \end{align*}
  This inequality scales\footnote{This is precisely why we prefer to use the $\ell_\infty$-norm: it behaves exceptionally
  well with respect to projections.} to $\mathcal E_n$ as
  \begin{align*}
    \frac{\lambda(B(z,r)\cap(\alpha_t^*)^{-1}(\mathcal{N}_{P_t}))}{\lambda(B(z,r)\cap\mathcal{E}_n)}
    & \geq
    1 - \frac{\epsilon}{T}.
  \end{align*}
  (The denominator in the above is equal to $r^{2^n-1}\cdot 2^{\lvert G\rvert}$.)

  By the union bound, it follows that
  \begin{align}\label{eq:union_bound}
    \lambda\left(B(z,r)\cap\bigcap_{t=1}^T(\alpha_t^*)^{-1}(\mathcal{N}_{P_t})\right)
    & \geq
    (1 - \epsilon)\lambda(B(z,r)\cap\mathcal{E}_n),
  \end{align}
  and from the weak version of~\eqref{thm:characterization} for $T(F,\mathcal{N})$, we get
  \begin{align*}
    \lambda(B(z,r)\cap(\iota^*)^{-1}(\mathcal{N}_Q))
    & \geq
    (1 - \epsilon)\lambda(B(z,r)\cap\mathcal{E}_n),
  \end{align*}
  hence
  \begin{align*}
    \frac{\lambda(B(\iota^*(z),r)\cap\mathcal{N}_Q)}{\lambda(B(\iota^*(z),r)\cap\mathcal{E}_{k(Q)})}
    & \geq
    1 - \epsilon.
  \end{align*}
  As $\epsilon>0$ was arbitrary, this implies $z\in (\iota^*)^{-1}(D(\mathcal{N}_Q)) = (\iota^*)^{-1}(\mathcal{N}'_Q)$ as desired.

  \smallskip

  Finally, consider a goal clause $F$ of the form~\eqref{eq:HornGoal}, define $\alpha_t$ as in the previous case and let again $z\in \bigcap_{t=1}^T(\alpha_t^*)^{-1}(\mathcal{N}'_{P_t})\setminus \mathcal D_n$.
Repeating the first part of the previous argument, we get~\eqref{eq:union_bound}.
However, this time since $T(F,\mathcal{N}) = \mathcal{E}_n\setminus\bigcap_{t=1}^T (\alpha_t^*)^{-1}(\mathcal{N}_{P_t})$, the weak version of Theorem~\ref{thm:characterization} implies $\lambda(\bigcap_{t=1}^T(\alpha_t^*)^{-1}(\mathcal{N}_{P_t}))=0$, a contradiction.
\end{proof}

Following up on Example~\ref{ex:Horn}, we have the following corollary.

\begin{corollary}[Non-induced Euclidean Removal Lemma]\label{cor:erl}
  Let $T$ be a theory of the form $\Forbp{T_{\operatorname{Pure}}}{\mathcal F}$,
   where $T_{\operatorname{Pure}}$ is the pure canonical theory in the underlying language $\mathcal L$ with the set of axioms~\eqref{eq:canonical}. If $\mathcal{N}$ is a weak $T$-on, then there exists a strong Borel $T$-on $\mathcal{N}'$ such that
  \begin{align*}
    \lambda(\mathcal{N}'_P\symmdiff\mathcal{N}_P) = 0
  \end{align*}
  for every predicate symbol $P\in\mathcal{L}$.
\end{corollary}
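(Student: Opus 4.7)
The plan is to reduce the statement directly to the Horn Euclidean Removal Lemma (Theorem~\ref{thm:herl}) by verifying that $T = \Forbp{T_{\operatorname{Pure}}}{\mathcal F}$ is an almost Horn theory, thereby fulfilling the promise already made in Example~\ref{ex:Horn}. Once this syntactic fact is in place, the corollary is immediate with no additional analytic work.

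First, I would observe that $T_{\operatorname{Pure}}$ itself is almost Horn: each canonicity axiom $x_i = x_j \to \neg P(x_1,\ldots,x_k)$ from~\eqref{eq:canonical} is logically equivalent to the clause $x_i \neq x_j \lor \neg P(x_1,\ldots,x_k)$, which contains no positive literal at all and so trivially at most one positive literal not involving equality in the sense of Definition~\ref{def:horn}.

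Second, I would verify that each added axiom $\neg \PDopen(M)$ with $M \in \mathcal F$ is also almost Horn. By the very definition of $\PDopen(M)$, the formula $\PDopen(M)$ is a conjunction consisting only of inequalities $x_i \neq x_j$ and \emph{positive} atomic formulas $P(x_{i_1},\ldots,x_{i_k})$; De Morgan then turns $\neg \PDopen(M)$ into a disjunction of equalities $x_i = x_j$ and \emph{negative} predicate literals $\neg P(x_{i_1},\ldots,x_{i_k})$. Every predicate literal appearing is negative, so the clause has zero positive literals not involving equality, and after collecting equality literals on the left one recognizes precisely a goal clause of the form~\eqref{eq:HornGoal}. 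The degenerate case in which $M$ carries no relations causes no trouble, since $\neg \PDopen(M)$ is then a pure disjunction of equalities, still an almost Horn clause.

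Having established that $T$ is almost Horn, Theorem~\ref{thm:herl} applies verbatim and produces the strong Borel $T$-on $\mathcal N'$ given by $\mathcal N'_P \df D(\mathcal N_P)$ for every $P \in \mathcal L$, satisfying $\lambda(\mathcal N_P \symmdiff \mathcal N'_P) = 0$. This is exactly the conclusion of the corollary. I do not anticipate any real obstacle beyond the syntactic bookkeeping above: the substantive content—the use of Lebesgue density points to upgrade a weak theon to a strong Borel one, and the union-bound argument on definite and goal clauses—has already been carried out inside the proof of Theorem~\ref{thm:herl}.
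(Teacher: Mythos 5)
Your proposal is correct and takes essentially the same approach as the paper: the paper's proof is one line, deferring to Example~\ref{ex:Horn} for the fact that $\Forbp{T_{\operatorname{Pure}}}{\mathcal F}$ is almost Horn, and then citing Theorem~\ref{thm:herl}. You have simply unfolded the syntactic verification that Example~\ref{ex:Horn} leaves implicit, and your checks (canonicity axioms have no non-equality positive literal; $\neg\PDopen(M)$ De-Morganizes into a goal clause) are all accurate.
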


\begin{proof}
  Since $T$ is almost Horn, this is a partial case of Theorem~\ref{thm:herl}.
\end{proof}

For completeness, let us also explicitly state the dual of Corollary~\ref{cor:erl}.

Let us call a canonical theory $T$ {\em positive} if all its axioms $\forall\vec x F(\vec x)$ different from~\eqref{eq:canonical} are positive, that is any occurrence of an atomic formula is in the scope of an even number
of negations.

\begin{corollary}[Positive Euclidean Removal Lemma] \label{cor:perl} Let $T$ be a positive theory in a language $\mathcal L$. If $\mathcal N$ is a weak $T$-on then there exists a strong Borel $T$-on $\mathcal N'$ such that
$$
\lambda(\mathcal N_P'\symmdiff \mathcal N_P)=0
$$
for every predicate symbol $P\in\mathcal L$. In particular, this implies that $\phi_{\mathcal N'}=\phi_{\mathcal N}$.
\end{corollary}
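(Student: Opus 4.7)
The plan is to dualize the Horn argument of Theorem~\ref{thm:herl}. There one kept in $\mathcal{N}'_P$ only the Lebesgue density points of $\mathcal{N}_P$; here I would instead discard only the density points of the \emph{complement}, setting
$$\mathcal{N}'_P \df \mathcal{E}_{k(P)} \setminus D\bigl(\mathcal{E}_{k(P)} \setminus \mathcal{N}_P\bigr).$$
Proposition~\ref{prop:Ldensityae} applied to $\mathcal{E}_{k(P)} \setminus \mathcal{N}_P$ immediately gives that each $\mathcal{N}'_P$ is Borel and $\lambda(\mathcal{N}'_P \symmdiff \mathcal{N}_P) = 0$, which in particular yields $\phi_{\mathcal{N}'} = \phi_{\mathcal{N}}$, so only the strong-$T$-on property needs verification.

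First I would re-axiomatize $T$ so that every axiom other than the canonicity axioms~\eqref{eq:canonical} has the form
$$\bigwedge_{(i,j)\in E} x_i \neq x_j \;\to\; \bigvee_{t=1}^{r} P_t\bigl(x_{\alpha_t(1)},\ldots, x_{\alpha_t(k(P_t))}\bigr),$$
with each $\alpha_t\injection{[k(P_t)]}{[n]}$. This is just the CNF of the positive formula underlying the axiom rewritten implicationally (positive equality literals $x_i=x_j$ migrate to the antecedent as $x_i\neq x_j$); non-injective $\alpha_t$'s are eliminated using canonicity, and appending all substitutional instances (identifying two variables and again using canonicity to drop any $P_t$ with a repeated argument) produces a substitutionally closed axiomatization that remains positive. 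Theorem~\ref{thm:characterization} now applies, and since $T(x_i = x_j, \mathcal{N}') = \emptyset$ for $i\neq j$ trivializes the canonicity axioms, it remains to show $T(F, \mathcal{N}') \supseteq \mathcal{E}_n \setminus \mathcal{D}_n$ for each clause $F$ above. Unfolding Definition~\ref{def:truth} and using $T(x_i\neq x_j, \mathcal{N}') = \mathcal{E}_n$ for $i\neq j$, one computes $T(F, \mathcal{N}') = \bigcup_{t=1}^{r} (\alpha_t^*)^{-1}(\mathcal{N}'_{P_t})$.

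Fix $z \in \mathcal{E}_n \setminus \mathcal{D}_n$ and suppose for contradiction that $\alpha_t^*(z) \notin \mathcal{N}'_{P_t}$ for every $t$, i.e., that each $\alpha_t^*(z)$ is a Lebesgue density point of $\mathcal{E}_{k(P_t)} \setminus \mathcal{N}_{P_t}$. The same $\ell_\infty$-scaling identity exploited in the proof of Theorem~\ref{thm:herl} --- that $\alpha_t^*$ and the sup-norm ball commute up to a product with the irrelevant coordinates --- lifts this to the statement that $z$ is a density point of $\mathcal{E}_n \setminus (\alpha_t^*)^{-1}(\mathcal{N}_{P_t})$ for every $t$. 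A finite union bound on the complements then shows that $z$ is itself a density point of
$$\bigcap_{t=1}^{r}\bigl(\mathcal{E}_n \setminus (\alpha_t^*)^{-1}(\mathcal{N}_{P_t})\bigr) \;=\; \mathcal{E}_n \setminus T(F, \mathcal{N}).$$
But $\mathcal{N}$ is a weak $T$-on, so by the easy direction of Theorem~\ref{thm:characterization} we have $\lambda(T(F, \mathcal{N})) = 1$; its complement therefore has measure zero and cannot have density $1$ anywhere --- contradiction. The only genuine subtlety in the argument is the re-axiomatization bookkeeping, namely checking that the CNF conversion plus substitutional-closure operation stays inside the positive fragment; this is routine because substituting $x_i = x_j$ into a positive literal yields either another positive literal or, by canonicity, $\bot$, which simply shortens the disjunction.
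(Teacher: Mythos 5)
Your proof is correct and pins down the same set $\mathcal N'_P = \mathcal E_{k(P)} \setminus D(\mathcal E_{k(P)} \setminus \mathcal N_P)$ as the paper, but you establish the strong-$T$-on property by explicitly dualizing the density-point argument of Theorem~\ref{thm:herl}: re-axiomatize to positive clauses plus substitutional instances, unfold $T(F, \mathcal N') = \bigcup_t (\alpha_t^*)^{-1}(\mathcal N'_{P_t})$, and derive a contradiction with the weakness of $\mathcal N$. The paper takes the shorter reduction route: replacing every predicate occurrence with its negation turns a positive theory into an almost Horn one, so one applies Theorem~\ref{thm:herl} to the complemented theon and complements back, noting that under this translation~\eqref{eq:npprime} becomes exactly your formula for $\mathcal N'_P$. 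The two are the same dualization in spirit; the paper's version outsources all the $\ell_\infty$-scaling and union-bound bookkeeping to the already-proven Horn case, whereas yours is self-contained at the cost of re-verifying that the re-axiomatization (CNF conversion, dropping repeated-argument disjuncts via canonicity, closing under substitution) stays within the positive fragment and remains substitutionally closed --- a check you carry out correctly.
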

\begin{proof}
The theory $T'$ obtained from $T$ by negating all atomic formulas is almost Horn. Apply to it Theorem~\ref{thm:herl} and negate the resulting $T'$-on (note that~\eqref{eq:npprime} now becomes
$
\mathcal N_P'\df \mathcal E_{k(P)} \setminus D(\mathcal E_{k(P)}\setminus \mathcal N_P)
$).
\end{proof}

The dual of full theorem~\ref{thm:herl} also follows by the same argument.

\bigskip
Note that the underlying reason why the proof of Theorem~\ref{thm:herl} works is that every point $y\in\mathcal{N}'_P$ is ``guaranteed'' to be correct because in its neighborhood ``almost all'' points are also in $\mathcal{N}'_P$. The same idea will be used in the proof of Theorem~\ref{thm:ierl}, but this time we need to ensure that points both in $\mathcal{N}'_P$ and its complement are correct. However, there are points that are neither density points of $\mathcal{N}_P$ nor of its complement, and this is precisely where we will have to resort to the axiom of choice.

The idea of the proof is that we want to ``repair'' the peons in a way that all axioms of the theory are respected and apply Theorem~\ref{thm:characterization}. To do that, we first invoke the Compactness Theorem for propositional
logic and reduce the problem to ``repairing'' only finitely many points $y\in\mathcal{E}_{k(P)}$.

Then we define random variables $\rn{y^{(r)}}$ uniformly distributed over $B(y,r)$ and we decide whether to put $y$ in the $P$-on $\mathcal{N}'_P$ based on whether $\rn{y^{(r)}}$ is in $\mathcal{N}_P$ or not. If $r$ is small enough, then with high probability density points of $\mathcal{N}_P$ will be put in $\mathcal{N}'_P$ and density points of $\mathcal{E}_{k(P)}\setminus\mathcal{N}_P$ will be put in $\mathcal{E}_{k(P)}\setminus\mathcal{N}'_P$. The remaining points will be assigned randomly, but will have a positive measure witness to the fact that they satisfy the axioms of the theory.

Let us now do the formal proof.

\begin{proofof}{Theorem~\ref{thm:ierl}}
  By Remark~\ref{rmk:substclosed}, we can assume without loss of generality
  that $T$ is substitutionally closed. Let us call a point
  $y\in\mathcal{E}_{k(P)}\setminus\mathcal{D}_{k(P)}$ \emph{bad for
  $P\in\mathcal{L}$} if $y\notin D(\mathcal{N}_P)\cup
  D(\mathcal{E}_{k(P)}\setminus\mathcal{N}_P)$ (i.e., if $y$ is not a density
  point of either $\mathcal{N}_P$ or its complement) and let $\mathcal{B}_P$
  be the set of all points that are bad for $P$. Note that
  $\lambda(\mathcal{B}_P) = 0$ by Proposition~\ref{prop:Ldensityae}.

Our $P$-ons $\mathcal N_P'$ will contain the set $D(\mathcal N_P)$ and will be disjoint from the set $D(\mathcal{E}_{k(P)}\setminus\mathcal{N}_P)$, which will immediately give~\eqref{eq:difference}. The behavior
of $\mathcal N_P'$ on the remaining set $\mathcal B_P$ can be described by an (uncountable) set
of propositional variables $p_{P,y}\ (P\in\mathcal L,\ y\in\mathcal B_P)$
with the intended meaning ``$p_{P,y}=1\equiv y\in\mathcal N_P'$''. By Theorem~\ref{thm:characterization}, the $T$-on $\mathcal N' = (\mathcal N_P')_{P\in\mathcal L}$ is strong if and only if for every axiom $\forall\vec
x F(x_1,\ldots,x_n)$ and every $z\in\mathcal E_n\setminus \mathcal D_n$ we
have $z\in T(F,\mathcal N')$. For any fixed $z$ the latter fact is expressible by a
finite propositional formula $A_{F,z}$ in the variables $p_{P,y}$. We have to prove that this
system of propositional constraints is consistent.

For this purpose we invoke the Compactness Theorem for propositional logic
(see e.g.~\cite{ChKr}): as we noted in the introduction, while this step may look
innocent, it is actually equivalent to a weak form of the axiom of choice.
According to this theorem, it is sufficient to prove that any {\em finite}
system $\{A_{F_1,z_1},\ldots, A_{F_\ell, z_\ell}\}$ of constraints is
consistent. Fix for the rest of the argument any such system, and let us
denote by $n_\nu$ the number of variables in $F_\nu$. Let also $Y_P$ be
the set of all $y\in \mathcal E_{k(P)}$ for which at least one of these
constraints contains a propositional variable $p_{P,y}$. Note that all $y\in Y_P$ are of the form
$i^\ast(z_\nu)$ for some $\nu\in [\ell]$ and
$i\injection{[k(P)]}{[n_\nu]}$. In particular, since $z_\nu\not\in \mathcal
D_{n_\nu}$, we have $Y_P\cap\mathcal D_{k(P)}=\emptyset$.

Now, let $\Omega\subseteq [0,1]$ be the finite set of all the coordinates of
all the points $z_1,\ldots,z_\ell$ (hence any $y\in Y_P$ also has these
coordinates).
For $x\in\Omega$ and $X\subseteq\Omega$ let us introduce
a random variable $\rn{\xi^{(r)}}(x,X)$ uniformly distributed in $[x-r,x+r]\cap[0,1]$;
all these variables are assumed to be mutually independent, {\em including those that correspond to the
same $x$}.

These variables naturally define random perturbations $\rn{z_1^{(r)}},\ldots,
\rn{z_{\ell}^{(r)}}$ of the points $z_1,\ldots,z_\ell$, as well as of all
points $y\in Y_P$. Namely, we let
$$
(\rn{z_\nu^{(r)}})_A \df \rn{\xi^{(r)}}((z_\nu)_A, \{(z_\nu)_{\{i\}} \mid i\in A\}),
$$
and similarly for $y\in Y_P$:
$$
(\rn{y^{(r)}})_A \df \rn{\xi^{(r)}}(y_A, \{y_{\{i\}} \mid i\in A\}).
$$
Two straightforward but very useful facts about these distributions are:
\begin{description}
\item[Consistency] Let $\nu\in [\ell]$, and assume that $y=i^\ast(z_\nu)$
    for some $i\injection{[k(P)]}{[n_\nu]}$. Then $\rn{y^{(r)}}$ is the
    pushforward distribution $i^\ast(\rn{z_\nu^{(r)}})$.

\item[Local Independence] For any {\em fixed} $\nu\in[\ell]$, the variable
    $\rn{z_\nu^{(r)}}$ has uniform distribution over $B(z_\nu,r)\cap\mathcal{E}_{n_\nu}$, and
    the same is true for $\rn{y^{(r)}}\ (y\in Y_P)$. Indeed, since
    $z_\nu\not\in\mathcal D_{n_\nu}$, all sets $\set{(z_\nu)_{\{i\}}}{i\in
    A}$ are pairwise different. Hence all random variables involved in
    the definition of $\rn{z_\nu^{(r)}}$ (or $\rn{y^{(r)}}$) are mutually
    independent\footnote{This is precisely why we need the extra parameter $X$: our definition of
    the diagonal $\mathcal D_n$ does not forbid collisions in higher-order coordinates.}.
\end{description}

We now can also define a random Boolean assignment $\rn{u^{(r)}}$ to the
variables $p_{P,y}\ (y\in Y_P)$ by letting $\rn{u^{(r)}}_{P,y}=1 \equiv
\rn{y^{(r)}}\in\mathcal N_P$. As there are only finitely many of them, we can
fix an assignment $u$ in such a way that
\begin{equation} \label{eq:u_consistency}
\limsup_{r\to 0}\prob{\rn{u^{(r)}}=u}>0.
\end{equation}
 We claim that this $u$ is good,
i.e., it satisfies all the axioms $A_{F_\nu, z_\nu}$.

Recalling the definition of $A_{F_\nu,z_\nu}$, we want to show that
upon updating all $P$-ons $\mathcal N_P$ to $\mathcal N_P'$ on the points
$y\in \mathcal B_P\cap Y_P$ according to the rule $y\in\mathcal N_P'\equiv
u_{P,y}=1$ we will have $z_\nu\in T(F_\nu,\mathcal N')$ for all $\nu$. For that we compare
to the event $\rn{z_\nu^{(r)}}\in T(F_\nu,\mathcal N)$.

Firstly, we have $\prob{\rn{z_\nu^{(r)}}\in T(F_\nu,\mathcal N)}=1$ simply because
$\mathcal N$ is a weak $T$-on. Thus, it suffices to show that
\begin{equation} \label{eq:final_strike}
\limsup_{r\to 0} \prob{\rn{z_\nu^{(r)}}\in T(F_\nu,\mathcal N) \equiv
z_\nu\in T(F_\nu,\mathcal N')}>0.
\end{equation}

For $P\in\mathcal L$ and $i\injection{[k(P)]}{[n_\nu]}$, let $E^{(r)}(P,i)$
be the event $i^\ast(\rn{z_\nu^{(r)}})\in\mathcal N_P \equiv
i^\ast(z_\nu)\in\mathcal N_P'$; then the event in~\eqref{eq:final_strike} is
implied by the conjunction of all $E^{(r)}(P,i)$ (see item~\ref{it:truthmain} in
Definition~\ref{def:truth}). However, since
$i^\ast(\rn{z_\nu^{(r)}})=\rn{y^{(r)}}$, by the Consistency property, the
conjunction $\bigwedge\set{E^{(r)}(P,i)}{i^\ast(z_\nu)\in \mathcal B_P}$ is
precisely the event $\rn{u^{(r)}}=u$ in~\eqref{eq:u_consistency}. On the
other hand, if $i^\ast(z_\nu)\not\in\mathcal B_P$ then
$\lim_{r\to 0}\prob{E^{(r)}(P,i)}=1$ is a consequence of Definition~\ref{def:density} and Local Independence. Hence~\eqref{eq:u_consistency}
implies~\eqref{eq:final_strike}.

The proof of Theorem~\ref{thm:ierl} is complete.
\end{proofof}

Note that the proof of Theorem~\ref{thm:ierl} above actually gives us more information (which will be useful in Section~\ref{sec:posetons}) about the structure of the difference set:

\begin{proposition}\label{prop:ierl}
  If $T$ is a theory in a language $\mathcal{L}$ and $\mathcal{N}$ is a weak $T$-on, then there exists a strong $T$-on $\mathcal{N}'$ such that
  \begin{align*}
    D(\mathcal{N}_P)\setminus\mathcal{D}_{k(P)} & \subseteq \mathcal{N}'_P
    \subseteq \mathcal{E}_{k(P)}\setminus D(\mathcal{E}_{k(P)}\setminus\mathcal{N}_P)
  \end{align*}
  for every predicate symbol $P\in\mathcal{L}$.
\end{proposition}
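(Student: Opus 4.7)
The plan is to observe that the strong $T$-on $\mathcal{N}'$ constructed in the proof of Theorem~\ref{thm:ierl} already satisfies both inclusions stated in Proposition~\ref{prop:ierl}; the proposition essentially records structural information that was produced but not highlighted. Recall that in that construction, for each predicate symbol $P$ the proof partitions $\mathcal{E}_{k(P)}\setminus\mathcal{D}_{k(P)}$ into three regions: the density points $D(\mathcal{N}_P)$ of $\mathcal{N}_P$, the density points $D(\mathcal{E}_{k(P)}\setminus\mathcal{N}_P)$ of its complement, and the residual measure-zero "bad" set $\mathcal{B}_P \df (\mathcal{E}_{k(P)}\setminus\mathcal{D}_{k(P)})\setminus (D(\mathcal{N}_P)\cup D(\mathcal{E}_{k(P)}\setminus\mathcal{N}_P))$. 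On the first region the proof forces $\mathcal{N}'_P$ to contain the point; on the second it forces exclusion; on $\mathcal{B}_P$ the choice is dictated by the solution of the propositional consistency system obtained via the Compactness Theorem (the behavior on $\mathcal{D}_{k(P)}$ itself is inessential and can be set arbitrarily, e.g.\ to agree with $\mathcal{N}_P$).

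From this setup the first inclusion is immediate: every $y\in D(\mathcal{N}_P)\setminus\mathcal{D}_{k(P)}$ lies in $\mathcal{N}'_P$ by the very first declaration of the construction. The second inclusion is the contrapositive of the second declaration, namely $\mathcal{N}'_P\cap D(\mathcal{E}_{k(P)}\setminus\mathcal{N}_P)=\emptyset$, which is exactly $\mathcal{N}'_P\subseteq \mathcal{E}_{k(P)}\setminus D(\mathcal{E}_{k(P)}\setminus\mathcal{N}_P)$. The strong $T$-on property of $\mathcal{N}'$ is precisely what was verified in the proof of Theorem~\ref{thm:ierl} via Theorem~\ref{thm:characterization} applied to the finite systems $\{A_{F_1,z_1},\ldots,A_{F_\ell,z_\ell}\}$ of propositional constraints.

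The only point requiring a moment's care — and what I would flag as the place to double-check rather than a genuine obstacle — is that the boundary declarations on $D(\mathcal{N}_P)$ and on $D(\mathcal{E}_{k(P)}\setminus\mathcal{N}_P)$ do not clash with the propositional axioms $A_{F,z}$ that must be satisfied at points $z\in\mathcal{E}_n\setminus\mathcal{D}_n$. But this is exactly the content of the Local Independence and Consistency properties of the random perturbations $\rn{z_\nu^{(r)}}$: for coordinates $i^\ast(z)\not\in \mathcal{B}_P$ the constraint is satisfied with probability tending to $1$ as $r\to 0$, so fixing those boundary values never obstructs solvability of the remaining (finitely many, hence by compactness, overall) constraints on $\mathcal{B}_P$. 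Consequently Proposition~\ref{prop:ierl} follows with no additional argument beyond isolating these two bullets of the construction already carried out.
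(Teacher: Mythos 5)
Your proposal is correct and matches the paper's own treatment: the paper does not give a separate proof of Proposition~\ref{prop:ierl} but merely remarks, immediately after proving Theorem~\ref{thm:ierl}, that the construction already produces $\mathcal{N}'_P$ satisfying both inclusions, exactly as you observe.

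One small caveat on your parenthetical about the diagonal: you suggest that on $\mathcal{D}_{k(P)}$ the behavior "can be set arbitrarily, e.g.\ to agree with $\mathcal{N}_P$." Note that $D(\mathcal{E}_{k(P)}\setminus\mathcal{N}_P)$ agrees with $\mathcal{E}_{k(P)}\setminus\mathcal{N}_P$ only almost everywhere, so there can exist diagonal points $y\in\mathcal{N}_P\cap D(\mathcal{E}_{k(P)}\setminus\mathcal{N}_P)$; for such $y$, copying the value from $\mathcal{N}_P$ would put $y$ into $\mathcal{N}'_P$ and violate the second inclusion. The paper's construction avoids this because its two declarations ($\mathcal{N}'_P\supseteq D(\mathcal{N}_P)$ and $\mathcal{N}'_P\cap D(\mathcal{E}_{k(P)}\setminus\mathcal{N}_P)=\emptyset$) are imposed \emph{before} carving out the off-diagonal bad set $\mathcal{B}_P$, so they apply on the diagonal as well; only the leftover diagonal points (those that are density points of neither set) are truly free. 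Since you correctly base both inclusions on those two declarations rather than on your parenthetical convention, the argument stands, but the convention as stated is not actually permissible.
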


\subsection{Constructive proof for linear orders} \label{sec:constructive}

As we mentioned before, Theorem~\ref{thm:ierl} gives a non-constructive proof of the existence of the desired strong $T$-on using the axiom of choice. As a consequence, this strong $T$-on is not necessarily Borel. On the
other hand, Theorem~\ref{thm:herl} gives a choice-free construction of a strong Borel $T$-on in the case when $T$ is an almost Horn theory. While we do not know whether a constructive proof of Theorem~\ref{thm:ierl} is
possible in general, in this subsection we present an ad hoc argument for the non-Horn theory of linear orders ($T_{\operatorname{LinOrder}}$). The proof is somewhat on a technical side and this result is not used in
the rest of the paper. But it highlights the difficulties on the way of trying to get a constructive version of Theorem~\ref{thm:ierl} for arbitrary theories.

\medskip

We start with defining a few notions that already were informally used in various contexts.

\begin{definition}\label{def:symm}
  Let $P$ be a predicate symbol of arity $2$ and let $\mathcal{N}\subseteq\mathcal{E}_2$ be a $P$-on.

  Let us say that the peon $\mathcal{N}$ is \emph{anti-symmetric} if
  $(x_{\{1\}},x_{\{2\}},x_{\{1,2\}})\in\mathcal{N} \equiv (x_{\{2\}},x_{\{1\}},x_{\{1,2\}})\notin\mathcal{N}$
  for every $x\in\mathcal{E}_2\setminus\mathcal{D}_2$. In other words, $\mathcal{N}$ is a strong
  $T_{\operatorname{Tournament}}$-on.

  Let us say that $\mathcal{N}$ is \emph{transitive} if for every $x\in\mathcal{E}_3\setminus\mathcal{D}_3$,
  we have
  \begin{align*}
    (x_{\{1\}},x_{\{2\}},x_{\{1,2\}})\in\mathcal{N}\land (x_{\{2\}},x_{\{3\}},x_{\{2,3\}})\in\mathcal{N}
    \to
    (x_{\{1\}},x_{\{3\}},x_{\{1,3\}})\in\mathcal{N}.
  \end{align*}
  In other words, $\mathcal{N}$ is a strong $T_{\operatorname{PreOrder}}$-on, where $T_{\operatorname{PreOrder}}$ is the theory of (partial) preorders.
In these terms, a strong $T_{\operatorname{LinOrder}}$-on is simply an anti-symmetric and transitive peon.

  For $(x,y)\in\mathcal{E}_2^*$, we define the section
  \begin{align*}
    A_{\mathcal{N}}(x,y) & \df \{z\in[0,1] \mid (x,y,z)\in\mathcal{N}\}.
  \end{align*}
The $P$-on $\mathcal{N}$ is called \emph{$\mathcal{E}_2^*$-measurable} if for all $(x,y)\in\mathcal{E}_2^*$, the section $A_{\mathcal{N}}(x,y)$ is either $\emptyset$ or $[0,1]$,
i.e., $\mathcal N$ does not depend on the third coordinate.
\end{definition}

$\mathcal{E}_2^*$-measurable peons correspond to $\{0,1\}$-valued graphons in Lov\'{a}sz's
terminology; the reason we prefer the name $\mathcal{E}_2^*$-measurable is that peons are sets rather than
measurable functions to $[0,1]$ (cf.~the correspondence between graphons and $2$-hypergraphons in
Section~\ref{sec:hypergraphons}).

It may seem that all strong $T_{\operatorname{LinOrder}}$-ons are $\mathcal{E}_2^*$-measurable, but the
following example shows that this is not the case.

\begin{example}
  The $T_{\operatorname{LinOrder}}$-on $\mathcal{N}$ defined by
  \begin{align*}
    \mathcal{N}_{\prec}
    =
    \{x\in\mathcal{E}_2 \mid {}
    &
    x_{\{1\}}\bmod(1/2) < x_{\{2\}}\bmod(1/2)
    \\
    & \lor
    (x_{\{1\}} = x_{\{2\}} - 1/2\land x_{\{1,2\}} < 1/2 \land x_{\{2\}}\neq 1)
    \\
    & \lor
    (x_{\{1\}} = x_{\{2\}} + 1/2\land x_{\{1,2\}} \geq 1/2)
    \\
    & \lor
    x_{\{1\}} = 1
    \}
  \end{align*}
  is a strong $T_{\operatorname{LinOrder}}$-on (cf.~Theorem~\ref{thm:characterization}); it differs from the (weak) $T_{\operatorname{LinOrder}}$-on $\mathcal{N}$ of Example~\ref{ex:twoorderedtheons} by a set of measure~0.

  Intuitively, this $T_{\operatorname{LinOrder}}$-on corresponds to the ``random total order'' $\rn{\preceq}$ on $[0,1]$ defined as follows. First we let $a\rn{\preceq} b$ for every $x,y\in[0,1]$ such that
  \begin{align*}
    x\bmod(1/2) < y\bmod(1/2).
  \end{align*}
  We also let $1\rn{\preceq} x$ for every $x\in[0,1]$. Then for each $x\in[0,1/2)$ we, roughly speaking, make a ``random choice'' $x\rn{\preceq} x+1/2$ or $x+1/2\rn{\preceq} x$ with probability $1/2$ each.
\end{example}

The constructive (i.e., avoiding the axiom of choice) proof of Theorem~\ref{thm:ierl} for $T_{\operatorname{LinOrder}}$ below
can be summarized into the following three steps:
\begin{enumerate}
\item Get a (weak Borel) anti-symmetric $T_{\operatorname{LinOrder}}$-on.
\item Get $\mathcal{E}_2^*$-measurability preserving anti-symmetry (and Borel measurability).
\item Get transitivity preserving anti-symmetry (as well as Borel and $\mathcal{E}_2^*$-measurability).
\end{enumerate}

The first item on this program is easy but, remarkably, it takes care of the only non-Horn axiom
of $T_{\operatorname{LinOrder}}$.

\begin{lemma}\label{lem:theonsymmetry}
  If $\mathcal{N}$ is a weak $T_{\operatorname{LinOrder}}$-on, then there exists a weak Borel anti-symmetric
  $T_{\operatorname{LinOrder}}$-on $\mathcal{N}'$ such that $\lambda(\mathcal{N}\symmdiff\mathcal{N}') = 0$.

  Furthermore, if $\mathcal{N}$ is $\mathcal{E}_2^*$-measurable, then $\mathcal{N}'$ can also be taken
  $\mathcal{E}_2^*$-measurable.
\end{lemma}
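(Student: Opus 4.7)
The idea is to obtain $\mathcal{N}'$ by picking a Borel fundamental domain for the $S_2$-action on $\mathcal{E}_2\setminus\mathcal{D}_2$, recording the values of $\mathcal{N}$ on it, and forcing the values on the other half by flipping under the swap. First I would replace $\mathcal{N}_\prec$ by a Borel-measurable equivalent $\mathcal{N}_0$ with $\lambda(\mathcal{N}_0\symmdiff\mathcal{N}_\prec)=0$; in the $\mathcal{E}_2^\ast$-measurable case, where $\mathcal{N}_\prec=B\times[0,1]$ for some Lebesgue measurable $B\subseteq[0,1]^2$, I would instead take a Borel set $B_0\subseteq[0,1]^2$ with $\lambda(B_0\symmdiff B)=0$ and put $\mathcal{N}_0\df B_0\times[0,1]$.

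Write $\sigma\function{\mathcal{E}_2}{\mathcal{E}_2}$ for the involution swapping the coordinates indexed by $\{1\}$ and $\{2\}$ (and fixing the $\{1,2\}$-coordinate), and let $U\df\{x\in\mathcal{E}_2 \mid x_{\{1\}}<x_{\{2\}}\}$, so that $\sigma(U)=\{x\in\mathcal{E}_2\mid x_{\{1\}}>x_{\{2\}}\}$ and $\mathcal{E}_2=U\stackrel\cdot\cup\sigma(U)\stackrel\cdot\cup\mathcal{D}_2$. I would then define
\[
\mathcal{N}'_\prec \df (\mathcal{N}_0\cap U)\;\cup\;\bigl(\sigma(U)\setminus\sigma(\mathcal{N}_0\cap U)\bigr).
\]
This set is Borel, and in the $\mathcal{E}_2^\ast$-measurable case it equals $B'\times[0,1]$ for the obvious Borel set $B'\subseteq[0,1]^2$, so $\mathcal{E}_2^\ast$-measurability is preserved. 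Anti-symmetry is automatic from the construction: if $x\in U$ then $x\in\mathcal{N}'_\prec\iff x\in\mathcal{N}_0\iff\sigma(x)\notin\mathcal{N}'_\prec$; the case $x\in\sigma(U)$ is symmetric.

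It remains to show $\lambda(\mathcal{N}_\prec\symmdiff\mathcal{N}'_\prec)=0$, from which it follows that $\mathcal{N}'$ is a weak $T_{\operatorname{LinOrder}}$-on (since densities are invariant under null-set changes of the peon). Because $\mathcal{N}$ is a weak $T_{\operatorname{LinOrder}}$-on, applying Theorem~\ref{thm:characterization} (or directly Definition~\ref{def:theons} to the canonical $2$-vertex structures violating the anti-symmetry axiom) gives
\[
\lambda\bigl((\mathcal{N}_0\cap\sigma(\mathcal{N}_0))\setminus\mathcal{D}_2\bigr)=0
\quad\text{and}\quad
\lambda\bigl(((\mathcal{E}_2\setminus\mathcal{N}_0)\cap\sigma(\mathcal{E}_2\setminus\mathcal{N}_0))\setminus\mathcal{D}_2\bigr)=0.
\]
On $U$ the sets $\mathcal{N}_0$ and $\mathcal{N}'_\prec$ coincide exactly; on $\sigma(U)$ one has $x\in\mathcal{N}'_\prec\iff\sigma(x)\notin\mathcal{N}_0$, which by the two displayed null-set identities is equivalent to $x\in\mathcal{N}_0$ off a null set. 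Hence $\lambda(\mathcal{N}_0\symmdiff\mathcal{N}'_\prec)=0$, and combining this with $\lambda(\mathcal{N}_\prec\symmdiff\mathcal{N}_0)=0$ finishes the proof.

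There is no real obstacle here: the anti-symmetry axiom is the unique non-Horn axiom of $T_{\operatorname{LinOrder}}$, and it is ``self-dual'' under the single involution $\sigma$, so one can enforce it by a Borel choice on a fundamental domain. The only mildly delicate point is to pass to a Borel representative before the construction, which is needed because the hypothesis gives only Lebesgue measurability and in the $\mathcal{E}_2^\ast$-measurable case we must be careful to keep the product form $B\times[0,1]$.
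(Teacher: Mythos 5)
Your proof is correct and takes essentially the same approach as the paper's: replace $\mathcal N_\prec$ by a Borel equivalent, then use the ordering of $x_{\{1\}}$ and $x_{\{2\}}$ (equivalently, the $S_2$-fundamental domain $U$) to force anti-symmetry. The only cosmetic difference is the exact formula --- you record $\mathcal N_0$ on $U$ and impose the flip on $\sigma(U)$, whereas the paper keeps the already-anti-symmetric part of $\mathcal N$ verbatim and invokes $x_{\{1\}}<x_{\{2\}}$ only as a tie-break on the null exceptional set --- and the two resulting peons differ by a set of measure zero.
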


\begin{proof}
  By possibly changing $\mathcal{N}$ in a zero-measure set, we may suppose it is a Borel theon. Then we let
  \begin{align*}
    \mathcal{N}' & \df
    \{x\in\mathcal{E}_2 \mid x\in\mathcal{N}\setminus(\mathcal{N}\cdot\sigma)
    \lor (x\notin\mathcal{N}\symmdiff(\mathcal{N}\cdot\sigma)\land x_{\{1\}} < x_{\{2\}})\},
  \end{align*}
  where $\sigma$ is the unique non-identity permutation in $S_2$ (recall the natural action of $S_2$ on
  $\mathcal{E}_2$ from Definition~\ref{def:EV}). It is obvious that this construction preserves $\mathcal{E}_2^*$-measurability.
\end{proof}

\begin{lemma}\label{lem:E2*measurable}
  If $\mathcal{N}$ is a weak Borel anti-symmetric $T_{\operatorname{LinOrder}}$-on, then there exists a weak
  Borel $\mathcal{E}_2^*$-measurable anti-symmetric $T_{\operatorname{LinOrder}}$-on $\mathcal{N}'$ such that
  $\lambda(\mathcal{N}\symmdiff\mathcal{N}') = 0$.
\end{lemma}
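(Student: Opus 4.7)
The plan is to introduce the induced graphon $W(x,y)\df\lambda(\{z\in[0,1]:(x,y,z)\in\mathcal{N}\})$, which is Borel measurable on $[0,1]^2$ by Fubini, and use it as a bridge to construct $\mathcal{N}'$. Pointwise anti-symmetry of $\mathcal{N}$ immediately yields $W(x,y)+W(y,x)=1$ for every $x\neq y$. Once I have shown $W\in\{0,1\}$ almost everywhere, the natural candidate is
\[
\mathcal{N}' \df \{(x,y,z)\in\mathcal{E}_2 : W(x,y)=1\} \cup \{(x,y,z)\in\mathcal{E}_2 : W(x,y)\in(0,1)\text{ and }x<y\},
\]
which is Borel and $\mathcal{E}_2^*$-measurable by construction. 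The second term is a tie-breaker on the null set where $W\in(0,1)$, inserted so that $\mathcal{N}'$ remains pointwise anti-symmetric (verified case by case from $W(x,y)+W(y,x)=1$); since $\mathcal{N}\symmdiff\mathcal{N}'$ sits above a null set of pairs in $[0,1]^2$, $\mathcal{N}'$ is still a weak $T_{\operatorname{LinOrder}}$-on and $\lambda(\mathcal{N}\symmdiff\mathcal{N}')=0$.

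The heart of the argument is thus to show $\lambda^2(\{(x,y):W(x,y)\in(0,1)\})=0$. I would first analyze the marginal $F(x)\df\int_0^1 W(x,y)\,dy$ via a moment computation. Using the identity $\tinj(M,\mathcal{N})=\sum_{M'\supseteq M}\tind(M',\mathcal{N})$ from Section~\ref{sec:densities}, one obtains
\[
\int_0^1 F(x)^k\,dx=\tinj(M_k,\mathcal{N}),
\]
where $M_k$ is the model on $\{x,y_1,\ldots,y_k\}$ asserting $x\prec y_i$ for every $i$ and nothing else. The canonical extensions of $M_k$ in $T_{\operatorname{LinOrder}}$ are exactly the $k!$ linear orders with $x$ as the minimum, and each has $\tind$-density $1/(k+1)!$ by $n$-categoricity, giving $\int F(x)^k\,dx=k!/(k+1)!=1/(k+1)$. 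These are the moments of $\operatorname{Uniform}(0,1)$, so by the Hausdorff moment problem $F$ is measure preserving; in particular $\lambda^2(\{(x,y):F(x)=F(y)\})=0$.

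Next, the vanishing of the two cyclic $3$-tournament densities (both are non-models of $T_{\operatorname{LinOrder}}$) translates, after integrating out the $x_{\{1,2\}},x_{\{2,3\}},x_{\{1,3\}}$ coordinates, into
\[
\int W(a,b)\,W(b,c)\,(1-W(a,c))\,da\,db\,dc=0
\]
and the analogous identity with $W$ replaced by $1-W$ everywhere. Since each integrand is non-negative, Fubini delivers, for a.e.\ $(a,b)$,
\[
W(a,b)\Bigl[F(b)-\int_0^1 W(b,c)W(a,c)\,dc\Bigr]=0
\quad\text{and}\quad
(1-W(a,b))\Bigl[F(a)-\int_0^1 W(b,c)W(a,c)\,dc\Bigr]=0.
\]
On $\{(a,b):W(a,b)\in(0,1)\}$ both brackets must vanish simultaneously, forcing $F(a)=F(b)$; combined with the previous paragraph, this set has measure zero.

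I expect the main obstacle to be the moment step: correctly identifying $\int F^k$ with $\tinj(M_k,\mathcal{N})$, and invoking moment uniqueness on $[0,1]$. Everything downstream (the two cyclic-density identities, the a.e.\ argument forcing $F(a)=F(b)$, and the Borel definition of $\mathcal{N}'$) is more routine, with the only subtle point being that the tie-breaker $x<y$ must be put in by hand, since weakness of $\mathcal{N}$ can never pin down its behavior on a genuine null set.
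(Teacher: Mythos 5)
Your proof is correct but follows a genuinely different route from the paper's. The paper works directly with the sections $A_{\mathcal{N}}(x,y)$: it partitions pairs into vanishing, full, and bad (those with $0<\lambda(A_{\mathcal{N}}(x,y))<1$), and rules out a positive-measure set of bad pairs by sampling a random $\rn{x}\in\mathcal{E}_3$ with $\rn{x_{\{2\}}},\rn{x_{\{3\}}}\in\mathcal{B}_n(\rn{x_{\{1\}}})$ and invoking almost-everywhere transitivity twice to force $(\rn{x_{\{1\}}},\rn{x_{\{3\}}},\rn{x_{\{1,3\}}})\in\mathcal{N}$ with probability one, contradicting $\rn{x_{\{3\}}}\in\mathcal{B}_n(\rn{x_{\{1\}}})$. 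This argument never needs to know anything about the marginal $F$, so it skips the moment computation entirely. Your route instead extracts the graphon $W$ and splits the problem in two: first a moment argument (using $n$-categoricity to evaluate $\tinj(M_k,\mathcal{N})=1/(k+1)$, then the Hausdorff moment problem) to show $F=\int W(\cdot,y)\,dy$ is measure preserving and hence $\lambda^2(\{F(a)=F(b)\})=0$; second, the vanishing of the two labeled cyclic $\vec C_3$ densities (this is where transitivity enters) to derive the pair of identities forcing $F(a)=F(b)$ wherever $W(a,b)\in(0,1)$. The moment step is the extra machinery you pay for, and it leans on the $n$-categoricity of $T_{\operatorname{LinOrder}}$ more explicitly than the paper does; what it buys is a cleaner structural picture, since you obtain not just the result but the explicit graphon $W$ with $W\in\{0,1\}$ a.e.\ and the fact that $F$ is a measure-preserving map, both of which are interesting on their own. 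One small expository slip: the identity $\int_0^1 F(x)^k\,dx=\tinj(M_k,\mathcal{N})$ follows from Fubini and the product structure of $\Tinj(M_k,\mathcal{N})$, not from the inj--ind conversion; the latter is what you use in the \emph{next} step to evaluate $\tinj(M_k,\mathcal{N})$ as a sum of $k!$ equal $\tind$-densities. Everything else checks out: $W$ is Borel by Fubini, pointwise anti-symmetry of $\mathcal{N}$ gives $W(x,y)+W(y,x)=1$ off the diagonal, the tie-breaker $x<y$ on the null set $\{W\in(0,1)\}$ restores pointwise anti-symmetry for $\mathcal{N}'$, and $\lambda(\mathcal{N}\symmdiff\mathcal{N}')=0$ follows by Fubini again.
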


\begin{proof}
  Since the Borel $\sigma$-algebra on $\mathcal{E}_2$ is the product of Borel $\sigma$-algebras of $[0,1]$
  in each coordinate, it follows that every section $A_{\mathcal{N}}(x,y)$ is a Borel set for every
  $(x,y)\in\mathcal{E}_2^*$.

  Let us call a pair $(x,y)\in\mathcal{E}_2^*$ \emph{vanishing} if
  $\lambda(A_{\mathcal{N}}(x,y))=0$. By Fubini's Theorem, we know that if $\mathcal{V}$ is the
  set of vanishing pairs then $(\mathcal{V}\times[0,1])\cap\mathcal{N}$ is Borel and has measure $0$.

  Let us call a pair $(x,y)\in\mathcal{E}_2^*$ \emph{full} if
  $\lambda(A_{\mathcal{N}}(x,y))=1$. By Fubini's Theorem, we know that if $\mathcal{F}$ is the
  set of full pairs then $(\mathcal{F}\times[0,1])\setminus\mathcal{N}$ is Borel and has measure $0$.

  Finally, let us call a pair $(x,y)\in\mathcal{E}_2^*$ \emph{bad} if
  $0<\lambda(A_{\mathcal{N}}(x,y))<1$. Again, Fubini's Theorem implies that the set of bad
  pairs $\mathcal{B}$ is a Borel set.

  Note that if $\mathcal{B}$ has zero measure then setting
  \begin{align*}
    \mathcal{N}'
    & \df
    \bigl(\mathcal{N}\cup(\mathcal{F}\times[0,1])\bigr)\setminus
    \bigl((\mathcal{B}\cup\mathcal{V})\times[0,1]\bigr)
  \end{align*}
  gives a weak Borel $\mathcal{E}_2^*$-measurable $T_{\operatorname{LinOrder}}$-on, to which we can apply
  Lemma~\ref{lem:theonsymmetry} once more and get back anti-symmetry while preserving
  $\mathcal{E}_2^*$-measurability. Thus, it remains to prove that $\mathcal{B}$ does have
  zero measure.

  Suppose not, then by countable additivity there must exist $n\in\mathbb{N}_+$ such that
  \begin{align*}
    \mathcal{B}_n
    & \df
    \set{(x,y) \in \mathcal{E}_2^*}{
      \frac{1}{n} \leq \lambda(A_{\mathcal{N}}(x,y)) \leq 1 - \frac{1}{n}}
  \end{align*}
  has positive measure. Note that the anti-symmetry of $\mathcal{N}$ implies that
  $\mathcal{B}_n$ is symmetric, that is, we have
  $(x,y)\in\mathcal{B}_n\equiv(y,x)\in\mathcal{B}_n$.

  For every $x\in[0,1]$, define the section
  \begin{align*}
    \mathcal{B}_n(x) & \df \{y\in[0,1] \mid (x,y)\in\mathcal{B}_n\}.
  \end{align*}
  By Fubini's Theorem, the set $X$ of $x$ such that $\lambda(\mathcal{B}_n(x)) > 0$ has
  positive measure.

  We now pick $\rn x$ uniformly at random from the set
  $$
  \set{x\in\mathcal E_3}{x_{\{2\}}, x_{\{3\}}\in \mathcal B_n(x_{\{1\}})}.
  $$

  Since
  \begin{align*}
    \indprob{\rn{x_{\{1\}}},\rn{x_{\{2\}}},\rn{x_{\{3\}}}}{
      \indprob{\rn{x_{\{1,2\}}},\rn{x_{\{1,3\}}}}{
        (\rn{x_{\{2\}}},\rn{x_{\{1\}}},\rn{x_{\{1,2\}}})\in\mathcal{N}\land
        (\rn{x_{\{1\}}},\rn{x_{\{3\}}},\rn{x_{\{1,3\}}})\in\mathcal{N}
      } \geq \frac{1}{n^2}
    }
    & = 1,
  \end{align*}
  and since $\mathcal{N}$ is a weak $T_{\operatorname{LinOrder}}$-on, it follows that
  \begin{align*}
        \prob{(\rn{x_{\{2\}}},\rn{x_{\{3\}}},\rn{x_{\{2,3\}}})\in\mathcal{N}
      } = 1.
  \end{align*}
  But this implies
  \begin{align*}
    \indprob{\rn{x_{\{1\}}},\rn{x_{\{2\}}},\rn{x_{\{3\}}}}{
      \indprob{\rn{x_{\{1,2\}}},\rn{x_{\{2,3\}}}}{
        (\rn{x_{\{1\}}},\rn{x_{\{2\}}},\rn{x_{\{1,2\}}})\in\mathcal{N}\land
        (\rn{x_{\{2\}}},\rn{x_{\{3\}}},\rn{x_{\{2,3\}}})\in\mathcal{N}
      } \geq \frac{1}{n}
    } & = 1,
  \end{align*}
  hence, repeating the previous argument,  $\prob{(\rn{x_{\{1\}}},\rn{x_{\{3\}}},\rn{x_{\{1,3\}}})\in\mathcal{N}} = 1$, contradicting the fact
  that $\rn{x_{\{3\}}}$ is picked in $\mathcal{B}_n(\rn{x_{\{1\}}})$.

  Therefore the set of bad pairs $\mathcal{B}$ has zero measure and the proof is complete.
\end{proof}

Before we proceed to the final step, let us prove an easy lemma about anti-symmetric peons.

\begin{lemma}\label{lem:antisymvertex}
  Let $\mathcal{N}$ be an anti-symmetric peon and let $U\subseteq[0,1]$ be a Lebesgue measurable set with
  $\lambda(U) > 0$. Then there exist $x_1,x_2\in U$ such that
  \begin{align}
    \label{eq:antisymsection}
    \lambda(\{(y,z)\in U\times[0,1] \mid (x_1,y,z)\in\mathcal{N}\}) & > 0;
    \\
    \notag
    \lambda(\{(y,z)\in U\times[0,1] \mid (y,x_2,z)\in\mathcal{N}\}) & > 0.
  \end{align}
\end{lemma}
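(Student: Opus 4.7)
The plan is to use a simple counting/Fubini argument powered by the anti-symmetry of $\mathcal{N}$. Define
\begin{align*}
A & \df \{(x,y,z)\in U\times U\times[0,1] \mid (x,y,z)\in\mathcal{N}\},\\
B & \df \{(x,y,z)\in U\times U\times[0,1] \mid (y,x,z)\in\mathcal{N}\}.
\end{align*}
Both are Lebesgue measurable: $A$ is the intersection of $\mathcal{N}$ with $U\times U\times[0,1]$, and $B$ is the preimage of $A$ under the coordinate swap $(x,y,z)\mapsto(y,x,z)$, which is a measure-preserving involution of $\mathcal{E}_2$.

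The first key step is to note that because $\mathcal{N}$ is anti-symmetric, for every $(x,y,z)\in U\times U\times[0,1]$ with $x\neq y$, exactly one of the conditions $(x,y,z)\in\mathcal{N}$ or $(y,x,z)\in\mathcal{N}$ holds. Since the diagonal $\{(x,y,z)\in U\times U\times[0,1]\mid x=y\}$ has Lebesgue measure zero, I would conclude
\begin{align*}
\lambda(A)+\lambda(B) \;\geq\; \lambda(A\cup B) \;\geq\; \lambda(U)^2 \;>\; 0.
\end{align*}
The second key step is that the measure-preserving swap used to define $B$ sends $A$ bijectively onto $B$ (as $x,y\in U$ is symmetric in $(x,y)$), so $\lambda(A)=\lambda(B)$. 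Combined with the previous bound, this gives $\lambda(A)\geq\lambda(U)^2/2>0$ and similarly for $\lambda(B)$.

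Finally, I would apply Fubini's Theorem to each side. Since
\begin{align*}
0 \;<\; \lambda(A) \;=\; \int_U \lambda\bigl(\{(y,z)\in U\times[0,1] \mid (x,y,z)\in\mathcal{N}\}\bigr)\,dx,
\end{align*}
there must exist $x_1\in U$ witnessing a positive value of the integrand, which gives the first inequality of~\eqref{eq:antisymsection}. The same argument applied to $B$ yields the required $x_2\in U$. There is no serious obstacle here; the only subtlety worth double-checking is that the swap is indeed measure-preserving and that the comparisons use the essential anti-symmetry off the measure-zero diagonal, both of which are immediate.
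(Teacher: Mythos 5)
Your proof is correct and takes essentially the same route as the paper: anti-symmetry plus the measure-preserving swap gives $\lambda((U\times U\times[0,1])\cap\mathcal{N}) = \lambda(U)^2/2 > 0$, and Fubini then yields $x_1$ (and symmetrically $x_2$). You simply spell out the anti-symmetry/Fubini computation in more explicit detail than the paper's terse version.
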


\begin{proof}
  For every $x\in U$, let $V(x)$ be the set in~\eqref{eq:antisymsection} with $x_1 = x$.

  Since $\mathcal{N}$ is anti-symmetric, by Fubini's Theorem, we have
  \begin{align*}
    0 < \frac{\lambda(U)^2}{2} & = \lambda((U\times U\times [0,1])\cap\mathcal{N})
    = \int_U \lambda(V(x)) d\lambda(x),
  \end{align*}
  so there exists $x_1\in U$ such that $\lambda(V(x_1))>0$. The assertion for $x_2$ follows by
  anti-symmetry.
\end{proof}

\begin{theorem}\label{thm:strongLinOrder}
  If $\mathcal{N}$ is a weak $T_{\operatorname{LinOrder}}$-on, then there exists a strong
  Borel $\mathcal{E}_2^*$-measurable $T_{\operatorname{LinOrder}}$-on $\mathcal{N}'$ such
  that $\lambda(\mathcal{N}\symmdiff\mathcal{N}')=0$.
\end{theorem}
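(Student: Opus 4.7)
The plan is to carry out step~3 of the three-step program outlined at the beginning of this subsection. Concretely, I first apply Lemma~\ref{lem:theonsymmetry} and then Lemma~\ref{lem:E2*measurable} to $\mathcal{N}$ to obtain a weak Borel $\mathcal{E}_2^*$-measurable anti-symmetric $T_{\operatorname{LinOrder}}$-on, which I identify with a Borel set $B\subseteq[0,1]^{2}$ via $\mathcal{N}=B\times[0,1]$. The set $B$ is then exactly anti-symmetric and irreflexive (so for any $x\neq y$ exactly one of $(x,y),(y,x)$ lies in $B$), and applying the weak-theon condition to the cyclic tournament on three vertices gives transitivity almost everywhere:
\[
\lambda^{3}\bigl(\{(x,y,z)\in[0,1]^{3}:(x,y),(y,z),(z,x)\in B\}\bigr)=0.
\]

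Next, setting $B^{-1}_{x}\df\{z\in[0,1]:(z,x)\in B\}$, I define the Borel rank function $\phi(x)\df\lambda(B^{-1}_{x})$ and put
\[
B'\df\{(x,y):\phi(x)<\phi(y)\}\cup\{(x,y):\phi(x)=\phi(y)\land x<y\},\qquad \mathcal{N}'\df B'\times[0,1].
\]
Then $B'$ is manifestly a strict total linear order on $[0,1]$, so $\mathcal{N}'$ is a strong, Borel, $\mathcal{E}_2^*$-measurable $T_{\operatorname{LinOrder}}$-on. Integrating a.e.\ transitivity in the $z$-variable via Fubini shows that for a.e.\ pair $(x,y)\in B$ one has $B^{-1}_{x}\subseteq B^{-1}_{y}$ mod null, hence $\phi(x)\leq\phi(y)$; combined with exact anti-symmetry, this gives $(x,y)\in B\iff\phi(x)<\phi(y)$ for a.e.\ $(x,y)$ with $\phi(x)\neq\phi(y)$. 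Consequently $\lambda^{2}(B\symmdiff B')=0$ as soon as the tie set $\{(x,y)\in[0,1]^{2}:\phi(x)=\phi(y)\}$ is null, i.e., as soon as the push-forward measure $\phi_{*}\lambda$ on $[0,1]$ is atomless.

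This atomlessness is the crux of the argument, and I expect it to be the main technical obstacle. Assume for contradiction that some level set $A\df\phi^{-1}(c)$ has positive measure. For a.e.\ pair $(x_{1},x_{2})\in A^{2}$ with $(x_{1},x_{2})\in B$, the inclusion $B^{-1}_{x_{1}}\subseteq B^{-1}_{x_{2}}$ mod null together with $\lambda(B^{-1}_{x_{1}})=\lambda(B^{-1}_{x_{2}})=c$ forces $B^{-1}_{x_{1}}=B^{-1}_{x_{2}}$ mod null; invoking exact anti-symmetry, this equality in the measure algebra holds for a.e.\ pair in $A\times A$. A standard Fubini argument, fixing a generic $x_{0}\in A$, then yields a Borel set $Q\df B^{-1}_{x_{0}}$ with $B^{-1}_{x}=Q$ mod null for a.e.\ $x\in A$; symmetrically, a Borel set $Q^{+}$ satisfies $B_{x}=Q^{+}$ mod null for a.e.\ $x\in A$. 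The Borel set $B\cap(A\times A)$ thus coincides mod null with both $(Q\cap A)\times A$ (reading off $x_{1}$-sections) and $A\times(Q^{+}\cap A)$ (reading off $x_{2}$-sections). But exact anti-symmetry forces $\lambda^{2}(B\cap A^{2})=\lambda(A)^{2}/2$, hence $\lambda(Q\cap A)=\lambda(Q^{+}\cap A)=\lambda(A)/2$; then a direct computation shows that the symmetric difference of $(Q\cap A)\times A$ and $A\times(Q^{+}\cap A)$ has measure $\lambda(A)^{2}/2>0$, contradicting that both describe $B\cap(A\times A)$ modulo null. Hence $\lambda(A)=0$, $\phi$ is essentially injective, and $\mathcal{N}'$ satisfies all the conclusions of the theorem.
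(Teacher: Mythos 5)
Your proposal is correct and takes a genuinely different route from the paper's argument once the two preparatory Lemmas~\ref{lem:theonsymmetry} and~\ref{lem:E2*measurable} have been applied. The paper defines $\mathcal{N}'$ locally, through ``witness sets'' $W(x_1,x_2) \df \{y \mid (x_1,y)\in\mathcal{N}\land (y,x_2)\in\mathcal{N}\}$ and the dichotomy excellent/non-excellent pairs; it then has to establish anti-symmetry and transitivity of $\mathcal{N}'$ separately, with Lemma~\ref{lem:antisymvertex} driving the proof that non-excellent pairs form a null set. You instead build a single \emph{global} numerical rank $\phi(x) = \lambda(B^{-1}_x)$ and declare $x\prec' y$ iff $\phi(x)<\phi(y)$, breaking ties by the usual order on $[0,1]$. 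This makes $B'$ \emph{by construction} an exactly anti-symmetric transitive relation, so the verification that $\mathcal{N}'$ is a strong Borel $\mathcal{E}_2^*$-measurable $T_{\operatorname{LinOrder}}$-on is trivial; all the work is concentrated in the single claim that $\lambda(B\symmdiff B')=0$, which you correctly reduce to the atomlessness of $\phi_*\lambda$ and then establish by the Fubini/anti-symmetry contradiction (a level set $A$ of positive measure would force $B\cap A^2$ to be simultaneously a vertical and a horizontal half-rectangle of $A^2$, which is impossible by the explicit $\lambda(A)^2/2$ symmetric-difference computation). I verified each step, including the derivation from the $C_3$-condition that $B^{-1}_x\subseteq B^{-1}_y$ mod null whenever $(x,y)\in B$ (which uses exact anti-symmetry to rewrite $B_y$ as a co-section), and the final measure count; it is all sound.

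One further remark that may interest you: your function $\phi$ is essentially the $s^i_{\mathcal N}$ that appears later in the paper's treatment of permutons (Section~\ref{sec:permutons}, equation~\eqref{eq:permutonfunction}), where it is proved --- by a completely different counting argument involving $\tinj(S_n,\widehat{\mathcal N})$ --- that it is in fact \emph{measure preserving}, which is much stronger than atomless. Your elementary argument supplies the weaker atomlessness directly and self-containedly, which is all this theorem needs; the paper's sequencing (Section~\ref{sec:removal} precedes Section~\ref{sec:otherobjects}) prevents it from borrowing the permuton machinery. Conversely, the paper's witness-set approach has the modest advantage of producing $\mathcal{N}'$ ``pointwise close'' to $\mathcal{N}$ by design (Proposition~\ref{prop:ierl}-style density sandwiching), which is the template reused in the general non-constructive Theorem~\ref{thm:ierl}; your rank construction is specific to linear orders but is arguably more transparent where it applies.
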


\begin{proof}
  By Lemmas~\ref{lem:theonsymmetry} and~\ref{lem:E2*measurable}, we may suppose that $\mathcal{N}$ is a weak Borel
  $\mathcal{E}_2^*$-measurable anti-symmetric $T_{\operatorname{LinOrder}}$-on.

  Since all $T_{\operatorname{LinOrder}}$-ons in this proof will be $\mathcal{E}_2^*$-measurable, we will
  suppress all dummy variables $x_V$ indexed by $V$ with $\lvert V\rvert\geq 2$. Furthermore, since all
  variables are now indexed by singletons, we will use the notation $x_i$ for $x_{\{i\}}$.

  Let $F$ be the open formula
  \begin{align*}
    x\prec y\land y\prec z\to x\prec z.
  \end{align*}
  By Theorem~\ref{thm:characterization}, we have $\lambda(T(F,\mathcal{N}))=1$. For every $x\in[0,1]$,
  define the sections
  \begin{align*}
    T(F,\mathcal{N})_1(x) & \df \{(x_2,x_3)\in[0,1]^2 \mid (x,x_2,x_3)\in T(F,\mathcal{N})\};\\
    T(F,\mathcal{N})_2(x) & \df \{(x_1,x_3)\in[0,1]^2 \mid (x_1,x,x_3)\in T(F,\mathcal{N})\};\\
    T(F,\mathcal{N})_3(x) & \df \{(x_1,x_2)\in[0,1]^2 \mid (x_1,x_2,x)\in T(F,\mathcal{N})\};
  \end{align*}
  and let $G$ be the set of ``good'' points $x\in[0,1]$ such that
  $\lambda(T(F,\mathcal{N})_i(x)) = 1$ for all $i\in[3]$. Note that $\lambda(G)=1$ by Fubini's Theorem.

  For every $(x_1,x_2)\in[0,1]^2$, define the ``witness'' set
  \begin{align*}
    W(x_1,x_2) & \df \{y\in G \mid (x_1,y)\in\mathcal{N}\land (y,x_2)\in\mathcal{N}\}.
  \end{align*}

  Let us call a pair $(x_1,x_2)\in (G\times G)\setminus\mathcal{D}_2$ \emph{excellent} if at least one of
  $W(x_1,x_2)$ or $W(x_2,x_1)$ has positive measure and let $E\subseteq (G\times
  G)\setminus\mathcal{D}_2$ be the set of excellent pairs (note that $(x_1,x_2)\in E\equiv (x_2,x_1)\in E$).

  We now define $\mathcal{N}'$ by
  \begin{equation}\label{eq:LinOrder}
    \begin{aligned}
      \mathcal{N}'
      & \df
      \{(x_1,x_2)\in E \mid \lambda(W(x_1,x_2)) > 0\}
      \\ & \qquad
      \cup\{(x_1,x_2)\in (G\times G)\setminus E \mid x_1 < x_2\}
      \\ & \qquad
      \cup([0,1]\setminus G)\times G
      \\ & \qquad
      \cup\{(x_1,x_2)\in([0,1]\setminus G)\times([0,1]\setminus G) \mid x_1 < x_2\}
    \end{aligned}
  \end{equation}
  Fubini's Theorem guarantees that this is a Borel set. The intuition behind this construction is that $x_1$
  is declared ``smaller than'' $x_2$ if there is a positive measure witness $W(x_1,x_2)$ to this
  fact. However, since we need the resulting relation to be total, we need to consistently decide the ordering
  between pairs that are not excellent.

  Let us prove that $\mathcal{N}'$ is anti-symmetric.
If $(x_1,x_2)\notin E$ with $x_1\neq x_2$, then clearly $(x_1,x_2)\in\mathcal{N}'\equiv
  (x_2,x_1)\notin\mathcal{N}'$.
This means that the only way $\mathcal{N}'$ can fail anti-symmetry is if there exist distinct $x_1,x_2\in G$
  with $\lambda(W(x_1,x_2))>0$ and $\lambda(W(x_2,x_1))>0$. But since $x_1\in G$, we know that for almost
  every $(y,z)\in W(x_2,x_1)\times W(x_1,x_2)$, we have $(y,z)\in\mathcal{N}$. On the other hand, since
  $x_2\in G$, we know that for almost every $(z,y)\in W(x_1,x_2)\times W(x_2,x_1)$, we have
  $(z,y)\in\mathcal{N}$. Hence if $\lambda(W(x_1,x_2))>0$ and $\lambda(W(x_2,x_1)) > 0$, then almost every point of the positive measure set $W(x_1,x_2)\times W(x_2,x_1)$ violates the anti-symmetry of $\mathcal{N}$, a
  contradiction. Therefore $\mathcal{N}'$ is anti-symmetric.

  \medskip

  Let us now show that $\lambda(\mathcal{N}\symmdiff\mathcal{N}') = 0$.
To do so, let us first show that $\lambda(E) = 1$. Note that since $\lambda(G)=1$ and $E\subseteq G\times
  G$, it is enough to show that the set
  \begin{align*}
    Z & \df (G\times G)\setminus E
  \end{align*}
  has zero measure. For every $x_1\in G$, define
  \begin{align*}
    Z_1(x_1) & \df \{x_2\in G \mid (x_1,x_2)\in Z\cap\mathcal{N}\};\\
    Z_2(x_1) & \df \{x_2\in G \mid (x_1,x_2)\in Z\setminus\mathcal{N}\}.
  \end{align*}

  We claim that $\lambda(Z_1(x_1)) = 0$ for every $x_1\in G$. Indeed, otherwise, by
  Lemma~\ref{lem:antisymvertex}, there would exist $y\in Z_1(x_1)$ such that $\lambda(\{z\in Z_1(x_1) \mid
  (z,y)\in\mathcal{N}\}) > 0$, which would imply that $\lambda(W(x_1,y)) > 0$, contradicting $(x_1,y)\notin
  E$.

  Using anti-symmetry, we also conclude that $\lambda(Z_2(x_1))=0$ for every $x_1\in G$, and by Fubini's
  Theorem, we get $\lambda(E) = 1$. This means that to show that $\lambda(\mathcal{N}\symmdiff\mathcal{N}')=0$ it is
  sufficient to prove that $\lambda((\mathcal{N}\symmdiff\mathcal{N}')\cap E) = 0$.

  If $(x_1,x_2)\in(\mathcal{N}'\setminus\mathcal{N})\cap
  E$, then for every $y\in W(x_1,x_2)$ we have $(x_1,y,x_2)\in T(\neg F,\mathcal{N})$. But then Fubini's
  Theorem implies that
   $\lambda(W(x_1,x_2)) = 0$ for almost every $(x_1,x_2)\in(\mathcal{N}'\setminus\mathcal{N})\cap E$. Now
  the definition of $\mathcal{N}'$ gives $\lambda(W(x_1,x_2))>0$ for every $(x_1,x_2)\in\mathcal{N}'\cap E$,
  so we must have $\lambda((\mathcal{N}'\setminus\mathcal{N})\cap E)=0$.

  On the other hand, by anti-symmetry, we have $(x_1,x_2)\in(\mathcal{N}\setminus\mathcal{N}')\cap E\equiv
  (x_2,x_1)\in(\mathcal{N}'\setminus\mathcal{N})\cap E$, so it follows that
  $\lambda((\mathcal{N}\setminus\mathcal{N}')\cap E)=0$ as well.
Therefore $\lambda(\mathcal{N}\symmdiff\mathcal{N}')=0$.
  It remains to prove that $\mathcal{N}'$ is also transitive.
Fix $(x_1,x_2,x_3)\in[0,1]^3$.

  If at least one of $x_1,x_2,x_3$ is not in $G$, then clearly $\mathcal{N}'$ satisfies transitivity for (all
  permutations of) $(x_1,x_2,x_3)$. The same holds if none of the pairs $(x_1,x_2),(x_2,x_3),(x_3,x_1)$ is in
  $E$.

  Suppose then that $x_1,x_2,x_3\in G$ and that at least one pair is in $E$. Without loss of generality,
  let us suppose that $\lambda(W(x_1,x_2)) > 0$.

  By anti-symmetry of $\mathcal{N}$, for every $z\in W(x_1,x_2)\setminus\{x_3\}$, we either have
  $(z,x_3)\in\mathcal{N}$ or $(x_3,z)\in\mathcal{N}$. Since $\lambda(W(x_1,x_2)) > 0$, at least one of these
  possibilities must have positive probability, so we either have $\lambda(W(x_1,x_3)) > 0$ or
  $\lambda(W(x_3,x_2)) > 0$; in both cases, it follows that $\mathcal{N}'$ satisfies transitivity for
  $(x_1,x_2,x_3)$.
Therefore $\mathcal{N}'$ is a strong Borel $\mathcal{E}_2^*$-measurable $T_{\operatorname{LinOrder}}$-on such
  that $\lambda(\mathcal{N}\symmdiff\mathcal{N}')=0$.
\end{proof}

\begin{corollary}
  If $\mathcal{N}$ is a weak $T_{\operatorname{Perm}}$-on, then there exists a strong Borel
  $T_{\operatorname{Perm}}$-on $\mathcal{N}'$ such that $\mathcal{N}_{\prec_i}$ is
  $\mathcal{E}_2^*$-measurable and $\lambda(\mathcal{N}_{\prec_i}\symmdiff\mathcal{N}'_{\prec_i})=0$ for all
  $i\in[2]$.
\end{corollary}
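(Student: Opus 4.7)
The plan is to reduce the statement to two independent applications of Theorem~\ref{thm:strongLinOrder}. Since $T_{\operatorname{Perm}} = T_{\operatorname{LinOrder}} \cup T_{\operatorname{LinOrder}}$ is the disjoint union of two copies of the theory of linear orders, its axioms split into two disjoint groups: one group mentioning only $\prec_1$, the other only $\prec_2$. This decoupling at the axiomatic level should let us treat the two peons $\mathcal{N}_{\prec_1}$ and $\mathcal{N}_{\prec_2}$ independently.

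First I would verify that if $\mathcal{N}$ is a weak $T_{\operatorname{Perm}}$-on, then viewing the peon $\mathcal{N}_{\prec_i}$ alone as an Euclidean structure in the single-predicate language $\{\prec_i\}$ yields a weak $T_{\operatorname{LinOrder}}$-on. This is immediate from Theorem~\ref{thm:characterization}: each $T_{\operatorname{LinOrder}}$ axiom involves only one of the two predicates, so its interpretation $T(F,\mathcal{N})$ coincides with $T(F,\mathcal{N}_{\prec_i})$ (both theories being substitutionally closed). Hence the weak hypothesis on $\mathcal{N}$ transfers to both restrictions.

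Next I would apply Theorem~\ref{thm:strongLinOrder} separately to $\mathcal{N}_{\prec_1}$ and $\mathcal{N}_{\prec_2}$, obtaining strong Borel $\mathcal{E}_2^*$-measurable $T_{\operatorname{LinOrder}}$-ons $\mathcal{N}'_{\prec_1}$ and $\mathcal{N}'_{\prec_2}$ with $\lambda(\mathcal{N}_{\prec_i} \symmdiff \mathcal{N}'_{\prec_i}) = 0$ for $i\in[2]$. Then I define the Euclidean structure $\mathcal{N}'$ by bundling these two peons together.

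Finally, I would check that $\mathcal{N}'$ is a strong $T_{\operatorname{Perm}}$-on. Again by Theorem~\ref{thm:characterization}, it suffices to show that for every axiom $\forall \vec{x}\, F(\vec{x})$ of $T_{\operatorname{Perm}}$ we have $T(F,\mathcal{N}') \supseteq \mathcal{E}_n \setminus \mathcal{D}_n$. Since every such axiom mentions only one predicate $\prec_i$, its interpretation under $\mathcal{N}'$ coincides with its interpretation under $\mathcal{N}'_{\prec_i}$ (treated as an Euclidean structure in the language $\{\prec_i\}$), and the required inclusion follows from $\mathcal{N}'_{\prec_i}$ being a strong $T_{\operatorname{LinOrder}}$-on. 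There is essentially no obstacle here — the whole corollary is a matter of observing that Theorem~\ref{thm:strongLinOrder} factors through disjoint unions of theories in disjoint languages. The only thing one needs to be careful about is bookkeeping the fact that the two copies of $T_{\operatorname{LinOrder}}$ share the \emph{same} vertex coordinates $x_{\{i\}}$ in $\mathcal{E}_V$ (only the higher-order coordinates are separate per predicate), but since both Theorem~\ref{thm:strongLinOrder}'s output theons are $\mathcal{E}_2^*$-measurable and its modifications leave the $x_{\{i\}}$-level untouched, this causes no conflict.
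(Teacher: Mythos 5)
Your proof is correct and matches the paper's own argument exactly: apply Theorem~\ref{thm:strongLinOrder} to each peon $\mathcal{N}_{\prec_i}$ separately, then observe that the bundled structure is a strong $T_{\operatorname{Perm}}$-on because $T_{\operatorname{Perm}}$ is the disjoint union of two copies of $T_{\operatorname{LinOrder}}$, so its axioms decouple into independent groups each mentioning a single predicate. One minor imprecision in your final remark: the higher-order coordinate $x_{\{1,2\}}\in\mathcal{E}_2$ is also shared by both peons (it is not separate per predicate), but this is irrelevant since the axioms never mix $\prec_1$ and $\prec_2$.
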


\begin{proof}
  Simply apply Theorem~\ref{thm:strongLinOrder} to each of the peons separately. The resulting $T_{\operatorname{Perm}}$-on is strong since $T_{\operatorname{Perm}}$ is the {\em disjoint} union of two copies of $T_{\operatorname{LinOrder}}$.
\end{proof}

\section{Existence and uniqueness}\label{sec:existence-uniqueness}

The objective of this section is to prove Theorems~\ref{thm:theoncryptomorphism} and~\ref{thm:TheonUniqueness}, but before we do so, we will prove that yet another object is cryptomorphic to limits of convergent sequences of models.
Throughout this section, random variables will be identified with their distributions, that is we do not distinguish between random variables corresponding to the same probability measure.

Let us first recall the definition of weak convergence.

\begin{definition}\label{def:weakconv}
A {\em Polish space} is a separable completely metrizable topological space. Let $S$ be a Polish space endowed with the Borel $\sigma$-algebra $\mathcal{B}(S)$; we view it as a (standard) Borel space. A
  sequence $(\rn{X_n})_{n\in\mathbb{N}}$ of random $S$-valued variables \emph{weakly converges} (or
  \emph{converges in law}, or \emph{converges in distribution}) to another $S$-valued random variable $\rn{X}$
  if for any bounded continuous function $f\in C(S)$, $\lim_{n\to\infty}\expect{f(\rn{X_n})} = \expect{f(\rn
    X)}$.

It is a direct consequence of Prokhorov's Theorem that the random variable $\rn X$ is uniquely defined (due to our convention),
we will denote it by $\lim_{n\to\infty}\rn{X_n}$ and say that the sequence $(\rn{X_n})_{n\in\mathbb{N}}$ \emph{weakly converges} if this limit exists.
\end{definition}

Let us recall two important theorems on weak convergence. A {\em continuity set of $\rn{X}$} is a Borel set $B$ such that $\prob{\rn X\in\partial B}=0$, where $\partial B$ is the boundary of $B$.

\begin{theorem}[Portmanteau]\label{thm:port}
  If $\rn{X}$ and $\rn{X_n}$ ($n\in\mathbb{N}$) are random variables, then the following are equivalent.
  \begin{itemize}
  \item The sequence $(\rn{X_n})_{n\in\mathbb{N}}$ weakly converges to $\rn{X}$.
  \item For every continuity set $B$ of $\rn X$ we have
    \begin{align*}
      \lim_{n\to\infty}\prob{\rn{X_n}\in B} & = \prob{\rn X\in B}.
    \end{align*}
  \item For every open set $U\subseteq S$, we have
    \begin{align*}
      \liminf_{n\to\infty}\prob{\rn{X_n}\in U} & \geq \prob{\rn{X}\in U}.
    \end{align*}
  \item For every closed set $C\subseteq S$, we have
    \begin{align*}
      \limsup_{n\to\infty}\prob{\rn{X_n}\in C} & \leq \prob{\rn{X}\in C}.
    \end{align*}
  \end{itemize}
\end{theorem}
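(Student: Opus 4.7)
The plan is to establish the cyclic chain of implications (weak convergence) $\Rightarrow$ (closed-set condition) $\Leftrightarrow$ (open-set condition) $\Rightarrow$ (continuity-set condition) $\Rightarrow$ (weak convergence). The equivalence between the open and closed versions is immediate by complementation: if $U$ is open then $S\setminus U$ is closed, and $\prob{\rn{X_n}\in U}=1-\prob{\rn{X_n}\in S\setminus U}$, so the $\liminf$ bound over open sets is just the $\limsup$ bound over closed sets in disguise.

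For weak convergence $\Rightarrow$ closed-set condition, I would fix a closed $C\subseteq S$ and a compatible metric $d$ on the Polish space, and use the bounded continuous Urysohn-type functions $f_k(x)\df\max(0,\,1-k\cdot d(x,C))$. Each $f_k$ dominates $1_C$, and since $C$ is closed we have $f_k(x)\downarrow 1_C(x)$ pointwise as $k\to\infty$. Then $\prob{\rn{X_n}\in C}\leq \expect{f_k(\rn{X_n})}$, and applying weak convergence to $f_k$ followed by dominated convergence as $k\to\infty$ gives $\limsup_n\prob{\rn{X_n}\in C}\leq \expect{f_k(\rn X)}\to \prob{\rn X\in C}$.

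For the closed/open bounds $\Rightarrow$ continuity-set condition, any continuity set $B$ satisfies $\prob{\rn X\in\overline B}=\prob{\rn X\in \operatorname{int}(B)}=\prob{\rn X\in B}$, because $\partial B=\overline B\setminus\operatorname{int}(B)$ has $\rn X$-measure zero. Applying the closed-set bound to $\overline B$ and the open-set bound to $\operatorname{int}(B)$ squeezes $\prob{\rn{X_n}\in B}$ between $\prob{\rn X\in B}$ above and below, forcing convergence.

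The most delicate step is the implication from the continuity-set condition back to weak convergence. Given a bounded continuous $f$, by translating and rescaling I may assume $0\leq f\leq M$ and apply the layer-cake formula $\expect{f(\rn Y)}=\int_0^M \prob{f(\rn Y)>t}\,dt$. For each $t$ the superlevel set $B_t\df\{x\in S:f(x)>t\}$ is open, with $\partial B_t\subseteq \{f=t\}$. The sets $\{f=t\}$ for distinct values of $t$ are pairwise disjoint, so only countably many of them can carry positive $\rn X$-mass; hence for Lebesgue-almost every $t\in[0,M]$, $B_t$ is a continuity set of $\rn X$ and the hypothesis yields $\prob{\rn{X_n}\in B_t}\to \prob{\rn X\in B_t}$. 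Dominated convergence applied to the layer-cake integrals (with dominating function $1_{[0,M]}$) then gives $\expect{f(\rn{X_n})}\to \expect{f(\rn X)}$. The main obstacle is precisely this countability observation for the level sets and the verification that the dominated convergence hypothesis applies on $[0,M]$; everything else reduces to straightforward measure-theoretic bookkeeping and requires nothing about $S$ beyond the existence of a compatible metric.
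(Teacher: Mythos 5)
The paper states this theorem without proof, treating it as the classical Portmanteau theorem from weak-convergence theory, so there is nothing internal to compare against. Your argument is correct and is the standard textbook proof: the complementation equivalence of the open and closed conditions, the Lipschitz minorant/majorant $f_k(x)=\max(0,1-k\,d(x,C))$ together with monotone (or dominated) convergence as $k\to\infty$ for the forward direction, the squeeze via $\overline{B}$ and $\operatorname{int}(B)$ for continuity sets, and the layer-cake decomposition with the observation that $\{t:\prob{f(\rn X)=t}>0\}$ is countable so that almost every superlevel set $B_t$ is a continuity set. One small stylistic remark: for the step $\expect{f_k(\rn X)}\to\prob{\rn X\in C}$, monotone convergence (the $f_k$ decrease and are bounded by $1$) is slightly more natural than dominated convergence, though either works; and the bounded convergence theorem (with the constant majorant $1$ on $[0,M]$) is exactly what justifies the final interchange of limit and integral. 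All the details you flag as delicate — the countability of atomic level values and the applicability of dominated convergence on $[0,M]$ — do go through as you describe.
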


\begin{theorem}[Method of moments]\label{thm:moments}
Let $I$ be a finite or countable set of indices, and let $S\df [0,1]^I$ be endowed with product topology. Let $(\rn{X_n})_{n\in\mathbb{N}}$ be a sequence of $S$-valued random variables such that all joint moments converge, that is, for every finite $I'\subseteq I$ and every $k\function{I'}{\mathbb{N}}$, the limit
  \begin{equation} \label{eq:moments}
    \lim_{n\to\infty}\expect{\prod_{i\in I'}\pi_i(\rn{X_n})^{k(i)}}
  \end{equation}
  exists {\rm (}$\pi_i\function{[0,1]^I}{[0,1]}$ denotes the projection on the $i$th coordinate{\rm )}.

  Then $(\rn{X_n})_{n\in\mathbb{N}}$ weakly converges and $\lim_{n\to\infty} \rn{X_n}$ depends only on the moments~\eqref{eq:moments}.
\end{theorem}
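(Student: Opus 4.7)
The plan is to combine compactness of the space of probability measures on $S=[0,1]^I$ with Stone--Weierstrass density of the algebra of cylinder polynomials. This decouples the convergence statement (a compactness argument) from the uniqueness of the limit (an approximation argument).

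First, since $I$ is at most countable, $S=[0,1]^I$ with the product topology is compact metrizable by Tychonoff's theorem, and in particular Polish. The space $\mathcal{P}(S)$ of Borel probability measures on $S$ equipped with the weak topology is then itself compact: on a compact Hausdorff space tightness is automatic, so Prokhorov's theorem delivers sequential compactness for free. Hence every subsequence of $(\rn{X_n})_{n\in\mathbb N}$ admits a further weakly convergent subsequence.

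Next, I claim that the joint moments~\eqref{eq:moments} determine a probability measure on $S$ uniquely. For every finite $I'\subseteq I$ and $k\function{I'}{\mathbb N}$, the monomial $f_{I',k}(x)\df\prod_{i\in I'}\pi_i(x)^{k(i)}$ is bounded and continuous on $S$; so if $\rn{X_n}$ weakly converges to some $\rn Y$ along a subsequence, then by Definition~\ref{def:weakconv}, $\expect{f_{I',k}(\rn Y)}=\lim_n\expect{f_{I',k}(\rn{X_n})}$, and the hypothesis makes this limit independent of the chosen subsequence. Thus all subsequential weak limits of $(\rn{X_n})$ share the same joint moments. Now the $\mathbb R$-subalgebra $\mathcal A\subseteq C(S)$ generated by the $f_{I',k}$ together with the constants separates points of $S$ --- distinct $x\neq y$ in $S$ differ in some coordinate $i\in I$, and $\pi_i\in\mathcal A$ separates them --- so by the Stone--Weierstrass theorem $\mathcal A$ is uniformly dense in $C(S)$. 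Any two Borel probability measures on $S$ agreeing on $\mathcal A$ therefore agree on all of $C(S)$, and so coincide.

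Combining the two steps: every subsequential weak limit of $(\rn{X_n})$ equals a common measure $\rn X$ entirely determined by the limiting moments. Since $\mathcal P(S)$ is metrizable (as $S$ is), the standard subsequence principle upgrades this to weak convergence of the full sequence to $\rn X$: if $(\rn{X_n})$ failed to weakly converge to $\rn X$, some $f\in C(S)$ and $\varepsilon>0$ would yield a subsequence with $\lvert\expect{f(\rn{X_{n_k}})}-\expect{f(\rn X)}\rvert\geq\varepsilon$, contradicting the existence of a further sub-subsequence tending to $\rn X$ in law. The main (mild) obstacle is essentially bookkeeping, namely verifying that $\mathcal A$ separates points, which reduces to the defining property of the product topology; everything else assembles standard facts.
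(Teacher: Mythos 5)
Your proof is correct and standard. Note, however, that the paper does not actually prove Theorem~\ref{thm:moments}: it is stated, alongside the Portmanteau theorem, as a recalled background fact preceding Lemma~\ref{lem:convexpect}, so there is no paper proof to compare against. Your three ingredients --- sequential compactness of the space of Borel probability measures on the compact metrizable space $S=[0,1]^I$ (via Prokhorov's theorem, or equivalently Banach--Alaoglu with metrizability of the weak-$*$ topology); uniqueness of the subsequential limit via Stone--Weierstrass density of the algebra of cylinder polynomials in $C(S)$, together with the observation that this algebra is precisely the linear span of the monomials $f_{I',k}$ (products of cylinder monomials are cylinder monomials, and the empty index set gives the constant $1$) and hence is fully controlled by the limiting moments; and the metrizable subsequence principle upgrading identical subsequential limits to convergence of the whole sequence --- assemble cleanly and without gaps, and also deliver the ``depends only on the moments'' clause directly.
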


After these preliminaries, let us get to our framework. First, we prove a technical lemma that says that weak convergence of sequences of random models is the same as convergence in expectation. This is a far-reaching generalization of Theorem~\ref{thm:cryptomorphism} (the latter corresponds to deterministic sequences).

\begin{lemma}\label{lem:convexpect}
  Let $(s_n)_{n\in\mathbb{N}}$ be an increasing sequence of integers and $(\rn{N_n})_{n\in\mathbb{N}}$ be a
  sequence of random models of a theory $T$ such that $\lvert V(\rn{N_n})\rvert=s_n$ for every $n\in\mathbb{N}$.
   Then the following are equivalent.
\begin{enumerate}[label={\arabic*.}, ref={\arabic*)}]
\item \label{a} $\lim_{n\to\infty}\expect{p(M,\rn{N_n})}$
  exists for every $M\in\mathcal{M}$.

\item \label{b} The sequence $(p(\place,\rn{N_n}))_{n\in\mathbb{N}}$ of $[0,1]^{\mathcal M}$-valued
random variables weakly converges.

\item \label{c} The sequence $(p(\place,\rn{N_n}))_{n\in\mathbb{N}}$ weakly converges to a random variable supported on $\Hom$.
\end{enumerate}
Moreover, if the above holds then $\lim_{n\to\infty} p(\place,\rn{N_n})$ is uniquely determined by the limits $\lim_{n\to\infty}\expect{p(M,\rn{N_n})}$.
\end{lemma}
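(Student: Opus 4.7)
The plan is to view the $[0,1]^{\mathcal{M}}$-valued random variables $\rn{Y_n}\df p(\place,\rn{N_n})$ and invoke the Method of Moments (Theorem~\ref{thm:moments}) as the main engine. The implication \ref{c}$\Rightarrow$\ref{b} is trivial. For \ref{b}$\Rightarrow$\ref{a}, observe that each coordinate projection $\pi_M\function{[0,1]^{\mathcal{M}}}{[0,1]}$ is bounded and continuous, so weak convergence implies $\expect{p(M,\rn{N_n})}=\expect{\pi_M(\rn{Y_n})}\to\expect{\pi_M(\rn Y)}$, giving~\ref{a} together with a candidate for the limit expectation.

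For \ref{a}$\Rightarrow$\ref{b}, I have to verify that every joint moment $\expect{\prod_{i=1}^t p(M_i,\rn{N_n})^{k_i}}$ converges. Applying Lemma~\ref{lem:flaglowcollision} (with the tuple enumerating each $M_i$ with multiplicity $k_i$), one rewrites this as $\expect{p(M_1,\ldots,M_1,\ldots,M_t,\ldots,M_t;\rn{N_n})}+O(1/s_n)$. Fixing any $\ell\geq\sum_i k_i\lvert V(M_i)\rvert$ and noting that $s_n\geq\ell$ for all $n$ large, the chain rule of Lemma~\ref{lem:chain2} converts this into a \emph{finite} linear combination $\sum_{M'\in\mathcal{M}_\ell}p(\ldots;M')\expect{p(M',\rn{N_n})}$, whose coefficients do not depend on $n$. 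Each expectation on the right converges by~\ref{a}, so the whole expression converges, and Theorem~\ref{thm:moments} delivers weak convergence and uniqueness of the limit in terms of the quantities in~\ref{a}.

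For \ref{b}$\Rightarrow$\ref{c}, the idea is that $\Hom$ is a closed subset of $[0,1]^{\mathcal{M}}$: it is the intersection of the (countably many) closed sets cutting out positivity $\{\phi(M)\geq 0\}$, the chain rule $\{\phi(M)=\sum_{M'\in\mathcal{M}_\ell}p(M,M')\phi(M')\}$, and multiplicativity $\{\phi(M_1)\phi(M_2)=\sum_{N\in\mathcal{M}_n}p(M_1,M_2;N)\phi(N)\}$. Positivity and the chain rule hold \emph{exactly} on $\rn{Y_n}$ once $s_n$ is large enough, so the Portmanteau Theorem~\ref{thm:port} yields $\prob{\rn Y\in C}=1$ for each such closed $C$. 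The subtle point, and the main technical obstacle, is that multiplicativity does \emph{not} hold exactly on finite models; instead Lemma~\ref{lem:flaglowcollision} gives only an $O(1/s_n)$ defect. The remedy is to intersect relaxed closed sets $C_\epsilon\df\{\phi:\lvert\phi(M_1)\phi(M_2)-\sum_N p(M_1,M_2;N)\phi(N)\rvert\leq\epsilon\}$: for each fixed $\epsilon>0$ we have $\rn{Y_n}\in C_\epsilon$ almost surely for $n$ large, so Portmanteau gives $\prob{\rn Y\in C_\epsilon}=1$, and then $\prob{\rn Y\in\bigcap_{k}C_{1/k}}=1$ recovers the exact identity. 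Intersecting over the countable list of conditions defining $\Hom$ gives $\prob{\rn Y\in\Hom}=1$, establishing~\ref{c}. The final uniqueness clause is built into Theorem~\ref{thm:moments}, once we have shown (in the \ref{a}$\Rightarrow$\ref{b} step) that each moment is a fixed linear combination of the limits $\lim_{n\to\infty}\expect{p(M,\rn{N_n})}$.
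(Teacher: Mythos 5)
Your proposal is correct and takes essentially the same route as the paper: both arguments verify convergence of all joint moments via Lemma~\ref{lem:flaglowcollision} and the chain rule, invoke the Method of Moments (Theorem~\ref{thm:moments}) for weak convergence and uniqueness, and use the Portmanteau Theorem to transfer the exact chain rule and the approximate multiplicativity to the limit distribution. The only differences are cosmetic: you split the hard direction as \ref{a}$\Rightarrow$\ref{b}$\Rightarrow$\ref{c} where the paper proves \ref{a}$\Rightarrow$\ref{c} in one go, and for multiplicativity you use the closed-set form of Portmanteau with nested sets $C_\epsilon$ while the paper uses the dual open-set form with the complements $U_\epsilon$.
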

\begin{proof}
  \ref{c}~$\Longrightarrow$~\ref{b} and~\ref{b}~$\Longrightarrow$~\ref{a} are obvious. Let us
  prove~\ref{a}~$\Longrightarrow$~\ref{c}.

  Let $M_1,M_2,\ldots,M_t\in\mathcal{M}$ be arbitrary fixed models of $T$. By Lemma~\ref{lem:flaglowcollision}, we know that
  \begin{equation} \label{eq:models_moments}
  \lim_{n\to\infty}\max_{N_n\in\mathcal M_n} \left\lvert p(M_1,M_2,\ldots,M_t; N_n) - \prod_{i=1}^tp(M_i, N_n)\right\rvert = 0.
  \end{equation}
  But by Lemma~\ref{lem:chain2} we know that $p(M_1,M_2,\ldots,M_t;\rn{N_n})$ can be written as a linear combination of
  $(p(M,\rn{N_n}))_{M\in\mathcal{M}}$, which by linearity of expectation implies that the limit
  $\lim_{n\to\infty}\expect{p(M_1,M_2,\ldots,M_t;\rn{N_n})}$ (and hence also
  $\lim_{n\to\infty}\expect{\prod_{i=1}^tp(M_i,\rn{N_n})}$) exists.

  Since all joint moments of $(p(\place,\rn{N_n}))_{n\in\mathbb{N}}$ converge, by Theorem~\ref{thm:moments}
  the sequence $(p(\place,\rn{N_n}))_{n\in\mathbb{N}}$ weakly converges and the limit distribution $\rn{\phi}$ is
  completely determined by its joint moments, which, by~\eqref{eq:models_moments}, are completely determined by $(\lim_{n\to\infty}\expect{p(M,\rn{N_n})})_{M\in\mathcal{M}}$.

  It remains to prove that $\prob{\rn{\phi}\in\Hom}=1$. Since $\mathcal{M}$ is countable, it is enough to prove that $\rn{\phi}$ a.e.\
  satisfies the relations defining $\Hom$, that is:
  \begin{itemize}
  \item for every $M\in\mathcal{M}$ and every $\ell\geq\lvert V(M)\rvert$, we have
    \begin{align}\label{eq:chainphi}
      \prob{\rn{\phi}(M) = \sum_{M'\in\mathcal{M}_\ell}p(M,M')\rn{\phi}(M')} & = 1;
    \end{align}
  \item for every $M_1,M_2\in\mathcal{M}$ and every $\ell\geq\lvert V(M_1)\rvert + \lvert V(M_2)\rvert$, we have
    \begin{align}\label{eq:prodphi}
      \prob{\rn{\phi}(M_1)\rn{\phi}(M_2) = \sum_{M'\in\mathcal{M}_\ell}p(M_1,M_2;M')\rn{\phi}(M')} & = 1.
    \end{align}
  \end{itemize}

  Note that the event in~\eqref{eq:chainphi} is a closed set in $[0,1]^{\mathcal{M}}$ and note that if $s_n\geq\ell$, then
  \begin{align*}
    \prob{p(M,\rn{N_n}) = \sum_{M'\in\mathcal{M}_\ell}p(M,M')p(M',\rn{N_n})} & = 1,
  \end{align*}
  hence~\eqref{eq:chainphi} follows by Theorem~\ref{thm:port}.

  On the other hand, fixing $M_1,M_2\in\mathcal{M}$, $\ell\geq\lvert V(M_1)\rvert + \lvert V(M_2)\rvert$ and $\epsilon>0$, the set
  \begin{align*}
    U_\epsilon
    & =
    \left\{
    \psi\in[0,1]^{\mathcal{M}} \middle\vert
    \left\lvert\psi(M_1)\psi(M_2) - \sum_{M'\in\mathcal{M}_\ell}p(M_1,M_2;M')\psi(M')\right\rvert > \epsilon
    \right\}
  \end{align*}
  is open, and by Lemma~\ref{lem:flaglowcollision} we know that
  \begin{align*}
    \lim_{n\to\infty}\prob{p(\place,\rn{N_n})\in U_\epsilon} & = 0.
  \end{align*}

  By Theorem~\ref{thm:port}, we get $\prob{\rn{\phi}\in U_\epsilon}=0$ for every $\epsilon>0$, which implies that~\eqref{eq:prodphi} holds.
\end{proof}

Next we need to make precise the notion of a random canonical structure on an infinite countable set that we choose to
be $\mathbb{N}_+ = \mathbb{N}\setminus\{0\}$.

\begin{definition}
  Let $V$ be a set, and let $(V)_{<\omega}\df \bigcup_{k\in\mathbb N}(V)_k$.
  Let us also denote by $\mathcal{K}_V[\mathcal{L}]$ the set of all canonical structures in the language $\mathcal{L}$ with vertex set
  $V$ (we do \emph{not} identify isomorphic canonical structures). As usual, we will drop $[\mathcal{L}]$ from the notation
  when it is clear from context, and we will denote $\mathcal{K}_{[n]}$ by $\mathcal{K}_n$.
  For $K\in\mathcal K_{\mathbb N_+}$ and $V\subseteq\mathbb N_+$, we denote by $K\vert_V\in \mathcal K_V$ the structure
  induced by $K$ on the set $V$.
\end{definition}

The set $\mathcal K_{\mathbb N_+}$ can be naturally identified with $\{0,1\}^{\{(P,\alpha) \mid P\in\mathcal L, \
\alpha\in (\mathbb N_+)_{k(P)}\}}$ and, in particular, it inherits the ordinary product topology from that space.
The same topology can be alternatively described by the basis
$\{U_K \mid K\in\mathcal{K}_\ell, \ell\in\mathbb{N}\}$, where
\begin{align*}
  U_K & = \{L\in\mathcal{K}_{\mathbb{N}_+} \mid L\vert_{[\ell]} = K\}.
\end{align*}
Note that each $U_K$ is a clopen set.
This immediately implies that a random structure $\rn{K}\in \mathcal{K}_{\mathbb{N}_+}$
is uniquely determined by its marginals $(\rn{K}\vert_{[\ell]})_{\ell\in\mathbb{N}}$.

\begin{lemma}\label{lem:arrayweakconv}
  Let $(\rn{K_n})_{n\in\mathbb N}$ be $\mathcal K_{\mathbb N_+}$-random variables.
  Then the sequence $(\rn{K_n})_{n\in\mathbb N}$ weakly converges if and only if the limit $\lim_{n\to\infty} \prob{\rn{K_n}\vert_{[\ell]}=K}$ exists for every $\ell\in\mathbb N_+$ and every $K\in\mathcal K_\ell$.

  Moreover, if this is the case and $\rn K=\lim_{n\to\infty} \rn{K_n}$ then
  $$
  \lim_{n\to\infty} \prob{\rn{K_n}\vert_{[\ell]}=K} = \prob{\rn K\vert_{[\ell]}=K},
  $$
  again for every $\ell\in\mathbb N_+$ and every $K\in\mathcal K_\ell$.
\end{lemma}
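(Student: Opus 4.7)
The plan is to exploit the fact that $\mathcal K_{\mathbb N_+}$ is naturally a compact Polish space with a basis of \emph{clopen} cylinders $\{U_K\}$. First I would observe that, by canonicity, an element of $\mathcal K_{\mathbb N_+}$ is determined by a $\{0,1\}$-valued function on $\bigsqcup_{P\in\mathcal L}(\mathbb N_+)_{k(P)}$, so $\mathcal K_{\mathbb N_+}$ is homeomorphic to a countable product of two-point discrete spaces. The topology described in the excerpt is precisely this product topology, and in particular each basic set $U_K$ (for $K\in\mathcal K_\ell$) is a finite-dimensional cylinder, hence clopen with empty boundary. Consequently $U_K$ is a continuity set for every Borel probability measure on $\mathcal K_{\mathbb N_+}$.

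For the forward direction, suppose $(\rn{K_n})_{n\in\mathbb N}$ weakly converges to $\rn K$. Since $U_K$ is a continuity set for the distribution of $\rn K$, the Portmanteau theorem (Theorem~\ref{thm:port}) immediately gives
\begin{equation*}
\lim_{n\to\infty}\prob{\rn{K_n}\vert_{[\ell]}=K} \;=\; \lim_{n\to\infty}\prob{\rn{K_n}\in U_K} \;=\; \prob{\rn K\in U_K} \;=\; \prob{\rn K\vert_{[\ell]}=K},
\end{equation*}
which simultaneously settles the existence of the marginal limits and the ``moreover'' clause.

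For the backward direction, assume $\mu_\ell(K)\df\lim_{n\to\infty}\prob{\rn{K_n}\vert_{[\ell]}=K}$ exists for every $\ell$ and every $K\in\mathcal K_\ell$. Since $\mathcal K_{\mathbb N_+}$ is compact metrizable, the space of Borel probability measures on it is sequentially compact in the weak topology (Prokhorov's theorem, whose tightness hypothesis is automatic by compactness). Thus any subsequence of $(\rn{K_n})$ has a further weakly convergent sub-subsequence; let $\rn K'$ be any such subsequential limit. By the forward direction already proved, $\prob{\rn K'\vert_{[\ell]}=K}=\mu_\ell(K)$ for all $\ell$ and $K$. However, the collection $\{U_K \mid \ell\in\mathbb N,\, K\in\mathcal K_\ell\}$ is a $\pi$-system generating the Borel $\sigma$-algebra of $\mathcal K_{\mathbb N_+}$, so these marginal probabilities determine the distribution of $\rn K'$ uniquely. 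Hence every subsequential weak limit of $(\rn{K_n})$ has the same distribution $\rn K$, so the full sequence $(\rn{K_n})$ weakly converges to $\rn K$.

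The proof is essentially bookkeeping; the one point that needs care is recognising that the basis elements $U_K$ are clopen, which is what reduces everything to Portmanteau plus a standard $\pi$-system / compactness argument. No genuine obstacle arises beyond being explicit about the compact Polish structure of $\mathcal K_{\mathbb N_+}$.
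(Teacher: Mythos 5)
Your proof is correct and follows essentially the same route as the paper: the forward direction is Portmanteau applied to the clopen cylinders $U_K$, and the backward direction is Prokhorov-compactness plus metrizability together with the fact that finite marginals determine a probability measure on $\mathcal K_{\mathbb N_+}$. The paper phrases the backward step as a contradiction between two subsequential limits rather than your (equivalent) ``every sub-subsequence converges to the same law'' formulation, but the content is identical.
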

\begin{proof}
  ``Only if'' part readily follows from Theorem~\ref{thm:port} and the observation that every clopen set is a continuity set.

  For the ``if'' part we have to invoke Prokhorov's theorem again. The space of all probability measures on
  $\mathcal K_{\mathbb N_+}$ (that we identify with random variables) with the topology given by weak convergence is
  compact. Hence, for any $(\rn{K_n})_{n\in\mathbb N}$ there exists a subsequence $(\rn{K_{n_m}})$ weakly converging to a
  random variable $\rn K$. But this space is also metrizable. Hence if the whole sequence  $(\rn{K_{n}})$ would not have converged to $\rn K$, we could have found in it another subsequence $(\rn{K_{n_m'}})$ converging to a
 {\em different} probability measure $\rn L$, This, however, is absurd since
  \begin{align*}
    \prob{\rn K\vert_{[\ell]}=K} & = \lim_{m\to\infty} \prob{\rn{K_{n_m}}\vert_{[\ell]}=K}
    \\
    & = \lim_{m\to\infty} \prob{\rn{K_{n_m'}}\vert_{[\ell]}=K} = \prob{\rn L\vert_{[\ell]}=K},
  \end{align*}
  and, as we remarked above, a probability distribution over $\mathcal K_{\mathbb N_+}$ is completely determined by its finite marginals. This contradiction shows that in fact $\rn K=\lim_{n\to\infty} \rn{K_n}$.

    The second part of the lemma is again immediate from Theorem~\ref{thm:port}.
\end{proof}

\begin{definition}
  For a fixed target set $\Omega$, an array (indexed by $(\mathbb{N}_+)_{<\omega}$) is a
  function $X\function{(\mathbb{N}_+)_{<\omega}}{\Omega}$.

  Let $S_{\mathbb{N}_+}$ denote the symmetric group over $\mathbb{N}_+$ and define the (right) action of $S_{\mathbb{N}_+}$ on the set of arrays indexed by $(\mathbb{N}_+)_{<\omega}$ by letting
  \begin{align*}
    (X\cdot\sigma)_\alpha \df X_{\sigma\circ\alpha}\quad (\alpha\in (\mathbb N_+)_{<\omega}),
  \end{align*}
  for every permutation $\sigma\in S_{\mathbb{N}_+}$ and every array $X$.

  A random array $\rn{X}$ indexed by $(\mathbb{N}_+)_{<\omega}$ is \emph{(jointly) exchangeable} if for every $\sigma\in S_{\mathbb{N}_+}$ we
  have $\rn{X}\sim\rn{X}\cdot\sigma$.
\end{definition}

Let now $\Omega\df \{0,1\}^{\mathcal{L}}$. The elements of the set
$\mathcal{K}_{\mathbb{N}_+}$ that was previously identified with
$\{0,1\}^{\{(P,\alpha) \mid P\in\mathcal L, \ \alpha\in (\mathbb
N_+)_{k(P)}\}}$, will be now viewed as $\Omega$-valued arrays $X$, where we
for definiteness put $(X_\alpha)_P \df 0$ whenever $P\in\mathcal{L}$ and
$\alpha\injection{[k]}{\mathbb{N}_+}$ are such that $k\neq k(P)$.

A random structure $\rn{K}$ in $\mathcal{K}_{\mathbb{N}_+}$ is \emph{exchangeable} if the associated random array is exchangeable.

\begin{remark}\label{rmk:exchchar}
  Since $\rn K$ is completely determined by its finite marginals, to check whether $\rn{K}$ is exchangeable, it is sufficient
  to check that $\rn{K}\sim\rn{K}\cdot\sigma$ only for those $\sigma\in S_{\mathbb{N}_+}$ for which $\{n\in\mathbb{N}_+ \mid \sigma(n)\neq n\}$ is finite.

  This further implies that $\rn{K}$ is exchangeable if and only if for every $\ell\in\mathbb{N}$ and every $K,L\in\mathcal{K}_\ell$ with $K$ and $L$ isomorphic we have
  \begin{equation} \label{eq:extensions}
    \prob{\rn{K}\vert_{[\ell]}=K} = \prob{\rn{K}\vert_{[\ell]}=L}.
  \end{equation}

  Indeed, given $\alpha\in (\mathbb N_+)_{< \omega}$ and $\sigma\in S_{\mathbb N_+}$, pick
  $\ell$ to be large enough so that $\im(\alpha)\cup \im(\sigma\comp\alpha)\subseteq [\ell]$.
  Then we only have to replace $\sigma$ with an arbitrarily chosen $\sigma'\in S_\ell$ such that
  $\sigma\comp\alpha = \sigma'\comp\alpha$, and apply~\eqref{eq:extensions} to all possible
  extensions of $\rn{K}_\alpha$ to $\rn K\vert_{[\ell]}$.
\end{remark}

\begin{definition}
  Let $N$ be a canonical structure on $n$ vertices. The distribution $\rn{R}(N)$ over $\mathcal{K}_{\mathbb{N}_+}$ is defined by picking
  uniformly at random a labeling of $N$ by $[n]$ and completing it with isolated vertices. Formally, we pick $\rn{f}\injection{[n]}{V(N)}$ uniformly at random and define the random structure $\rn{R}(N)$ on $\mathbb{N}_+$ by letting
  \begin{align*}
    \alpha\in R_P(\rn{R}(N)) & \equiv \im(\alpha)\subseteq [n]\land \rn{f}\comp\alpha\in R_P(N),
  \end{align*}
  for every $P\in\mathcal{L}$ and every $\alpha\injection{[k(P)]}{\mathbb{N}_+}$.

  Furthermore, if $\rn{N}$ is itself a random canonical structure, then we define $\rn{R}(\rn{N})$ by picking $\rn{f}$ independently from $\rn{N}$.
\end{definition}

The next theorem (or, more exactly, its Corollary~\ref{cor:cryptoexch}) add extreme distributions of exchangeable
random structures in $\mathcal{K}_{\mathbb{N}_+}$ to the list of objects
cryptomorphic to convergent sequences (this connection was originally pointed
out independently by Diaconis and Janson~\cite{DiJa} and Austin\footnote{Unlike us, Austin also covers the case of flag algebra homomorphisms of non-zero
types.}~\cite{Aus}).

\begin{theorem}\label{thm:cryptoexch}
  If $\rn{\phi}$ is a probability distribution on the set $\HomT{T}\subseteq[0,1]^{\mathcal{M}[T]}$, then there
  exists an exchangeable
  probability distribution $\rn{K}$ over $\mathcal{K}_{\mathbb{N}_+}$ satisfying
  \begin{align}\label{eq:cryptoexch}
    \prob{\rn{K}\vert_{[m]} \cong M} & = \expect{\rn{\phi}(M)}
  \end{align}
  for every $M\in\mathcal{M}_m$. In particular, almost surely $\rn{K}$ is a model of $T$.

  Conversely, for every exchangeable probability distribution $\rn{K}$ over $\mathcal{K}_{\mathbb{N}_+}$ that is almost surely a model of $T$, there exists a probability distribution $\rn{\phi}$ over $\HomT{T}$ such that~\eqref{eq:cryptoexch} holds.

  Furthermore,~\eqref{eq:cryptoexch} gives a one-to-one correspondence between probability distributions over $\HomT{T}$ and distributions of exchangeable random structures in $\mathcal{K}_{\mathbb{N}_+}$ that are almost surely models of $T$.
\end{theorem}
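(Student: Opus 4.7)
The theorem asserts a bijection between two classes of objects, so the plan splits naturally into constructing maps in both directions and then checking that they are mutually inverse. The forward direction (from $\rn\phi$ to $\rn K$) will be a bookkeeping exercise followed by Kolmogorov extension, while the backward direction will reduce essentially to Lemma~\ref{lem:convexpect}.

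For the forward direction, I will specify the finite-dimensional marginals of $\rn K$ directly. For each $\ell\in\mathbb N$ and each $K\in\mathcal K_\ell$, set
$$
\mu_\ell(K) \df \frac{\lvert\Aut(\widetilde K)\rvert}{\ell!}\expect{\rn\phi(\widetilde K)}
$$
when $K$ is a model of $T$ (with $\widetilde K$ denoting the isomorphism type), and $\mu_\ell(K)\df 0$ otherwise. Grouping labeled structures by their type, the total mass at level $\ell$ equals $\expect{\sum_{M\in\mathcal M_\ell}\rn\phi(M)}=\expect{\rn\phi(1)}=1$, using the chain rule $\sum_{M\in\mathcal M_\ell} M = 1$ applied inside the algebra homomorphism $\rn\phi$. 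I then need to check Kolmogorov consistency, $\mu_\ell(K)=\sum_{K'\vert_{[\ell]}=K}\mu_{\ell+1}(K')$; this I will verify by grouping $K'$ by its type $M'$, counting labeled extensions via~\eqref{eq:ind-p}, and recognizing the outcome as the chain rule applied to $p(\widetilde K,\place)$ inside $\rn\phi$. Kolmogorov's extension theorem will then yield $\rn K$; exchangeability is immediate from Remark~\ref{rmk:exchchar} since $\mu_\ell(K)$ depends only on $\widetilde K$; and because each universal axiom involves only finitely many variables, $\rn K$ will be a.s.\ a model of $T$ as soon as each marginal $\rn K\vert_{[\ell]}$ is. Summing $\mu_m$ over labelings of a fixed unlabeled type $M$ delivers the required identity $\prob{\rn K\vert_{[m]}\cong M}=\expect{\rn\phi(M)}$.

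For the backward direction, set $\rn{N_n}\df\rn K\vert_{[n]}$; this is a sequence of random models of $T$ on a strictly increasing number of vertices. By exchangeability, for any $M\in\mathcal M_m$ and $n\geq m$, the expectation $\expect{p(M,\rn{N_n})}$ equals $\prob{\rn K\vert_{\rn S}\cong M}$ for a uniformly random $m$-subset $\rn S\subseteq[n]$, which by a further application of exchangeability equals $\prob{\rn K\vert_{[m]}\cong M}$, a value independent of $n$. The hypothesis of Lemma~\ref{lem:convexpect} is therefore satisfied trivially, and the lemma directly supplies a $\HomT T$-valued random variable $\rn\phi\df\lim_{n\to\infty}p(\place,\rn{N_n})$ with $\expect{\rn\phi(M)}=\prob{\rn K\vert_{[m]}\cong M}$.

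It remains to show that the two constructions are mutually inverse. The shared identity $\prob{\rn K\vert_{[m]}\cong M}=\expect{\rn\phi(M)}$ pins down each object from the other: an exchangeable distribution on $\mathcal K_{\mathbb N_+}$ is determined by its finite marginals, which by exchangeability and~\eqref{eq:extensions} are in turn determined by the isomorphism-type probabilities; a distribution on $\HomT T$ is determined by its joint moments $\expect{\prod_i\rn\phi(M_i)}$, and these collapse to single-model expectations since $\rn\phi$ is a.s.\ an algebra homomorphism. The main obstacle, absorbing essentially all of the combinatorial work, will be the Kolmogorov consistency check in the forward direction---arranging the automorphism factors $\lvert\Aut(\widetilde K)\rvert/\ell!$ so that the chain rule for $p$ translates precisely into the projection rule for the family $(\mu_\ell)_\ell$. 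The backward direction is by contrast almost automatic, since Lemma~\ref{lem:convexpect} was designed precisely to promote convergence-in-expectation of densities into convergence to a $\HomT T$-valued random variable, and exchangeability trivializes its hypothesis.
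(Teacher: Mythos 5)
Your proof is correct. The backward direction and the one-to-one argument agree with the paper's: pass to the marginals $\rn{K}\vert_{[n]}$ and invoke Lemma~\ref{lem:convexpect}. The forward direction, however, takes a genuinely different route. You prescribe the finite marginals of $\rn{K}$ explicitly, $\mu_\ell(K)=\frac{\lvert\Aut(K)\rvert}{\ell!}\expect{\rn\phi(K)}$ for $K\in\mathcal K_\ell$ a model of $T$ (and $0$ otherwise), verify Kolmogorov consistency directly via the chain rule (the key count being that $\mathcal K_{\ell+1}$ contains exactly $\tind(K,M')\cdot(\ell+1)!/\lvert\Aut(M')\rvert$ structures $K'\cong M'$ with $K'\vert_{[\ell]}=K$, whence $\sum_{K'\vert_{[\ell]}=K}\mu_{\ell+1}(K')=\sum_{M'\in\mathcal M_{\ell+1}}\tind(K,M')\expect{\rn\phi(M')}=\mu_\ell(K)$), and appeal to Kolmogorov's extension theorem. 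The paper instead builds random finite models $\rn{N_n}$ with $\prob{\rn{N_n}=N}=\expect{\rn\phi(N)}$, lifts them to random arrays $\rn{R}(\rn{N_n})$ whose marginals at level $\ell$ stabilize for $n\geq\ell$, and extracts $\rn{K}$ as their weak limit via Lemma~\ref{lem:arrayweakconv} (which rests on Prokhorov's theorem). Both routes hinge on the same chain-rule identity, but yours surfaces the consistency check explicitly and replaces the compactness argument by the softer Kolmogorov extension, whereas the paper's route sidesteps the explicit consistency computation (marginals of a single random array are automatically consistent) and reuses the weak-convergence tools already in play for the converse. One small point worth spelling out in your write-up: the total-mass check $\sum_{M\in\mathcal M_\ell}\rn\phi(M)=\rn\phi(1)=1$ at every level $\ell$ uses non-degeneracy of $T$ (so that $\mathcal M_\ell\neq\emptyset$ and the chain rule applies), which is implicit in $\rn\phi$ taking values in $\HomT{T}$.
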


\begin{proof}
  Suppose first that $\rn{\phi}$ is a probability distribution over $\HomT{T}$. For every $n\in\mathbb{N}$, we define the probability distribution $\rn{N_n}$ over $\mathcal{M}_n$ by
  \begin{align*}
    \prob{\rn{N_n} = N} & \df \expect{\rn{\phi}(N)}.
  \end{align*}

  Note furthermore that for every $M\in\mathcal{M}_m$ with $m\leq n$, we have
  \begin{align*}
    \expect{p(M,\rn{N_n})}
    & =
    \sum_{N\in\mathcal{M}_n} p(M,N)\expect{\rn{\phi}(N)}
    =
    \expect{\rn{\phi}(M)}.
  \end{align*}

  On the other hand, for $K\in\mathcal{K}_\ell$ and $N\in\mathcal{M}$ with $\lvert V(N)\rvert\geq\ell$ we have $\prob{\rn{R}(N)\vert_{[\ell]} = K} = \tind(K,N)\ (= (\lvert\Aut(K)\rvert/\ell!)\cdot p(K,N))$. Therefore, for every $n\geq\ell$, we have
  \begin{align}\label{eq:arrayversushom}
    \prob{\rn{R}(\rn{N_n})\vert_{[\ell]} = K}
    & =
    \expect{\tind(K,\rn{N_n})}
    =
    \frac{\lvert\Aut(K)\rvert}{\ell!}\expect{\rn{\phi}(K)}.
  \end{align}

  By Lemma~\ref{lem:arrayweakconv}, it follows that $(\rn{R}(\rn{N_n}))_{n\in\mathbb{N}}$ is weakly convergent; let $\rn{K}$ be its limit. By~\eqref{eq:arrayversushom}, it follows that if $K_1,K_2\in\mathcal{K}_\ell$ are isomorphic then $\prob{\rn{K}\vert_{[\ell]}=K_1}=\prob{\rn{K}\vert_{[\ell]}=K_2}$, hence $\rn{K}$ is exchangeable by Remark~\ref{rmk:exchchar}. Furthermore~\eqref{eq:arrayversushom} also implies~\eqref{eq:cryptoexch}, from which it follows that $\rn{K}$ is almost surely a model of $T$.

  \medskip

  Let us now prove the converse. Suppose $\rn{K}$ is an exchangeable probability distribution over $\mathcal{K}_{\mathbb{N}_+}$ that is almost surely a model of $T$. Then we define the probability distributions $\rn{K_n}$ over $\mathcal{K}_n$ as its marginals:
  \begin{align*}
    \prob{\rn{K_n} = K} & \df \prob{\rn{K}\vert_{[n]} = K},
  \end{align*}
  and we note that since $\rn{K}$ is exchangeable, for every $L\in\mathcal{K}_\ell$ with $\ell\leq n$, we have
  \begin{align*}
    \expect{p(L,\rn{K_n})}
    & =
    \frac{\ell!}{\lvert\Aut(L)\rvert}\sum_{K\in\mathcal{K}_n}\tind(L,K)\prob{\rn{K}\vert_{[n]} = K}
    \\
    & =
    \frac{\ell!}{\lvert\Aut(L)\rvert}\prob{\rn{K}\vert_{[\ell]} = L}
    \\
    & =
    \prob{\rn{K}\vert_{[\ell]} \cong L}.
  \end{align*}
  Hence by Lemma~\ref{lem:convexpect} $(p(\place,\rn{K_n}))_{n\in\mathbb{N}}$ is weakly convergent and its limit $\rn{\phi}$ is supported on $\Hom$ and satisfies~\eqref{eq:cryptoexch}.

  Finally, the one-to-one correspondence follows from the uniqueness statement of Lemma~\ref{lem:convexpect} and the fact that the distribution of $\rn{K}$ is uniquely determined by its marginals $\rn{K}\vert_{[\ell]}$.
\end{proof}

\begin{definition}
Recall that an {\em extreme point} of a convex set $S$ is a point that does not lie in any open segment joining two distinct points of $S$. We say that an exchangeable random structure $\rn{K}$ in $\mathcal{K}_{\mathbb{N}_+}$ is {\em extreme} if its distribution is an extreme point in the set of distributions of all such structures.
\end{definition}

\begin{corollary}\label{cor:cryptoexch}
  Let $\rn{\phi}$ be a random homomorphism in $\Hom$ and let $\rn{K}$ be an exchangeable random structure in $\mathcal{K}_{\mathbb{N}_+}$ with distribution corresponding to $\rn{\phi}$ according to Theorem~\ref{thm:cryptoexch}. Then $\rn{K}$ is extreme if and only if there exists $\phi\in\Hom$ such that $\rn{\phi}=\phi$ almost surely.
\end{corollary}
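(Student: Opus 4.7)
The plan is to leverage Theorem~\ref{thm:cryptoexch} by observing that the bijection it establishes is affine between two convex sets, and then to use the fact that extreme points are preserved and reflected by affine bijections. Let $\mathcal{P}(\Hom)$ denote the convex set of Borel probability measures on $\Hom$ (a closed, hence compact, subset of the compact metrizable space $[0,1]^{\mathcal{M}}$), and let $\mathcal{X}$ denote the convex set of distributions on $\mathcal{K}_{\mathbb{N}_+}$ of exchangeable random structures almost surely in $\mathcal{M}[T]$. Identifying random variables with their distributions as in the running convention of this section, Theorem~\ref{thm:cryptoexch} provides a bijection $\Psi\function{\mathcal{P}(\Hom)}{\mathcal{X}}$ via the identity~\eqref{eq:cryptoexch}.

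The first step is to verify that $\Psi$ is affine. Given $\mu,\nu\in\mathcal{P}(\Hom)$ and $\lambda\in[0,1]$, let $\rn{\phi}$ have distribution $\lambda\mu+(1-\lambda)\nu$. Then for every $M\in\mathcal{M}_m$,
\begin{align*}
  \expect{\rn{\phi}(M)} & = \lambda\int_{\Hom}\psi(M)\,d\mu(\psi) + (1-\lambda)\int_{\Hom}\psi(M)\,d\nu(\psi),
\end{align*}
so by~\eqref{eq:cryptoexch} the finite marginal $\prob{\Psi(\lambda\mu+(1-\lambda)\nu)\vert_{[m]}\cong M}$ equals the corresponding convex combination of the marginals of $\Psi(\mu)$ and $\Psi(\nu)$. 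Since an exchangeable distribution on $\mathcal{K}_{\mathbb{N}_+}$ is determined by its finite marginals (a fact already used in the proof of Theorem~\ref{thm:cryptoexch}), this yields $\Psi(\lambda\mu+(1-\lambda)\nu)=\lambda\Psi(\mu)+(1-\lambda)\Psi(\nu)$, and so $\Psi$ transports extreme points to extreme points in both directions.

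The second step is to identify the extreme points of $\mathcal{P}(\Hom)$ as exactly the Dirac measures $\delta_\phi$ for $\phi\in\Hom$. Dirac measures are trivially extreme; conversely, if $\mu\in\mathcal{P}(\Hom)$ is not Dirac, pick a Borel set $A\subseteq\Hom$ with $0<\mu(A)<1$ and write $\mu=\mu(A)\mu_A+(1-\mu(A))\mu_{\Hom\setminus A}$, where $\mu_A$ and $\mu_{\Hom\setminus A}$ denote the normalized restrictions of $\mu$; this exhibits $\mu$ as a non-trivial convex combination. Combining the two steps, $\rn{K}\in\mathcal{X}$ is extreme if and only if its preimage $\rn{\phi}\in\mathcal{P}(\Hom)$ is a Dirac measure $\delta_\phi$ for some $\phi\in\Hom$, which is exactly the condition $\rn{\phi}=\phi$ almost surely. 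I do not anticipate any real obstacle in this argument: the one substantive point is the affinity of $\Psi$, and that is essentially bookkeeping on top of~\eqref{eq:cryptoexch}.
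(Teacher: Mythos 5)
Your proof is correct and follows exactly the same route as the paper's: observe that the correspondence of Theorem~\ref{thm:cryptoexch} is affine with respect to convex combinations, and combine this with the observation that the extreme points of the space of probability distributions on $\Hom$ are precisely the Dirac measures. You have simply spelled out the two steps that the paper states in one sentence.
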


\begin{proof}
  Follows directly from the fact that the correspondence~\eqref{eq:cryptoexch} is linear w.r.t.\ convex combinations of probability measures and the obvious observation that extreme points in the space of {\em all} probability
  distributions on the set $\Hom$ are precisely as described.
\end{proof}

\begin{definition}
  Given an array $X$ indexed by $(\mathbb{N}_+)_{<\omega}$ and a set $I\subseteq\mathbb{N}_+$ (possibly, infinite), we define the restriction $X\vert_I$ as the restriction of $X$ to $(I)_{<\omega}$.

  A random array $\rn{X}$ indexed by $(\mathbb{N}_+)_{<\omega}$ is \emph{local} (or \emph{dissociated}) if for every pairwise disjoint $I_1,I_2,\ldots,I_k\subseteq\mathbb{N}_+$ the restrictions $\rn{X}\vert_{I_i}$ are mutually independent.

  We extend the definition of locality to random structures in $\mathcal{K}_{\mathbb{N}_+}$ via the correspondence with arrays.
\end{definition}

\begin{remark}\label{rmk:local}
  Note that if $\rn{X}$ is exchangeable, then to check whether $\rn{X}$ is local it is enough to test only the case $I_1=[m_1]$ and $I_2=\{m_1+1,\ldots,m_1+m_2\}$ for every $m_1,m_2\in\mathbb{N}$.

  Analogously, if $\rn{K}$ is an exchangeable random structure, then to check whether $\rn{K}$ is local it is enough to test if $\rn{K}\vert_{[m_1]}$ and $\rn{K}\vert_{\{m_1+1,\ldots,m_1+m_2\}}$ are independent for every $m_1,m_2\in\mathbb{N}$. Furthermore, by exchangeability it is sufficient to check
independence of the events $\rn K\vert_{[m_1]}\cong M_1$ and $\rn K\vert_{\{m_1+1,\ldots, m_1+m_2\}}\cong M_2$ for every pair $M_1\in\mathcal M_{m_1}$ and $M_2\in\mathcal M_{m_2}$.
\end{remark}

\begin{proposition}[\protect{cf.~\cite[Proposition~14.62]{Lov4}}]\label{prop:extremelocal}
  Suppose $\rn{K}$ is an exchangeable random structure in $\mathcal{K}_{\mathbb{N}_+}$. Then $\rn{K}$ is local if and only if it is extreme.
\end{proposition}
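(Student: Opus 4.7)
The plan is to reduce both directions to a single computation relating the joint distribution of restrictions to the second moment of the random homomorphism $\rn{\phi}\in\Hom$ corresponding to $\rn K$ via the bijection of Theorem~\ref{thm:cryptoexch}. Combined with the characterization of extreme measures in Corollary~\ref{cor:cryptoexch} (namely $\rn\phi=\phi$ a.s.) and the reduction of locality to two-block independence in Remark~\ref{rmk:local}, this will dispose of the proposition cleanly.

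The key identity I would establish first is: for every $m_1,m_2\in\mathbb N$ and every $M_1\in\mathcal M_{m_1}$, $M_2\in\mathcal M_{m_2}$,
\begin{equation*}
  \prob{\rn K\vert_{[m_1]}\cong M_1,\ \rn K\vert_{\{m_1+1,\ldots,m_1+m_2\}}\cong M_2} = \expect{\rn\phi(M_1)\rn\phi(M_2)}.
\end{equation*}
To see this, pick any $n\ge m_1+m_2$. Exchangeability makes $\prob{\rn K\vert_{V_1}\cong M_1,\rn K\vert_{V_2}\cong M_2}$ independent of the choice of disjoint $V_1,V_2\subseteq[n]$ of sizes $m_1,m_2$, so averaging over all such pairs yields $\expect{p(M_1,M_2;\rn K\vert_{[n]})}$. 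Lemma~\ref{lem:flaglowcollision} then gives $\lvert p(M_1,M_2;\rn K\vert_{[n]})-p(M_1,\rn K\vert_{[n]})p(M_2,\rn K\vert_{[n]})\rvert=O(1/n)$, and since $(p(\place,\rn K\vert_{[n]}))_n$ weakly converges to $\rn\phi$ by Lemma~\ref{lem:convexpect} (its limit being precisely the $\rn\phi$ of Theorem~\ref{thm:cryptoexch}, by construction of that bijection), applying weak convergence to the bounded continuous functional $\psi\mapsto\psi(M_1)\psi(M_2)$ finishes the derivation upon letting $n\to\infty$.

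With the identity in hand, both implications become one-liners. If $\rn K$ is extreme then $\rn\phi=\phi$ a.s.\ by Corollary~\ref{cor:cryptoexch}, so the right-hand side factors as $\phi(M_1)\phi(M_2)=\expect{\rn\phi(M_1)}\expect{\rn\phi(M_2)}=\prob{\rn K\vert_{[m_1]}\cong M_1}\cdot\prob{\rn K\vert_{\{m_1+1,\ldots,m_1+m_2\}}\cong M_2}$; Remark~\ref{rmk:local} then yields locality. Conversely, if $\rn K$ is local then taking $M_1=M_2=M$ and using the identity gives $\expect{\rn\phi(M)^2}=\expect{\rn\phi(M)}^2$, so $\var{\rn\phi(M)}=0$ and $\rn\phi(M)$ is a.s.\ a constant. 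Since $\mathcal M$ is countable, a single almost-sure event makes $\rn\phi$ identically equal to a fixed $\phi\in\Hom$, and Corollary~\ref{cor:cryptoexch} gives extremity.

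The only genuinely delicate step is the passage $\expect{p(M_1,\rn K\vert_{[n]})p(M_2,\rn K\vert_{[n]})}\to\expect{\rn\phi(M_1)\rn\phi(M_2)}$; here weak convergence alone is not enough in principle, but because all coordinates take values in $[0,1]$ the product map is bounded continuous on $[0,1]^{\mathcal M}$, so convergence of the expectation follows directly from the Portmanteau characterization. Everything else is bookkeeping, and no new machinery beyond Theorem~\ref{thm:cryptoexch}, Corollary~\ref{cor:cryptoexch}, Lemmas~\ref{lem:flaglowcollision} and~\ref{lem:convexpect}, and Remark~\ref{rmk:local} is required.
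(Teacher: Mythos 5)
Your proposal is correct and follows essentially the same line of argument as the paper: both hinge on the identity $\prob{\rn K\vert_{[m_1]}\cong M_1\land \rn K\vert_{\{m_1+1,\ldots,m_1+m_2\}}\cong M_2}=\expect{\rn\phi(M_1M_2)}\ (=\expect{\rn\phi(M_1)\rn\phi(M_2)}$, since $\rn\phi\in\Hom$ a.s.), which in the extreme case factors by homomorphism and in the non-extreme case produces a variance gap for some $M$. The only real divergence is in how that identity is obtained: the paper reads it off directly from Definition~\ref{def:mult} together with exchangeability (the coefficients $p(M_1,M_2;N)$ are precisely the conditional probabilities of seeing $M_1,M_2$ on disjoint blocks given $\rn K\vert_{[m_1+m_2]}\cong N$), whereas you derive it via the longer route of averaging to get $\expect{p(M_1,M_2;\rn K\vert_{[n]})}$, invoking Lemma~\ref{lem:flaglowcollision} to drop the collision term, and passing to the weak limit via Portmanteau; both are correct, but the paper's derivation is a bit more direct and avoids re-invoking the convergence machinery of Lemma~\ref{lem:convexpect}.
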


\begin{proof}
  Suppose first that $\rn{K}$ is extreme. Then by Corollary~\ref{cor:cryptoexch} we know that~\eqref{eq:cryptoexch} holds with a single element $\phi\in\Hom$. Now, for every $M_1\in\mathcal M_{m_1}$ and $M_2\in\mathcal M_{m_2}$, the desired equality
  \begin{align*}
    & \!\!\!\!\!\!
    \prob{\rn K\vert_{[m_1]}\cong M_1 \land \rn K\vert_{\{m_1+1,\ldots,m_1+m_2\}}\cong M_2}
    = \phi(M_1M_2)
    \\
    & = \phi(M_1)\phi(M_2)
    = \prob{\rn K\vert_{[m_1]}\cong M_1}\prob{\rn K\vert_{\{m_1+1,\ldots,m_1+m_2\}}\cong M_2}
  \end{align*}
  immediately follows from Definition~\ref{def:mult} of the product in flag algebras (and the fact that $\phi$ is an algebra homomorphism).

  \medskip

  In the opposite direction, if $\rn{K}$ is not extreme then~\eqref{eq:cryptoexch} holds for a distribution $\rn\phi$ on $\Hom$ that is not supported on any single point. The latter implies that for some $M\in\mathcal M_m$, $\rn\phi(M)$ is not supported on any single point and hence that $\text{Var}(\rn\phi(M))>0$. But now we have $\prob{\rn K\vert_{[m]}\cong M \land \rn K\vert_{[m+1,\ldots,2m]}\cong M} = \expect{\rn\phi(M^2)} > \expect{\rn\phi(M)}^2$. Hence $\rn K$ is not local.
\end{proof}

The next two theorems on exchangeable arrays are key for the theon existence. Recall from Definition~\ref{def:EV} that $r(V)$ and $\mathcal E_V$ are well-defined even if $V$ is countable, but even in that case $r(V)$ stands for the collection of non-empty {\em finite} subsets of $V$. We let $\rn\xi = (\rn \xi_A)_{A\in r(\mathbb N_+)}$ be drawn uniformly (w.r.t.\ the Lebesgue measure) from $\mathcal E_{\mathbb N_+}$ and let $\rn\eta$ be drawn uniformly from $[0,1]$, independently of $\rn\xi$. We also let $\mathcal E_V^+\df [0,1]\times \mathcal E_V$.

\begin{theorem}[Hoover~\cite{Hoo}, see also~{\cite[Theorem~7.22]{Kall}}]\label{thm:exchrepr}
  Let $\Omega$ be a Polish space and let
  $\rn{X}=(\rn{X_\alpha})_{\alpha\in(\mathbb{N}_+)_{<\omega}}$ be an $\Omega$-valued exchangeable random array
  indexed by $(\mathbb{N}_+)_{<\omega}$.

  Then there exist measurable functions
  \begin{align*}
    \chi_k\function{\mathcal E_k^+}{\Omega}\quad (k\in\mathbb N_+)
  \end{align*}
such that $\rn X$ is equidistributed with the random array $\rn Y$ given by
  \begin{align}\label{eq:exchrepr}
    \rn{Y_\alpha} & \df \chi_{\lvert\alpha\rvert}(\rn\eta,\alpha^*(\rn{\xi}))\quad (\alpha\in(\mathbb{N}_+)_{<\omega}).
  \end{align}
\end{theorem}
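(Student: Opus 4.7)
The plan is to follow the two--stage strategy that is standard for Aldous--Hoover--Kallenberg-type representations: first reduce to the extreme (dissociated) case by coding the ergodic component into $\rn\eta$, and then prove the representation directly for dissociated arrays by induction on the arity.

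First I would argue that the set of laws of $\Omega$-valued exchangeable arrays indexed by $(\mathbb{N}_+)_{<\omega}$ is a convex, weakly compact set whose extreme points are exactly the dissociated laws: the proof of Proposition~\ref{prop:extremelocal}, which establishes this for $\mathcal K_{\mathbb N_+}$-valued structures, carries over to general Polish $\Omega$ with purely cosmetic changes. Choquet's theorem then gives a (unique) representation of the distribution of $\rn X$ as a mixture of dissociated exchangeable laws, and since the resulting parameter space of ergodic laws is standard Borel, one can couple the mixing parameter to a single uniform $\rn\eta$ independent of all the $\rn\xi_A$. This reduces the problem to producing, for each dissociated law, measurable functions $\chi_k^\omega$ with the required property such that $\omega \mapsto \chi_k^\omega(\cdot)$ is jointly measurable; absorbing $\omega$ into the first argument of $\chi_k$ then gives the functions claimed in the statement.

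Next I would treat the dissociated case by induction on $k$, reducing first to $\Omega=[0,1]$ via a Borel isomorphism between the Polish space $\Omega$ and a Borel subset of $[0,1]$. For $k=1$, dissociation coincides with independence of the $\rn X_{\{i\}}$, and the classical quantile transform produces a measurable $\chi_1\function{[0,1]}{[0,1]}$ with $\chi_1(\rn\xi_{\{i\}})\sim \rn X_{\{i\}}$ jointly over $i$. For the inductive step, I would disintegrate: using the existence of regular conditional probabilities on Polish spaces together with Kallenberg's transfer theorem, I would produce, for every injection $\alpha\injection{[k]}{\mathbb N_+}$, a measurable $\chi_k$ such that feeding a fresh uniform $\rn\xi_{\im(\alpha)}$ reproduces the conditional law of $\rn X_\alpha$ given all strictly lower-order entries. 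Here dissociation is crucial: it forces this conditional law to depend only on the sub-array indexed by proper non-empty subsets of $\im(\alpha)$, which the inductive hypothesis already expresses as a measurable function of $(\rn\xi_B)_{B\subsetneq\im(\alpha)}$. Hence $\chi_k$ descends to a function on $\mathcal E_k^+$ alone, as required.

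Finally I would verify that the array $\rn Y$ defined by~\eqref{eq:exchrepr} is exchangeable (this is automatic from the symmetric roles of the $\rn\xi_A$) and matches the finite-dimensional distributions of $\rn X$: by construction, the joint law of $(\rn Y_\alpha)$ restricted to tuples of length at most $k$ agrees with that of the corresponding restriction of $\rn X$ for every $k$, and laws of such arrays are uniquely determined by their finite marginals exactly as for $\mathcal K_{\mathbb N_+}$-valued structures (cf.\ the discussion preceding Lemma~\ref{lem:arrayweakconv}). The main obstacle is the inductive step in the dissociated case: one has to make the measurable-selection argument output a single $\chi_k$ that is honestly jointly measurable in the ergodic parameter $\omega$ and in the coordinates of $\mathcal E_k^+$, which requires carefully chaining regular conditional probabilities and Borel isomorphisms rather than ad hoc pointwise choices.
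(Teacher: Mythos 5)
The paper does not prove Theorem~\ref{thm:exchrepr}; it cites it as a result of Hoover (with \cite[Theorem~7.22]{Kall} as a reference) and uses it as a black box to derive Theorem~\ref{thm:localexchrepr}. So there is no in-paper proof to compare with, and I evaluate your sketch on its own terms. Your first stage (ergodic/Choquet decomposition to reduce to the dissociated case, coupling the mixing variable to $\rn\eta$) is correct and is indeed how $\rn\eta$ enters.

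The genuine gap is in the inductive step. You propose to represent $\rn X_\alpha$ by sampling from its conditional law given \emph{all strictly lower-order entries}, arguing that dissociation collapses this to a function of $(\rn\xi_B)_{B\subsetneq\im(\alpha)}$ via the inductive representation of the lower-order sub-array. The trouble is that the $\rn\xi_B$'s produced by the inductive hypothesis are only required to reproduce the lower-order sub-array in distribution; they need not carry the latent structure that actually correlates the top-level entries. A concrete failure: take $\Omega=[0,1]$, let $\rn U_i$ be i.i.d.\ uniforms, set $\rn X_{\{i\}}\equiv 0$ for all $i$, and $\rn X_{\{i,j\}}=W(\rn U_i,\rn U_j)$ for a nonconstant symmetric $W$. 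This is exchangeable and dissociated; the conditional law of $\rn X_{\{1,2\}}$ given all singleton entries is the unconditional one, and a legitimate inductive output is $\chi_1\equiv 0$. Feeding a fresh $\rn\xi_{\{i,j\}}$ into that conditional law makes the $\rn Y_{\{i,j\}}$ mutually independent, so $\rn Y$ cannot match the joint law of $\rn X$, which has $\rn X_{\{1,2\}}$ and $\rn X_{\{1,3\}}$ correlated through $\rn U_1$. That correlation is exactly what the shared coordinate $\rn\xi_{\{1\}}$ in~\eqref{eq:exchrepr} must encode, and nothing in your scheme forces $\rn\xi_{\{1\}}$ to contain it.

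What is missing is the noise-extraction step, which is the real content of the Aldous--Hoover theorem. One must choose the $\rn\xi_B$'s so that they are a \emph{sufficient statistic} for the conditional distribution of the entries above level $\lvert B\rvert$ given the sub-array at $\im(\alpha)$ --- not merely a representation of the sub-array itself --- and only then does conditional independence of the top-level entries given $(\rn\xi_B)_{B\subsetneq\im(\alpha)}$ hold. This forces a much stronger inductive hypothesis (or, in Aldous's original $k=2$ argument, a direct construction of $\rn\xi_{\{i\}}$ from a suitable invariant/tail $\sigma$-field at vertex $i$) and is where Kallenberg's Chapter~7 spends most of its effort (the coding lemmas surrounding Lemmas~7.28--7.35). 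The measurable-selection headaches you flag at the end are real but secondary; the primary obstacle is deciding \emph{what} $\rn\xi_B$ must be, and that is precisely the step your sketch elides.
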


The theorem below proved first by Aldous~\cite{Ald2} for arrays indexed by $\mathbb{N}_+^2$ and extended for
arrays indexed by $\mathbb{N}_+^k$ ($k\geq 1$) by Kallenberg~\cite[Lemma~7.35]{Kall} says that in the local
case, we can remove the dependency on $\rn{\eta}$. We provide an ad hoc proof from
Theorem~\ref{thm:exchrepr} for the case of arrays indexed by $(\mathbb{N}_+)_{<\omega}$.

\begin{theorem}[Aldous~\cite{Ald2}, Kallenberg~{\cite[Lemma~7.35]{Kall}}]\label{thm:localexchrepr}
  Under the assumptions of Theorem~\ref{thm:exchrepr}, the array $\rn{X}$ is local if and only if there exist
  measurable functions
  \begin{align*}
    \chi_k\function{\mathcal E_k}{\Omega}\quad (k\in\mathbb N_+)
  \end{align*}
such that $\rn X$ is equidistributed with the random array $\rn Y$ given by
  \begin{align}\label{eq:localexchrepr}
    \rn{Y_\alpha} & \df \chi_{\lvert\alpha\rvert}(\alpha^*(\rn{\xi}))\quad (\alpha\in(\mathbb{N}_+)_{<\omega}).
  \end{align}
\end{theorem}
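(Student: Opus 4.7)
The plan is to handle the two implications separately; the forward direction is immediate, while the reverse requires a variance argument that generalizes Proposition~\ref{prop:extremelocal}.

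For the forward direction, assume the representation~\eqref{eq:localexchrepr} and let $I_1,I_2\subseteq\mathbb N_+$ be disjoint. For any $\alpha\in(I_i)_{<\omega}$, I have $\alpha(A)\in r(I_i)$ for all $A\in r(\lvert\alpha\rvert)$, so $\rn Y_\alpha$ depends only on the coordinates $\rn\xi_B$ with $B\in r(I_i)$. Since $r(I_1)\cap r(I_2)=\emptyset$ and the coordinates of $\rn\xi$ are i.i.d., the restrictions $\rn Y\vert_{I_1}$ and $\rn Y\vert_{I_2}$ are independent, yielding locality.

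For the reverse direction, I first apply Theorem~\ref{thm:exchrepr} to obtain measurable $\chi_k\function{\mathcal E_k^+}{\Omega}$ with $\rn X\sim\rn Y$, where $\rn Y_\alpha\df\chi_{\lvert\alpha\rvert}(\rn\eta,\alpha^*(\rn\xi))$; then $\rn Y$ is local as well, since locality is distributional. The goal is to show that $\rn\eta$ can be replaced by a deterministic value $t_0\in[0,1]$. The core estimate is this: for any bounded measurable $f\function{\Omega^{([m])_{<\omega}}}{\mathbb R}$, set $F_1\df f(\rn Y\vert_{[m]})$, $F_2\df f(\rn Y\vert_{\{m+1,\ldots,2m\}})$, and $g(\rn\eta)\df\expect{F_1\mid\rn\eta}$. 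By the exchangeability built into the representation, $F_1\sim F_2$ and $\expect{F_2\mid\rn\eta}=g(\rn\eta)$; by locality of $\rn Y$ the variables $F_1,F_2$ are unconditionally independent, giving $\expect{F_1 F_2}=\expect{F_1}^2$. Conditionally on $\rn\eta$, however, they are functions of the disjoint independent blocks $(\rn\xi_B)_{B\in r([m])}$ and $(\rn\xi_B)_{B\in r(\{m+1,\ldots,2m\})}$ respectively, hence conditionally independent, so $\expect{F_1 F_2\mid\rn\eta}=g(\rn\eta)^2$. Taking expectations yields $\expect{g(\rn\eta)^2}=\expect{g(\rn\eta)}^2$, forcing $g(\rn\eta)=\expect{F_1}$ almost surely.

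To conclude, I fix for each $m$ a countable family $\{f_{m,i}\}_{i\in\mathbb N}$ of bounded continuous functions on the Polish space $\Omega^{([m])_{<\omega}}$ that separates probability measures; intersecting the full-measure sets of $\rn\eta$-values produced by the core estimate (one per pair $(m,i)$) yields some $t_0\in[0,1]$ such that, for every $m$, the regular conditional distribution of $\rn Y\vert_{[m]}$ given $\rn\eta=t_0$ equals its marginal distribution. Since the joint distribution of $\rn Y$ is determined by its finite-dimensional marginals, setting $\chi'_k(x)\df\chi_k(t_0,x)$ and $\rn Y'_\alpha\df\chi'_{\lvert\alpha\rvert}(\alpha^*(\rn\xi))$ produces $\rn Y'\sim\rn Y\sim\rn X$, the desired representation. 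The main obstacle will be the measure-theoretic bookkeeping needed to lift the countable family of conditional-expectation equalities to equality of regular conditional distributions; this relies on separability of $\Omega$ and the existence of regular conditional distributions on Polish product spaces, but no further substantial idea seems to be required.
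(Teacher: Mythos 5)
Your proof is correct and follows essentially the same route as the paper: the forward direction is the same trivial observation, and for the reverse direction both arguments show that $\rn Y$ is independent of $\rn\eta$ by combining conditional independence of $\rn Y\vert_{[m]}$ and $\rn Y\vert_{\{m+1,\ldots,2m\}}$ given $\rn\eta$ with the unconditional independence that locality supplies, and then conclude via Fubini plus Polish-space separability to pick a good fiber $t_0$. The only cosmetic difference is that you phrase the key step as $\operatorname{Var}(g(\rn\eta))=0$ for conditional expectations of a separating family of test functions, whereas the paper phrases it as equality of $\condprob{\rn Y\vert_{[\ell]}\in B}{\rn\eta\in C}$ and $\condprob{\rn Y\vert_{[\ell]}\in B}{\rn\eta\notin C}$ via strict convexity of $x\mapsto x^2$; these are the same second-moment argument.
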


\begin{proof}
  For the ``if'' part, note that~\eqref{eq:localexchrepr} implies that $\rn{Y}\vert_I$ depends only on $\{\rn{\xi_A} \mid A\in r(I)\}$, hence if $I_1\cap I_2 = \emptyset$, then $\rn{Y}\vert_{I_1}$ and $\rn{Y}\vert_{I_2}$ are independent.

  For the ``only if'' part, let $\chi_k$ be as in Theorem~\ref{thm:exchrepr}, so that the random array $\rn Y$ defined by~\eqref{eq:exchrepr} is local. First, we claim that $\rn{Y}$ is independent of $\rn{\eta}$.
To prove this, we need to show that for every Borel set $C\subseteq[0,1]$ with $0<\lambda(C)<1$, for every $\ell\in\mathbb{N}_+$ and every Borel set $B\subseteq
  \Omega^{([\ell])_{<\omega}}$, we have
  \begin{align*}
    \prob{\rn{Y}\vert_{[\ell]} \in B \given \rn{\eta}\in C} & =
    \prob{\rn{Y}\vert_{[\ell]} \in B \given \rn{\eta}\notin C}
  \end{align*}
  (recall once more that random arrays are uniquely determined by their finite marginals).

  Let $\alpha$ and $\beta$ be the left and right-hand sides in the above expression respectively. Then
  \begin{align*}
   \prob{\rn{Y}\vert_{[\ell]}\in B} &= p\alpha + (1-p)\beta,
  \end{align*}
where $p\df\lambda(C)$.

  Let now $\sigma\in S_{\mathbb{N}_+}$ be a permutation such that $\sigma(i) = \ell+i$ for every $i\in[\ell]$
  and note that the events $\rn{Y}\vert_{[\ell]}\in B$, $ (\rn{Y}\cdot\sigma)\vert_{[\ell]}\in B$
  are conditionally independent given $\rn{\eta}$ since by~\eqref{eq:exchrepr}, the first one depends only on $\{\rn{\eta}\}\cup\{\rn{\xi_A} \mid A\in r(\ell)\}$, and the second one only on $\{\rn{\eta}\}\cup\{\rn{\xi_A} \mid A\in r(\{\ell+1,\ldots,2\ell\})\}$. Furthermore,~\eqref{eq:exchrepr} readily implies that
  $\rn Y$ remains exchangeable after conditioning on any event that depends on $\rn\eta$ only.
  In particular,
  \begin{align*}
    & \!\!\!\!\!\!
    \prob{\rn{Y}\vert_{[\ell]}\in B\land(\rn{Y}\cdot\sigma)\vert_{[\ell]}\in B}
    \\
    & =
    p\prob{\rn{Y}\vert_{[\ell]}\in B\land(\rn{Y}\cdot\sigma)\vert_{[\ell]}\in B \given \rn{\eta}\in C}
    \\ & \qquad
    +
    (1-p)
    \prob{\rn{Y}\vert_{[\ell]}\in B\land(\rn{Y}\cdot\sigma)\vert_{[\ell]}\in B \given \rn{\eta}\notin C}
    \\
    & =
    p\alpha^2 + (1-p)\beta^2.
  \end{align*}

  From strict convexity of $x\mapsto x^2$, it follows that
  \begin{align*}
    \prob{\rn{Y}\vert_{[\ell]}\in B\land(\rn{Y}\cdot\sigma)\vert_{[\ell]}\in B}
    & \geq
    (p\alpha + (1-p)\beta)^2
    =
    \prob{\rn{Y}\vert_{[\ell]}\in B}^2,
  \end{align*}
  with equality if and only if $\alpha=\beta$. But the locality of $\rn X$ implies that
  we do have equality here, hence indeed $\alpha=\beta$ and thus $\rn Y$ is independent of
  $\rn\eta$.

  The rest is a routine exercise in measure theory. First of all, since $\Omega$ is Polish,
  it has a countable base and hence Fubini's theorem implies that for almost all $x\in [0,1]$,
  all fiber functions $\chi_k^x\df \chi_k(x,\place)$ are measurable and hence we can form random arrays $\rn Y(x)$
  by $\rn Y_\alpha(x) = \chi_{\lvert\alpha\rvert}(x,\alpha^\ast(\rn\xi))$. We claim that for almost all $x$, $\rn Y(x)$ is equidistributed with $\rn Y$.

  Since the space of $\Omega$-valued arrays indexed by $(\mathbb N_+)_{<\omega}$ is also
  Polish, it suffices to check that $\prob{\rn Y\in A}= \prob{\rn Y(x)\in A}$ a.e.\ for any fixed
  Borel set $A$ in this space. But for any fixed $n$, the sets $\{x\in [0,1] \mid \prob{\rn Y(x)\in A} > \prob{\rn Y\in A}+ 1/n\}$ and $\{x\in [0,1] \mid \prob{\rn Y\in A} > \prob{\rn Y(x)\in A}+ 1/n\}$ must
  have measure~0 since otherwise by Fubini's theorem we would get a contradiction with the independence
  we have just proven. Hence indeed $\prob{\rn Y(x)\in A} =\prob{\rn Y\in A}$ for almost all
  $x\in [0,1]$, which completes the proof.
\end{proof}

Let us finally show how Theorem~\ref{thm:localexchrepr} implies theon existence.

\begin{proofof}{Theorem~\ref{thm:theoncryptomorphism}}
  For an element $\phi\in\Hom$, by Corollary~\ref{cor:cryptoexch} and Proposition~\ref{prop:extremelocal}, let $\rn{K}$ be a local exchangeable random structure in $\mathcal{K}_{\mathbb{N}_+}$ with distribution corresponding to $\phi$ via~\eqref{eq:cryptoexch}.

  Recall that we view $\rn K$ as a local exchangeable random array $\rn X$ indexed by $(\mathbb{N}_+)_{<\omega}$ with values in $\Omega\df \{0,1\}^{\mathcal L}$, where $(\rn X_\alpha)_P$ is the
  characteristic function of the event $\alpha\in R_P(\rn K)$ if $k(P)= \lvert\alpha\rvert$ and defined
  arbitrarily if $k(P)\neq \lvert\alpha\rvert$. By Theorem~\ref{thm:localexchrepr}, there exist measurable functions $\chi_k\function{\mathcal{E}_k}{\Omega}$ such that~\eqref{eq:localexchrepr} holds.

  Define the Euclidean structure $\mathcal{N}$ by letting
  \begin{align*}
    \mathcal{N}_P & \df \{x\in\mathcal{E}_{k(P)} \mid \chi_{k(P)}(x)_P = 1\},
  \end{align*}
  for every $P\in\mathcal{L}$, where $\chi_k(x)_P$ denotes the $P$th coordinate of $\chi_k(x)$.

  By~\eqref{eq:cryptoexch}, for every (unordered) model $M\in\mathcal M_m$, $\phi(M) = \prob{\rn K\vert_{[m]} \cong  M}$, and (see Definition~\ref{def:peons}), $p(M,\mathcal N)= \frac{\lvert V(M)\rvert!}{\lvert\Aut(M)\rvert}\lambda(\Tind(M,\mathcal N))$. Thus, passing to the labeled case, we have to prove that
\begin{equation} \label{eq:tobeproven}
\prob{\rn K\vert_{[m]} =K} = \lambda(\Tind(K, \mathcal N))
\end{equation}
for any $K\in\mathcal K_m$.
The latter quantity, however, can be interpreted as $\prob{i_m^\ast(\rn\xi) \in \Tind(K,\mathcal N)}$, where $i_m\injection{[m]}{\mathbb N_+}$ is the natural inclusion, and now the events on both sides of~\eqref{eq:tobeproven} are identical. Namely, for every $P\in\mathcal L$ and every $\alpha\injection{[k(P)]}{[m]}$, $\chi_{k(P)}(\alpha^\ast(\rn\xi))_P=1$ if and only if $\alpha\in R_P(\rn K)$.

  In the opposite direction, if $\mathcal{N}$ is a weak $T$-on, then we define the random structure $\rn{K}$ in $\mathcal{K}_{\mathbb{N}_+}$ by letting
  \begin{align*}
    \alpha\in R_P(\rn{K}) & \equiv \alpha^*(\rn{\xi})\in\mathcal{N}_P,
  \end{align*}
  for every $P\in\mathcal{L}$ and every $\alpha\injection{[k(P)]}{\mathbb{N}_+}$.

  Reversing the above argument, for every structure $K\in\mathcal{K}_m$ we have
  \begin{align*}
    \tind(K,\mathcal{N}) & = \prob{\rn{K}\vert_{[m]} = K}
  \end{align*}
  and hence also
  \begin{align*}
    \tinj(K,\mathcal{N}) & = \prob{\rn{K}\vert_{[m]}\supseteq K};\\
    p(K,\mathcal{N}) & = \prob{\rn{K}\vert_{[m]} \cong K}.
  \end{align*}

  By Theorem~\ref{thm:localexchrepr}, this implies that the corresponding $\rn X$ is a local exchangeable array, and by Proposition~\ref{prop:extremelocal} and Corollary~\ref{cor:cryptoexch}, we know that $\rn{X}$ corresponds to a homomorphism $\phi\in\Hom$. Theorem~\ref{thm:cryptomorphism} then concludes the proof.
\end{proofof}

\bigskip

The next task is to prove theon uniqueness that will require extending Definition~\ref{def:hoa}.

\begin{definition}[Definition~\ref{def:hoa}, cntd.] For a function $f$ on $\mathcal E_V^+\ (= [0,1]\times \mathcal E_V)$, we will abbreviate its $x$th fiber $f(x,\place)$ as $f^x$. A function $f\function{\mathcal E_V^+}{[0,1]}$
is \emph{symmetric} if $f^{x}$ is symmetric for every $x\in[0,1]$. Furthermore, the function $f$ is \emph{measure preserving on highest order argument} (h.o.a.) if it is measurable and $f^{x}$ is measure preserving on h.o.a.\ for every $x\in[0,1]$.

  Suppose now that $f = (f_d)_{d\in\mathbb{N}}$ is a family of symmetric functions with $f_d\function{\mathcal{E}^+_d}{[0,1]}$. Then we define a new sequence $\widehat{f} = (\widehat{f}_d)_{d\in\mathbb{N}}$ with $\widehat{f}_d\function{\mathcal{E}^+_d}{\mathcal{E}^+_d}$ by
  \begin{equation} \label{eq:locality}
    \widehat{f}_d(x,y) \df (f_0(x), \widehat{(f^{x})}_d(y)).
  \end{equation}

  Analogously, for $h\function{\mathcal{E}_V^+\times \mathcal{E}_V^+}{[0,1]}$ and for $(x,x')\in [0,1]^2$, we define the function
  \begin{align*}
    \begin{functiondef}
      h^{x,x'}\colon & \mathcal{E}_V\times \mathcal{E}_V & \longrightarrow & [0,1]\\
      & (y,y') & \longmapsto & h((x,y),(x',y')).
    \end{functiondef}
  \end{align*}

  The function $h$ is \emph{symmetric} if $h^{x,x'}$ is symmetric for every $(x,x')\in[0,1]^2$. Furthermore, the function $h$ is \emph{measure preserving on highest order argument} (h.o.a.) if it is measurable and $h^{x,x'}$ is measure preserving on h.o.a.\ for every $(x,x')\in[0,1]^2$.

  If $h = (h_d)_{d\in\mathbb{N}}$ is a family of symmetric functions with $h_d\function{\mathcal{E}_d^+\times \mathcal{E}_d^+}{[0,1]}$, then we define the sequence $\widehat{h}=(\widehat{h}_d)_{d\in\mathbb{N}}$ by
  \begin{align*}
    \widehat{h}_d((x,y),(x', y'))
    & \df
    (h_0(x, x'), \widehat{(h^{x,x'})}_d(y,y')).
  \end{align*}
\end{definition}

The following theorem by Hoover~\cite{Hoo} and Kallenberg~\cite{Kall2} (see also~\cite[Lemma~7.28]{Kall}) characterizes equidistributed exchangeable arrays. Recall that $\rn\xi$ is uniformly distributed in $\mathcal E_{\mathbb N_+}$ and $\rn\eta$ is uniformly distributed in $[0,1]$; thus, the pair $(\rn\eta, \rn\xi)$ defines a uniform distribution over $\mathcal E_{\mathbb N_+}^+$.

\begin{theorem}[Hoover~\cite{Hoo}, Kallenberg~\cite{Kall2}]\label{thm:exchuniqueness}
  Let $\Omega$ be a Polish space and let
  \begin{align*}
    \chi_d,\chi_d'\function{{\mathcal{E}}^+_d}{\Omega}
  \end{align*}
  be measurable functions. Define the random (exchangeable) arrays $\rn{X}$ and $\rn{X'}$ indexed by $(\mathbb{N}_+)_{<\omega}$ by letting
  \begin{align*}
    \rn{X}_\alpha & = \chi_{\lvert\alpha\rvert}(\rn\eta,\alpha^*(\rn{\xi})); &
    \rn{X'}_\alpha & = \chi_{\lvert\alpha\rvert}'(\rn{\eta},\alpha^*(\rn{\xi})).
  \end{align*}

  Then the following are equivalent.
  \begin{itemize}
  \item The arrays $\rn{X}$ and $\rn{X'}$ have the same distribution.

  \item There exist families $f = (f_d)_{d\in\mathbb{N}}$ and $g = (g_d)_{d\in\mathbb{N}}$ of symmetric functions measure preserving on h.o.a., $f_d,g_d\function{\mathcal{E}^+_d}{[0,1]}\ (d\in\mathbb{N})$, such that
    \begin{align*}
      \chi_d(\widehat{f}_d(x,y)) & = \chi_d'(\widehat{g}_d(x,y)),
    \end{align*}
    for every $d\in\mathbb{N}$ and almost every $x\in [0,1],\ y\in\mathcal{E}_d$.
  \item There exists a family $h = (h_d)_{d\in\mathbb{N}}$ of symmetric functions measure preserving on h.o.a., $h_d\function{\mathcal{E}^+_d\times \mathcal{E}^+_d}{[0,1]}\ (d\in\mathbb{N})$, such that
    \begin{align*}
      \chi_d(\widehat{h}_d((x,y), (x',y'))) & = \chi'_d(x,y)
    \end{align*}
    for every $d\in\mathbb{N}$ and almost every $x,x'\in [0,1]$ and $y,y'\in \mathcal{E}_d$.
  \end{itemize}
\end{theorem}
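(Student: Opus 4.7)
The plan is as follows. The easy directions $(2)\Rightarrow(1)$ and $(3)\Rightarrow(1)$ proceed by a direct push-forward argument. The hard direction $(1)\Rightarrow(3)$ is the technical heart and will be obtained by an inductive (in the arity $d$) noise-outsourcing construction of the family $h$. The remaining implication $(1)\Rightarrow(2)$ will then follow from $(1)\Rightarrow(3)$ via a middle-array trick analogous to the one illustrated in Example~\ref{ex:twoorderedtheons}.

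For $(2)\Rightarrow(1)$ I would first extend the given family $\widehat f=(\widehat f_d)_d$ to a single transformation $\widehat f_{\mathbb N_+}$ on the infinite-dimensional space $\mathcal E^+_{\mathbb N_+}$, defined by the same recipe as in Definition~\ref{def:hoa} applied coordinate-wise on $r(\mathbb N_+)$; likewise for $\widehat g_{\mathbb N_+}$. Two facts are then used: (i) $\widehat f_{\mathbb N_+}$ preserves the uniform (product Lebesgue) measure on $\mathcal E^+_{\mathbb N_+}$, which follows from a stratified Fubini argument integrating out the coordinates of $r(\mathbb N_+)$ in order of increasing cardinality and invoking the h.o.a.\ measure-preservation at each stratum; and (ii) $\widehat f_{\mathbb N_+}$ commutes with every projection $\alpha^\ast$ for $\alpha\injection{[d]}{\mathbb N_+}$, which is immediate from the symmetry of the $f_d$ together with the commutative diagram in Definition~\ref{def:hoa}. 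Combined with the pointwise hypothesis $\chi_d\comp\widehat f_d=\chi'_d\comp\widehat g_d$ a.e., these two facts yield joint equidistribution of $\rn X$ and $\rn X'$. The direction $(3)\Rightarrow(1)$ is entirely analogous, worked out on the product $\mathcal E^+_{\mathbb N_+}\times\mathcal E^+_{\mathbb N_+}$ instead.

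For $(1)\Rightarrow(3)$, the plan is to build $h=(h_d)_{d\in\mathbb N}$ layer by layer using the randomization (noise-outsourcing) lemma for standard Borel spaces: any random variable that is a measurable function of another can be re-expressed as a measurable function of that other and an auxiliary independent uniform. At layer $d=0$, the equal marginal laws of $\chi_0(\rn\eta)$ and $\chi'_0(\rn\eta')$ (under independent uniforms $\rn\eta,\rn\eta'$) yield a measure-preserving $h_0$ with $\chi_0\comp h_0(\rn\eta,\rn\eta')\sim\chi'_0(\rn\eta')$ almost surely. Inductively, having constructed $h_0,\ldots,h_{d-1}$, we condition on the lower-order coordinates of both copies $(\rn\eta,\rn\xi)$ and $(\rn\eta',\rn\xi')$ and apply the randomization lemma at arity $d$: the additional randomness required to equate the $\chi_d$ and $\chi'_d$ outputs is precisely absorbed by the highest-order variable $y_{[d]}$ of the source copy (which is uniform and independent of all lower-order data), and this is exactly why we demand that $h_d$ be measure preserving on h.o.a. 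The implication $(1)\Rightarrow(2)$ then follows by choosing a third representation $\chi''$ of the common distribution and applying $(1)\Rightarrow(3)$ to each of the pairs $(\chi'',\chi)$ and $(\chi'',\chi')$, extracting $f,g$ from the resulting $h,h'$ by fixing (or integrating out) the second factor of the product space.

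The principal obstacle will be executing the inductive step of $(1)\Rightarrow(3)$ so as to produce $h_d$ satisfying the symmetry and cross-layer consistency conditions required by Definition~\ref{def:hoa2}: the freshly chosen $h_d$ at each layer must be $S_d$-symmetric and must intertwine the projections $\beta^\ast$ for lower arities, so that the pieced-together family $\widehat h$ acts as a single consistent transformation on the full array rather than merely on each marginal separately. This is handled by a simultaneous disintegration over the $S_d$-orbit structure (making the randomization choices $S_d$-equivariantly), together with a careful selection of Borel isomorphisms of $[0,1]$ in the noise-outsourcing step so that the resulting $h_d$ is genuinely measure preserving on its highest-order argument rather than only after symmetrization.
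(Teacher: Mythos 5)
The paper does not prove this theorem: it imports it from Hoover and Kallenberg (citing~\cite[Lemma~7.28]{Kall}) and uses it as a black box in the proofs of Theorems~\ref{thm:TheonUniqueness} and~\ref{thm:TheonUniquenessSecond}. So there is no in-paper argument to compare yours against; the only question is whether your sketch amounts to an actual proof of the quoted result.

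It does not, for the reason you yourself point to. The implications $(2)\Rightarrow(1)$ and $(3)\Rightarrow(1)$ are fine (the ``stratified Fubini'' fact you invoke is the observation already recorded after Definition~\ref{def:hoa} that each $\widehat f_d$ is measure preserving in the ordinary sense, together with its commutation with the projections $\beta^\ast$). The content of the theorem is entirely in $(1)\Rightarrow(3)$, and the difficulty you call ``the principal obstacle'' is not a loose end to be tidied up; it \emph{is} the argument. Naive noise outsourcing hands you, at arity $d$, a measurable coupling of one conditional law at a time. What is needed is a \emph{single} $h_d$ that simultaneously (i) is $S_d$-symmetric, (ii) is measure preserving only in the top slot $y_{[d]}$ conditionally on all lower-order data of both copies, (iii) is compatible with the previously built $h_0,\ldots,h_{d-1}$ under the projections $\beta^\ast$ so that the pieced-together $\widehat h$ is well defined on the whole array, and (iv) pushes the conditional source law onto the conditional target law jointly for all arity-$d$ injections, not for one marginal. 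Saying this is handled by ``simultaneous disintegration over the $S_d$-orbit structure'' and ``careful selection of Borel isomorphisms'' names the problem rather than solving it; this equivariant, cross-layer-consistent coding is precisely the delicate part of Kallenberg's proof and cannot be waved through.

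A smaller, fixable point: you do not need a third representation $\chi''$ to deduce $(2)$ from $(3)$. Fix a Borel measure-isomorphism $\iota\colon[0,1]\to[0,1]^2$ and apply it coordinate-wise to identify $\mathcal E^+_d$ with $\mathcal E^+_d\times\mathcal E^+_d$ consistently across arities. Setting $g_d(x,y)\df\pi_2(\iota(y_{[d]}))$ and letting $f_d$ be $h_d$ precomposed with this identification produces symmetric functions measure preserving on h.o.a.\ for which $\widehat g_d$ is the second component of the $\iota$-splitting and $\widehat f_d=\widehat h_d\comp(\iota\text{-splitting})$; the displayed identity in~$(3)$ turns exactly into the one in~$(2)$.
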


As the reader may have noticed, Theorems~\ref{thm:TheonUniqueness} and~\ref{thm:exchuniqueness} are very similar, with the difference that the latter uses extra variables $x\in [0,1]$. The proof of theon uniqueness below consists of a standard measure-theoretic trick to remove these extra variables (cf.~the proof of Theorem~\ref{thm:localexchrepr}).

\begin{proofof}{Theorem~\ref{thm:TheonUniqueness}}
  By the definition of $\phi_{\mathcal{N}}$ and $\phi_{\mathcal{N}'}$, \ref{it:fgequiv}~$\Longrightarrow$~\ref{it:densityequiv} is straightforward.

  Assume now that $\phi_{\mathcal{N}}=\phi_{\mathcal{N}'}$, and let us prove~\ref{it:fgequiv}. Define random canonical structures $\rn{K}$ and $\rn{K}'$ from our theons as in the proof of Theorem~\ref{thm:theoncryptomorphism}. That is, for every $P\in\mathcal{L}$ and every $\alpha\injection{[k(P)]}{\mathbb{N}_+}$, we let
  \begin{align*}
    \alpha\in R_P(\rn{K}) & \equiv \alpha^*(\rn{\xi})\in\mathcal{N}_P; &
    \alpha\in R_P(\rn{K}') & \equiv \alpha^*(\rn{\xi})\in\mathcal{N}'_P.
  \end{align*}

  Note that for every $L\in\mathcal{K}_\ell$, we have
  \begin{align*}
    \prob{\rn{K}\vert_{[\ell]}=L} & = \frac{\ell!}{\lvert\Aut(L)\rvert} \phi_{\mathcal{N}}(L)
    = \frac{\ell!}{\lvert\Aut(L)\rvert} \phi_{\mathcal{N}'}(L) = \prob{\rn{K}'\vert_{[\ell]}=L},
  \end{align*}
  hence $\rn{K}\sim\rn{K}'$. In particular, random exchangeable arrays $\rn X$ and $\rn X'$ derived from $\rn K$ and $\rn K'$ also have the same distribution. But the specific way in which $\rn K,\ \rn K'$ were constructed also provides us with a natural representation of the arrays $\rn X,\ \rn X'$ as required in Theorem~\ref{thm:exchuniqueness}. Namely, define first the functions $\zeta_d,\zeta_d'\function{\mathcal E_d}{\Omega}$ by letting
  \begin{align*}
    \zeta_d(y)_P & \df
    \begin{dcases*}
      1, & if $y\in\mathcal{N}_P$;\\
      0, & otherwise;
    \end{dcases*}
    &
    \zeta_d'(y)_P & \df
    \begin{dcases*}
      1, & if $x\in\mathcal{N}'_P$;\\
      0, & otherwise
    \end{dcases*}
  \end{align*}
  for every $y\in\mathcal E_d$ and $P\in\mathcal{L}$ with $k(P) = d$; as always, the values $\zeta_d(y)_P$ can be chosen arbitrarily when $k(P)\neq d$.

  Next, define the functions $\chi_d,\chi_d'\function{\mathcal{E}^+_d}{\Omega}$ by adding $x$ as a dummy variable:
  \begin{align*}
    \chi_d(x,y) & = \zeta_d(y); &
    \chi_d'(x,y) & = \zeta_d'(y).
  \end{align*}
  for every $d\in\mathbb{N}$, every $x\in [0,1]$ and every $y\in\mathcal{E}_d$.

  By Theorem~\ref{thm:exchuniqueness}, there exist families $f = (f_d)_{d\in\mathbb{N}}$ and $g = (g_d)_{d\in\mathbb{N}}$ of symmetric functions measure preserving on h.o.a.\ with $f_d,g_d\function{\mathcal{E}^+_d}{[0,1]}$ such that
  \begin{equation} \label{eq:chi}
    \chi_d(\widehat{f}_d(x,y)) = \chi_d'(\widehat{g}_d(x,y)),
  \end{equation}
  for every $d\in\mathbb{N}$ and almost every $(x,y)\in\mathcal{E}^+_d$.
  As in the proof of Theorem~\ref{thm:localexchrepr}, we have to get rid of the first argument but this time it more or less immediately follows from the fact that the definition~\eqref{eq:locality} is local. Formally,
$$
\chi_d(\widehat f_d(x,y)) = \chi_d(f_0(x), (\widehat{f^x})_d(y)) = \zeta_d((\widehat{f^x})_d(y)),
$$
and likewise for $\chi_d'$. Hence we have
\begin{equation} \label{eq:33}
\zeta_d((\widehat{f^x})_d(y)) = \zeta'_d((\widehat{g^x})_d(y))\quad \text{a.e.}
\end{equation}
Since all our spaces are Polish, we can apply Fubini's theorem and find a particular $x_0$ such that for every $d$ the functions $f^{x_0}_d,\ g^{x_0}_d$ are measurable and~\eqref{eq:33} holds for almost every $y\in\mathcal E_d$.
The families $(f^{x_0}_d)_{d=1}^k$ and $(g^{x_0}_d)_{d=1}^k$, where $k=\max\{k(P) \mid P\in\mathcal{L}\}$ have the required properties, and the $P$-on $\mathcal N_P''$ is provided by~\eqref{eq:chi}: $y\in\mathcal N''_P$ if and only if (say) $\chi_{k(P)}(x_0,y)_P=1$.
\end{proofof}

The proof of Theorem~\ref{thm:TheonUniquenessSecond} from Theorem~\ref{thm:exchuniqueness} is analogous.

\section{One final cryptomorphism: ergodicity}
\label{sec:othercrypto}

In this section we will prove an ergodicity property and wrap up the list of cryptomorphic objects in the style of~\cite[Theorem~11.52]{Lov4}.

\begin{definition}
  Let us call a random structure $\rn{K}$ in $\mathcal{K}_{\mathbb{N}_+}$ \emph{weakly ergodic} if for every $S_{\mathbb{N}_+}$-invariant Borel set $A$ (i.e., $A = A\cdot\sigma$ for every $\sigma\in S_{\mathbb{N}_+}$), the event $\rn{K}\in A$ is trivial, that is $\prob{\rn{K}\in A}\in\{0,1\}$.

  Let $S_{\mathbb{N}_+}^*$ be the subgroup of all permutations in $S_{\mathbb{N}_+}$ that fix all but finitely many elements of $\mathbb{N}_+$.
  A random structure $\rn{K}$ in $\mathcal{K}_{\mathbb{N}_+}$ is \emph{strongly ergodic} if for every $S_{\mathbb{N}_+}^*$-invariant Borel set $A$, the event $\rn{K}\in A$ is trivial.
\end{definition}

Since an $S_{\mathbb{N}_+}$-invariant set is clearly an $S_{\mathbb{N}_+}^*$-invariant set, it follows that strong ergodicity
implies weak ergodicity. It is also important to note that although the concepts of $S_{\mathbb{N}_+}$-invariant and $S_{\mathbb{N}_+}^*$-invariant \emph{distributions} over $\mathcal{K}_{\mathbb{N}_+}$ are equivalent (cf.~Remark~\ref{rmk:exchchar}), the notions of $S_{\mathbb{N}_+}$-invariant and $S_{\mathbb{N}_+}^*$-invariant sets are not the same, see Examples~\ref{ex:bernoulli} and~\ref{ex:rado} below.

\begin{proposition}\label{prop:ergodic}
  Let $\rn{K}$ be an exchangeable random structure in $\mathcal{K}_{\mathbb{N}_+}$. The following are equivalent.
  \begin{enumerate}[label={\arabic*.}, ref={\arabic*)}]
  \item The structure $\rn{K}$ is strongly ergodic.\label{it:erg}
  \item The structure $\rn{K}$ is local.\label{it:local}
  \item The structure $\rn{K}$ is extreme.\label{it:extr}
  \end{enumerate}
\end{proposition}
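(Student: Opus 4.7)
The equivalence \ref{it:local}~$\Leftrightarrow$~\ref{it:extr} is already Proposition~\ref{prop:extremelocal}, so the task reduces to placing strong ergodicity in this circle. My plan is to prove \ref{it:extr}~$\Rightarrow$~\ref{it:erg} and \ref{it:local}~$\Rightarrow$~\ref{it:extr} (the latter via ergodicity, closing the loop as \ref{it:local}~$\Rightarrow$~\ref{it:erg}~$\Rightarrow$~\ref{it:extr}). For \ref{it:extr}~$\Rightarrow$~\ref{it:erg} I argue by contraposition: suppose $A\subseteq\mathcal{K}_{\mathbb{N}_+}$ is $S_{\mathbb{N}_+}^*$-invariant with $p\df\prob{\rn K\in A}\in (0,1)$. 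Then the conditional laws $\rn K_A\df \prob{\place\mid\rn K\in A}$ and $\rn K_{A^c}\df \prob{\place\mid\rn K\in A^c}$ are both exchangeable, because by Remark~\ref{rmk:exchchar} exchangeability need only be checked against $\sigma\in S_{\mathbb{N}_+}^*$, and for any such $\sigma$ the set $A$ (and hence $A^c$) is unchanged. These two conditionals are distinct since they assign mass $1$ and $0$ to $A$ respectively, and $\rn K=p\,\rn K_A+(1-p)\rn K_{A^c}$ is a non-trivial convex combination, contradicting extremality.

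For \ref{it:local}~$\Rightarrow$~\ref{it:erg} I use a Hewitt--Savage-style self-squaring argument. Fix an $S_{\mathbb{N}_+}^*$-invariant Borel set $A$ and an arbitrary $\epsilon>0$. Since the cylindrical events $\{\rn K\vert_{[n]}\in B\}$ ($n\in\mathbb N$, $B\subseteq\mathcal K_n$) generate the Borel $\sigma$-algebra on $\mathcal{K}_{\mathbb{N}_+}$, a standard approximation gives $n$ and $B\subseteq\mathcal K_n$ with $\prob{\{\rn K\in A\}\symmdiff\{\rn K\vert_{[n]}\in B\}}<\epsilon$. Let $\tau\in S_{\mathbb{N}_+}^*$ be the involution swapping $i\leftrightarrow n+i$ for $i\in [n]$. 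By the $\tau$-invariance of $A$ and the exchangeability of $\rn K$, the same bound holds with $\rn K\vert_{[n]}$ replaced by $\rn K\vert_{\{n+1,\ldots,2n\}}$. A routine two-set calculation yields $\prob{\rn K\vert_{[n]}\in B}=\prob{\rn K\in A}+O(\epsilon)$ and similarly for the shifted block. Locality (applied to the disjoint $I_1=[n]$, $I_2=\{n+1,\ldots,2n\}$, cf.\ Remark~\ref{rmk:local}) gives
\[
\prob{\{\rn K\vert_{[n]}\in B\}\cap\{\rn K\vert_{\{n+1,\ldots,2n\}}\in B\}}=\prob{\rn K\vert_{[n]}\in B}^2,
\]
while the same intersection is $O(\epsilon)$-close to $\prob{\rn K\in A}$ by two applications of the approximation bound. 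Equating and letting $\epsilon\to 0$ gives $\prob{A}^2=\prob{A}$, so $\prob{A}\in\{0,1\}$.

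To fully close the loop I can now chain \ref{it:local}~$\Rightarrow$~\ref{it:erg}~$\Rightarrow$~\ref{it:extr} with the already-established \ref{it:extr}~$\Leftrightarrow$~\ref{it:local} from Proposition~\ref{prop:extremelocal}, or I can give a direct \ref{it:erg}~$\Rightarrow$~\ref{it:extr}: via Theorem~\ref{thm:cryptoexch} let $\rn\phi\in\Hom$ be the random homomorphism corresponding to $\rn K$; for each $M\in\mathcal M$ the identities in Lemmas~\ref{lem:chain} and~\ref{lem:flaglowcollision} together with an exchangeability-based $L^2$-computation show $p(M,\rn K\vert_{[n]})\to \rn\phi(M)$ a.s., and this limit is manifestly $S_{\mathbb{N}_+}^*$-invariant (any fixed $\sigma\in S_{\mathbb{N}_+}^*$ eventually acts inside $[n]$ and $p(M,\place)$ is isomorphism-invariant), so ergodicity forces $\rn\phi(M)$ to be almost surely constant; since $\mathcal M$ is countable, $\rn\phi$ is deterministic and Corollary~\ref{cor:cryptoexch} gives extremality.

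The main obstacle I anticipate is the approximation step in \ref{it:local}~$\Rightarrow$~\ref{it:erg}: one must argue that, despite the full Borel $\sigma$-algebra being strictly richer than the $S_{\mathbb{N}_+}^*$-invariant cylinder events (see e.g.\ Examples~\ref{ex:bernoulli}/\ref{ex:rado} alluded to in the text), every Borel invariant event can still be $\epsilon$-approximated in probability by a (non-invariant) cylinder, and that the swap $\tau$ is flexible enough to make the two cylinder blocks disjoint. This is the point where the interplay between the restrictive notion of $S_{\mathbb{N}_+}^*$-invariance of sets and the more permissive $S_{\mathbb{N}_+}^*$-invariance of distributions is subtle, but it is handled cleanly by a monotone-class argument on cylinder events.
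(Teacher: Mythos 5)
Your proof is correct and reaches the key implication by a genuinely different route. The conditional-decomposition step you use for extreme $\Rightarrow$ strongly ergodic is, up to contraposition, the paper's proof that non-ergodicity implies non-extremality. For the converse, the paper shows that non-extremality implies non-ergodicity via a Hahn decomposition $(P,N)$ of the signed measure $D_1-D_2$, symmetrizing it to an $S_{\mathbb N_+}^*$-invariant pair $(P',N')$ (countability of $S_{\mathbb N_+}^*$ keeps $P'$ Borel) and showing $D(P')\notin\{0,1\}$; this argument is purely measure-theoretic and makes no use of locality. You instead prove local $\Rightarrow$ strongly ergodic by the Hewitt--Savage mechanism: approximate the invariant set $A$ by a cylinder on $[n]$, transport the approximation to a disjoint block via the swap $\tau$ (using invariance of $A$ and exchangeability of $\rn K$), and invoke locality to square the probability. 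Your approach exploits locality directly and is the classical probabilistic route; the paper's is leaner and establishes extreme $\Leftrightarrow$ strongly ergodic for any exchangeable law on a standard Borel space under a countable group action, without appealing to locality. Two small remarks. First, in your direct ergodic $\Rightarrow$ extreme argument, the a.s.\ limit of $p(M,\rn K\vert_{[n]})$ is only a.s.\ $S_{\mathbb N_+}^*$-invariant as a function of $\rn K$ (it is literally $\sigma$-invariant only once $n$ exceeds the support of $\sigma$); to apply ergodicity one should saturate its sub-level sets over the countable group $S_{\mathbb N_+}^*$ to obtain honestly invariant Borel sets. Second, your closing worry is over-cautious: the approximating cylinder event need not itself be invariant, since the Hewitt--Savage swap is precisely what trades invariance of $A$ for independence of disjoint blocks, and the ordinary Carath\'eodory/monotone-class approximation of a Borel set by cylinders is all that is required.
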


\begin{proof}
  The equivalence $\ref{it:local}\equiv\ref{it:extr}$ is the content of Proposition~\ref{prop:extremelocal}.

  Suppose the distribution $D$ of $\rn{K}$ is not an extreme point, then there are distinct distributions $D_1$ and $D_2$ of exchangeable random structures $\rn{K_1}$ and $\rn{K_2}$ such that $D=(D_1+D_2)/2$.

  Let $(P,N)$ be a Hahn decomposition (see e.g.~\cite[Theorem~I.3.1.1]{Bog}) of the signed measure $D_1-D_2$, that is, $P$ and $N$ are Borel sets
  such that
  \begin{itemize}
  \item $\mathcal{K}_{\mathbb{N}_+} = P\mathbin{\stackrel\cdot\cup} N$;
  \item for every Borel set $A\subseteq P$, we have $D_1(A)\geq D_2(A)$;
  \item for every Borel set $A\subseteq N$, we have $D_1(A)\leq D_2(A)$.
  \end{itemize}

  Let then
  \begin{align*}
    P' & \df \bigcup_{\sigma\in S_{\mathbb{N}_+}^*}\sigma\cdot P, &
    N' & \df \mathcal{K}_{\mathbb{N}_+}\setminus P' = \bigcap_{\sigma\in S_{\mathbb{N}_+}^*}\sigma\cdot N,
  \end{align*}
  and note that since $S_{\mathbb{N}_+}^*$ is countable (this is how we use \emph{strong} ergodicity), these sets are Borel. Clearly these sets are also $S_{\mathbb{N}_+}^*$-invariant.
  We claim that $(P',N')$ is another Hahn decomposition of $D_1-D_2$.

  Firstly, since $N'\subseteq N$, if $A\subseteq N'$ is a Borel set, then clearly $D_1(A)\leq D_2(A)$. Thus, it remains to prove that if $A\subseteq P'$ is a Borel set, then $D_1(A)\geq D_2(A)$.

  Fix an enumeration $(\sigma_n)_{n\in\mathbb{N}}$ of $S_{\mathbb{N}_+}^*$ and define the sets
  \begin{align*}
    A_n & \df (\sigma_n\cdot P)\cap \left(A\setminus\bigcup_{m=0}^{n-1} A_m\right)
  \end{align*}
  inductively. Since $\sigma_n^{-1}\cdot A_n\subseteq P$ and $D_1$ and $D_2$ are $S_{\mathbb{N}_+}$-invariant, we have $D_1(A_n)\geq D_2(A_n)$. Since $A = \mathop{\stackrel\cdot\bigcup}_{n\in\mathbb{N}} A_n$, we get
  \begin{align*}
    D_1(A) & = \sum_{n\in\mathbb{N}} D_1(A_n) \geq \sum_{n\in\mathbb{N}} D_2(A_n) = D_2(A)
  \end{align*}
  as desired. Therefore $(P',N')$ is also a Hahn decomposition of $D_1-D_2$ as claimed above.

  We claim now that $D(P')\notin\{0,1\}$. Indeed, if $D(P')=1$, then $D(N')=0$, which implies that $D_1(A)\geq D_2(A)$ for every Borel set $A\subseteq\mathcal{K}_{\mathbb{N}_+}$. Since $D_1$ and $D_2$ are probability measures, by taking complements we get $D_1=D_2$, contradicting our assumption. Analogously, $D(P')=0$ implies the same contradiction $D_1=D_2$.
Therefore $D(P')\notin\{0,1\}$, hence $D$ is not strongly ergodic as $P'$ is $S_{\mathbb{N}_+}^*$-invariant.

   \medskip
   Conversely, if $D$ is not strongly ergodic and $A$ is an $S_{\mathbb{N}_+}^*$-invariant set with $0 < D(A) < 1$, then $D$ is a convex combination of the exchangeable distributions $D_1$ and $D_2$ defined by
   \begin{align*}
     D_1(B) & = \frac{D(B\cap A)}{D(A)}; &
     D_2(B) & = \frac{D(B\setminus A)}{1 - D(A)};
   \end{align*}
   for every Borel set $B\subseteq\mathcal{K}_{\mathbb{N}_+}$.
\end{proof}

The following examples show that not all weakly ergodic random structures are local.

\begin{example} \label{ex:bernoulli}
  Consider the theory $T_{2\operatorname{-Coloring}}$ and note that $\mathcal{K}_{\mathbb{N}_+}$ is naturally
  identified with $\{0,1\}^{\mathbb{N}_+}$. Note that $x,y\in\{0,1\}^{\mathbb{N}_+}$ are in the same $S_{\mathbb{N}_+}$-orbit if and only if $\lvert x^{-1}(0)\rvert = \lvert y^{-1}(0)\rvert$ and $\lvert x^{-1}(1)\rvert = \lvert y^{-1}(1)\rvert$ (either one of these four quantities can be infinite, of course). In particular, there are countably many orbits.

  For $p\in(0,1)$, let $D_p$ be the distribution of the random structure in
  $\mathcal{K}_{\mathbb{N}_+}$ corresponding to the homomorphism
  $\phi_p\in\HomT{T_{2\operatorname{-Coloring}}}$ in which a fraction $p$ of the vertices
  has color $\chi_0$ (via the identification made above, $D_p$ is simply the product of Bernoulli
  distributions with parameter $p$). Note that the $S_{\mathbb{N}_+}$-orbit of sequences that have infinitely many zeros and ones has $D_p$-measure $1$ and all other orbits have $D_p$-measure $0$.

  Take $p,q\in(0,1)$ distinct and let $D = (D_p + D_q)/2$. Then any $S_{\mathbb{N}_+}$-invariant Borel set $A\subseteq\mathcal{K}_{\mathbb{N}_+}$ must be a union of orbits, hence must have $D$-measure either $0$ or $1$ depending only on whether $A$ contains the orbit of infinitely many zeros and ones. Therefore, $D$ is weakly ergodic. It is not strongly ergodic by Proposition~\ref{prop:ergodic}; more explicitly, the $D$-measure of the $S_{\mathbb{N}_+}^*$-invariant set
  $$
\left\{ x\in \{0,1\}^{\mathbb N^+} \;\middle\vert\; \limsup_{n\to\infty} \frac{\lvert x^{-1}(1)\cap [n]\rvert}{n} \geq \frac{p+q}{2} \right\}
  $$
  is $1/2$.
\end{example}

\begin{example} \label{ex:rado}
  Consider the theory $T_{\operatorname{Graph}}$ and let $D_p$ be the distribution of the random structure in $\mathcal{K}_{\mathbb{N}_+}$ corresponding to the almost sure limit of $(\rn{G_{n,p}})_{n\in\mathbb{N}}$ (cf.~Example~\ref{ex:ErdosRenyi}). For $p\in(0,1)$, the distribution $D_p$ is concentrated on the $S_{\mathbb{N}_+}$-orbit of Rado graphs (see e.g.~\cite{Cam}). As in Example~\ref{ex:bernoulli}, the distribution $D = (D_p + D_q)/2$ for $p,q\in(0,1)$ distinct is non-local (and hence not strongly ergodic), but it is weakly ergodic.

\smallskip

  One can also generalize this example to the theory $T_{k\operatorname{-Hypergraph}}$. Let $D_p$ be the distribution of the random model $\rn{K_p}$ over $\mathbb{N}_+$ in which each hyperedge is present independently with probability $p\in(0,1)$. Just as in the graph case, one can show that $\rn{K_p}$ satisfies the following extension property with probability $1$: for every finite $k$-hypergraph $H$, every $W\subseteq V(H)$ and every embedding $f\function{W}{\mathbb{N}_+}$ of $H\vert_W$ in $\rn{K_p}$, we can extend $f$ to an embedding of $H$ in $\rn{K_p}$.

  Then, by a straightforward application of the back-and-forth method, one can prove that all hypergraphs over $\mathbb{N}_+$ that satisfy this extension property are isomorphic to each other and form an $S_{\mathbb{N}_+}$-orbit; they make a perfect hypergraph analogue of Rado graphs. Again, the same construction $D = (D_p + D_q)/2$ yields a non-local weakly ergodic random model in the theory $T_{k\operatorname{-Hypergraph}}$.
\end{example}

\begin{theorem}\label{thm:allcrypto}
  Consider the following objects for a theory $T$.
  \begin{enumerate}[label={\arabic*.}, ref={\arabic*)}]
  \item A convergent sequence of models $(N_n)_{n\in\mathbb{N}}$.
  \item A flag algebra homomorphism $\phi\in\HomT{T}$.
    \label{it:flagalgebracrypto}
  \item A $T$-on $\mathcal{N}$.
  \item A local exchangeable random structure $\rn{K}$ in $\mathcal{K}_{\mathbb{N}_+}$ supported on models of $T$.
  \item A strongly ergodic exchangeable random structure $\rn{K}$ in $\mathcal{K}_{\mathbb{N}_+}$ supported on models of $T$.
  \item An extreme exchangeable random structure $\rn{K}$ in $\mathcal{K}_{\mathbb{N}_+}$ supported on models of $T$. \label{it:crypto}
  \end{enumerate}

  The objects above are cryptomorphic in the sense that given an instance of one of them,
  one can construct instances of the others that satisfy the following for every $K\in\mathcal{K}_\ell$:
  \begin{gather*}
    \lim_{n\to\infty}p(K,N_n)
    = \phi(K)
    = p(K,\mathcal{N})
    = \prob{\rn{K}\vert_{[\ell]}\cong K}.
    \\
    \lim_{n\to\infty}\tind(K,N_n)
    = \frac{\lvert\Aut(K)\rvert}{\ell!}\phi(K)
    = \tind(K,\mathcal{N})
    = \prob{\rn{K}\vert_{[\ell]}= K}.
    \\
    \lim_{n\to\infty}\tinj(K,N_n)
    = \sum_{K'\supseteq K}\frac{\lvert\Aut(K')\rvert}{\ell!}\phi(K')
    = \tinj(K,\mathcal{N})
    = \prob{\rn{K}\vert_{[\ell]}\supseteq K}.
  \end{gather*}
\end{theorem}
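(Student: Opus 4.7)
The plan is essentially bookkeeping: all the substantive cryptomorphisms have been established earlier, and what remains is to assemble them into a single diagram and verify the explicit density formulas.

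First, I would draw the web of cryptomorphisms, citing the relevant prior results. The arrow $(1)\leftrightarrow(2)$ is Theorem~\ref{thm:cryptomorphism}. The arrow $(2)\leftrightarrow(3)$ is provided by Theorem~\ref{thm:theoncryptomorphism} (existence) combined with Theorem~\ref{thm:TheonUniqueness} (uniqueness): concretely, given $\phi\in\HomT{T}$ one constructs a weak $T$-on $\mathcal{N}$ with $\phi_{\mathcal{N}}=\phi$, and any two such $T$-ons are related by the measure-preserving transformations of Theorem~\ref{thm:TheonUniqueness}. The arrow $(2)\leftrightarrow(6)$ is Corollary~\ref{cor:cryptoexch}, which identifies extreme exchangeable distributions on $\mathcal{K}_{\mathbb{N}_+}$ (supported on models of $T$) with points of $\HomT{T}$ via~\eqref{eq:cryptoexch}. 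The equivalence $(4)\leftrightarrow(6)$ is Proposition~\ref{prop:extremelocal}, and $(4)\leftrightarrow(5)$ is the content of Proposition~\ref{prop:ergodic}. Chasing these arrows around the diagram yields all the remaining cryptomorphisms.

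With the bijections in hand, it remains to verify the three chains of identities for a fixed $K\in\mathcal{K}_\ell$. For the first chain $\lim_{n\to\infty}p(K,N_n)=\phi(K)=p(K,\mathcal{N})=\prob{\rn{K}\vert_{[\ell]}\cong K}$, the first equality is simply the defining property of convergence of $(N_n)$ to $\phi$ (noting that $p(K,\cdot)$ depends only on the isomorphism type of $K$); the second is the identification $\phi=\phi_{\mathcal{N}}$ together with Definition~\ref{def:peons}; and the third is equation~\eqref{eq:cryptoexch} of Theorem~\ref{thm:cryptoexch}, specialized to the extremal case in which $\rn{\phi}=\phi$ almost surely (Corollary~\ref{cor:cryptoexch}). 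The second and third chains follow from the first term-by-term via the conversion identities~\eqref{eq:ind-p} and~\eqref{eq:inj-ind}. These identities hold in each of the four settings (finite models, homomorphisms in $\HomT{T}$, $T$-ons, and exchangeable arrays); in the last setting they take the respective forms $\prob{\rn{K}\vert_{[\ell]}\cong K}=(\ell!/\lvert\Aut(K)\rvert)\prob{\rn{K}\vert_{[\ell]}=K}$ and $\prob{\rn{K}\vert_{[\ell]}\supseteq K}=\sum_{K'\supseteq K}\prob{\rn{K}\vert_{[\ell]}=K'}$, both immediate from the definitions.

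There is no genuine obstacle here; the only care required is to keep straight the labeled/unlabeled distinction between a specific $K\in\mathcal{K}_\ell$ and its isomorphism class in $\mathcal{M}_\ell$. This is precisely what the factor $\lvert\Aut(K)\rvert/\ell!$ in~\eqref{eq:ind-p} is designed to absorb, so the bookkeeping is routine.
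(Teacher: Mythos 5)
Your proposal is correct and follows the same route as the paper: the paper's proof of this theorem is a one-line citation of Theorem~\ref{thm:cryptomorphism}, Theorem~\ref{thm:theoncryptomorphism}, Corollary~\ref{cor:cryptoexch}, Proposition~\ref{prop:extremelocal}, and Proposition~\ref{prop:ergodic}, which is exactly the web of arrows you draw (with Theorem~\ref{thm:TheonUniqueness} an optional extra). Your explicit verification of the three chains of density identities via~\eqref{eq:ind-p} and~\eqref{eq:inj-ind} is a useful expansion of what the paper leaves implicit, but it is the same bookkeeping.
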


\begin{proof}
  This is the content of Theorem~\ref{thm:cryptomorphism}, Theorem~\ref{thm:theoncryptomorphism}, Corollary~\ref{cor:cryptoexch}, Proposition~\ref{prop:extremelocal} and Proposition~\ref{prop:ergodic}.
\end{proof}

Comparing with Lov\'{a}sz's list for ordinary graphs~\cite[Theorem~11.52]{Lov4}, we see two
omissions in our treatment: consistent \emph{finite} random models and convergence w.r.t.\
the cut-distance. The former are omitted because they are ``trivially'' cryptomorphic to
homomorphisms in flag algebras (item~\ref{it:flagalgebracrypto} in Theorem~\ref{thm:allcrypto}). The situation with
cut-distance is, however, way more intriguing, and to the best of our knowledge, no
unambiguous and useful analogue of it is known even for 3-graphs. We will return to this discussion in the concluding section~\ref{sec:conclusion}.

\section{Other limit objects} \label{sec:otherobjects}

Let us now see a few concrete examples of how to connect theons with several
limit objects previously considered in the literature other than graphons,
digraphons and hypergraphons. Most of them are defined on ``nice'' $\sigma$-algebras, but the probability measures involved are normally entirely out of our control. In particular, a priori we do not have any idea how their completion may look like, and that adds additional measure-theoretical subtleties to be taken care of. This is our first order of business.

\subsection{Measure-theoretic background} \label{sec:measure_background}

\smallskip

In all definitions and results presented so far, there is nothing special about using the unit interval $[0,1]$ as the underlying space for the coordinates of Euclidean structures and peons. In fact,
in definitions we can use instead any probability space $\Omega=(X,\mathcal{A},\mu)$. In order for our results to hold, however, we need a few more assumptions on $\Omega$.

\begin{definition}
 Recall that an \emph{atom} of $\Omega$ is a measurable set $A\in\mathcal{A}$ such that $\mu(A)>0$ and every measurable set $B\in\mathcal{A}$ contained in $A$ has either measure $0$ or $\mu(A)$. The space $\Omega$ is called \emph{atomless} if it does not have any atoms.
\end{definition}

{\bf Assumption~P.} {\em The space $X$ can be endowed with the structure of a Polish space such that $\mathcal A$ is the $\sigma$-algebra consisting of its Borel sets and $\mu(\{x\}) = 0$ for every $x\in X$.}

\medskip
Let us note at once that $\sigma$-algebras appearing in Assumption~P are automatically atomless.

\begin{lemma}\label{lem:atomless}
  Let $X$ be a Polish space, let $\mathcal{A}$ be its Borel $\sigma$-algebra and let $\mu$ be a probability measure on $(X,\mathcal{A})$. Then $(X,\mathcal{A},\mu)$ is atomless if and only if $\mu(\{x\})=0$ for every $x\in X$.
  Also, in that case the diagonal $\set{(x,x)\in X^2}{x\in X}$ has measure~0 w.r.t.\ the product measure $\mu^2$.
\end{lemma}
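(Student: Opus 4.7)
The plan is to prove the two parts essentially independently, though the second uses the first.

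For the first assertion, the ``only if'' direction is immediate: if $\{x\}$ has positive measure, then it is trivially an atom (every measurable subset is either empty or equal to $\{x\}$), so atomlessness forces $\mu(\{x\})=0$ for all $x$. The content is the ``if'' direction: assuming every singleton has measure zero, I need to rule out the existence of an atom $A$ with $\mu(A)=c>0$. My strategy is to exploit the Polish structure to localize the mass of $A$ down to a point. Fix a compatible complete metric on $X$ and a countable dense subset. For each $n\in\mathbb N_+$, the open balls of radius $2^{-n}$ centered at dense points cover $X$; since they are countable and $A$ is an atom, exactly one of them, call it $B_n$, satisfies $\mu(A\cap B_n)=c$, while all others meet $A$ in a null set. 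Since $A$ is an atom, the finite intersection $A\cap B_1\cap\cdots\cap B_n$ still has measure $c$ (each successive intersection is a subset of the atom $A$, hence has measure $0$ or $c$, and cannot drop to $0$ by inclusion–exclusion with the complements). Passing to the limit with continuity of measure from above yields $\mu(A\cap\bigcap_n B_n)=c$. But $\operatorname{diam}(B_n)\to 0$ and the metric is complete, so $\bigcap_n B_n$ is either empty or a singleton, forcing $\mu(\{x\})\geq c>0$ for some $x\in X$, contradicting the hypothesis.

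For the second assertion, I will use the first part and the standard ``fine partition'' property of atomless measures. First I must check that the diagonal $\Delta\df\{(x,x) : x\in X\}$ is measurable with respect to $\mu^2$: since $X$ is Polish, hence Hausdorff and second countable, $\Delta$ is closed in $X\times X$, and the Borel $\sigma$-algebra of $X\times X$ coincides with the product $\sigma$-algebra (second countability), so $\Delta$ is a Borel set. The next step is to invoke the well-known consequence of atomlessness: for every $\epsilon>0$ there exists a finite Borel partition $X=A_1\mathbin{\stackrel\cdot\cup}\cdots\mathbin{\stackrel\cdot\cup} A_m$ with $\mu(A_i)<\epsilon$ for all $i$. (This can be derived from part~1 by repeatedly splitting any piece of measure $\geq\epsilon$ in half, a process that terminates in finitely many steps since $\mu$ is finite; splitting in half is possible because, by part~1 applied to the restricted measure, any Borel set of positive measure contains a subset of any smaller prescribed measure.)

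Finally, given such a partition, the diagonal is contained in $\bigcup_{i=1}^m (A_i\times A_i)$, so
\begin{align*}
\mu^2(\Delta) \;\leq\; \sum_{i=1}^m \mu(A_i)^2 \;<\; \epsilon \sum_{i=1}^m \mu(A_i) \;=\; \epsilon.
\end{align*}
Since $\epsilon>0$ was arbitrary, $\mu^2(\Delta)=0$. The main obstacle in the whole argument is the ``if'' direction of part~1, which is the only place where the Polish hypothesis is essentially used; without it, one can have an atomic measure (e.g.\ a nontrivial ultrafilter measure) on a $\sigma$-algebra for which no single point carries mass. The fine-partition argument for the diagonal is then routine once part~1 is in hand.
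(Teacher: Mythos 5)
Your treatment of the first assertion is essentially the same as the paper's: both arguments localize the mass of a hypothetical atom inside balls of radius $2^{-n}$ around points of a countable dense set, then intersect and let the diameters go to zero. The paper nests the balls explicitly by taking $A_m = B(p_{n_m},2^{-m-1})\cap A_{m-1}$, whereas you keep the $B_n$ unnested and observe that $\mu(A\cap B_1\cap\cdots\cap B_n)=c$ anyway because each $A\setminus B_i$ is null; both are fine. One cosmetic slip: your ``exactly one'' ball of radius $2^{-n}$ with $\mu(A\cap B_n)=c$ is not right (balls overlap, so several could qualify), but ``at least one'' is all you use. Also, completeness of the metric is not actually needed to conclude $\bigcap_n B_n$ is a singleton or empty; the diameters going to zero suffice.

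For the diagonal, however, you diverge from the paper, and here your route has a gap in its justification. The paper simply observes that $D$ is closed (hence Borel) and applies Fubini's theorem: each section $D(x)=\{x\}$ is $\mu$-null by part~1, so $\mu^2(D)=\int_X \mu(\{x\})\,d\mu(x)=0$. This needs nothing beyond part~1. You instead reduce to the fine-partition property of atomless measures and bound $\mu^2(\Delta)\leq\sum_i\mu(A_i)^2<\epsilon$. That estimate is correct, but your parenthetical derivation of the fine partition is not: you claim that ``splitting in half'' is possible because ``any Borel set of positive measure contains a subset of any smaller prescribed measure,'' which is Sierpi\'nski's intermediate-value theorem for atomless measures. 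That theorem is true, but it is not an immediate consequence of part~1 (which only concerns singletons); its standard proof requires a Zorn's-lemma or transfinite-exhaustion argument. Atomlessness does directly give you that every positive-measure set has a measurable subset of strictly smaller positive measure (and by iterating, of arbitrarily small positive measure), but it does not immediately let you split a set into two pieces each of measure at most half. A correct elementary argument for the fine partition is possible but requires care (e.g., greedily peeling off pieces whose measure is at least half the supremum of achievable sub-$\epsilon$ measures and arguing the process terminates), which you did not supply. In short: the approach works, but the justification as written leans on a nontrivial fact presented as if it were trivial; the paper's Fubini argument is both shorter and fully self-contained once part~1 is in hand.
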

We defer the proof of this lemma to Appendix~\ref{sec:measure}.

We can now define a version of all our concepts by replacing $[0,1]$ with $\Omega$. Some care, however, must be taken with respect to the type of measurability
required.

\begin{definition}
  Let $\Omega=(X,\mathcal{A},\mu)$ be a probability space satisfying Assumption~P. For a finite set $V$, we define $\mathcal{E}_V(\Omega) \df X^{r(V)}$ and we let $\mathcal{B}_V(\Omega)$ be
  the product $\sigma$-algebra of $\lvert r(V)\rvert$ copies of $\mathcal{A}$. Let also $\mathcal{L}_V(\Omega)$ be
  the completion of $\mathcal{B}_V(\Omega)$ with respect to the product $\mu^{r(V)}$ of $\lvert r(V)\rvert$
  copies of the measure $\mu$. Note for the record that the space $(\mathcal E_V(\Omega), \mathcal B_V(\Omega), \mu^{r(V)})$ also satisfies Assumption~P.  We will sometimes abuse the notation  denoting by $\mu^{r(V)}$ the completion of this measure as well.

  We use the same shorthand conventions when $V=[k]$.
  For a predicate symbol $P$ of arity $k$, a \emph{$P$-on over $\Omega$} is a set in $\mathcal{L}_k(\Omega)$.
  Define $\mathcal{D}_V(\Omega)$, a (weak or strong) $T$-on over $\Omega$ and related
  concepts such as $\Tinj(M,\mathcal{N})$, $\Tind(M,\mathcal{N})$, $T(F,\mathcal{N})$ etc.\ by replacing
  $[0,1]$ with $\Omega$ in Definitions~\ref{def:EV}, \ref{def:peons}, \ref{def:theons}, \ref{def:truth},
  \ref{def:hoa}, and~\ref{def:hoa2}, saying that a theon $\mathcal{N}$ over $\Omega$ is \emph{Borel} if
  $\mathcal{N}_P$ is a set in $\mathcal{B}_{k(P)}(\Omega)$ (rather than just in $\mathcal{L}_{k(P)}(\Omega)$)
  for every $P\in\mathcal{L}$. Measurability of functions $f\function{\mathcal{E}_V(\Omega)}{\Omega}$
  and $h\function{\mathcal{E}_V(\Omega)\times\mathcal{E}_V(\Omega)}{\Omega}$ in Definitions~\ref{def:hoa} and~\ref{def:hoa2} is taken with respect
  to $\mathcal{A}$ in the codomain and $\mathcal{L}_V(\Omega)$ and $\mathcal{L}_V(\Omega\times\Omega)$ (via the natural identification of $\mathcal{E}_V(\Omega)\times\mathcal{E}_V(\Omega)$ with $\mathcal{E}_V(\Omega\times\Omega)$) in the
  domain respectively. Note that for $\Omega_1 = ([0,1],\mathcal{B}_1,\lambda)$, where $\mathcal{B}_1$ is the $\sigma$-algebra of Borel sets, we recover all our previous notions.
\end{definition}

We claim that all the previous results continue to hold for an arbitrary probability space $\Omega=(X,\mathcal{A},\mu)$ satisfying Assumption~P. One way to verify this is by a direct inspection of proofs. Alternately, we can do it in a more intelligent way by invoking relatively deep results from measure theory.

\begin{definition}
  Let $\Omega=(X,\mathcal{A},\mu)$ and $\Omega'=(X',\mathcal{A}',\mu')$ be measure spaces.

  A \emph{measure-isomorphism} between $\Omega$ and $\Omega'$ is a bijection $F\function{X}{X'}$ such that both $F$ and $F^{-1}$ are measurable and measure preserving. Two spaces are said to be \emph{measure-isomorphic} if there exists a measure-isomorphism between them.

  The spaces $\Omega$ and $\Omega'$ are said to be \emph{measure-isomorphic modulo~0} if there exist $A\in\mathcal{A}$ and $A'\in\mathcal{A}'$ such that $\mu(X\setminus A)=\mu'(X'\setminus A')=0$ and the spaces $(A,\mathcal{A}\vert_A,\mu\vert_{A})$ and $(A',\mathcal{A}'\vert_{A'},\mu'\vert_{A'})$ are measure-isomorphic.

  We will denote by $\mathcal B_t$ the $\sigma$-algebra on $[0,1]^t$ that consists of all Borel sets, by $\mathcal L_t$ the $\sigma$-algebra consisting of Lebesgue measurable sets, and by $\lambda^t$ the Lebesgue measure itself. Let also $\Omega_t\df ([0,1], \mathcal B_t, \lambda^t)$.  Finally, we denote by $\pi_i\function{[0,1]^t}{[0,1]}$ the projection on the $i$th coordinate.
\end{definition}

\begin{theorem}[\protect{\cite[Theorem~9.2.2]{Bog}}] \label{thm:bogachev}
Every probability space $\Omega=(X,\mathcal{A},\mu)$ satisfying Assumption~P
is measure-isomorphic modulo~0 to $\Omega_1$.
\end{theorem}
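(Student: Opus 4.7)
The plan is to prove this via two steps: first reduce to the case of an atomless Borel probability measure on $[0,1]$, then show any such measure is measure-isomorphic mod~0 to Lebesgue measure. For the first step I would invoke the classical Borel isomorphism theorem from descriptive set theory (due to Kuratowski): any two uncountable standard Borel spaces are Borel isomorphic, and in particular each such space is Borel isomorphic to $([0,1],\mathcal{B}_1)$. Since $X$ is Polish, $(X,\mathcal{A})$ is standard Borel; by Lemma~\ref{lem:atomless} the atomless hypothesis rules out a countable $X$, so there is a Borel isomorphism $\varphi\function{X}{[0,1]}$. Pushing $\mu$ forward gives an atomless Borel probability measure $\nu\df\varphi_\ast\mu$ on $([0,1],\mathcal{B}_1)$, and the problem reduces to comparing $([0,1],\mathcal{B}_1,\nu)$ with $\Omega_1$.

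For the second step I would use the cumulative distribution function $F(x)\df\nu([0,x])$. Since $\nu$ is atomless, $F$ is continuous on $[0,1]$ with $F(0)=0$, $F(1)=1$, and by a standard change-of-variables calculation the pushforward $F_\ast\nu$ equals Lebesgue measure $\lambda$ restricted to $[0,1]$. The remaining work is to upgrade $F$ into a genuine bijection modulo null sets. Flat pieces of $F$ correspond to maximal open intervals $(a,b)\subseteq[0,1]$ on which $\nu$ vanishes; there are only countably many, say $\{(a_n,b_n)\}_{n\in\mathbb{N}}$, and their union $N$ is $\nu$-null. On $X_0\df[0,1]\setminus N$ the function $F$ is strictly increasing and continuous, hence a Borel bijection onto $F(X_0)$, which equals $[0,1]$ minus the countable $\lambda$-null set $\{F(a_n)\}$.

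Removing these two null sets produces Borel sets $X_0\subseteq[0,1]$ and $Y_0\subseteq[0,1]$ of full measure (with respect to $\nu$ and $\lambda$, respectively) on which $F$ restricts to a strictly monotone continuous bijection whose inverse is automatically Borel. Composing with $\varphi$, I obtain the required measure-isomorphism modulo~0 between $\Omega$ and $\Omega_1$.

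The main technical obstacle will be bookkeeping the various null sets to guarantee that both the bijection and its inverse are Borel (not merely Lebesgue) measurable. A cleaner alternative, more robust against such pathologies, is to bypass the CDF altogether and build the isomorphism inductively on dyadic partitions: one simultaneously partitions both $X$ and $[0,1]$ into $2^n$ Borel cells of equal measure (using atomless-ness to split each piece evenly), and takes the limiting map along these refining partitions. This Borel--Cantor--style construction, essentially the approach in~\cite{Bog}, handles boundaries uniformly and gives the result without ever mentioning monotone functions.
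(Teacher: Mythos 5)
The paper does not prove this theorem; it cites it directly from Bogachev's treatise (\cite[Theorem~9.2.2]{Bog}), so there is no in-paper proof against which to compare your argument. Your two-step plan---reduce via a Borel isomorphism to an atomless Borel probability measure on $[0,1]$, then deploy the CDF---is the standard route and is sound in outline. One bookkeeping step, however, is wrong as written. After deleting $N=\bigcup_n(a_n,b_n)$ you assert that $F$ is strictly increasing on $X_0=[0,1]\setminus N$; it is not, because the endpoints $a_n$ and $b_n$ both remain in $X_0$ and satisfy $F(a_n)=F(b_n)$, so $F\vert_{X_0}$ is not injective. Relatedly, $F(X_0)$ is all of $[0,1]$, not $[0,1]$ minus $\{F(a_n)\}$: each value $F(a_n)$ is attained at $a_n\in X_0$. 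The repair is to delete one endpoint from each flat interval as well, e.g.\ take $X_0 \df [0,1]\setminus\bigl(N\cup\{b_n : n\in\mathbb{N}\}\bigr)$; one then checks that $F\vert_{X_0}$ is a strictly increasing continuous bijection onto $[0,1]$ whose inverse is Borel by monotonicity, and that the removed set is $\nu$-null. Composed with the Borel isomorphism $\varphi$, this gives the measure-isomorphism modulo~$0$. Your alternative construction via refining dyadic partitions is a robust way to avoid these endpoint issues and is closer in spirit to Bogachev's own argument, at the price of a limiting step over nested partitions.
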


Measure-isomorphism modulo~0 is clearly sufficient for transferring results about weak theons, i.e., those concerning only
densities of models. For results involving \emph{points} in $\mathcal{E}_V(\Omega)$ such as the ones involving
strong theons or measurable functions, measure-isomorphism modulo~0 is not enough. Note, however,
that since $(\mathcal{E}_V(\Omega),\mathcal{B}_V(\Omega),\mu^{r(V)})$ also satisfies Assumption~P and since we
define peons over $\Omega$ to be measurable with respect to the
\emph{completion} $(\mathcal{E}_V(\Omega),\mathcal{L}_V(\Omega),\mu^{r(V)})$ of this space, we can use the
following neat characterization.

\begin{theorem}\label{thm:measiso}
A probability space can be represented as the completion of a space satisfying Assumption~P if and only if it is measure-isomorphic to $([0,1], \mathcal L_1,\lambda^1)$.
\end{theorem}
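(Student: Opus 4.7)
The plan is to prove each direction separately, using Theorem~\ref{thm:bogachev} only for the harder ``only if'' part.

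For the ``if'' direction, suppose $F\function{(Y,\Sigma,\nu)}{([0,1],\mathcal L_1,\lambda^1)}$ is a measure-isomorphism. I would transport the standard Polish topology of $[0,1]$ back to $Y$ via $F^{-1}$, and set $\mathcal A \df F^{-1}(\mathcal B_1)$. The triple $(Y,\mathcal A,\nu|_{\mathcal A})$ then satisfies Assumption~P (atomlessness transfers through the bijection, yielding $\nu(\{y\})=0$ for every $y\in Y$ by Lemma~\ref{lem:atomless}), and $F$ becomes a measure-isomorphism of Polish probability spaces onto $([0,1],\mathcal B_1,\lambda^1)$. Since measure-isomorphisms commute with completion, the completion of $(Y,\mathcal A,\nu|_{\mathcal A})$ is precisely $F^{-1}(\mathcal L_1)=\Sigma$ with measure $\nu$.

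For the ``only if'' direction, let $(X,\mathcal A,\mu)$ satisfy Assumption~P with completion $(X,\tilde{\mathcal A},\tilde\mu)$. Theorem~\ref{thm:bogachev} provides conull Borel sets $A\subseteq X$ and $B\subseteq [0,1]$ together with a measure-isomorphism $F_0\function{A}{B}$. My goal is to extend $F_0$ to a bijection $\tilde F\function{X}{[0,1]}$ that serves as a measure-isomorphism of the completions. For this it suffices to exhibit a bijection $G\function{X\setminus A}{[0,1]\setminus B}$ and set $\tilde F\df F_0\cup G$: such a $G$ is automatically a measure-isomorphism between the induced subspaces of the completions, because every subset of a null set in a complete measure space is measurable with measure zero.

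The main obstacle is that $|X\setminus A|$ and $|[0,1]\setminus B|$ may differ. Both have cardinality at most $2^{\aleph_0}$ since $X$ and $[0,1]$ are Polish; moreover both are uncountable, as an atomless probability measure cannot live on a countable space (again by Lemma~\ref{lem:atomless}). Hence both underlying sets have cardinality exactly $2^{\aleph_0}$. To equalize the cardinalities of the two complements, I would first pick a null Borel ``Cantor-type'' subset $C\subseteq B$ of cardinality $2^{\aleph_0}$ in $[0,1]$, and pull it back via $F_0$ to obtain $C_X\df F_0^{-1}(C)\subseteq A$, which is still null in $X$ and retains cardinality $2^{\aleph_0}$. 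Replacing $A$ by $A\setminus C_X$ and $B$ by $B\setminus C$ (both still conull), each complement now has cardinality exactly $2^{\aleph_0}$; any bijection between them, adjoined to $F_0|_{A\setminus C_X}$, yields the desired $\tilde F$.
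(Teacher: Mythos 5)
Your proof takes essentially the same route as the paper. The ``if'' direction transports the Polish topology of $[0,1]$ through the isomorphism, which is a slightly more explicit version of the paper's one-liner (``Assumption~P is clearly invariant under measure-isomorphisms''), and the ``only if'' direction is identical in spirit: apply Theorem~\ref{thm:bogachev} to get a measure-isomorphism between conull Borel pieces, remove from $B$ (and pull back to $A$) a null set of cardinality $2^{\aleph_0}$ produced by a Cantor-type construction (this is exactly Lemma~\ref{lem:zerocontinuum} in the paper), and then patch the remaining bijection on the now-continuum-sized null complements, using completeness of both spaces to guarantee the extension is a measure-isomorphism.

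One sentence in the middle of your ``only if'' argument is wrong, though harmless to the overall proof: you claim that $X\setminus A$ and $[0,1]\setminus B$ ``are uncountable, as an atomless probability measure cannot live on a countable space.'' That reasoning applies to $X$ and $[0,1]$, not to their null complements -- $X\setminus A$ could perfectly well be empty or a single point. Since you go on to remove a Cantor-type set from $B$ (and its preimage from $A$) precisely to force both complements to have cardinality $2^{\aleph_0}$, the mistaken claim is not load-bearing, but it should be rephrased as a statement about $X$ and $[0,1]$ themselves (whose cardinality is needed for the final bijection to exist).
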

Since we have not been able to find this statement in the measure-theoretic literature, we defer its simple (that is, modulo Theorem~\ref{thm:bogachev}) proof to Appendix~\ref{sec:measure} as well.

As a corollary of the above, the spaces $(\mathcal{E}_k,\mathcal{L}_k,\lambda^{r(k)})$
and $(\mathcal{E}_k(\Omega),\mathcal{L}_k(\Omega),\mu^{r(k)})$ are measure-isomorphic, where $\mathcal{L}_k$
is the $\sigma$-algebra of Lebesgue measurable sets of $\mathcal{E}_k$. Using this measure-isomorphism,
results such as Theorems~\ref{thm:ierl} and~\ref{thm:characterization} continue to hold for spaces
satisfying Assumption~P (see also Proposition~\ref{prop:uniquenessother} below).

For the uniqueness results, Theorems~\ref{thm:TheonUniqueness} and~\ref{thm:TheonUniquenessSecond}, we can mix
several different spaces (it will turn out handy for comparing ``theonic'' definitions of various objects with
original ones). Notably, for spaces $\Omega=(X,\mathcal{A},\mu)$ and $\Omega'=(X',\mathcal{A}',\mu')$
satisfying Assumption~P, we can extend Definition~\ref{def:hoa} for
functions $f\function{\mathcal{E}_V(\Omega)}{\Omega'}$
and $h\function{\mathcal{E}_V(\Omega)\times\mathcal{E}_V(\Omega)}{\Omega'}$, where measurability is taken with
respect to $\mathcal{A}'$ in the codomain and $\mathcal{L}_V(\Omega)$ and $\mathcal{L}_V(\Omega\times\Omega)$
in the domain respectively (that is, $f^{-1}(A)\in\mathcal L_V(\Omega)$ and $h^{-1}(A)\in\mathcal L_V(\Omega\times\Omega)$ for any $A\in\mathcal A'$). This implies that $\widehat{f}_d\function{\mathcal{E}_d(\Omega)}{\mathcal{E}_d(\Omega')}$ and $\widehat{h}_d\function{\mathcal{E}_d(\Omega)\times\mathcal{E}_d(\Omega)}{\mathcal{E}_d(\Omega')}$ are measurable with respect to $\mathcal{B}_d(\Omega')$ in the codomain and $\mathcal{L}_d(\Omega)$ and $\mathcal{L}_d(\Omega\times\Omega)$ in the domain respectively.

We can then combine and generalize Theorems~\ref{thm:TheonUniqueness} and~\ref{thm:TheonUniquenessSecond} into
the following form that we will need below. Recall that $\Omega_1= ([0,1], \mathcal B_1, \lambda^1)$.

\begin{proposition}\label{prop:uniquenessother}
  Let $T$ be a canonical theory in a language $\mathcal{L}$, let $k=\max\{k(P) \mid P\in\mathcal{L}\}$, and
  let $\mathcal{N}$ and $\mathcal{N}'$ be two $T$-ons over $\Omega=(X,\mathcal{A},\mu)$ and $\Omega'=(X',\mathcal{A}',\mu')$, respectively, where $\Omega$ and $\Omega'$ satisfy Assumption~P. Then the following are equivalent.
  \begin{enumerate}[label={\arabic*.}, ref={\arabic*)}]
  \item We have $\phi_{\mathcal{N}}=\phi_{\mathcal{N}'}$.
    \label{it:phiequal}
  \item There exist families $f = (f_1,\ldots,f_k)$ and $g = (g_1,\ldots,g_k)$ of symmetric functions measure preserving on h.o.a., $f_d\function{\mathcal{E}_d(\Omega_1)}{\Omega}$ and $g_d\function{\mathcal{E}_d(\Omega_1)}{\Omega'}$ and a weak $T$-on $\mathcal N''$ over $\Omega_1$
      with the property
    \begin{align*}
      x\in\mathcal{N}_P'' & \equiv
      \widehat{f}_{k(P)}(x)\in\mathcal{N}_P \equiv \widehat{g}_{k(P)}(x)\in\mathcal{N}'_P,
    \end{align*}
    for every $P\in\mathcal L$ and almost every $x\in\mathcal{E}_{k(P)}(\Omega_1)$.
    \label{it:uofg}
  \item There exists a family $h = (h_1,\ldots,h_k)$ of symmetric functions measure preserving on h.o.a., $h_d\function{\mathcal{E}_d(\Omega')\times\mathcal{E}_d(\Omega')}{\Omega}$ such that
    \begin{align*}
      \widehat{h}_{k(P)}(x,\widehat{x})\in\mathcal{N}_P & \equiv x\in\mathcal{N}'_P,
    \end{align*}
    for every predicate symbol $P\in\mathcal{L}$ and for almost every $(x,\widehat{x})\in\mathcal{E}_{k(P)}(\Omega')\times\mathcal{E}_{k(P)}(\Omega')$.
    \label{it:uoh}
  \end{enumerate}
\end{proposition}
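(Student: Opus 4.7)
The plan is to reduce Proposition~\ref{prop:uniquenessother} to Theorems~\ref{thm:TheonUniqueness} and~\ref{thm:TheonUniquenessSecond} (the special case $\Omega=\Omega'=\Omega_1$) by transferring the given $T$-ons to $\Omega_1$ via the measure-isomorphisms supplied by Theorem~\ref{thm:bogachev}. Concretely, I would fix measure-isomorphisms modulo~0, $\varphi\function{\Omega}{\Omega_1}$ and $\varphi'\function{\Omega'}{\Omega_1}$ (extending each arbitrarily on its null exceptional set), and note that these induce coordinate-wise maps $\hat\varphi_V\function{\mathcal E_V(\Omega)}{\mathcal E_V(\Omega_1)}$ and $\hat\varphi'_V\function{\mathcal E_V(\Omega')}{\mathcal E_V(\Omega_1)}$ that are themselves measure-isomorphisms modulo~0 of the product spaces. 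Then I would set $\widetilde{\mathcal N}_P \df \hat\varphi_{k(P)}(\mathcal N_P)$ and $\widetilde{\mathcal N}'_P \df \hat\varphi'_{k(P)}(\mathcal N'_P)$, obtaining Lebesgue measurable subsets of $\mathcal E_{k(P)}(\Omega_1)$ (after repairing on null sets). A direct coordinate-wise computation gives $\Tind(M,\widetilde{\mathcal N})=\hat\varphi_{V(M)}(\Tind(M,\mathcal N))$ modulo null sets, hence $\tind(M,\widetilde{\mathcal N})=\tind(M,\mathcal N)$ for every canonical $M$. Thus $\widetilde{\mathcal N}$ is a weak $T$-on over $\Omega_1$ with $\phi_{\widetilde{\mathcal N}}=\phi_{\mathcal N}$, and likewise for $\mathcal N'$.

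For the implication (\ref{it:phiequal})$\Longrightarrow$(\ref{it:uofg}), the hypothesis gives $\phi_{\widetilde{\mathcal N}}=\phi_{\widetilde{\mathcal N}'}$, so Theorem~\ref{thm:TheonUniqueness} supplies symmetric h.o.a.-measure-preserving families $\tilde f_d,\tilde g_d\function{\mathcal E_d(\Omega_1)}{[0,1]}$ and a weak $T$-on $\mathcal N''$ over $\Omega_1$ with $x\in\mathcal N''_P \equiv \widehat{\tilde f}_{k(P)}(x)\in\widetilde{\mathcal N}_P \equiv \widehat{\tilde g}_{k(P)}(x)\in\widetilde{\mathcal N}'_P$ almost everywhere. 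I would then set $f_d\df\varphi^{-1}\comp\tilde f_d$ and $g_d\df(\varphi')^{-1}\comp\tilde g_d$; from the coordinate-wise definition of $\widehat{\place}$ one reads off $\widehat f_d=\hat\varphi_d^{-1}\comp\widehat{\tilde f}_d$, so the chain of equivalences transports verbatim: $\widehat f_{k(P)}(x)\in\mathcal N_P \equiv \widehat{\tilde f}_{k(P)}(x)\in\widetilde{\mathcal N}_P$, and similarly for $g$. For (\ref{it:phiequal})$\Longrightarrow$(\ref{it:uoh}) I would invoke Theorem~\ref{thm:TheonUniquenessSecond} with $\widetilde{\mathcal N}'$ and $\widetilde{\mathcal N}$ in the roles of the first and second theon (so that the direction of the equivalence matches~(\ref{it:uoh})) to obtain $\tilde h_d\function{\mathcal E_d(\Omega_1)\times\mathcal E_d(\Omega_1)}{[0,1]}$, and set $h_d(x,\hat x)\df\varphi^{-1}(\tilde h_d(\hat\varphi'_d(x),\hat\varphi'_d(\hat x)))$. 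The elementary identity $\alpha_A^\ast\comp\hat\varphi'_d = \hat\varphi'_{\lvert A\rvert}\comp\alpha_A^\ast$ then gives $\widehat h_d = \hat\varphi_d^{-1}\comp\widehat{\tilde h}_d\comp(\hat\varphi'_d,\hat\varphi'_d)$, from which the required equivalence follows exactly as above.

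The reverse implications (\ref{it:uofg})$\Longrightarrow$(\ref{it:phiequal}) and (\ref{it:uoh})$\Longrightarrow$(\ref{it:phiequal}) proceed just as in Theorems~\ref{thm:TheonUniqueness} and~\ref{thm:TheonUniquenessSecond}: the symmetry and h.o.a.-measure-preservation of the component functions force each of $\widehat f_d$, $\widehat g_d$, $\widehat h_d$ to be measure-preserving between the ambient cubes (by Fubini factorization over the highest-order coordinate, with the lower-order coordinates handled by induction on $d$), so pulling back peons through these maps preserves every value $\tind(M,\place)$. The main obstacle in the argument is therefore purely the measure-theoretic bookkeeping on the new families $f,g,h$: one must verify that composing with a measure-isomorphism modulo~0 preserves measurability (from $\mathcal L_d$ in the domain to the target $\sigma$-algebra $\mathcal A$ or $\mathcal A'$ in the codomain), symmetry, and measure-preservation on the h.o.a.\ coordinate. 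Symmetry is automatic since the pre- and post-composed isomorphisms are constant in the permuted coordinates; h.o.a.-measure-preservation is inherited because measure-isomorphisms preserve measure on each fiber; and the measurability transfers because $\varphi^{-1}$ and $(\varphi')^{-1}$ are $(\mathcal B_1,\mathcal A)$- and $(\mathcal B_1,\mathcal A')$-measurable after the null-set repair, while $\hat\varphi'_d$ is $(\mathcal L_d(\Omega'),\mathcal L_d(\Omega_1))$-measurable by the Fubini argument for product measure-isomorphisms.
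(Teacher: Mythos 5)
Your overall strategy is the same as the paper's: transfer $\mathcal N, \mathcal N'$ to $T$-ons over $\Omega_1$ via the isomorphisms guaranteed by Theorem~\ref{thm:bogachev}, apply Theorems~\ref{thm:TheonUniqueness} and~\ref{thm:TheonUniquenessSecond}, and transport the resulting families back. However, there is a genuine gap in the measurability bookkeeping, and it is precisely the point the paper itself flags as delicate.

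You introduce $\varphi\function{\Omega}{\Omega_1}$, ``extend it arbitrarily'' on the null exceptional set, and then use $\varphi^{-1}$, asserting near the end that $\varphi^{-1}$ is $(\mathcal B_1,\mathcal A)$-measurable ``after the null-set repair.'' This is where the argument fails. First, an arbitrary extension of a modulo-0 isomorphism need not be injective at all (so $\varphi^{-1}$ need not be a function); arranging a genuine bijection requires the null exceptional sets on both sides to be put in bijection, which can be forced (e.g.\ via the Lemma~\ref{lem:zerocontinuum} trick) but is not automatic. Second, and more importantly, even if $\varphi$ is made a bijection, $\varphi^{-1}$ is generally only $(\mathcal L_1,\mathcal A)$-measurable, not $(\mathcal B_1,\mathcal A)$-measurable: for $A\in\mathcal A$, the image $\varphi(A)$ is the union of a Borel set and an arbitrary subset of a null Borel set, which need not be Borel. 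Your composites $f_d = \varphi^{-1}\comp\tilde f_d$ thus threaten to require $\tilde f_d^{-1}$ of a merely Lebesgue set, and the stated justification does not save this. (It can in fact be rescued by observing that $\tilde f_d$ is measure preserving, hence pulls back Lebesgue null sets to Lebesgue null sets; but you do not make this argument, and your written justification is incorrect.)

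The paper sidesteps this entirely by defining the transfer maps in the \emph{opposite} direction: $F\function{\Omega_1}{\Omega}$ is obtained by taking the modulo-0 isomorphism $B\to A$ from Theorem~\ref{thm:bogachev} (with $B$ Borel) and collapsing the Borel null complement $[0,1]\setminus B$ to a single point of $A$. This explicit extension is manifestly $(\mathcal B_1,\mathcal A)$-measurable, so the composites $f'_d = F\comp f_d$ inherit the required $(\mathcal L_d(\Omega_1),\mathcal A)$-measurability directly; no inverse of a modulo-0 isomorphism is ever taken. For item~\ref{it:uoh} the paper additionally invokes Theorem~\ref{thm:measiso} to get a bona fide measure-isomorphism $G$ between the \emph{completed} spaces for the domain side (where a genuine bijection that is measurable in both directions w.r.t.\ the completions is needed), while still using the Borel-level $F'$ for the codomain side. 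The asymmetric use of these two different devices is deliberate and is what makes the measurability go through; conflating them, as your proof implicitly does, is the gap that needs repair.
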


Since transferring this result is a bit more sensitive with respect to measurability, we explicitly provide the proof below.

\begin{proof}
  The implications~\ref{it:uofg}~$\Longrightarrow$~\ref{it:phiequal}
  and~\ref{it:uoh}~$\Longrightarrow$~\ref{it:phiequal} are immediate.

  Suppose then that $\phi_{\mathcal{N}}=\phi_{\mathcal{N}'}$.

  By Theorem~\ref{thm:bogachev}, there exist $B\in\mathcal{B}_1$ and $A\in\mathcal{A}$ and a measure
  isomorphism $F\function{(B,\mathcal{B}_1\vert_B,\lambda^1\vert_B)}{(A,\mathcal{A}\vert_A,\mu\vert_A)}$. Let
  $x_0\in A$ be an arbitrary point and extend $F$ to a measure preserving function
  $F\function{\Omega_1}{\Omega}$ by setting $F(z) = x_0$ for every $z\in[0,1]\setminus B$ (note that we may lose
  bijectivity in the process).

  For every $d\in[k]$, let $F_d\function{\mathcal{E}_d(\Omega_1)}{\mathcal{E}_d(\Omega)}$ be the function
  obtained by applying $F$ to each of the coordinates, that is we set
  \begin{align*}
    F_d(x)_A & = F(x_A)
  \end{align*}
  for every $A\in r(d)$. Note that $F_d$ is measurable and measure preserving when we equip the domain
  with $(\mathcal{B}_d(\Omega_1),\lambda^{r(d)})$ and the codomain with $(\mathcal{B}_d(\Omega),\mu^{r(d)})$. We define measure preserving $F'\function{\Omega_1}{\Omega'}$ and $F_d'\function{\mathcal{E}_d(\Omega_1)}{\mathcal{E}_d(\Omega')}$ by the
same process.

  Consider then the $T$-ons over $\Omega_1$ defined by
  \begin{align*}
    \widehat{\mathcal{N}}_P & \df F_{k(P)}^{-1}(\mathcal{N}_P); &
    \widehat{\mathcal{N}}'_P & \df (F_{k(P)}')^{-1}(\mathcal{N}'_P);
  \end{align*}
  and note that since $F_d$ and $F_d'$ are measure preserving, we
  have $\phi_{\widehat{\mathcal{N}}}=\phi_{\mathcal{N}}=\phi_{\mathcal{N}'}=\phi_{\widehat{\mathcal{N}}'}$.

  By Theorem~\ref{thm:TheonUniqueness}, there exist families $f = (f_1,\ldots,f_k)$ and $g = (g_1,\ldots,g_k)$
  of symmetric functions measure preserving on h.o.a., $f_d\function{\mathcal{E}_d(\Omega_1)}{\Omega_1}$ and
  $g_d\function{\mathcal{E}_d(\Omega_1)}{\Omega_1}$ such that
  there exists a $T$-on $\mathcal N''$ over $\Omega_1$ with the property
  \begin{align*}
    x\in\mathcal{N}_P'' & \equiv \widehat{f}_{k(P)}(x)\in\widehat{\mathcal{N}}_P \equiv
    \widehat{g}_{k(P)}(x)\in\widehat{\mathcal{N}}'_P,
  \end{align*}
  for almost every $x\in\mathcal{E}_{k(P)}(\Omega_1)$.

  Then item~\ref{it:uofg} holds for the families $f' = (f'_1,\ldots,f'_k)$ and $g' = (g'_1,\ldots,g'_k)$ given
  by
  \begin{align*}
    f'_d & \df F\comp f_d; & g'_d & \df F'\comp g_d
  \end{align*}
  since for these we get
  \begin{align*}
    \widehat{f}'_d & = F_d\comp\widehat{f}_d; & \widehat{g}'_d & = F_d'\comp\widehat{g}_d.
  \end{align*}
  Let us stress once more that measurability of the $f'_d$ and $g'_d$ only follows because we used Borel $\sigma$-algebras
  in the domain of $F,F'$ and in the codomain of $f_d$: the completion $\mathcal L_1$ of $\mathcal B_1$ may totally misbehave in
  this new context.

  \medskip

  Let us now prove item~\ref{it:uoh}. By Theorem~\ref{thm:measiso} there exists a measure
  isomorphism $G\function{(\mathcal{E}_1(\Omega),\mathcal{L}_1(\Omega),\mu)}{([0,1],\mathcal{L}_1,\lambda^1)}$.
  Let $G_d\function{\mathcal{E}_d(\Omega)}{\mathcal{E}_d(\Omega_1)}$ be the function
  obtained by applying $G$ to each of the coordinates. Note that $G_d$ is a measure-isomorphism if we equip
  its domain and codomain with the product $\sigma$-algebras of $\lvert r(d)\rvert$ copies
  of $\mathcal{L}_1(\Omega)$ and $\mathcal{L}_1$ respectively. By completing both measure spaces, we get
  that $G_d$ is a measure-isomorphism between $(\mathcal{E}_d(\Omega),\mathcal{L}_d(\Omega),\mu^{r(d)})$
  and $(\mathcal{E}_d(\Omega_1),\mathcal{L}_d(\Omega_1),\lambda^{r(d)})$. Define measure preserving $F'\function{\Omega_1}{\Omega'}$ and $F_d'\function{\mathcal{E}_d(\Omega_1)}{\mathcal{E}_d(\Omega')}$ as in the previous item and let
  \begin{align*}
    \widehat{\mathcal{N}}_P & \df G_{k(P)}(\mathcal{N}_P); &
    \widehat{\mathcal{N}}'_P & \df (F_{k(P)}')^{-1}(\mathcal{N}'_P).
  \end{align*}

  Finally, let $H\function{\mathcal{E}_1(\Omega)\times\mathcal{E}_1(\Omega)}{\Omega_1\times\Omega_1}$ be the
  function obtained by applying $G$ to each coordinate and let
  $H_d\function{\mathcal{E}_d(\Omega)\times\mathcal{E}_d(\Omega)}{\mathcal{E}_d(\Omega_1)\times\mathcal{E}_d(\Omega_1)}$
  be the function obtained by applying $G_d$ to each coordinate.

  By Theorem~\ref{thm:TheonUniquenessSecond}, there exists a family $h = (h_1,\ldots, h_k)$ of symmetric
  functions measure preserving on h.o.a., $h_d\function{\mathcal{E}_d(\Omega_1)\times\mathcal{E}_d(\Omega_1)}{\Omega_1}$ such that
  \begin{align*}
     x\in\widehat{\mathcal{N}}_P\equiv \widehat{h}_{k(P)}(x,\widehat x)\in\widehat{\mathcal{N}}'_P,
  \end{align*}
  for every predicate symbol $P\in\mathcal{L}$ and for almost every $(x,\widehat
  x)\in\mathcal{E}_{k(P)}\times\mathcal{E}_{k(P)}$, which implies
  \begin{align*}
    y\in\mathcal{N}_P\equiv (F'_{k(P)}\comp\widehat{h}_{k(P)}\comp H_{k(P)})(y,\widehat y)\in\mathcal{N}'_P,,
  \end{align*}
  for every predicate symbol $P\in\mathcal{L}$ and for almost every $(y,\widehat
  y)\in\mathcal{E}_{k(P)}(\Omega)\times\mathcal{E}_{k(P)}(\Omega)$. Then item~\ref{it:uoh} holds for the family $h' = (h'_1,\ldots,h'_k)$ given by
  \begin{align*}
    h'_d & \df F'\comp h_d\comp H
  \end{align*}
  since for these we get
  \begin{align*}
    \widehat{h}'_d & = F'_d\comp\widehat{h}_d\comp H_d.
  \end{align*}
  Again, measurability of the $h'_d$ only follows because we used the Borel $\sigma$-algebras for $F'$ and the completions for $G$.
\end{proof}

The requirement in Assumption~P that $\mu$ is atomless is very crucial. For example, if $X$ is finite then there are only finitely many different theons, which is certainly an undesirable feature of the theory.

\subsection{Permutons} \label{sec:permutons}
  Recall (see Example~\ref{ex:mix}) that in our formalism the theory of permutations is defined as
  $T_{\operatorname{Perm}}= T_{\operatorname{LinOrder}}\cup
  T_{\operatorname{LinOrder}}$. On the other hand, the following definition
  was made before.

  \begin{definition}[\cite{HKM*}]\label{def:permuton}
  A \emph{permuton} is a probability measure $\mu$ on $([0,1]^2,\mathcal{B}_2)$ (recall that $\mathcal{B}_2$ denotes the $\sigma$-algebra of Borel sets) such that both marginals of $\mu$ are equal to the Lebesgue measure $\lambda^1$.
  \end{definition}

  Note that the last condition simply says that each of the projections $\pi_i\function{([0,1]^2, \mathcal B_2, \mu)}{\Omega_1}$ is measure
preserving.

  For a fixed permutation $\sigma\in S_m$, we view it as an element of $\mathcal{M}_m[T_{\operatorname{Perm}}]$ and define $p(\sigma,\mu)$ by the following probabilistic experiment. We let $\rn{X_1},\ldots,\rn{X_m}$ be i.i.d.\ random variables picked according to the measure $\mu$ and define the random structure $\rn{M}$ in $\mathcal{K}_m[\{\prec_1,\prec_2\}]$ by letting
  \begin{align}\label{eq:randompermutation}
    R_{\prec_i,\rn{M}} & \df \{(a,b)\in[m]^2 \mid \pi_i(\rn{X_a}) < \pi_i(\rn{X_b})\},
  \end{align}
  for all $i\in[2]$.
Note that the marginal condition on $\mu$ guarantees that the diagonal has measure~0 and hence $\rn{M}$ is almost surely a model of $T_{\operatorname{Perm}}$. We then define $p(\sigma,\mu) \df \prob{\rn{M}\cong\sigma}$ and
define the functional $\phi_\mu \df p(\place,\mu)$. It is easy to see by a direct computation that $\phi_\mu\in \HomT{T_{\operatorname{Perm}}}$, and~\cite[Theorem~1.6]{HKM*} proved that every convergent sequence converges to
a permuton. Along with Theorem~\ref{thm:allcrypto}, this implies that permutons
  are cryptomorphic to $T_{\operatorname{Perm}}$-ons and hence to all other objects listed in its statement.
  But this detour via flag algebras is definitely unnatural, and
our purpose in this section is to give a {\em direct} translation between permutons and $T_{\operatorname{Perm}}$-ons bypassing any density counting. As an application, we will present an
alternate proof of the uniqueness of permutons~\cite[Theorem~1.7 and discussion thereafter]{HKM*}.

\medskip
In one direction, such a translation is more or less straightforward (modulo the background material we developed in Section~\ref{sec:measure_background}). Namely, the marginal conditions imply that every permuton $\mu$ satisfies $\mu(\{x\})=0,\ x\in [0,1]^2$ and hence Assumption~P. Consider the (strong Borel $\mathcal E_2^*$-measurable)
$T_{\operatorname{Perm}}$-on $\mathcal{N}(\mu)$ over the space $([0,1]^2,\mathcal{B}_2,\mu)$ given by\footnote{The second term here resolves conflicts along horizontal and vertical lines and is inserted to make sure that the theon $\mathcal N(\mu)$ is strong.}
\begin{equation}\label{eq:standardTPerm-on}
  \begin{aligned}
    \mathcal{N}(\mu)_{\prec_i}
    & \df
    \Bigl\{(x_{\{1\}},x_{\{2\}},x_{\{1,2\}})\in\mathcal{E}_2(([0,1]^2,\mathcal{B}_2,\mu)) \mid
    \pi_i(x_{\{1\}}) < \pi_i(x_{\{2\}})
    \\
    & \qquad \lor
    \bigl(\pi_i(x_{\{1\}}) = \pi_i(x_{\{2\}})\land\pi_{3-i}(x_{\{1\}}) < \pi_{3-i}(x_{\{2\}})\bigr);
  \end{aligned}
\end{equation}
we will sometimes call it the {\em standard $T_{\operatorname{Perm}}$-on associated with $\mu$}. It is straightforward to see that if $\rn{M}$ is the random permutation defined in~\eqref{eq:randompermutation}, then $p(\sigma,\mathcal{N}(\mu)) = \prob{\rn{M}\cong\sigma} =p(\sigma,\mu)$.

\medskip
In the opposite direction, we need to show how to obtain the measure $\mu$ from a (weak) $T_{\operatorname{Perm}}$-on $\mathcal N$. Let us briefly remark first that with material from flag algebras slightly more advanced
than we reviewed in Section~\ref{sec:flag}, this is also completely straightforward. Let us sketch the argument for the readers familiar with those parts of the theory (this argument will not be used in the sequel).

\medskip
First, the (easy!) part of Theorem~\ref{thm:allcrypto} gives us a homomorphism $\phi_{\mathcal N}\in\HomT{T_{\operatorname{Perm}}}$.
Now, define from it a random distribution $\rn{\phi^1}$ over $\operatorname{Hom}^+(\mathcal A^1[T_{\operatorname{Perm}}],\mathbb R)$ as in~\cite[Definition~10]{flag} and consider two elements $L^1, L^2\in \mathcal A^1_2$, where $L^i$ is the sum of the (two)
flags in $\mathcal F^1_2$ in which $v\prec_i 1$, $1$ being the labeled vertex and $v$ being the unlabeled one. Then it is easy to check that the
pushforward distribution $(\rn{\phi^1}(L_1), \rn{\phi^1}(L_2))$ defines the required permuton $\mu$.
It is worth noting that this argument already gives us an alternative proof of the existence of permutons.

As a by-side remark, the uniqueness of permutons is also quite straightforward in
this language. Indeed, let $\mu$ be a permuton, and let $\mu^\ast\sim (\rn{\phi_\mu^1}(L_1), \rn{\phi_\mu^1}(L_2))$ be the
permuton (uniquely) retrieved from $\phi_\mu$ by the process described in the previous paragraph. We have to show that $\mu^\ast=\mu$.
But this is immediate from the observation that the measure $\rn{\phi_\mu^1}$ with the (uniquely!) defining
property~\cite[Definition~10]{flag} can be geometrically constructed from {\em any}
$T_{\operatorname{Perm}}$-on $\mathcal N$ with $\phi_{\mathcal N} =\phi_\mu$. In
particular, it can be constructed from the standard $T_{\operatorname{Perm}}$-on $\mathcal N(\mu)$.
Now the fact that $(\rn{\phi_\mu^1}(L_1), \rn{\phi_\mu^1}(L_2))$ has the same distribution
as $\mu$ is straightforward.

\medskip
The above argument, while formally quite simple, entirely obscures the geometric
nature of both permutons and $T_{\operatorname{Perm}}$-ons and replaces it with
formal algebraic and measure-theoretic manipulations. While this is arguably the
whole point of the theory of flag algebras, it is certainly not the main thrust
of the current paper. Fortunately, in this particular case the geometric translation
is not very difficult to describe (and prove) explicitly.

\smallskip
So, we start with a weak $T_{\operatorname{Perm}}$-on $\mathcal N$ over a space
$\Omega = (X,\mathcal A,\mu)$ satisfying Assumption~P. For $i=1,2$, define the
function $s^i_{\mathcal N}\function X{[0,1]}$ as the measure of the corresponding
section:
\begin{align}\label{eq:permutonfunction}
  s^i_{\mathcal N}(y) & \df
  \mu^2(\{(x,z)\in X^2 \mid (x,y,z)\in \mathcal N_{\prec_i}\})
\end{align}
(by Fubini's theorem, $s_i$ is defined a.e.\ and is measurable), and let $s_{\mathcal N}\function{X}{[0,1]^2}$
be their pointwise Cartesian product: $s_{\mathcal N}(x)
\df (s^1_{\mathcal N}(x), s^2_{\mathcal N}(x))$. We claim that the pushforward measure
$\nu_{\mathcal N}\df (s_{\mathcal N})_\ast\mu$ is the desired permuton.

The most subtle part is to prove that the functions $s^i_{\mathcal N}$ are
measure preserving. Towards that end, fix $i\in \{1,2\}$ and $a\in [0,1]$, and let us abbreviate
$s\df s^i_{\mathcal N}$. We have to prove that $\mu(s^{-1}([0,a]))=a$. This clearly
follows from
\begin{align*}
  \mu(s^{-1}([0,a])) & \leq a, &
  \mu(s^{-1}([a,1])) & \leq 1-a,
\end{align*}
and by symmetry it suffices to prove the first bound.

Denote $Y\df s^{-1}([0,a])$, and assume, for the sake of contradiction, that
$\mu(Y)>a$. By~\cite[Theorem~3.15]{Oxt}, there exists a $G_\delta$-set $Z\supseteq Y$ with
$\mu(Z)=\mu(Y)$, and by~\cite[Theorem~6.1.12]{Bog}, the set $Z$ with the induced topology is a Polish space. Hence the
induced probability space $\widehat\Omega\df (Z,\mathcal A\vert_Z, \widehat\mu)$,
where
$$
\widehat\mu(A) \df\frac{\mu(A)}{\mu(Z)},
$$
satisfies Assumption~P. Endow this space with the (induced) structure of a
$T_{\operatorname{LinOrder}}$-on $\widehat{\mathcal N}$ by letting
$$
\widehat{\mathcal N} \df \set{(x,y,z)\in Z^{r(2)}}{(x,y,F(z))\in
 \mathcal N_{\prec_i}},
$$
where $F$ is an arbitrary fixed measure-isomorphism modulo~0 between $\widehat\Omega$ and $\Omega$.

Let $n>0$ and let $S_n$ be the model of $T_{\operatorname{Order}}$ with
$V(S_n) = [n]$ in which $2\prec 1,\ 3\prec 1,\ldots, n\prec 1$ and the
elements $2,3,\ldots,n$ are mutually incomparable. Since
$\mathcal M_n[T_{\operatorname{LinOrder}}]$ consists of a single element,
say, $M_n$, \eqref{eq:chain_inj} allows us to calculate
$\tinj(S_n,M_L)$, for any $L\geq n$, as follows:
$$
\tinj(S_n,M_L) =\tinj(S_n, M_n) =1/n.
$$
Taking the limit, we conclude that
\begin{equation} \label{eq:order}
\tinj(S_n,\widehat{\mathcal N})=1/n.
\end{equation}

On the other hand, this quantity can be calculated geometrically as
$$
\tinj(S_n,\widehat{\mathcal N}) = \prob{\bigwedge_{i=2}^n (\rn{x_i}, \rn{y}, \rn{z_i})\in
\widehat{\mathcal N}} = \indexpect{\rn{y}}{\indprob{\rn{x}, \rn{z}}{(\rn{x}, \rn{y}, \rn{z})\in
\widehat{\mathcal N}}^{n-1}},
$$
where $\rn y, \rn{x_2},\ldots,\rn{x_n},\rn{z_{2}},\ldots,\rn{z_{n}},\rn{x},\rn{z}$ are sampled from $Z$
i.i.d.\ with respect to the measure $\widehat\mu$.

Finally, for almost all $y\in Z$ we have $s(y)\leq a$ and hence
\begin{align*}
  \prob{(\rn x, y, \rn{z}) \in \widehat{\mathcal N}}
  & =
  \condprob{(\rn x,y,\rn z)\in \mathcal N_{\prec_i}}{\rn x\in Z}
  \\
  & \leq
  \frac{\prob{(\rn x,y,\rn z)\in \mathcal N_{\prec_i}}}{\mu(Z)}
  =
  \frac{s(y)}{\mu(Y)}
  \leq
  \frac{a}{\mu(Y)}.
\end{align*}
Putting things together, we get $\tinj(S_n,\widehat{\mathcal N})\leq (a/\mu(Y))^{n-1}$, and since $\mu(Y) > a$ this
contradicts~\eqref{eq:order} as long as $n$ is large enough.

\smallskip

Now that we know that $\nu_{\mathcal{N}}$ is a permuton, it follows that the space $\Omega'=([0,1]^2,\mathcal{B}_2,\nu_{\mathcal{N}})$ satisfies Assumption~P. By the definition of $\nu_{\mathcal{N}}$, the functions
\begin{align}\label{eq:permutonshoa}
  \begin{functiondef}
    f_1\colon & \mathcal{E}_1(\Omega) & \longrightarrow & \Omega'\\
    & x & \longmapsto & s_{\mathcal{N}}(x)
  \end{functiondef}
  & &
  \begin{functiondef}
    f_2\colon & \mathcal{E}_2(\Omega) & \longrightarrow & \Omega'\\
    & x & \longmapsto & s_{\mathcal{N}}(x_{\{1,2\}})
  \end{functiondef}
\end{align}
are symmetric and measure preserving on h.o.a., and since
\begin{align*}
  x\in\mathcal{N}_{\prec_i} & \equiv \widehat{f}_2(x)\in\mathcal{N}(\nu_{\mathcal{N}})_{\prec_i}
\end{align*}
for almost every $x\in\mathcal{E}_2(\Omega)$, by the (easy!) part of Proposition~\ref{prop:uniquenessother} and the first direction of the cryptomorphism, we get $\phi_{\mathcal{N}} = \phi_{\mathcal{N}(\nu_{\mathcal{N}})} = \phi_{\nu_{\mathcal{N}}}$.

\medskip

We end this subsection with a geometric proof of permuton uniqueness.

\begin{theorem}[{{\cite[Theorem~1.7 and discussion thereafter]{HKM*}}}]
  Let $\mu$ and $\nu$ be permutons. Then $\phi_\mu = \phi_\nu$ if and only if $\mu = \nu$ (as measures).
\end{theorem}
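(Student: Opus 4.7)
The $(\Leftarrow)$ direction is immediate since identical permutons yield identical sampling experiments in~\eqref{eq:randompermutation}, hence identical permutation densities. For the $(\Rightarrow)$ direction, my plan is a short moment-based argument. By Stone--Weierstrass, polynomials are uniformly dense in $C([0,1]^2)$, so any Borel probability measure on $[0,1]^2$ is uniquely determined by its mixed moments $I_{j,k}(\mu)\df\int \pi_1^j\pi_2^k\,d\mu$ with $j,k\in\mathbb N$. It thus suffices to express every $I_{j,k}(\mu)$ as a fixed (i.e., $\mu$-independent) linear combination of permutation densities $p(\sigma,\mu)$, whereby $\phi_\mu=\phi_\nu$ forces $I_{j,k}(\mu)=I_{j,k}(\nu)$ for all $j,k$ and hence $\mu=\nu$.

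To achieve this, I would sample $n\df j+k+1$ iid points $(\rn{X_1},\rn{Y_1}),\ldots,(\rn{X_n},\rn{Y_n})\sim\mu$ and introduce the event
$$E\df\Bigl(\bigwedge_{i=2}^{j+1}\rn{X_i}<\rn{X_1}\Bigr)\land\Bigl(\bigwedge_{i=j+2}^n\rn{Y_i}<\rn{Y_1}\Bigr).$$
Conditioning on $(\rn{X_1},\rn{Y_1})=(x,y)$, independence of the remaining samples together with the Lebesgue marginals of $\mu$ (so $\prob{\rn{X_i}<x}=x$ and $\prob{\rn{Y_i}<y}=y$) yield $\prob{E\mid(\rn{X_1},\rn{Y_1})=(x,y)}=x^jy^k$, and integrating against $\mu$ gives $\prob{E}=I_{j,k}(\mu)$. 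On the other hand, atomlessness of the marginals ensures that almost surely no two samples share a first or second coordinate, so the random labeled structure $\rn M\in\mathcal K_n[T_{\operatorname{Perm}}]$ produced by~\eqref{eq:randompermutation} is a.s.\ a model of $T_{\operatorname{Perm}}$ with $\prob{\rn M=M_\sigma}=p(\sigma,\mu)/n!$ for any fixed labeled representative $M_\sigma$ of $\sigma\in S_n$ (using $\lvert\Aut(M)\rvert=1$ for all models of $T_{\operatorname{Perm}}$). Since $E$ decomposes, up to a null set, as the disjoint union of the events $\{\rn M=M\}$ over the combinatorially explicit family $\Sigma_{j,k}$ of labeled structures on $[n]$ in which vertex $1$ lies $\prec_1$-above vertices $2,\ldots,j+1$ and $\prec_2$-above vertices $j+2,\ldots,n$, we obtain
$$I_{j,k}(\mu)=\prob{E}=\frac{1}{n!}\sum_{M\in\Sigma_{j,k}}p([M],\mu),$$
where $[M]$ denotes the isomorphism class of $M$. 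This exhibits $I_{j,k}(\mu)$ as a fixed linear combination of permutation densities and closes the argument.

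The only delicate step is the conditional probability identity $\prob{E\mid(\rn{X_1},\rn{Y_1})=(x,y)}=x^jy^k$, which uses in an essential way both the independence of the $n$ samples and the defining property of a permuton that each coordinate marginal equals Lebesgue measure on $[0,1]$; without the latter, the factor $x^jy^k$ would instead be a cumulative distribution function that itself encodes information about $\mu$. Everything else is routine probabilistic bookkeeping, and notably no theon machinery beyond the sampling definition of $p(\sigma,\mu)$ is needed; alternatively, one could phrase the argument by applying Proposition~\ref{prop:uniquenessother} directly to the standard $T_{\operatorname{Perm}}$-ons $\mathcal N(\mu)$ and $\mathcal N(\nu)$ over $([0,1]^2,\mathcal B_2,\mu)$ and $([0,1]^2,\mathcal B_2,\nu)$ and tracking the induced measure-preserving maps back to $\mu$ and $\nu$, but this is strictly more technical.
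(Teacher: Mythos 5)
Your proof is correct, and it takes a genuinely different route from the paper's. The paper proves the theorem geometrically: it invokes Proposition~\ref{prop:uniquenessother} to extract a symmetric measure-preserving function $h_1\function{\Omega_\mu\times\Omega_\mu}{\Omega_\nu}$ satisfying the order-compatibility condition~\eqref{eq:pih}, then runs a Fubini argument to show that $\pi_i\comp h_1(y,\widehat y)=\pi_i(y)$ almost surely, so that $h_1$ is a.e.\ the projection onto its first $\Omega_\mu$-argument; pushing $\mu^2$ forward through $h_1$ yields $\nu=\mu$. Your argument instead bypasses the theon uniqueness machinery entirely and works at the level of moments: you express each mixed moment $I_{j,k}(\mu)=\int\pi_1^j\pi_2^k\,d\mu$ as a fixed, $\mu$-independent linear combination of permutation densities $p(\sigma,\mu)$ via the sampling event $E$ and the explicit labeled family $\Sigma_{j,k}$, then invoke the standard fact that a Borel probability measure on $[0,1]^2$ is determined by its mixed moments. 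The key steps are all sound: the identity $\prob{E\mid(\rn{X_1},\rn{Y_1})=(x,y)}=x^jy^k$ holds because the $n-1$ remaining samples are independent of $(\rn{X_1},\rn{Y_1})$ and of one another, the constraints on $\rn X$- and $\rn Y$-coordinates occupy disjoint index sets, and each factor equals $x$ (resp.\ $y$) by the Lebesgue-marginal requirement; and the bookkeeping $\prob{\rn M=M_\sigma}=p(\sigma,\mu)/n!$ correctly uses $\lvert\Aut(M)\rvert=1$ for every $T_{\operatorname{Perm}}$-model together with exchangeability of the i.i.d.\ sample. Each approach has something to offer: yours is more elementary and self-contained, needing nothing beyond the sampling definition of $p(\sigma,\mu)$ and Stone--Weierstrass, whereas the paper's geometric proof is deliberately chosen to showcase the theon uniqueness theorem (the paper even sketches a related syntactic shortcut via random flag-algebra homomorphisms of non-trivial type, and sets it aside precisely because it ``entirely obscures the geometric nature''). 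Your moment argument is arguably the shortest purely probabilistic route.
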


\begin{proof}
  The backward implication is obvious, so suppose $\phi_\mu = \phi_\nu$.

  Let $\Omega_\mu = ([0,1]^2,\mathcal{B}_2,\mu)$ and $\Omega_\nu = ([0,1]^2,\mathcal{B}_2,\nu)$.

  Using standard $T_{\operatorname{Perm}}$-ons $\mathcal N(\mu), \mathcal N(\nu)$ associated to $\mu$ and $\nu$ and by
  Proposition~\ref{prop:uniquenessother} (see also Figure~\ref{fig:permuniq} below), we know that there exists a symmetric measure preserving function\footnote{Since $\mathcal{N}(\mu)$ and $\mathcal{N}(\nu)$ are $\mathcal E_2^\ast$-measurable, we do not need $h_2$.}
  $h_1\function{\Omega_\mu\times \Omega_\mu}{\Omega_\nu}$ such that
  \begin{align}\label{eq:pih}
    \pi_i(h_1(y_{\{1\}},\widehat{y}_{\{1\}})) < \pi_i(h_1(y_{\{2\}},\widehat{y}_{\{2\}}))
    & \equiv
    \pi_i(y_{\{1\}}) < \pi_i(y_{\{2\}}),
  \end{align}
  for almost every $((y_A)_{A\in r(2)},(\widehat{y}_A)_{A\in
    r(2)})\in\mathcal{E}_2(\Omega_\mu)\times\mathcal{E}_2(\Omega_\mu)$ and every $i\in[2]$. By Fubini's Theorem, it follows that~\eqref{eq:pih} holds also for almost every $(y_{\{1\}},y_{\{2\}},\widehat{y}_{\{1\}},\widehat{y}_{\{2\}})\in\Omega_\mu^4$.

  \begin{figure}[ht]
    \begin{center}
      \begingroup

\makeatletter
\def\pgfsetangleandlength#1#2#3#4{%
  \pgfmathanglebetweenpoints{\pgfpointanchor{#3}{center}}{\pgfpointanchor{#4}{center}}%
  \xdef#1{\pgfmathresult}%
  \pgfmathparse{veclen(\pgf@x,\pgf@y)}%
  \xdef#2{\pgfmathresult}%
}
\makeatother

\def\ellipseextra{1} %in pt
\def\ellipsey{0.05 cm}
\def\al{3mm}
\def\hal{1.5mm}
\def\aw{2mm}

\begin{tikzpicture}[scale=6]
  \begin{scope}[xshift=-0.6cm]
    \coordinate (Y1) at    (0.2, 0.6);
    \coordinate (Y2) at    (0.5, 0.3);
    \coordinate (Yhat1) at (0.6, 0.8);
    \coordinate (Yhat2) at (0.8, 0.5);

    \coordinate (pY1) at ($(0,0)!(Y1)!(1,0)$);
    \coordinate (pY2) at ($(0,0)!(Y2)!(1,0)$);

    \coordinate (mY1) at ($1/2*(Y1) + 1/2*(pY1)$); 
    \coordinate (mY2) at ($1/2*(Y2) + 1/2*(pY2)$); 

    \draw (0,0) -- (1,0) -- (1,1) -- (0,1) -- cycle;

    \coordinate (Lmu) at (0.5,1);
  \end{scope}

  \begin{scope}[xshift=0.6cm]
    \coordinate (hY1) at (0.2, 0.7);
    \coordinate (hY2) at (0.75, 0.2);

    \coordinate (phY1) at ($(0,0)!(hY1)!(1,0)$);
    \coordinate (phY2) at ($(0,0)!(hY2)!(1,0)$);

    \coordinate (mhY1) at ($1/2*(hY1) + 1/2*(phY1)$);
    \coordinate (mhY2) at ($1/2*(hY2) + 1/2*(phY2)$);

    \draw (0,0) -- (1,0) -- (1,1) -- (0,1) -- cycle;

    \coordinate (Lnu) at (0.5,1);
  \end{scope}

  \node[above] at (Lmu) {$\Omega_\mu$};
  \node[above] at (Lnu) {$\Omega_\nu$};

  \foreach \p/\l/\s in {%
    Y1/$y_{\{1\}}\;$/left,
    Y2/$\strut y_{\{2\}}$/below right,
    Yhat1/$\vphantom{\Big(}\widehat{y}_{\{1\}}$/above,
    Yhat2/$\;\widehat{y}_{\{2\}}$/right,
    hY1/{$h_1(y_{\{1\}},\widehat{y}_{\{1\}})$}/above,
    hY2/{$h_1(y_{\{2\}},\widehat{y}_{\{2\}})$}/above%
  }{%
    \fill (\p) circle (0.4pt);
    \node[\s] at (\p) {\l};
  }

  \foreach \p/\l in {%
    pY1/$\strut\pi_1(y_{\{1\}})$,
    pY2/$\strut\;\;\;\pi_1(y_{\{2\}})$,
    phY1/${\strut\pi_1(h_1(y_{\{1\}},\widehat{y}_{\{1\}}))}$,
    phY2/${\strut\;\;\;\pi_1(h_1(y_{\{2\}},\widehat{y}_{\{2\}}))}$%
  }{%
    \node[below] at (\p) {\l};
  }

  \node[below] at ($1/2*(pY1) + 1/2*(pY2)$) {$\strut<$};
  \node[below] at ($1/2*(phY1) + 1/2*(phY2)$) {$\strut<$};

  \draw[dashed,arrows={-Stealth[width=\aw,length=\al]},shorten >={-\hal}] (Y1) -- (mY1);
  \draw[dashed] (mY1) -- (pY1);
  \draw[dashed,arrows={-Stealth[width=\aw,length=\al]},shorten >={-\hal}] (Y2) -- (mY2);
  \draw[dashed] (mY2) -- (pY2);
  \draw[dashed,arrows={-Stealth[width=\aw,length=\al]},shorten >={-\hal}] (hY1) -- (mhY1);
  \draw[dashed] (mhY1) -- (phY1);
  \draw[dashed,arrows={-Stealth[width=\aw,length=\al]},shorten >={-\hal}] (hY2) -- (mhY2);
  \draw[dashed] (mhY2) -- (phY2);

  \node[left] at (mhY1) {$\pi_1$};
  \node[right] at (mhY2) {$\pi_1$};
  \node[left] at (mY1) {$\pi_1$};
  \node[right] at (mY2) {$\pi_1$};

  \foreach \P/\Q/\s in {%
    Y1/Yhat1/,
    Y2/Yhat2/,
    Y1/Y2/dashed,
    Yhat2/Yhat1/dashed%
  }{%
    \pgfsetangleandlength{\angle}{\length}{\P}{\Q}
    \begin{scope}[rotate=\angle]
      \pgfmathsetmacro{\xdim}{\length/2 + \ellipseextra}
      \draw[\s]
      ($1/2*(\P) + 1/2*(\Q)$) ellipse (\xdim pt and \ellipsey);

      \coordinate (el\P\Q) at ($1/2*(\P) + 1/2*(\Q) + (0,-\ellipsey)$);
    \end{scope}
  }

  \node[below left] at (elY1Y2) {$y$};
  \node[above right] at (elYhat2Yhat1) {${\widehat{y}}$};

  \coordinate (maphY1) at ($3/10*(elY1Yhat1) + 7/10*(hY1)$);
  \coordinate (maphY2) at ($7/10*(elY2Yhat2) + 3/10*(hY2)$);

  \node[above] at (maphY1) {$h_1$};
  \node[below] at (maphY2) {$h_1$};

  \draw[arrows={-Stealth[width=\aw,length=\al]},shorten >={-\hal}] (elY1Yhat1) -- (maphY1);
  \draw (maphY1) -- (hY1);
  \draw[arrows={-Stealth[width=\aw,length=\al]},shorten >={-\hal}] (elY2Yhat2) -- (maphY2);
  \draw (maphY2) -- (hY2);

\end{tikzpicture}

\endgroup
      \caption{Function $h_1$ and property~\eqref{eq:pih}. The variables $\widehat{y}_{\{1\}}$
        and $\widehat{y}_{\{2\}}$ act as dummy variables, so their relative order does not matter.}
      \label{fig:permuniq}
    \end{center}
  \end{figure}

  Define $\chi_i \df \pi_i\comp h_1$; this function is measure preserving since $h_1$ and $\pi_i$ are so.
Our objective is to prove that $\chi_i(y,\widehat{y}) = \pi_i(y)$ for almost
  every $(y,\widehat{y})\in\Omega_\mu^2$.
We will show this for $\chi_1$ (the proof for $\chi_2$ is analogous); it might be instructive to compare this proof with those in Section~\ref{sec:constructive}.

  Let
  \begin{align*}
    C
    & \df
    \left\{\bigl(x,\widehat{x},y,\widehat{y}\bigr)\in\Omega_\mu^4 \;\middle\vert\;
    \chi_1(x,\widehat{x}) < \chi_1(y,\widehat{y}) \equiv \pi_1(x) < \pi_1(y)\right\}.
  \end{align*}
  As we have previously observed in~\eqref{eq:pih}, we have $\mu^4(C) = 1$.
For every $(y,\widehat{y})\in\Omega_\mu^2$, define the section
  \begin{align*}
    C(y,\widehat{y})
    & \df
    \left\{(x,\widehat{x})\in\Omega_\mu^2 \;\middle\vert\;
    (x,\widehat{x},y,\widehat{y})\in C\right\},
  \end{align*}
  and let $G$ be the set of all $(y,\widehat{y})\in\Omega_\mu^2$ such
  that $\mu^2(C(y,\widehat{y})) = 1$.
By Fubini's Theorem, it follows that $\mu^2(G) = 1$.

  Finally, define the set
  \begin{align*}
    L(y,\widehat{y})
    & \df
    \left\{(x,\widehat{x})\in\Omega_\mu^2 \;\middle\vert\;
    \chi_1(x,\widehat{x}) < \chi_1(y,\widehat{y})\right\}
    \\
    & =
    \chi_1^{-1}\Bigl(\bigl[0,\chi_1(y,\widehat{y})\bigr)\Bigr);
  \end{align*}
  note that $\mu^2(L(y,\widehat y)) = \chi_1(y,\widehat y)$ since $\chi_1$ is measure preserving.

  Tracking down our definitions, it follows that for every $(y,\widehat{y})\in G$, we have
  \begin{align*}
    \mu^2\Bigl(L\bigl(y,\widehat{y}\bigr)\symmdiff \bigl(\pi_1^{-1}([0,\pi_1(y)))\times\Omega_\mu\bigr)\Bigr)
    & =
    0.
  \end{align*}
  Hence (since $\pi_1\function{\Omega_\mu}{\Omega_1}$ is measure preserving)
    $\pi_1(y)
    =
    \mu^2(L(y,\widehat{y}))
  =
    \chi_1(y,\widehat{y})$
  whenever $(y,\widehat{y})\in G$.  Since $\mu^2(G) = 1$, it follows that $\chi_1(y,\widehat{y}) =
  \pi_1(y)$ for almost every $(y,\widehat{y})\in\Omega_\mu^2$ as desired.
Analogously, we get $\chi_2(y,\widehat{y}) = \pi_2(y)$ for almost every $(y,\widehat{y})\in\Omega_\mu^2$.

  Since $\chi_i = \pi_i\comp h_1$, it follows that $h_1(y,\widehat{y}) = y$ for almost
  every $(y,\widehat{y})\in\Omega_\mu^2$. Let then $I$ be the set of all $(y,\widehat{y})\in\Omega_\mu^2$ such
  that this holds ($\mu^2(I) = 1$) and note that for every $A\in\mathcal{B}_2$, we have
  \begin{align*}
    \nu(A)
    & =
    \mu^2(h_1^{-1}(A))
    =
    \mu^2(h_1^{-1}(A)\cap I)
    \\
    & =
    \mu^2((A\times\Omega_\mu)\cap I)
    =
    \mu^2(A\times\Omega_\mu)
    =
    \mu(A),
  \end{align*}
  hence $\mu = \nu$.
\end{proof}

\subsection{Posetons}
\label{sec:posetons}

Our next objective is to show how posetons from~\cite{Jan} can be identified with $T_{\operatorname{Order}}$-ons.

\begin{definition}[\cite{Jan}]\label{def:posetons}
  A \emph{poseton} is a measurable function $W\function{[0,1]^4}{[0,1]}$ such that for every $(x_1,y_1),(x_2,y_2),(x_3,y_3)\in[0,1]^2$, we have
  \begin{align*}
    x_1\geq x_2 & \implies W(x_1,y_1,x_2,y_2)=0;\\
    W(x_1,y_1,x_2,y_2) > 0\land W(x_2,y_2,x_3,y_3)>0 & \implies W(x_1,y_1,x_3,y_3)=1.
  \end{align*}
\end{definition}

For a poseton $W$ and a fixed poset $M\in\mathcal{M}_m[T_{\operatorname{Order}}]$, we define $p(M,W)$ by the following probabilistic experiment. We let $\rn{X_1},\rn{Y_1},\rn{X_2},\rn{Y_2},\ldots,\rn{X_m},\rn{Y_m}$ be i.i.d.\ random variables picked uniformly in $[0,1]$ and define the random structure $\rn{M}$ in $\mathcal{K}_m[\{\prec\}]$ by putting each $(a,b)\in[m]^2$ with $a\neq b$ in $R_{\prec,\rn{M}}$ independently with probability $W(\rn{X_a},\rn{Y_a},\rn{X_b},\rn{Y_b})$. Note that $\rn{M}$ is almost surely a model of $T_{\operatorname{Order}}$ (the first condition in Definition~\ref{def:posetons} enforces the axiom~\eqref{eq:asymmetry} while the second enforces transitivity). We then define $p(M,W) \df \prob{\rn{M}\cong M}$ and define the functional $\phi_W = p(\place,W)$. By a direct computation, we have $\phi_W\in\HomT{T_{\operatorname{Order}}}$.

Janson~\cite[Theorems~1.7 and~1.9]{Jan} proved that convergent sequences of finite posets are cryptomorphic to  posetons. In this section, we will show how this cryptomorphism looks in our framework for $T_{\operatorname{Order}}$-ons. To do so, we will need another theory: let $T_{\operatorname{ExtendedOrder}}$ be the theory obtained from $T_{\operatorname{LinOrder}}\cup T_{\operatorname{Order}}$ by adding the axiom
\begin{align}\label{eq:completionorder}
  \forall x\forall y (x\prec_2 y \to x\prec_1 y),
\end{align}
where $\prec_1$ and $\prec_2$ are the predicate symbols of $T_{\operatorname{LinOrder}}$
and $T_{\operatorname{Order}}$ respectively. In other words, $T_{\operatorname{ExtendedOrder}}$ is the theory of a partial order $\prec_2$ along with its extension $\prec_1$ to a linear order. The cryptomorphism between posetons and (strong) $T_{\operatorname{Order}}$-ons will be established via the following constructions:
\begin{align*}
  \text{posetons} & \implies
  \text{strong } T_{\operatorname{Order}}\text{-ons} \implies
  \text{strong Borel } T_{\operatorname{Order}}\text{-ons}
  \\
   & \implies
  \text{strong } T_{\operatorname{ExtendedOrder}}\text{-ons} \implies
  \text{posetons}.
\end{align*}

\medskip

The first arrow is simple: given a poseton $W$, it is easy to see that the Euclidean structure over $\Omega_2$ ($=([0,1]^2,\mathcal{B}_2,\lambda^2)$) defined by
\begin{equation}\label{eq:TOrderon}
  \begin{aligned}
    \mathcal{N}(W)_\prec \df \{z\in\mathcal{E}_2(\Omega_2) & \mid \pi_1(z_{\{1,2\}}) < W(z_{\{1\}}, z_{\{2\}})
    \\
    & \lor W(z_{\{1\}}, z_{\{2\}}) =
    \pi_1(z_{\{1,2\}}) =1\},
  \end{aligned}
\end{equation}
where $W$ is viewed as a function $W\function{[0,1]^2\times [0,1]^2}{[0,1]}$, is a strong $T_{\operatorname{Order}}$-on and satisfies $\phi_{\mathcal{N}_\prec(W)} = \phi_W$. For future reference we note that we can retrieve $W$ from $\mathcal{N}(W)$ by
\begin{align}\label{eq:TOrderonretrieve}
  W(x_1,y_1,x_2,y_2) & = \lambda^2(\{z\in[0,1]^2 \mid ((x_1,y_1),(x_2,y_2),z)\in\mathcal{N}(W)_\prec\}).
\end{align}
In other words, the mapping $W\mapsto \mathcal N(W)$ is injective.

\medskip

The second arrow follows from Theorem~\ref{thm:herl} for $T_{\operatorname{Order}}$ that (unlike $T_{\operatorname{LinOrder}}$)
{\em is} a Horn theory.

\medskip

For the third arrow, let $I\interpret{T_{\operatorname{Order}}}{T_{\operatorname{ExtendedOrder}}}$ be the structure-erasing interpretation that erases the linear order. Syntactically, it is easy to see that the mapping $\pi^\ast(I)\function{\HomT{T_{\operatorname{ExtendedOrder}}}}{\HomT{T_{\operatorname{Order}}}}$
(see Section~\ref{sec:flag}) is surjective. That is (in theonic language), given a $T_{\operatorname{Order}}$-on $\mathcal{N}$, we can find a $T_{\operatorname{ExtendedOrder}}$-on $\mathcal{N}'$ such that
$\phi_{\mathcal{N}} = \phi_{I(\mathcal{N}')}$. Indeed, since every partial order on a \emph{finite} set can be extended to a linear order, we know that for every \emph{finite} model $M$ of $T_{\operatorname{Order}}$, there
exists a model $N$ of $T_{\operatorname{ExtendedOrder}}$ such that $I(N) = M$. Fix any sequence $(M_n)_{n\in\mathbb{N}}$ of partial orders converging
to $\phi_{\mathcal N}$ and let $(N_n)_{n\in\mathbb{N}}$ be an arbitrary sequence of models of $T_{\operatorname{ExtendedOrder}}$ such that $I(N_n)=M_n$. By possibly passing to a subsequence, we may suppose that
$(N_n)_{n\in\mathbb{N}}$ converges to some $\psi\in\HomT{T_{\operatorname{ExtendedOrder}}}$, and we can let $\mathcal{N}'$ be any strong $T_{\operatorname{ExtendedOrder}}$-on such that $\phi_{\mathcal{N}'} = \psi$.
By Theorem~\ref{thm:flagpi} and Remark~\ref{rmk:theoninterpret}, we get $\phi_{I(\mathcal{N}')} = \phi_{\mathcal{N}'}\comp\pi^I = \phi_{\mathcal{N}}$ (and in fact, for this syntactic construction, we do not even need the fact that $\mathcal{N}$ is Borel).

However, since the construction above is syntactic, the theons $\mathcal{N}$ and $\mathcal{N}'$ are geometrically unrelated, that is, there are no a priori reasons why they should satisfy
$I(\mathcal{N}') = \mathcal{N}$ (not even a.e.). One naive attempt to achieve a semantic construction might be to invoke Proposition~\ref{prop:uniquenessother} and align partial orders $\mathcal N, \mathcal N_{\prec_2}'$. Then one might hope that this alignment could be used to create the linear extension on $\mathcal N$ from $\mathcal N_{\prec_1}'$.

There are no general reasons for this plan to work, however, and Example~\ref{ex:no_omega1} below shows that this in fact can fail quite badly for another simple pair of theories.
For a geometric construction of $\mathcal{N}'$, we do need the following deep measure-theoretic result.

\begin{theorem}[{{\cite[Theorem~1.10 and discussion thereafter]{HMPP}}}]\label{thm:measext}
  Every measurable partial order on a complete atomless probability space can be extended to a measurable linear order.
\end{theorem}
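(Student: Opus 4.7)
The plan is to carry out a measurable analogue of Szpilrajn's extension theorem, reducing to a standard probability space and then constructing a measurable ``score'' that lexicographically linearizes the partial order.

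First, I would reduce to $([0,1], \mathcal{L}_1, \lambda^1)$ via Theorem~\ref{thm:measiso}: since a complete atomless probability space is measure-isomorphic to $([0,1], \mathcal{L}_1, \lambda^1)$, one can transport $\prec$ and assume $X = [0,1]$, $\mathcal{A} = \mathcal{L}_1$, $\mu = \lambda^1$. The graph $R \df \set{(x,y)\in X^2}{x \prec y}$ is then Lebesgue measurable; by completion we may further assume $R$ is Borel after discarding a null set, which reduces the problem to a Borel linearization problem on a standard Borel space.

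Next, I would build a measurable map $\Phi\function{X}{[0,1]^{\mathbb{N}}}$ that respects $\prec$ in the lexicographic order on the target. Fix a countable algebra $\{A_n\}_{n\in\mathbb{N}}$ generating the Borel $\sigma$-algebra of $[0,1]$ and define
\begin{align*}
  \phi_n(x) \df \mu(A_n \cap D_x), \qquad D_x \df \set{y\in X}{y \prec x}.
\end{align*}
Fubini applied to the Borel set $R$ shows that each $\phi_n$ is Borel. Since $x \prec y$ forces $D_x \subseteq D_y$, one gets $\phi_n(x) \leq \phi_n(y)$ for every $n$, and letting $\Phi(x) \df (\phi_n(x))_{n\in\mathbb{N}}$ one aims to show that atomlessness of $\mu$ together with the separating property of $\{A_n\}$ produces strict inequality $\Phi(x) <_{\text{lex}} \Phi(y)$ whenever $x \prec y$, possibly after discarding another null set.

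Finally, I would define the desired linear order by
\begin{align*}
  x < y \quad\text{iff}\quad \Phi(x) <_{\text{lex}} \Phi(y) \;\lor\; \bigl(\Phi(x) = \Phi(y) \land x <_{\mathbb{R}} y\bigr),
\end{align*}
using the ambient order of $[0,1]$ to break any remaining ties. Antisymmetry, transitivity, and totality of $<$ are inherited from those of $<_{\text{lex}}$ and $<_{\mathbb{R}}$, measurability of $<$ is immediate from Borel measurability of $\Phi$ together with the Borel nature of the lexicographic and real comparisons, and the extension property $\prec \subseteq <$ follows from the strict monotonicity of $\Phi$ along $\prec$.

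The principal obstacle is precisely this last point: a single rank function $\phi_n$ only yields weak monotonicity, so one must ensure that the countable family $\{A_n\}$ genuinely distinguishes each $\prec$-comparable pair $x\prec y$, i.e., that some $A_n$ witnesses the inclusion $D_x\subsetneq D_y$ by a set of positive measure. Naively this may fail on a ``measurably small'' but non-trivial set, and closing this gap requires a careful measurable selection or refinement argument; this is the technical heart of~\cite[Theorem~1.10]{HMPP}, and my plan would rely on (or reproduce in a streamlined way) their construction.
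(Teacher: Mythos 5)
The paper does not prove this theorem---it is imported as a black box from \cite{HMPP}---so there is no internal argument to compare your sketch against.

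Your construction has a genuine gap at the step you yourself flag as the principal obstacle, and it cannot be closed along the lines you propose. The rank map $\Phi=(\phi_n)_n$ with $\phi_n(x)=\mu(A_n\cap D_x)$ is weakly monotone along $\prec$, but it can be identically constant on $\prec$-comparable pairs, and your tie-break by $<_{\mathbb{R}}$ is then decoupled from $\prec$. Concretely, on $([0,1],\mathcal{L}_1,\lambda^1)$ put $x\prec y$ iff $x\in(1/2,1)$ and $y=x-1/2$; this is a Borel strict partial order (transitivity is vacuous), yet every down-set $D_x$ is at most a singleton, so $\Phi\equiv 0$ and your constructed order degenerates to $<_{\mathbb{R}}$. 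But $x\prec y$ forces $x>_{\mathbb{R}}y$, so the construction reverses \emph{every} $\prec$-comparable pair, and the points participating in such pairs have full measure, so removing a null set does not help (a measurable linear extension obviously exists here---the reversal of $<_{\mathbb{R}}$---your construction simply does not produce it). The underlying issue is that atomlessness and a separating algebra $\{A_n\}$ give no control over whether $\mu(D_y\setminus D_x)>0$ when $x\prec y$; the fibers of $\Phi$ may be nontrivially tangled by $\prec$, and linearizing within them compatibly with $\prec$ requires a genuinely different idea (a measurable selection along $\prec$ inside each fiber, or the transfinite argument of \cite{HMPP}). As written, the plan does not establish the theorem.
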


The geometric construction then proceeds as follows. Let $\mathcal{N}$ be a strong Borel $T_{\operatorname{Order}}$-on over $\Omega = (X,\mathcal{A},\mu)$. Our first task is to take care of vanishing and bad pairs (cf.~Lemma~\ref{lem:E2*measurable}).
Recall from Definition~\ref{def:symm} that $A_{\mathcal{N}}(x,y)\df\{z\in X\mid (x,y,z)\in\mathcal{N}\}$ and let
\begin{align*}
  \mathcal{V} & \df \{(x,y)\in\mathcal{E}_2^*(\Omega)\mid \mu(A_{\mathcal{N}}(x,y)) = 0\};\\
  \mathcal{B} & \df \{(x,y)\in\mathcal{E}_2^*(\Omega)\mid \mu(A_{\mathcal{N}}(x,y)) > 0\land\mu(A_{\mathcal{N}}(y,x)) > 0\}
\end{align*}
be the sets of \emph{vanishing pairs} and of \emph{bad pairs} respectively. By Fubini's Theorem, both these sets are Borel and $(\mathcal{V}\times X)\cap\mathcal{N}$ has zero measure.

We claim that $\mathcal{B}$ also has zero measure. Suppose not, then for some $n\in\mathbb{N}_+$, the set
\begin{align*}
  \mathcal{B}_n & \df
  \set{(x,y)\in\mathcal{E}_2^*(\Omega)}{\mu(A_{\mathcal{N}}(x,y)) > \frac{1}{n}\land
    \mu(A_{\mathcal{N}}(y,x)) > \frac{1}{n}}
\end{align*}
must have positive measure.

Pick $\rn{x_1},\rn{x_2},\rn{x_3}\in X$ uniformly at random. Then by Cauchy-Schwarz,
$$
\prob{(\rn{x_1}, \rn{x_3})\in\mathcal B_n\land (\rn{x_2}, \rn{x_3}) \in\mathcal B_n} \geq \prob{(\rn{x_1}, \rn{x_3})\in\mathcal B_n}^2>0.
$$
By the last part of Lemma~\ref{lem:atomless}, this implies that there exist {\em pairwise distinct} $x_1,x_2,x_3\in X$ such that $(x_1,x_3)\in\mathcal B_n\land (x_2,x_3)\in\mathcal B_n$. Now, the point $(x_1,x_2,x_3)$ can be
easily extended to $x\in\mathcal E_3(\Omega)$ (with $x_{\{i\}}=x_i$ for $i=1,2,3$) violating either the transitivity axiom or the asymmetry axiom (for the pair $(x_1,x_2)$). This contradicts the fact that $\mathcal{N}$ is a
strong $T_{\operatorname{Order}}$-on\footnote{The proof that bad pairs have zero measure is much simpler here in comparison to Lemma~\ref{lem:E2*measurable} due to the fact that the starting theon $\mathcal{N}$ is \emph{strong}.}, therefore $\mathcal{B}$ has zero measure.

This means that the $T_{\operatorname{Order}}$-on
\begin{align*}
  \widetilde{\mathcal{N}} & \df \mathcal{N}\setminus((\mathcal{V}\cup\mathcal{B})\times X)
\end{align*}
differs from $\mathcal{N}$ only by a zero-measure set. Note that $\widetilde{\mathcal{N}}$ is also a strong Borel $T_{\operatorname{Order}}$-on. Define now the relation
\begin{align*}
  x \preceq' y & \equiv x=y\lor \mu(A_{\widetilde{\mathcal{N}}}(x,y)) > 0,
\end{align*}
and note that since $A_{\widetilde{\mathcal{N}}}(x,y)\neq\emptyset\implies\mu(A_{\widetilde{\mathcal{N}}}(x,y)) > 0$ and since $\widetilde{\mathcal{N}}$ does not have bad pairs, $\preceq'$ is a partial order. From Fubini's Theorem, it also follows that it is a Borel set. By Theorem~\ref{thm:measext} above we can extend $\preceq'$ to a measurable linear order $\leq'$. We then let
\begin{align*}
  \mathcal{N}'_{\prec_1}
  & \df
  \{x\in\mathcal{E}_2(\Omega) \mid x_{\{1\}} \leq' x_{\{2\}}\},
  &
  \mathcal{N}'_{\prec_2}
  & \df
  \widetilde{\mathcal{N}},
\end{align*}
and it follows that $\mathcal{N}'$ is a strong $T_{\operatorname{ExtendedOrder}}$-on satisfying $I(\mathcal{N}') = \widetilde{\mathcal{N}}$ and hence also $\mu(I(\mathcal{N}')\symmdiff\mathcal{N}) = 0$.

\medskip

Thus, it remains to show how to ``naturally'' obtain a poseton $W$ satisfying $\phi_W = \phi_{I(\mathcal{N})}$ from a strong $T_{\operatorname{ExtendedOrder}}$-on $\mathcal{N}$.

The core of this construction (and of the construction in the next section) is given by the next lemma, which says that given an open interpretation $I\interpret{T_{\operatorname{LinOrder}}}{T}$, we can convert every $T$-on into a strong $T$-on over $\Omega_2$ ($=([0,1]^2,\mathcal{B}_2,\lambda^2)$) such that the linear order is given by the natural order of $[0,1]$ in the first coordinate, similarly to standard $T_{\operatorname{Perm}}$-ons in Section~\ref{sec:permutons}. As Example~\ref{ex:no_omega1} below suggests, this
statement is more subtle than it may appear at first glance.

\begin{lemma}\label{lem:LinOrderplus}
  Let $I\interpret{T_{\operatorname{LinOrder}}}{T}$ be an open interpretation. If $\mathcal{N}$ is a $T$-on over $\Omega=(X,\mathcal{A},\mu)$, then there exists a strong $T$-on $\mathcal{N}_2$ over $\Omega_2$ such that $\phi_{\mathcal{N}_2}=\phi_{\mathcal{N}}$ and
  \begin{equation}\label{eq:naturallinorder}
    \begin{aligned}
      & \!\!\!\!\!\!
      \{x\in\mathcal{E}_2(\Omega_2) \mid \pi_1(x_{\{1\}}) < \pi_1(x_{\{2\}})\}
      \subseteq
      I(\mathcal{N}_2)_\prec
      \\
      & \subseteq
      \{x\in\mathcal{E}_2(\Omega_2) \mid \pi_1(x_{\{1\}}) \leq \pi_1(x_{\{2\}})\}.
    \end{aligned}
  \end{equation}
\end{lemma}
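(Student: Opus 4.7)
The plan is to mimic the permuton construction of Section~\ref{sec:permutons}, with the interpretation $I$ playing the role that the two order-predicates $\prec_1,\prec_2$ played there. First, I would define the CDF-like function $s\function{X}{[0,1]}$ by
\begin{equation*}
  s(y) \df \mu^2\bigl(\bigl\{(x,z)\in X^2 \mid (x,y,z)\in I(\mathcal{N})_\prec\bigr\}\bigr).
\end{equation*}
Adapting verbatim the argument of Section~\ref{sec:permutons} — replacing the star poset $S_n$ with the transitive tournament $\Tr_n$, for which $\tinj(\Tr_n,L)=1/n!$ in every $T_{\operatorname{LinOrder}}$-model $L$ — shows that $s$ is measure preserving. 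Combining this with an adaptation of Lemma~\ref{lem:E2*measurable} to $I(\mathcal{N})$ (which converts the weak linear order inside $\mathcal{N}$ into a genuine measurable linear order off a null set) and using that $(y_2,y_1)\in I(\mathcal{N})_\prec$ would, by transitivity, force $s(y_2)\leq s(y_1)$, yields the key CDF equivalence: for almost every $(y_1,y_2,z)\in\mathcal{E}_2(\Omega)$,
\begin{equation*}
  (y_1,y_2,z)\in I(\mathcal{N})_\prec \;\Longleftrightarrow\; s(y_1)<s(y_2).
\end{equation*}

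Next, I would enlarge to $\Omega^+\df\Omega\times\Omega_1$ with product measure, which will guarantee atomless disintegration fibers below. Let $\mathcal{N}^+$ be the pullback of $\mathcal{N}$ through the projection $\Omega^+\to\Omega$ (so $\phi_{\mathcal{N}^+}=\phi_{\mathcal{N}}$), and let $\tilde s(y,u)\df s(y)$, a measure preserving function $\Omega^+\to\Omega_1$. Each fiber $\tilde s^{-1}(a)=s^{-1}(a)\times[0,1]$ is an atomless standard Borel probability space because of the $\Omega_1$-factor, so a disintegration of $\mu\otimes\lambda^1$ along $\tilde s$ together with Theorem~\ref{thm:bogachev} applied fiberwise — and a standard Kuratowski--Ryll-Nardzewski-style selection ensuring joint measurability in $a$ — produces a measure-isomorphism modulo~$0$, $F\function{\Omega^+}{\Omega_2}$, satisfying $\pi_1\comp F=\tilde s$. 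Defining the candidate $T$-on
\begin{equation*}
  \mathcal{N}_2^{\text{cand},Q} \df F_{k(Q)}\bigl(\mathcal{N}^{+,Q}\bigr),\qquad Q\in\mathcal{L},
\end{equation*}
then yields a weak $T$-on over $\Omega_2$ with $\phi_{\mathcal{N}_2^{\text{cand}}}=\phi_{\mathcal{N}}$, and tracking the coordinates through $F$ transforms the CDF equivalence into the almost-everywhere version of~\eqref{eq:naturallinorder}.

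To conclude, I would invoke Proposition~\ref{prop:ierl} to obtain a strong $T$-on $\mathcal{N}_2$ whose peons lie between the density-point sets of $\mathcal{N}_2^{\text{cand},Q}$ and the complements of the density points of the corresponding complements. To upgrade~\eqref{eq:naturallinorder} from "almost everywhere" to "everywhere", I would use that the sets $U_\pm\df\{x\in\mathcal{E}_2(\Omega_2) : \pm(\pi_1(x_{\{1\}})-\pi_1(x_{\{2\}}))>0\}$ are \emph{open}: combined with the stratification~\eqref{eq:phi_expression} expressing $T(I(\prec),\mathcal{N}_2^{\text{cand}})$ as a finite disjoint union of strata $\Tind(K,\mathcal{N}_2^{\text{cand}})$ indexed by canonical $K$ on two vertices, every point of $U_+$ turns out to be a density point of the specific stratum realizing the $\prec$-configuration (and dually for $U_-$), so the sandwich supplied by Proposition~\ref{prop:ierl} forces~\eqref{eq:naturallinorder} to hold pointwise for $\mathcal{N}_2$.

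The main obstacle is the measurable selection in the middle step: piecing the individual fiber isomorphisms $\tilde s^{-1}(a)\to\Omega_1$ of Theorem~\ref{thm:bogachev} into a single jointly measurable $F$. The enlargement to $\Omega^+$ is what makes this tractable, since the fibers of $s$ alone may carry atoms and thus fail to be individually isomorphic to $\Omega_1$. A secondary subtlety is propagating the density-point property through the Boolean combination $I(\prec)$ back to each underlying peon $\mathcal{N}_2^{\text{cand},Q}$ in the last step, which is exactly what the stratification argument in the proof of Theorem~\ref{thm:characterization} provides.
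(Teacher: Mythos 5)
Your route is genuinely different from the paper's: where the paper simply invokes Proposition~\ref{prop:uniquenessother} (applied to $I(\mathcal{N})$ and the canonical $T_{\operatorname{LinOrder}}$-on $\mathcal{G}_\prec$ over $\Omega_1$) to obtain, in one stroke, the family $(h_d)$ that simultaneously straightens the order and carries all higher-arity peons to $\Omega_2$, you try to \emph{reprove} the relevant instance of uniqueness by hand via a CDF function and a Rokhlin-type disintegration. This is instructive but considerably heavier, and it is also where the two real problems sit.

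The first is a slip rather than a gap. You propose to replace the star poset $S_n$ by the transitive tournament $\Tr_n$ with $\tinj(\Tr_n,M_L)=1/n!$, claiming the argument of Section~\ref{sec:permutons} adapts verbatim. It does not: the crux of that argument is the conditional independence identity $\tinj(S_n,\widehat{\mathcal{N}}) = \indexpect{\rn{y}}{\indprob{\rn{x},\rn{z}}{(\rn{x},\rn{y},\rn{z})\in\widehat{\mathcal{N}}}^{n-1}}$, which holds precisely because $S_n$ has a single "hub" vertex and its $n-1$ comparisons are conditionally independent given that hub. $\Tr_n$ has $\binom{n}{2}$ comparisons with no such factorization, so the inequality collapses. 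The fix is trivial: keep $S_n$ — it is a $T_{\operatorname{Order}}$-model, hence $\tinj(S_n,\place)$ is still defined on the $T_{\operatorname{LinOrder}}$-on $I(\mathcal{N})$ and equals $1/n$ on $M_L$.

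The second is a genuine gap, in the final step. You apply Proposition~\ref{prop:ierl} to the $T$-on $\mathcal{N}_2^{\text{cand}}$ and then assert that the pointwise inclusions~\eqref{eq:naturallinorder} for the \emph{derived} peon $T(I(\prec),\mathcal{N}_2)$ follow from the density-point sandwich on the \emph{primitive} peons $(\mathcal{N}_2)_Q$, via the stratification~\eqref{eq:phi_expression}. This does not work: Lebesgue density points do not commute with the Boolean combination defining $I(\prec)$. Concretely, a point $z\in U_+$ is a density point of $T(I(\prec),\mathcal{N}_2^{\text{cand}})$ (since that set agrees with the open set $U_+$ a.e.), but $z$ need not be a density point of any single stratum $\Tind(K,\mathcal{N}_2^{\text{cand}})$, nor of any individual $(\alpha^\ast)^{-1}(\mathcal{N}_2^{\text{cand},Q})$ or its complement; so Proposition~\ref{prop:ierl}, which only pins down the value of each primitive peon at its own density points, gives you nothing about the value of $T(I(\prec),\mathcal{N}_2)$ at $z$. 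The paper sidesteps this entirely by passing to the enriched theory $\widehat{T}$ in which $\prec$ is a \emph{primitive} predicate tied to $I(\prec)$ by an axiom, defining $\mathcal{N}'_\prec$ to be exactly the set $U_+$, and then applying Proposition~\ref{prop:ierl} over $\widehat{T}$: the sandwich on the primitive $\prec$-peon, combined with $U_+$ open, $U_+\cap\mathcal{D}_2(\Omega_2)=\emptyset$, and $\mathcal{E}_2(\Omega_2)\setminus D(\mathcal{E}_2(\Omega_2)\setminus U_+) = \{\pi_1(x_{\{1\}})\leq\pi_1(x_{\{2\}})\}$, then delivers~\eqref{eq:naturallinorder} directly. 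Your construction can be patched the same way — set $\mathcal{N}_2^{\text{cand},\prec}\df U_+$, check the resulting structure is a weak $\widehat{T}$-on (the $\widehat{T}$-axiom $x\prec y\equiv I(\prec)(x,y)$ holds a.e.\ by your CDF equivalence), apply Proposition~\ref{prop:ierl} to $\widehat{T}$, and finish with $\widehat{I}$ — but as written the proof does not close.

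As for the middle step, the disintegration plus measurable selection producing $F$ with $\pi_1\comp F = \tilde{s}$: the $\Omega^+$-enlargement correctly forces atomless conditional fibers, and the required fiberwise straightening is a standard (if nontrivial) consequence of Rokhlin's theorem; I do not see an error there, only substantial extra machinery that Proposition~\ref{prop:uniquenessother} lets you avoid.
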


\begin{proof}
  As in Remark~\ref{rmk:strucerase}, let $\widehat{T}$ be the theory obtained from $T_{\operatorname{LinOrder}}\cup T$ by adding the axiom $x\prec y\equiv I(\prec)(x,y)$. Then the open interpretation $\widehat{I}\interpret{T}{\widehat{T}}$ acting as identity on $T$ is an isomorphism and the diagram
  \begin{equation*}
    \begin{tikzcd}
      T_{\operatorname{LinOrder}}\arrow[r,"I"]\arrow[d,"S",swap] & T\arrow[d,"\widehat{I}"]\\
      T_{\operatorname{LinOrder}}\cup T\arrow[r,"A"] & \widehat{T}
    \end{tikzcd}
  \end{equation*}
  commutes, where $S$ and $A$ are the structure-erasing and the axiom-adding interpretations respectively.

  Recall that $\HomT{T_{\operatorname{LinOrder}}}$ has a unique element (cf.~Examples~\ref{ex:orders} and~\ref{ex:twoorderedtheons}); it is represented by both $T_{\operatorname{LinOrder}}$-ons
  \begin{align*}
    \mathcal{G}_\prec & \df \{x\in\mathcal{E}_2(\Omega_1) \mid x_{\{1\}} < x_{\{2\}}\}
  \end{align*}
  and $I(\mathcal{N})$. By Proposition~\ref{prop:uniquenessother}, there exist symmetric functions $h_1\function{\mathcal{E}_1(\Omega_1)\times\mathcal{E}_1(\Omega_1)}{\Omega}$ and $h_2\function{\mathcal{E}_2(\Omega_1)\times\mathcal{E}_2(\Omega_1)}{\Omega}$ measure preserving on h.o.a.\ such that
  \begin{align*}
    \widehat{h}_2(x,\widehat{x})\in I(\mathcal{N})_\prec
    & \equiv
    x\in\mathcal{G}_\prec
    \equiv
    x_{\{1\}} < x_{\{2\}}
  \end{align*}
  for almost every $(x,\widehat{x})\in\mathcal{E}_2(\Omega_1)\times\mathcal{E}_2(\Omega_1)$.

  Pick then arbitrary symmetric functions
  $h_d\function{\mathcal{E}_d(\Omega_1)\times\mathcal{E}_d(\Omega_1)}{\Omega}$ measure preserving on
  h.o.a.\ for every $d\geq 3$ and define
  \begin{align*}
    \mathcal{N}'_P & \df \{x\in\mathcal{E}_{k(P)}(\Omega_2) \mid \widehat{h}_{k(P)}(x)\in\mathcal{N}_P\}
  \end{align*}
  for every predicate symbol $P$ in the language of $T$, where we use the natural identification between $\mathcal{E}_d(\Omega_1)\times\mathcal{E}_d(\Omega_1)$ and $\mathcal{E}_d(\Omega_2)$ and define $\mathcal{N}'_\prec$ as the set in the left-hand side of~\eqref{eq:naturallinorder}. Since $\widehat{I}^{-1}$ acts identically on $T$ and as $I$ on $T_{\operatorname{LinOrder}}$, Proposition~\ref{prop:uniquenessother} implies that $\mathcal{N}'$ is a weak $\widehat{T}$-on satisfying $\phi_{\mathcal{N}'} = \phi_{\widehat{I}^{-1}(\mathcal{N})}$, hence $\phi_{\widehat{I}(\mathcal{N}')} = \phi_{\mathcal{N}}$.

  By Proposition~\ref{prop:ierl}, there exists a strong $\widehat{T}$-on $\mathcal{N}''$ whose peons contain all the density points of the corresponding peons of $\mathcal{N}'$ and are disjoint from the set of density points of the complements of the corresponding peons. This in particular implies that $\mathcal{N}''$ satisfies $\phi_{\mathcal{N}''} = \phi_{\mathcal{N}'}$ and that $\mathcal{N}''_\prec$ still satisfies~\eqref{eq:naturallinorder}.

  Let then $\mathcal{N}_2\df \widehat{I}(\mathcal{N}'')$. Since $I(\mathcal{N}_2) = S(A(\mathcal{N}''))$ and since both $S$ and $A$ act identically on $T_{\operatorname{LinOrder}}$, it follows that $(\mathcal{N}_2)_\prec = \mathcal{N}''_\prec$, so it also satisfies~\eqref{eq:naturallinorder}. On the other hand, we have
  \begin{align*}
    \phi_{\mathcal{N}_2} & = \phi_{\widehat{I}(\mathcal{N}'')} = \phi_{\widehat{I}(\mathcal{N}')} = \phi_{\mathcal{N}}.
  \end{align*}
  Finally, since $\mathcal{N}''$ is a strong $\widehat{T}$-on, it follows that $\mathcal{N}_2$ is a strong $T$-on.
\end{proof}

\begin{example} \label{ex:no_omega1}
Lemma~\ref{lem:LinOrderplus} is no longer true if we replace $\Omega_2$ by
$\Omega_1$.

Let $T\cong T_{\operatorname{LinOrder}}^2$ be the
extension of $T_{\operatorname{LinOrder}}$ with a unary predicate symbol $A$, and let $I\interpret{T_{\operatorname{LinOrder}}}{T}$ be the structure-erasing interpretation. Consider the (weak) $T$-on $\mathcal N$ over $\Omega_2$ in which $\mathcal N_{\prec}$ is given by the left-hand side of~\eqref{eq:naturallinorder} and $\mathcal N_A \df \{x\in\Omega_2\mid \pi_2(x)\leq 1/2\}$. Then there does not exist any (weak) $T$-on $\mathcal N'$
over $\Omega_1$ such that $\phi_{\mathcal N'} = \phi_{\mathcal N}$ and $\{ x\in \mathcal E_2(\Omega_1)\mid x_{\{1\}}< x_{\{2\}}\} \subseteq \mathcal N'_\prec \subseteq \{ x\in \mathcal E_2(\Omega_1)\mid x_{\{1\}}\leq x_{\{2\}}\}$.

Indeed, assume the contrary, i.e., that there exists a measurable
set $\mathcal N'_A\subset [0,1]$ such that $\phi_{\mathcal N} =
\phi_{\mathcal N'}$. Then the latter fact would have readily implied
that $\mathcal N'_A$ has density $1/2$ in every non-empty interval. That
is impossible by Proposition~\ref{prop:Ldensityae}.
\end{example}

We can now establish poseton cryptomorphism.  Given a $T_{\operatorname{ExtendedOrder}}$-on $\mathcal{N}$, Lemma~\ref{lem:LinOrderplus} above with the structure-erasing interpretation $I\interpret{T_{\operatorname{LinOrder}}}{T_{\operatorname{ExtendedOrder}}}$ defined by
$I(\prec)(x,y)\df x\prec_1 y$ gives us a strong $T_{\operatorname{ExtendedOrder}}$-on $\mathcal{N}_2$ satisfying~\eqref{eq:naturallinorder}  and $\phi_{\mathcal{N}_2}=\phi_{\mathcal{N}}$. Then we set
\begin{equation}\label{eq:poseton}
  \begin{aligned}
    A(x,y) & \df \{z\in[0,1]^2\mid (x,y,z)\in(\mathcal{N}_2)_{\prec_2}\};
    \\
    W(x,y) & \df
    \begin{dcases*}
      \lambda^2(A(x,y)),
      & if $A(x,y)$ is measurable and $\pi_1(x) < \pi_1(y)$;
      \\
      0,
      & otherwise.
    \end{dcases*}
  \end{aligned}
\end{equation}
It is straightforward to check that~\eqref{eq:naturallinorder} and the fact that $\mathcal{N}_2$ is strong imply that $W$ (when viewed as a function $[0,1]^4 \to [0,1]$) is a poseton and $\phi_W = \phi_{J(\mathcal{N})}$ for the structure-erasing interpretation $J\interpret{T_{\operatorname{Order}}}{T_{\operatorname{ExtendedOrder}}}$.

\medskip

We finish this subsection with poseton uniqueness. Just as in the case of graphon uniqueness, the higher order variables $x_{\{1,2\}}$ of $T_{\operatorname{Order}}$-ons get ``integrated out'' (cf.~Remark~\ref{rmk:graphon_uniqueness}). Let us also remark that the original list of equivalences for poseton uniqueness in~\cite[Theorem~7.1]{Jan} also includes other items that are not covered here.

\begin{theorem}[{{\cite[Theorem~7.1]{Jan}}}]
  Let $W_1$ and $W_2$ be posetons. The following are equivalent.
  \begin{enumerate}[label={\arabic*.}, ref={\arabic*)}]
  \item We have $\phi_{W_1} = \phi_{W_2}$.
    \label{it:posetonphi}
  \item There exist measure preserving functions $f,g\function{[0,1]}{[0,1]^2}$ such that
    \begin{align*}
      W_1(f(x),f(y)) & = W_2(g(x),g(y)),
    \end{align*}
    for almost every $(x,y)\in[0,1]^2\times[0,1]^2$.
    \label{it:posetonfg}
  \item There exists a measure preserving function $h\function{[0,1]^2\times[0,1]^2}{[0,1]^2}$ such that
    \begin{align*}
      W_1(h(x,\widehat{x}),h(y,\widehat{y})) & = W_2(x,y)
    \end{align*}
    for almost every $(x,\widehat{x},y,\widehat{y})\in([0,1]^2)^4$.
    \label{it:posetonh}
  \end{enumerate}
\end{theorem}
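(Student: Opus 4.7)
The plan is to reduce this theorem to the general theon uniqueness results in Proposition~\ref{prop:uniquenessother} via the poseton-to-$T_{\operatorname{Order}}$-on correspondence $W\mapsto \mathcal{N}(W)$ defined by~\eqref{eq:TOrderon}, which satisfies $\phi_{\mathcal{N}(W)}=\phi_W$.

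The backward implications~\ref{it:posetonfg}$\Longrightarrow$\ref{it:posetonphi} and~\ref{it:posetonh}$\Longrightarrow$\ref{it:posetonphi} will be straightforward probabilistic computations. For the first, I would sample $\rn{Z_1},\ldots,\rn{Z_m}$ i.i.d.\ uniform in $[0,1]$ and note that, since $f$ and $g$ are measure preserving and $W_1(f(x),f(y))=W_2(g(x),g(y))$ almost everywhere, the arrays $(W_1(f(\rn{Z_a}),f(\rn{Z_b})))_{a,b}$ and $(W_2(g(\rn{Z_a}),g(\rn{Z_b})))_{a,b}$ are equidistributed; the Bernoulli-edge construction from Section~\ref{sec:posetons} then yields $p(M,W_1)=p(M,W_2)$ for every $M\in \mathcal{M}[T_{\operatorname{Order}}]$. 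The argument for~\ref{it:posetonh}$\Longrightarrow$\ref{it:posetonphi} is identical upon replacing $f(\rn{Z_a})$ by $h(\rn{X_a},\rn{\widehat{X}_a})$, with $(\rn{X_a},\rn{\widehat{X}_a})$ i.i.d.\ uniform in $[0,1]^2\times [0,1]^2$.

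For the forward direction~\ref{it:posetonphi}$\Longrightarrow$\ref{it:posetonh}, I would observe that $\phi_{\mathcal{N}(W_1)}=\phi_{\mathcal{N}(W_2)}$ and apply clause~\ref{it:uoh} of Proposition~\ref{prop:uniquenessother} to the $T_{\operatorname{Order}}$-ons $\mathcal{N}(W_1),\mathcal{N}(W_2)$ over $\Omega_2=([0,1]^2,\mathcal{B}_2,\lambda^2)$. This produces a symmetric family $(h_1,h_2)$ measure preserving on h.o.a., with $h_1\function{[0,1]^2\times [0,1]^2}{[0,1]^2}$ (since for $d=1$ the h.o.a.\ condition collapses to ordinary measure preservation) and $h_2\function{([0,1]^2)^3\times ([0,1]^2)^3}{[0,1]^2}$, such that
\begin{equation*}
\widehat{h}_2(x,\widehat{x})\in \mathcal{N}(W_1)_\prec \;\equiv\; x\in \mathcal{N}(W_2)_\prec
\end{equation*}
for almost every $(x,\widehat{x})\in \mathcal{E}_2(\Omega_2)\times \mathcal{E}_2(\Omega_2)$. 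I would take $h\df h_1$, fix generic $(x_{\{1\}},x_{\{2\}},\widehat{x}_{\{1\}},\widehat{x}_{\{2\}})\in ([0,1]^2)^4$, and integrate the above equivalence over $(x_{\{1,2\}},\widehat{x}_{\{1,2\}})\in ([0,1]^2)^2$. By the h.o.a.\ property of $h_2$, the map $(x_{\{1,2\}},\widehat{x}_{\{1,2\}})\mapsto \pi_1(h_2(\cdots))$ is measure preserving into $[0,1]$, so (modulo the measure-zero boundary in~\eqref{eq:TOrderon}) the measure of the left-hand event is exactly $W_1(h(x_{\{1\}},\widehat{x}_{\{1\}}),h(x_{\{2\}},\widehat{x}_{\{2\}}))$, while the right-hand event, depending neither on $\widehat{x}_{\{1,2\}}$ nor on $\pi_2(x_{\{1,2\}})$, has measure $W_2(x_{\{1\}},x_{\{2\}})$. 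Fubini's theorem then promotes the a.e.\ equivalence of events to the a.e.\ equality $W_1(h(x_{\{1\}},\widehat{x}_{\{1\}}),h(x_{\{2\}},\widehat{x}_{\{2\}}))=W_2(x_{\{1\}},x_{\{2\}})$ required by~\ref{it:posetonh}.

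The implication~\ref{it:posetonphi}$\Longrightarrow$\ref{it:posetonfg} will proceed in the same way, this time via clause~\ref{it:uofg} of Proposition~\ref{prop:uniquenessother} with $\Omega=\Omega'=\Omega_2$. It produces families $(f_1,f_2)$ and $(g_1,g_2)$ measure preserving on h.o.a., with $f_1,g_1\function{[0,1]}{[0,1]^2}$; setting $f\df f_1$, $g\df g_1$ and integrating over the single h.o.a.\ coordinate $z_{\{1,2\}}\in [0,1]$ using the corresponding h.o.a.\ properties of $f_2$ and $g_2$ delivers the a.e.\ identity $W_1(f(z_{\{1\}}),f(z_{\{2\}}))=W_2(g(z_{\{1\}}),g(z_{\{2\}}))$. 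The principal technical obstacle throughout will be the measure-theoretic bookkeeping in these Fubini arguments: the a.e.\ equivalence delivered by Proposition~\ref{prop:uniquenessother} is a statement in a high-dimensional product space, and extracting from it the a.e.\ pointwise equality for the lower-dimensional posetons relies on the h.o.a.\ measure-preservation clause cooperating with the zero-measure boundary set in~\eqref{eq:TOrderon} so that equalities are preserved exactly under integration.
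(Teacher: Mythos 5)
Your proposal is correct and follows the same route as the paper: both reduce to Proposition~\ref{prop:uniquenessother} applied to the $T_{\operatorname{Order}}$-ons $\mathcal{N}(W_1)$, $\mathcal{N}(W_2)$ over $\Omega_2$, take $f=f_1$, $g=g_1$ (respectively $h=h_1$), and extract the poseton identity by integrating out the $\{1,2\}$-coordinate. The paper compresses this last step into a citation of the retrieval formula~\eqref{eq:TOrderonretrieve}; your explicit Fubini argument using the h.o.a.\ property of $h_2$ (resp.\ $f_2$, $g_2$) is exactly the content of that citation, spelled out.
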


\begin{proof}(sketch)
  The implications~\ref{it:posetonfg}~$\Longrightarrow$~\ref{it:posetonphi} and~\ref{it:posetonh}~$\Longrightarrow$~\ref{it:posetonphi} are trivial.

  \smallskip

  For the implication~\ref{it:posetonphi}~$\Longrightarrow$~\ref{it:posetonfg}, suppose $\phi_{W_1}=\phi_{W_2}$ and consider the $T_{\operatorname{Order}}$-ons $\mathcal{N}(W_1)$ and $\mathcal{N}(W_2)$ defined in~\eqref{eq:TOrderon}. By Proposition~\ref{prop:uniquenessother}, there exist $f_1,g_1\function{\mathcal{E}_1(\Omega_1)}{\Omega_2}$ and $f_2,g_2\function{\mathcal{E}_2(\Omega_1)}{\Omega_2}$ symmetric and measure preserving on h.o.a.\ such that
  \begin{align*}
    \widehat{f}_2(x)\in\mathcal{N}(W_1) & \equiv \widehat{g}_2(x)\in\mathcal{N}(W_2)
  \end{align*}
  for almost every $x\in\mathcal{E}_2(\Omega_1)$. Then by~\eqref{eq:TOrderonretrieve}, item~\ref{it:posetonfg} follows for $f = f_1$ and $g = g_1$.

  \smallskip

  The implication~\ref{it:posetonphi}~$\Longrightarrow$~\ref{it:posetonh} follows by an analogous argument using item~\ref{it:uoh} of Proposition~\ref{prop:uniquenessother} instead.
\end{proof}

\subsection{Limits of interval graphs}

We now consider limits of interval graphs, which were first studied in~\cite{DHJ}.

\begin{definition}
  An \emph{interval graph} is a graph $G$ for which there exists a family of intervals $(I_v)_{v\in V(G)}$ in the real line\footnote{It is easy to see that we may suppose that all such intervals are of the form $[a,b]$ for $0\leq a\leq b\leq 1$ and that no two intervals have coinciding endpoints. By dilating locally, one can further assume that the left endpoints are of the form $i/n$ for some $i\in \{0,1,\ldots,n-1\}$ where $n=\lvert V(G)\rvert$.} such that $vw\in E(G)$ if and only if $I_v\cap I_w\neq\emptyset$. We let $T_{\operatorname{IntervalGraph}}$ be the theory of interval graphs.

  An \emph{interval graph limit} is a probability measure $\mu$ over $([0,1]^2,\mathcal{B}_2)$ such that $\mu(\{(a,b)\in[0,1]^2 \mid b < a\}) = 0$ and such that the first marginal $(\pi_1)_\ast(\mu)$ is equal to the Lebesgue measure.
\end{definition}

For a fixed graph $G$, we define $p(G,\mu)$ by the following probabilistic experiment. We let $\rn{X_1},\rn{X_2},\ldots,\rn{X_m}$ be i.i.d.\ random variables picked according to the measure $\mu$ and define the random structure $\rn{M}$ in $\mathcal{K}_m[\{E\}]$ by letting
\begin{align}\label{eq:randomintervalgraph}
  R_{E,\rn{M}} & = \{(v,w)\in[m]^2 \mid v\neq w\land
  [\pi_1(\rn{X_v}),\pi_2(\rn{X_v})]\cap[\pi_1(\rn{X_w}),\pi_2(\rn{X_w})]\neq\emptyset\},
\end{align}
where we let $[a,b]\df\emptyset$ when $a > b$. From the definition above, it follows that $\rn{M}$ is always an interval graph. We then define $p(G,\mu) = \prob{\rn{M}\cong G}$ and define the functional $\phi_\mu = p(\place,\mu)$.

\medskip

The first direction of the cryptomorphism between interval graph limits and $T_{\operatorname{IntervalGraph}}$-ons follows by an argument similar to the one in Section~\ref{sec:permutons}: the marginal condition implies that $\mu$ satisfies Assumption~P, so we can consider the (strong Borel) $T_{\operatorname{IntervalGraph}}$-on $\mathcal{N}(\mu)$ over $([0,1]^2,\mathcal{B}_2,\mu)$ given by
\begin{align}\label{eq:standardTIntervalGraph-on}
  \mathcal{N}(\mu)_E
  & \df
  \{x\in\mathcal{E}_2(([0,1]^2,\mathcal{B}_2,\mu)) \mid
                [\pi_1(x_{\{1\}}),\pi_2(x_{\{1\}})]\cap[\pi_1(x_{\{2\}}),\pi_2(x_{\{2\}})]\neq\emptyset\}.
\end{align}
We call $\mathcal N(\mu)$ the \emph{standard $T_{\operatorname{IntervalGraph}}$-on associated with $\mu$}. It is straightforward to see that $\phi_{\mathcal{N}(\mu)} = \phi_\mu$.

\medskip

For the other direction, we will employ an intermediate theory just as in Section~\ref{sec:posetons}. Let $T_{\operatorname{LeftIntervalGraph}}$ be the theory obtained from $T_{\operatorname{LinOrder}}\cup T_{\operatorname{IntervalGraph}}$ by adding the axiom
\begin{align}\label{eq:intervalaxiom}
  \forall x_1\forall x_2\forall x_3,
  (x_1\prec x_2\land x_2\prec x_3\land E(x_1,x_3)\to E(x_1,x_2)),
\end{align}
where $\prec$ and $E$ are the predicate symbols of $T_{\operatorname{LinOrder}}$ and $T_{\operatorname{IntervalGraph}}$ respectively, and let $I\interpret{T_{\operatorname{IntervalGraph}}}{T_{\operatorname{LeftIntervalGraph}}}$ be the structure-erasing interpretation.

The intended meaning of $T_{\operatorname{LeftIntervalGraph}}$ is that its interval graphs have the extra information about the order of the left endpoints of its intervals. With this in mind, if $G$ is an interval graph
represented by a family of intervals $([a_v,b_v])_{v\in V(G)}$, then defining the linear order $\prec$
on $V(G)$ by $v\prec w \equiv a_v < a_w$, we get a model $M$ of $T_{\operatorname{LeftIntervalGraph}}$ such that $I(M) = G$.
By the same syntactic argument as in Section~\ref{sec:posetons}, it follows that given a $T_{\operatorname{IntervalGraph}}$-on $\mathcal{N}$, there exists a $T_{\operatorname{LeftIntervalGraph}}$-on $\mathcal{N}'(\mathcal{N})$ such that $\phi_{\mathcal{N}} = \phi_{I(\mathcal{N}'(\mathcal{N}))}$. We would like to remark that we have not been able to come up with an entirely semantic argument (like Theorem~\ref{thm:measext}) in the context of interval graphs. One good approach to this problem might be to analyze existing algorithms for constructing interval representations of interval graphs and see if they are ``transferable'' to the infinite world but it does not seem to be an easy thing to do.

\smallskip

It remains to show how to construct an interval graph limit $\mu_{\mathcal{N}}$ from a
$T_{\operatorname{LeftIntervalGraph}}$-on $\mathcal{N}$ satisfying $\phi_{\mu_{\mathcal{N}}} = \phi_{I(\mathcal N)}$.
To do so, we appeal again to Lemma~\ref{lem:LinOrderplus} for $I(\prec)(x,y)\df x\prec y$. Let $\mathcal{N}_2$ be a strong $T_{\operatorname{LeftIntervalGraph}}$-on over $\Omega_2$ satisfying~\eqref{eq:naturallinorder} and, for $x,y\in [0,1]^2$, define
\begin{align*}
  A(x,y)
  & \df
  \{z\in[0,1]^2 \mid (x,y,z)\in(\mathcal{N}_2)_E\};
  \\
  \chi(x)
  & \df
  \pi_1(x) + \lambda^2(\{y\in [0,1]^2 \mid \pi_1(x) < \pi_1(y)\land \lambda^2(A(x,y))>0\});
  \\
  s(x) & \df (\pi_1(x), \chi(x)).
\end{align*}
Fubini's Theorem guarantees that the functions $\chi,s$ are defined and measurable a.e. We claim that the pushforward measure $\mu_{\mathcal{N}} \df s_*\lambda^2$ is the desired interval graph limit.

Since the first coordinate of $s$ is the projection, the first marginal of $\mu_{\mathcal{N}}$ is $\lambda$ and since $\chi(x)\geq\pi_1(x)$, we have $\mu_{\mathcal{N}}(\{(a,b)\in[0,1]^2 \mid b < a\}) = 0$, so $\mu_{\mathcal{N}}$ is an interval graph limit.

Note now that since $\mathcal{N}_2$ is a strong $T_{\operatorname{LeftIntervalGraph}}$-on satisfying~\eqref{eq:naturallinorder}, by Theorem~\ref{thm:characterization} for the axiom~\eqref{eq:intervalaxiom}, it follows that if $x,y,z\in[0,1]^2$ are such that $A(x,z)\neq\emptyset$ and $\pi_1(x) < \pi_1(y) < \pi_1(z)$, then $A(x,y)=[0,1]^2$. This implies that
\begin{equation}\label{eq:intervalchi}
  \begin{aligned}
    \chi(x) & = \sup\{t\in[\pi_1(x),1] \mid \pi_1(x) = t\lor\exists u\in[0,1] A(x,(t,u))\neq\emptyset\}
    \\
    & = \sup\{t\in[\pi_1(x),1] \mid \pi_1(x) = t\lor\exists u\in[0,1] A(x,(t,u))=[0,1]^2\}
  \end{aligned}
\end{equation}
for almost every $x\in[0,1]$.

Let then $\Omega \df ([0,1]^2,\mathcal{B}_2,\mu_{\mathcal{N}})$. By the definition of $\mu_{\mathcal{N}}$,
the functions
\begin{align*}
  \begin{functiondef}
    f_1\colon & \mathcal{E}_1(\Omega_2) & \longrightarrow & \Omega\\
    & x & \longmapsto & s(x)
  \end{functiondef}
  & &
  \begin{functiondef}
    f_2\colon & \mathcal{E}_2(\Omega_2) & \longrightarrow & \Omega\\
    & x & \longmapsto & s(x_{\{1,2\}})
  \end{functiondef}
\end{align*}
are symmetric and measure preserving on h.o.a.\ and from~\eqref{eq:intervalchi}, it follows that
\begin{align*}
  x\in(\mathcal{N}_2)_E
  & \equiv
  [\pi_1(x_{\{1\}}),\chi(x_{\{1\}})]\cap [\pi_1(x_{\{2\}}),\chi(x_{\{2\}})] \neq\emptyset
\end{align*}
for almost every $x\in\mathcal{E}_2(\Omega_1)$, which by Proposition~\ref{prop:uniquenessother} implies $\phi_{I(\mathcal{N})} = \phi_{\mathcal{N}(\mu_{\mathcal{N}})} = \phi_{\mu_{\mathcal{N}}}$.

\medskip

\subsection{Lineons} \label{sec:lineons}

Our last example is motivated by research on limits of functions on finite
vector spaces~\cite{HHH,Szeg,Yos}, and it is of somewhat different nature. As
we noticed several times before, our theory is not directly applicable to
first-order languages containing function symbols, like the language of
rings. This section illustrates that when the structure in question possesses
sufficiently high symmetries, it is still possible to salvage a significant
part of it.

Let $A\subseteq \mathbb F_p^n$ be given. For $m\leq n$, let us consider a
randomly chosen {\em linear} mapping $\rn\alpha\function{\mathbb
F_p^m}{\mathbb F_p^n}$ that, after composing it with the characteristic
function $\mathbf 1_A\function{\mathbb F_p^n}{\{0,1\}}$ of $A$ gives a
probability distribution over functions from $\mathbb F_p^m$ to $\{0,1\}$.
Removing the zero vector from the domain, we get a distribution over
functions $f\function{\mathbb F_p^m\setminus \{0^m\}}{\{0,1\}}$. Hence, just
as in Section~\ref{sec:prel}, we can define densities $p(f,A)$, converging
sequences of subsets $A_n\subseteq \mathbb F_p^n$ (for $p$ fixed and
$n\to\infty$), etc. We note that the quantities additive combinatorics is
typically interested in, like $\expect{\mathbf 1_A(\rn x)\mathbf 1_A(\rn
x+\rn h)\mathbf 1_A(\rn x+\rn y)\mathbf 1_A(\rn x+\rn y+\rn h)}\ (\rn x,\rn
y,\rn h\in_R \mathbb F_p^n)$, are retrievable from a finite number of
densities $p(f,A)$.

Upon a moment's reflection, it is clear how to formulate this in the
framework of theons (cf.~\cite[\S 1.4]{Szeg}), but we need a small
twist.

\begin{definition}
A first-order language $\mathcal L$ (as always, consisting of predicate
symbols only) is {\em locally finite} if for every $k>0$ it contains
only finitely many $k$-ary predicate symbols.
\end{definition}

Note that if $T$ is a {\em canonical} theory in a locally finite language
$\mathcal L$, then for every fixed $n>0$, there are only finitely many
non-isomorphic models of $T$ of size $n$. This is the only property that we actually
need from $T$, and it is easy to verify that our formalism extends
to this situation straightforwardly; in particular, we still can define
$T$-ons with all nice properties.

Let us now return to linear mappings and assume for a moment that $p=2$. Then we introduce the locally finite language
$\mathcal L_{\operatorname{Lin}}$ that has one $k$-ary predicate symbol $E_k$
for every $k\geq 1$, and the canonical theory $T_{\operatorname{Lin}}\df
T_{1\operatorname{-Hypergraph}}\cup T_{\operatorname{Graph}}\cup T_{3\operatorname{-Hypergraph}}\cup
T_{4\operatorname{-Hypergraph}}\cup\cdots$ in our language asserting that all
predicate symbols are symmetric. Any given $A\subseteq \mathbb
F_2^n$ can be turned into a model $M_A$ of the theory
$T_{\operatorname{Lin}}$ with vertex set $\mathbb{F}_2^n$ if we interpret the symbols $E_k$ by
$$
R_{E_k,M_A} \df \set{(x_1,\ldots,x_k) \in (\mathbb F_2^n)^k}{x_1,\ldots,x_k\ \text{pairwise distinct}\
\land x_1+\cdots+x_k\in A}.
$$

If $p>2$ the notation becomes slightly heavier: for any linear form
$a_1x_1+\cdots+ a_kx_k$ with $a_i\in \{1,\ldots,p-1\},\ a_1\geq\cdots\geq a_k$
we introduce its own predicate symbol $E_{\vec a}$, along with axioms
asserting that $E_{\vec a}$ is symmetric under those elements of $S_k$ that
stabilize $\vec a$. But this does not entail any principal changes in what
follows.

A linear mapping $\alpha\function{\mathbb F_p^m}{\mathbb F_p^n}$ is uniquely
determined by its values $\alpha(e_1),\ldots,\alpha(e_m)$, where
$B = \{e_1,\ldots,e_m\}$ is a fixed basis in $\mathbb F_p^m$. Moreover, if
$\alpha(e_1),\ldots,\alpha(e_m)$ are pairwise distinct then $f = (\mathbf
1_A\circ\alpha)\vert_{\mathbb F_p^m\setminus \{0^m\}}$ if and only if
$\alpha\vert_B$ determines an embedding of the model
$N_{f,B} \df M_{f^{-1}(1)}\vert_B$ into $M_A$ in the theory $T_{\operatorname{Lin}}$. In
particular, we have $p(f,A) = p(N_{f,B}, M_A)\pm O(p^{-n})$ (the error term
accounts for collisions that are allowed in the left-hand side but not in the
right-hand side), and hence both formulations are cryptomorphic. Following
the well-established tradition, we will call {\em lineons} both classes of
convergent sequences $A_n\subseteq \mathbb F_p^n$, as well as
$T_{\operatorname{Lin}}$-ons resulting from such sequences.

Note, however, that the situation here is very different from all other
examples considered in this section. The reason is that lineons make only a
{\em subspace} in the space of all $T_{\operatorname{Lin}}$-ons, and the
structure of this subspace is at the moment understood quite poorly. We can
only contribute the following simple remarks.
\begin{itemize}
\item We cannot add any extra axioms to the theory $T_{\operatorname{Lin}}$ without losing some lineons. Indeed, it is easy to see that
  for every model $M$ of $T_{\operatorname{Lin}}$, there is a model $M_A$ for $A\subseteq \mathbb F_2^m$ such
  that $p(M,M_A)>0$ and then $p(M,M_{A_n})$ is bounded away from $0$ in the increasing sequence $A_n\df A\times
  \mathbb F_2^{n-m}$.

\item On the other hand, lineons make a proper subspace in the space of all
    $T_{\operatorname{Lin}}$-ons. For example, the densities of all components $E_k$ must be the same in every lineon. This is certainly
    not true in an arbitrary $T_{\operatorname{Lin}}$-on.

\item We at least know that this is a closed subspace (in the standard
    product topology, see Section~\ref{sec:convergence}). This simply
    follows from the fact that lineons can be described as accumulation
    points of the set  $\set{p(\place, M_A)\in [0,1]^{\mathcal
    M[T_{\operatorname{Lin}}]}}{A\subseteq \mathbb F_2^m, m\in\mathbb{N}_+}$ (for the notation see again Section~\ref{sec:convergence}). Since
    $[0,1]^{\mathcal M[T_{\operatorname{Lin}}]}$ is metrizable, the set of
    accumulation points of any of its subsets is closed. As we will see at the end of this section, this implies that the set of all lineons can be in principle defined by countably many inequalities and equations. Giving an {\em explicit} description of these relations, however, is an enormously difficult task that, in a slightly different language, is a recurrent theme in arithmetic combinatorics.

\item Expanding on the previous item, we can show that the space of lineons
    is path-connected. Before we present the argument below, let us remark
    that it is not entirely obvious even for the space of
    $T_{\operatorname{Lin}}$-ons itself or, for that matter, even for
    graphons unless we have the whole theory of limit objects at our
    disposal. The argument below can be also used to give a simple
    elementary proof of connectedness in those cases as well.
\end{itemize}

\begin{theorem}
The space of all lineons, viewed as a subset in $[0,1]^{\mathcal
M[T_{\operatorname{Lin}}]}$, is path-connected.
\end{theorem}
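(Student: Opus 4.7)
The plan is to exhibit the space of lineons as star-shaped with respect to the ``empty'' lineon $\phi_\emptyset\df \lim_{n\to\infty} p(\place, M_\emptyset)$, which is the unique lineon satisfying $\phi_\emptyset(f) = \mathbf 1[f\equiv 0]$. Concretely, for each lineon $\phi$ I will construct a continuous path $t\mapsto\phi_t$, $t\in [0,1]$, of lineons joining $\phi_0 = \phi_\emptyset$ to $\phi_1 = \phi$; since $\phi_\emptyset$ is a common basepoint for all such paths, path-connectedness follows at once.

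Fix a representative sequence $A_n\subseteq \mathbb F_2^{k_n}$ for $\phi$. For each $t\in[0,1]$, I would form the random subset $\rn{A_n^t}\subseteq A_n$ by independently keeping each element of $A_n$ with probability $t$. A first-moment computation, conditioning on which elements of $A_n$ lie in the image of a random linear injection $\alpha\function{\mathbb F_2^m}{\mathbb F_2^{k_n}}$ and using injectivity of $\alpha$ on $\mathbb F_2^m\setminus\{0^m\}$, yields the exact identity
\[
 \expect{p(f,\rn{A_n^t})}
 =
 \sum_{g\geq f} p(g,A_n)\cdot t^{w(f)}(1-t)^{w(g)-w(f)},
\]
where $w(h)\df |h^{-1}(1)|$ and the sum is over all $g\function{\mathbb F_2^m\setminus\{0^m\}}{\{0,1\}}$ with $g\geq f$ pointwise. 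Passing to $n\to\infty$ and using $p(g,A_n)\to \phi(g)$ yields
\[
 \phi_t(f)\df \sum_{g\geq f}\phi(g)\cdot t^{w(f)}(1-t)^{w(g)-w(f)},
\]
a polynomial in $t$; direct inspection together with the normalization $\sum_g\phi(g)=1$ give $\phi_0=\phi_\emptyset$ and $\phi_1=\phi$.

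The main obstacle will be verifying that for each fixed $t\in (0,1)$ the polynomial candidate $\phi_t$ is actually a lineon, and not merely a point of $[0,1]^{\mathcal M[T_{\operatorname{Lin}}]}$. I would handle this by a second-moment argument, in the same spirit as Lemma~\ref{lem:flaglowcollision}: two independent random linear injections $\alpha_1,\alpha_2\function{\mathbb F_2^m}{\mathbb F_2^{k_n}}$ have images meeting only at $0$ with probability $1-O(2^{2m-k_n})=1-o(1)$, and on this event the events ``$\alpha_i$ produces $f$ in $\rn{A_n^t}$'' are functions of disjoint portions of the random variables defining $\rn{A_n^t}$, hence conditionally independent given $A_n$. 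This yields $\mathrm{Var}\,p(f,\rn{A_n^t}) = o(1)$ as $n\to\infty$, so Chebyshev's inequality combined with a diagonal argument over the countable set $\mathcal M[T_{\operatorname{Lin}}]$ produces, for each $t$, a deterministic sequence $A_n^t\subseteq A_n$ with $p(\place,A_n^t)\to \phi_t$ coordinatewise; thus $\phi_t$ is a lineon.

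Finally, since every coordinate function $t\mapsto \phi_t(f)$ is polynomial in $t$, the map $t\mapsto \phi_t$ is continuous in the product topology on $[0,1]^{\mathcal M[T_{\operatorname{Lin}}]}$. Hence it is a continuous path inside the lineon space joining $\phi_\emptyset$ to $\phi$, yielding path-connectedness.
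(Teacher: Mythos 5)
Your proof is essentially correct, and it takes a genuinely different route from the paper's. The paper connects \emph{two arbitrary} lineons: it pads both representative sequences $\{A_n\}$ and $\{B_n\}$, passes to the $[0,1]$-valued interpolation $(1-t)\mathbf 1_{A_n}+t\,\mathbf 1_{B_n}$, extracts convergent subsequences in a nested fashion over the rationals $t$, argues that $t\mapsto(\phi_t)_M$ is Lipschitz with a model-dependent constant $c_M$, extends to all of $[0,1]$ by uniform continuity in a reweighted product metric, and invokes closedness of the lineon set to conclude. You instead establish the stronger property that the space is star-shaped around the empty lineon $\phi_\emptyset$: thin each $A_n$ with parameter $t$, compute the limit of the expected densities directly as an explicit polynomial in $t$, verify via a second-moment concentration plus diagonalization that each $\phi_t$ is realized by a deterministic sequence, and obtain continuity of $t\mapsto\phi_t$ for free from polynomiality of each coordinate. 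Your route buys an explicit closed-form path, bypasses both the nested-subsequence machinery and the detour through $[0,1]$-valued weighted lineons (Remark~\ref{rmk:convex}), and yields star-shapedness. The cost is generality: the paper's argument applies to connecting any two limits in any theory (it remarks that the same argument proves connectedness of $T$-on spaces for arbitrary $T$, and of graphons), whereas thinning toward the empty set requires the empty structure to be a model of $T$ --- true for $T_{\operatorname{Lin}}$, $T_{\operatorname{Graph}}$, $T_{k\operatorname{-Hypergraph}}$, but not for, say, $T_{\operatorname{LinOrder}}$ or $T_{\operatorname{Tournament}}$.

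One small inaccuracy: the first-moment formula you give is not an \emph{exact} identity, because $\rn\alpha$ in the definition of $p(f,A)$ is a uniformly random linear \emph{map}, not a linear injection; when $\rn\alpha$ collides on $\mathbb F_2^m\setminus\{0^m\}$ (which happens with probability $O(2^{2m-k_n})$), the conditional thinning probability no longer factors as $t^{w(f)}(1-t)^{w(g)-w(f)}$. Since the collision probability vanishes as $k_n\to\infty$, the asymptotic identity you actually need --- that $\expect{p(f,\rn{A_n^t})}\to\phi_t(f)$ --- does hold, and the rest of the proof goes through.
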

\begin{proof} (sketch) Every lineon corresponds to a convergent sequence $\{A_{n_t}\subseteq \mathbb
F_p^{n_t}\},\ n_1<n_2<\cdots < n_t<\cdots$. By letting $A_n\df A_{n_t}\times
\mathbb F_p^{n-n_t}$, where $t$ is the largest index for which $n_t\leq n$,
we can assume without loss of generality that the sets $A_n$ in our convergent sequence are
defined for {\em all} positive integers $n$.

Next, let $\{A_n\subseteq \mathbb F_p^n\}$ and $\{B_n\subseteq \mathbb
F_p^n\}$ be two convergent sequences. Consider their convex combination
$(1-t)\cdot \mathbf 1_{A_n} +t\cdot \mathbf 1_{B_n}\function{\mathbb
F_p^n}{[0,1]}$ depending on the parameter $t$ (see Remark~\ref{rmk:convex} below). For every fixed $t\in [0,1]$
this sequence contains a subsequence converging to a lineon $\phi_t\in
[0,1]^{\mathcal M[T_{\operatorname{Lin}}]}$, with $\phi_0$ and $\phi_1$ being
precisely the lineons the sequences $\{A_n\}$ and $\{B_n\}$ are converging
to. One remaining technical problem is that the mapping $t\mapsto \phi_t$
need not necessarily be continuous since e.g.\ the sequences corresponding to
different $t$ may be disjoint.

In order to circumvent this problem, let us fix an arbitrary enumeration
$t_1,\ldots,t_m,\ldots$ of all {\em rationals} in $[0,1]$ and, by induction
on $m$, construct the lineons $\phi_{t_m}$ in such a way that for any $m<m'$, the lineon
$\phi_{t_{m'}}$ is derived from a subsequence of the sequence used for
defining the lineon $\phi_{t_m}$. This property implies that for any fixed
model $M$, the function $\mathbb Q\cap [0,1]\to \mathbb [0,1]$ given by
$q\mapsto (\phi_q)_M$ is Lipschitz, with the corresponding Lipschitz
constant $c_M$ depending only on $M$. We can then use a definition of distance in $[0,1]^{\mathcal
M[T_{\operatorname{Lin}}]}$ analogous to Definition~\eqref{eq:distance}, but with denominators $2^n\cdot\max\{c_{M_n}, 1\}$ instead; such distance still induces the product topology and we see that the function $\phi$ itself is {\em
uniformly} continuous on $\mathbb Q\cap[0,1]$. Hence it can be uniquely extended to a
continuous function $[0,1]\to [0,1]^{\mathcal M[T_{\operatorname{Lin}}]}$.
Finally, recalling that the set of lineons is closed in $[0,1]^{\mathcal
M[T_{\operatorname{Lin}}]}$ finishes the proof.
\end{proof}

\begin{remark}\label{rmk:convex}
  As the reader may have noticed, in the proof above we used actually densities of
  sets $A\subseteq\mathbb{F}_p^k$ in functions $f\function{\mathbb{F}_p^n}{[0,1]}$
  that are not necessarily $\{0,1\}$-valued. It is straightforward to extend
  the definitions of lineon densities to $[0,1]$-valued functions ({\em weighted} lineons).

  Furthermore, one may also argue that the proof above only shows that the space of limits of $[0,1]$-valued
  functions is path-connected, which a priori may be much larger than just the space of lineons (i.e., the
  space of limits of $\{0,1\}$-valued functions). However, these spaces turn out to be the same by the
  following standard argument. Every $[0,1]$-valued function $f\function{\mathbb{F}_p^m}{[0,1]}$ is the almost sure
  limit of the sequence of random sets $\rn{A_n}\subseteq\mathbb{F}_p^{m+n}$ where each
  element $x\in\mathbb{F}_p^{m+n}$ is independently present in $\rn{A_n}$ with probability $f(\pi(x))$,
  where $\pi\function{\mathbb{F}_p^{m+n}}{\mathbb{F}_p^m}$ is the projection to the first $m$ coordinates. Hence,
  every $p(\place,f)$ is in fact a lineon. Finally, the fact that lineons form a closed set implies that limits
  of $[0,1]$-valued functions are also lineons.
\end{remark}

Once we know that lineons form a proper subspace of $\HomT{T_{\operatorname{Lin}}}$, one natural question
that arises is: can we at least describe it with countably many polynomial inequalities? Note that $\HomT{T_{\operatorname{Lin}}}$ itself can be described like this from the explicit
description given by flag algebras (see Section~\ref{sec:flag}).

As the final result of this section, we will prove that this is indeed possible, and in fact, we can prove that this is the
case for {\em any} closed subset of the set of all limit objects (and any theory). We will need the following basic lemma
in topology/geometry (since we were not able to find it in the literature, we offer a simple proof in Appendix~\ref{sec:convex}).

\begin{lemma}\label{lem:closedconvex}
  Let $C\subseteq[0,1]^{\mathbb{N}}$ be a closed convex set. Then $C$ can be described as the set of points $p\in[0,1]^{\mathbb{N}}$ that satisfy some system of countably many linear inequalities, each depending on finitely many coordinates of $p$.
\end{lemma}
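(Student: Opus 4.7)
The plan is to reduce to finite dimensions by projecting to the first $n$ coordinates and exploiting the fact that basic open sets in the product topology constrain only finitely many coordinates. For each $n\in\mathbb{N}$, let $\pi_n\function{[0,1]^{\mathbb{N}}}{[0,1]^n}$ denote the projection on the first $n$ coordinates and set $C_n\df\pi_n(C)$. Since $C$ is closed in the compact Hausdorff space $[0,1]^{\mathbb{N}}$, it is itself compact, so $C_n$ is compact (hence closed) in $[0,1]^n$, and it is convex because $\pi_n$ is linear.

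The first key step is to show that $C=\bigcap_{n\in\mathbb{N}}\pi_n^{-1}(C_n)$. The inclusion $\subseteq$ is immediate. For the reverse, given $p\notin C$, pick a basic open neighborhood $U$ of $p$ disjoint from $C$; such a $U$ constrains only finitely many coordinates, so after choosing $n$ large enough we may write $U=\pi_n^{-1}(V)$ for some open $V\subseteq[0,1]^n$ containing $\pi_n(p)$. If some $c\in C$ satisfied $\pi_n(c)\in V$, then $c\in\pi_n^{-1}(V)=U$, contradicting $U\cap C=\emptyset$; hence $V\cap C_n=\emptyset$, so $\pi_n(p)\notin C_n$ and thus $p\notin\pi_n^{-1}(C_n)$.

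The second step is to show, for each fixed $n$, that the compact convex set $C_n\subseteq[0,1]^n$ is the intersection of a countable family $\mathcal{H}_n$ of closed half-spaces of the form $\{x\in[0,1]^n : \langle a,x\rangle\leq b\}$ with $(a,b)\in\mathbb{Q}^n\times\mathbb{Q}$. Take $\mathcal{H}_n$ to consist of all such rational half-spaces containing $C_n$; this collection is countable, and clearly $C_n\subseteq\bigcap\mathcal{H}_n$. For the reverse inclusion, given $q\in[0,1]^n\setminus C_n$, the standard separating hyperplane theorem in $\mathbb{R}^n$ produces $a\in\mathbb{R}^n$ and $b\in\mathbb{R}$ with $\langle a,x\rangle\leq b$ for all $x\in C_n$ and $\langle a,q\rangle > b$; since strict inequality on a compact set is preserved under small perturbations of the coefficients, one may replace $(a,b)$ by a nearby pair in $\mathbb{Q}^n\times\mathbb{Q}$ still satisfying both conditions, exhibiting a member of $\mathcal{H}_n$ that excludes $q$.

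Combining, each half-space $H\in\mathcal{H}_n$ pulls back along $\pi_n$ to a linear inequality on $[0,1]^{\mathbb{N}}$ depending on only the first $n$ coordinates, and the countable family $\bigcup_n\pi_n^{-1}(\mathcal{H}_n)$ has intersection exactly $\bigcap_n\pi_n^{-1}(C_n)=C$. The only non-routine point is the rational approximation argument in the second step, but the requisite openness of the strict separation condition in the finite-dimensional parameter space $\mathbb{R}^n\times\mathbb{R}$ makes this a standard perturbation.
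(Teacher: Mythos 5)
Your proposal is correct and takes essentially the same route as the paper: reduce to finite dimensions via the projections $\pi_n$ and the identity $C=\bigcap_n\pi_n^{-1}(\pi_n(C))$, then describe each compact convex $\pi_n(C)\subseteq[0,1]^n$ by countably many rational half-spaces. The only difference is the finishing move in the finite-dimensional step: you invoke strict separation plus a perturbation to rational coefficients (where, strictly speaking, you should state the separating inequality as $\langle a,x\rangle < b$ for $x\in C_n$ rather than $\leq$, so that the perturbation argument goes through cleanly), whereas the paper approximates the real coefficients of arbitrary supporting half-spaces from above by rationals and then observes that only countably many distinct rational tuples appear; both are standard and equivalent.
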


\begin{theorem}\label{thm:closeddescribe}
  Every closed set $F\subseteq\HomT{T}$ can be described with countably many polynomial inequalities (each in a finite number of coordinates).
\end{theorem}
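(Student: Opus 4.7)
The plan is to combine the Veronese embedding (which turns polynomial inequalities into linear ones) with Lemma~\ref{lem:closedconvex} and a weak-$\ast$ convergence argument reminiscent of the method of moments. Index by a countable set $I$ all finite nonempty multisets $S$ of elements of $\mathcal{M}$, and consider the continuous map
\begin{align*}
V\function{[0,1]^{\mathcal{M}}}{[0,1]^I}, \qquad V(\phi)_S \df \prod_{M\in S}\phi(M).
\end{align*}
Since $F$ is closed in the compact set $\HomT{T}$, the image $V(F)$ is compact and thus closed in $[0,1]^I$, and its closed convex hull $C$ is a closed convex subset of $[0,1]^I\cong[0,1]^{\mathbb{N}}$. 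By Lemma~\ref{lem:closedconvex}, $C$ is cut out by countably many linear inequalities $\ell_n(y)\leq c_n$, each depending on only finitely many of the coordinates $y_S$. Pulling back through $V$, each such inequality becomes a polynomial inequality $p_n(\phi)\leq c_n$ in finitely many of the coordinate functions $\pi_M\function{[0,1]^{\mathcal{M}}}{[0,1]}$, $\pi_M(\phi)\df\phi(M)$; this describes $V^{-1}(C)$ by countably many polynomial inequalities, each in finitely many coordinates.

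It remains to verify that $V^{-1}(C)\cap\HomT{T}=F$; the inclusion $F\subseteq V^{-1}(V(F))\subseteq V^{-1}(C)$ is immediate. For the converse, suppose $\phi\in\HomT{T}$ satisfies $V(\phi)\in C$. Since $[0,1]^I$ is metrizable, pick convex combinations $\nu_n=\sum_i\lambda_n^i V(\phi_n^i)$ with $\phi_n^i\in F$ converging to $V(\phi)$, and let $\mu_n\df\sum_i\lambda_n^i\delta_{\phi_n^i}$ be the associated probability measures supported on $F\subseteq\HomT{T}$. For every $S\in I$, the convergence of the $S$-th coordinate reads
\begin{align*}
\int\prod_{M\in S}\psi(M)\,d\mu_n(\psi) = (\nu_n)_S \longrightarrow V(\phi)_S = \prod_{M\in S}\phi(M),
\end{align*}
so $\int q\,d\mu_n\to q(\phi)$ for every monomial $q$ in the coordinate functions $\pi_M$, and hence for every polynomial in them by linearity.

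The algebra generated by the $\pi_M$'s contains the constants and separates points of the compact Hausdorff space $\HomT{T}$, so by the Stone--Weierstrass theorem it is uniformly dense in $C(\HomT{T})$; together with the fact that each $\mu_n$ is a probability measure, this promotes polynomial convergence to $\int f\,d\mu_n\to f(\phi)$ for every $f\in C(\HomT{T})$, i.e., $\mu_n\to\delta_\phi$ weakly. Since $\mu_n(F)=1$ for every $n$ and $F$ is closed, the closed-set item of Theorem~\ref{thm:port} gives $1\leq\delta_\phi(F)$, so $\phi\in F$, as required. The main obstacle I expect is this final step: one has to exploit simultaneously that $V$ is \emph{rich enough} (via \emph{all} monomials) for Stone--Weierstrass to convert coordinate-wise convergence into weak convergence of the averaging measures $\mu_n$, and that the homomorphism property of $\phi$ is what makes the pulled-back inequalities $p_n$ genuinely polynomial in the finitely many original coordinates they involve.
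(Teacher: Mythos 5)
Your proof is correct, but it takes a genuinely different route from the paper. The paper works directly with the closed convex hull of $F$ in $[0,1]^{\mathcal{M}[T]}$, applies Lemma~\ref{lem:closedconvex} to get linear inequalities, invokes~\cite[Theorem~3.28]{Rud} to represent every point of that hull as a barycenter $\expect{\rn\phi}$ of a measure supported on $F$, and then finishes in one line by appealing to the cryptomorphism with extreme exchangeable random structures (Theorem~\ref{thm:allcrypto}, item~\ref{it:crypto}): since the correspondence between distributions on $\HomT{T}$ and exchangeable random structures is linear and depends only on first moments $\expect{\rn\phi(M)}$, a barycenter lying in $\HomT{T}$ forces the measure to be Dirac. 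You instead use a Veronese embedding into $[0,1]^I$ to turn polynomials into linear forms, apply the same Lemma~\ref{lem:closedconvex} in the embedded space, and then close the loop via approximating convex combinations, Stone--Weierstrass, and Portmanteau. This is more self-contained (it leans only on basic weak-convergence facts and not on the heavy cryptomorphism machinery that is the paper's main content), at the price of the Veronese bookkeeping; the paper's argument is essentially free once Theorem~\ref{thm:allcrypto} is in hand.

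One remark on your closing paragraph, which slightly misdiagnoses where the difficulty lies. The homomorphism property of $\phi$ plays no role in making the pulled-back inequalities polynomial: each coordinate $y_S$ pulls back to the monomial $\prod_{M\in S}\pi_M$, a polynomial in the original coordinates regardless of whether $\phi\in\HomT{T}$. In fact, your converse argument does not use $\phi\in\HomT{T}$ at all: for any $\phi\in[0,1]^{\mathcal{M}}$ with $V(\phi)\in C$, the finitely-supported measures $\mu_n$ on $F$ converge weakly to $\delta_\phi$ on the compact space $[0,1]^{\mathcal{M}}$ (Stone--Weierstrass on $[0,1]^{\mathcal{M}}$ plus the fact that the $\mu_n$ are probability measures), and Portmanteau then forces $\phi\in F$. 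So you have actually shown the stronger statement $V^{-1}(C)=F$, which means intersecting with the polynomial description of $\HomT{T}$ is unnecessary for your argument, though of course harmless. What really makes the Veronese step work is exactly what you identified in the first half of that sentence: the embedding is rich enough that matching all moments of $\mu_n$ against those of $\delta_\phi$ pins down $\delta_\phi$ completely.
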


\begin{proof}
  Since $\HomT{T}$ itself can be described with countably many polynomial inequalities (see Section~\ref{sec:flag}) and in
  light of Lemma~\ref{lem:closedconvex} above, it is enough to show that there exists a closed convex set $C\subseteq
  [0,1]^{\mathcal M[T]}$ such that $F = C\cap\HomT{T}$. Let $C$ be the closed convex hull of $F$. Since both $F$ and $C$ are
  compact, by~\cite[Theorem~3.28]{Rud}, every element of $C$ is of the form $\expect{\rn{\phi}}$ for some random variable
  $\rn{\phi}$ supported on $F$. Thus, it is enough to show that if $\rn{\phi}$ is supported on $F$ and $\expect{\rn{\phi}}\in\HomT{T}$, then we must have $\prob{\rn{\phi}=\expect{\rn\phi}}=1$. This is immediate from item~\ref{it:crypto} of Theorem~\ref{thm:allcrypto} and the observation that the isomorphism between distributions on $\HomT T$ and exchangeable random structures is linear and hence preserves extreme points.
\end{proof}

But all in all, these pieces of information about the space of lineons
generate more questions than they answer, and we defer further discussion to
the concluding section.

\section{Conclusion and open problems} \label{sec:conclusion}

One topic that we have touched only very briefly is the distance and topology
on the space of all $T$-ons for a given theory $T$. Both can be defined via
densities (see Section~\ref{sec:prel}), but to the best of our knowledge, no
alternative ``intrinsic'' description bypassing statistical sampling is known
even for the case of 3-hypergraphons. This is of course in sharp contrast
with the case of ordinary graphons where the characterization in terms of
cut-distance~\cite{BCL*} is an inherent part of a fruitful and beautiful
theory~\cite[Part~3]{Lov4}. Is it possible to give an analogous
characterization for an arbitrary theory $T$, presumably based on a suitable
generalization of the cut-distance to higher dimensions?

Expanding on the previous question, in the sparse graph setting, convergence in (normalized) cut-distance
yields the theory of $L^p$-graphons~\cite{BCCZ14,BCCZ18}, which forms a semantic limit theory just as ordinary
graphons. A natural question is whether it is possible to generalize such notions of cut-distance convergence in
the sparse setting to arbitrary combinatorial objects. While one stepping stone in answering this would be a generalization of cut-distance to higher dimensions, another would be to
provide some sort of syntactic limit for sparse graphs capturing cut-distance convergence.

On the other hand, for convergence of densities there have been successful adaptations of flag algebras to
some theories in the sparse setting~\cite{Bab11,BHLL14}, providing a syntactic limit for them (recall that
convergence in cut-distance and convergence of densities are not equivalent in the sparse setting). Can one
then also provide a semantic limit for these theories? One possible approach would be to substitute the
exchangeability notion used in this work with the notion of partial exchangeability (see~\cite{DiFr84} for a
definition and further references on the topic).

As we have mentioned before, our proof of the Induced Euclidean Removal Lemma (Theorem~\ref{thm:ierl}) heavily depends on the axiom of choice. A natural question is whether one can prove it without using the axiom of choice or whether this theorem is equivalent to some weak form of the axiom of choice. As we saw in Section~\ref{sec:removal}, this question is ``morally similar'' to the question of whether strong theons can be always made Borel (using whatever methods).
The simplest theory for which we do not know the answer is the theory of
graphs with forbidden {\em induced} cycles $C_4$.

The undecidability result by Hatami and Norine~\cite{HaNo} says that given an element $f\in\mathcal A[T_{\operatorname{Graph}}]$
with rational coefficients, it is algorithmically undecidable whether the
inequality $f\geq 0$ holds in the limit for all graphs. It is very natural to ask for which other universal theories $T$ in a relational language this is true; let us provisionally call such theories ``statistically undecidable''. One obvious example of a statistically decidable theory is $T_{\operatorname{LinOrder}}$, and this notion is clearly invariant under isomorphisms as defined in Section~\ref{sec:interpretations}. More generally,
if $I\interpret{T_1}{T_2}$ is an open interpretation {\em such that $\pi^\ast(I)$ is surjective} (see Example~\ref{ex:erasing}) and $T_2$ is statistically decidable then $T_1$ is also statistically decidable. But other than these simple remarks we have little to say on the subject. In particular, we do not know how statistical decidability compares with ordinary (in the full first-order logic) decidability, either way.

Let us finally ask several questions about lineons (Section~\ref{sec:lineons}). On the semantical side, it would be very interesting to
give a theon-free description of limit objects that better takes into
account the specific $\mathbb F_p$-linear structure. A natural test for the
validity of such a description should be a (two-sided) cryptomorphism between
these hypothetical objects and lineons, as described syntactically in Section~\ref{sec:lineons}. While a progress in this direction has been reported in~\cite{HHH,Szeg,Yos}, the question still remains open.

We concluded Section~\ref{sec:lineons} by observing that the set of lineons is a proper closed subset of
$\HomT{T_{\operatorname{Lin}}}\subseteq [0,1]^{\mathcal
M[T_{\operatorname{Lin}}]}$ that can be described by countably many polynomial inequalities. However, our proof does not provide these inequalities explicitly, so it remains an open question whether the set of lineons has an explicit description (i.e., computably enumerable) by countably many polynomial inequalities.

Finally, the hypergraph interpretation of lineons allows us to transfer to
this framework the whole host of questions asked in the asymptotical extremal
combinatorics about {\em concrete} relations $f\geq 0$, and some of them will
look quite natural in this setting. For example, inspired by the concept of
{\em commonality} in graph theory, we might ask the following. Is it true
that for any coloring $c\function{\mathbb F_2^n}{\mathbb F_2}$ the density of
monochromatic affine triangles $(x,y,x+y)$ is always at least $1/4-o(1)$? If
the answer is yes, what is the structure of extremal lineons, are there any
other examples besides affine mappings and quasi-random ones?

\section*{Acknowledgment}

We are grateful to Alexander Shen and Caroline Terry for pointing out to us
the references~\cite{AuT} and~\cite{ArC}, respectively. Our special thanks are due to Persi Diaconis for many insightful remarks made on the first version of the paper, as
well as some important references to earlier work. 

%%%%%%%%%%%%%%%%%%%%%%%%%%%%%%%%

\bibliographystyle{alpha}
\bibliography{refs}

\appendix

\section{M\"{o}bius Inversion}
\label{sec:Mobius}

In this section we present the M\"{o}bius Inversion used in our particular application in a lightweight ad hoc manner. For a more thorough introduction to the topic, we refer the interested reader to~\cite{Spi}.

\begin{definition}
  Let $(P,\preceq)$ be a finite poset. A \emph{(closed and bounded) interval} in $P$ is a set of the form
  \begin{align*}
    [a,b] & \df \{c\in P \mid a\preceq c\preceq b\},
  \end{align*}
  for some $a,b\in P$ with $a\preceq b$. The \emph{length} of the interval $[a,b]$, denoted by $\ell([a,b])$ is defined as the cardinality of the largest chain contained in $[a,b]$, that is, we have
  \begin{align*}
    \ell([a,b]) & \df \max\{\ell\in\mathbb{N} \mid \exists a_1,a_2,\ldots,a_\ell\in[a,b],(a=a_1\prec a_2\prec\cdots\prec a_\ell=b)\}.
  \end{align*}

  The \emph{M\"{o}bius Function} of the poset $P$ is the function $\mu\function{P\times P}{\mathbb{R}}$ defined by induction on the length of intervals by
  \begin{align*}
    \mu(a,b) & =
    \begin{dcases*}
      1, & if $a=b$;\\
      -\sum_{\substack{c\in [a,b]\\c\neq b}} \mu(a,c), & if $a\prec b$;\\
      0, & if $a\npreceq b$.
    \end{dcases*}
  \end{align*}
\end{definition}

\begin{theorem}[M\"{o}bius Inversion]
  Let $(P,\preceq)$ be a finite poset and $f$ and $g$ be real-valued functions defined on $P$. If
  \begin{align}\label{eq:mobiuscondition}
    \forall x\in P, f(x) & = \sum_{y\succeq x} g(y),
  \end{align}
  then
  \begin{align*}
    \forall x\in P, g(x) & = \sum_{y\succeq x} \mu(x,y)f(y).
  \end{align*}
\end{theorem}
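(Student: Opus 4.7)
The plan is to prove the theorem by direct substitution, swapping the order of summation, and then invoking a key identity that is essentially the defining recurrence of the M\"obius function $\mu$. Concretely, starting from the right-hand side
\begin{align*}
\sum_{y\succeq x}\mu(x,y)f(y) &= \sum_{y\succeq x}\mu(x,y)\sum_{z\succeq y}g(z),
\end{align*}
I would interchange the order of summation (valid since $P$ is finite, so all sums involved have only finitely many nonzero terms) to rewrite this as
\begin{align*}
\sum_{z\succeq x}g(z)\sum_{y\in[x,z]}\mu(x,y).
\end{align*}

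The key step is then to establish the identity
\begin{align*}
\sum_{y\in[x,z]}\mu(x,y) &=
\begin{dcases*}
1, & if $x=z$;\\
0, & if $x\prec z$.
\end{dcases*}
\end{align*}
This is immediate from the recursive definition of $\mu$: when $x = z$, the sum is just $\mu(x,x) = 1$, and when $x\prec z$, the definition of $\mu(x,z)$ can be rearranged to exactly $\sum_{y\in[x,z]}\mu(x,y) = 0$. (Here I am relying on the fact that $P$ is finite so that the recursion on interval length actually bottoms out.)

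Plugging this identity back in, only the term $z=x$ survives in the outer sum, and we obtain $g(x)$ as desired. I do not expect a significant obstacle here; the entire argument is a two-line calculation once the auxiliary identity on $\mu$ is noted, and that identity is essentially a restatement of how $\mu$ is defined. The only points requiring a moment of care are justifying the interchange of summations (trivial by finiteness) and being precise that $\mu(x,y)=0$ whenever $y\not\succeq x$, so that extending sums from $y\in[x,z]$ to $y\in P$ or vice versa does not change their values.
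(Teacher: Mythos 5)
Your proof is correct and follows essentially the same route as the paper's: substitute the hypothesis, interchange the order of summation, and observe that $\sum_{y\in[x,z]}\mu(x,y)$ collapses to $1$ when $x=z$ and to $0$ when $x\prec z$ by the defining recurrence of $\mu$. The paper just writes out this last cancellation inline rather than isolating it as a separate identity, but the argument is the same.
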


\begin{proof}
  Follows directly from the calculation below.
  \begin{align*}
    \sum_{y\succeq x}\mu(x,y)f(y)
    & =
    \sum_{y\succeq x}\mu(x,y)\sum_{z\succeq y}g(z)
    \\
    & =
    \sum_{z\succeq x}g(z)\sum_{y\in [x,z]}\mu(x,y)
    \\
    & =
    g(x)\mu(x,x) + \sum_{z\succ x}g(z)\biggl(\mu(x,z)+\sum_{\substack{y\in [x,z]\\y\neq z}}\mu(x,y)\biggr)
    \\
    & =
    g(x).
  \end{align*}
\end{proof}

As a corollary, if we consider a model $M$ of a theory $T$ and form the poset $P_M$ of (labeled) models of $T$ whose set of
vertices is $V(M)$ and with the partial order $\subseteq$, the equation~\eqref{eq:inj-ind} is exactly of the
form~\eqref{eq:mobiuscondition}. Hence we get
\begin{align}\label{eq:ind-inj}
  \tind(M,N) & = \sum_{M'\supseteq M}\mu(M,M')\tinj(M',N).
\end{align}

\begin{example}[uniform hypergraphs]
  In the theory of $k$-uniform hypergraphs $T_{k\operatorname{-Hypergraph}}$, we have
  \begin{align*}
    \mu(M,N) & = (-1)^{\lvert E(N)\setminus E(M)\rvert},
  \end{align*}
  for every $M\subseteq N$ with $V(M)=V(N)$, where $E(M)$ denotes the set of hyperedges of $M$, that is, we have
  \begin{align*}
    E(M) & = \{\{v_1,v_2,\ldots,v_k\}\subseteq V(M) \mid R_{E,M}(v_1,v_2,\ldots,v_k)\}.
  \end{align*}

  Therefore, we have
  \begin{align*}
    \tind(M,N) & = \sum_{M'\supseteq M}(-1)^{\lvert E(M')\setminus E(M)\rvert}\tinj(M',N).
  \end{align*}
\end{example}

\begin{example}[uniform directed hypergraphs]
  In the theory of directed $k$-uniform hypergraphs (that is, the canonical theory in the language with a single predicate whose arity is $k$ and with only the canonicity axiom~\eqref{eq:canonical}), we have
  \begin{align*}
    \mu(M,N) & = (-1)^{\lvert R_{E,N}\setminus R_{E,M}\rvert},
  \end{align*}
  for every $M\subseteq N$ with $V(M)=V(N)$.

  Therefore, we have
  \begin{align*}
    \tind(M,N) & = \sum_{M'\supseteq M}(-1)^{\lvert R_{E,M'}\setminus R_{E,M}\rvert}\tinj(M',N).
  \end{align*}
\end{example}

\begin{example}[permutations and tournaments]
  Since in the theory of permutations and in the theory of tournaments we have
  \begin{align*}
    V(M) = V(N)\land M\subseteq N & \implies M=N,
  \end{align*}
  then the M\"{o}bius Function for these theories is trivial:
  \begin{align*}
    \mu(M,N) & =
    \begin{dcases*}
      1, & if $M = N$;\\
      0, & if $M\neq N$.
    \end{dcases*}
  \end{align*}
\end{example}

Which is another way of saying that induced and non-induced settings for these theories are the same.

\section{Measure theory proofs}
\label{sec:measure}

In this section, we prove Lemma~\ref{lem:atomless} and Theorem~\ref{thm:measiso}.

\begin{proofof}{Lemma~\ref{lem:atomless}}
  The forward implication follows directly from the fact that if $\mu(\{x\}) > 0$ for some $x\in X$, then $\{x\}$ is an atom of $(X,\mathcal{A},\mu)$.

  \medskip

  For the backward implication, fix arbitrarily a metric $\operatorname{dist}(x,y)$ leading to the topology and let $\{p_n \mid n\in\mathbb{N}\}$ be a countable dense set in $X$. Let us denote by $B(x,r)$ the open ball of radius $r$ centered at $x$ and denote by $\operatorname{diam}(B)\df\sup\{\operatorname{dist}(x,y) \mid x,y\in B\}$ the diameter of a set $B$.

  Suppose that $A\in\mathcal{A}$ is an atom of $\Omega$ and let us show that there exists $x\in X$ such that $\mu(\{x\}) > 0$.

  We construct inductively a sequence $(A_m)_{m\in\mathbb{N}}$ of measurable sets satisfying
  \begin{itemize}
  \item $A_m$ is an atom of $\Omega$;
  \item $\operatorname{diam}(A_m)\leq 2^{-m}$;
  \item $A_{m+1}\subseteq A_{m}$;
  \item $\mu(A_m) = \mu(A)$.
  \end{itemize}

  As an initial step, we set $A_{-1} \df A$ (note that we do not claim that $A_{-1}$ has finite diameter). Given $A_{m-1}$ for some $m\in\mathbb{N}$, since $A_{m-1} = \bigcup_{n\in\mathbb{N}} (B(p_n,2^{-m-1})\cap A_{m-1})$, we know that $\mu(B(p_{n_m},2^{-m-1})\cap A_{m-1}) > 0$ for some $n_m$ and since $A_{m-1}$ is an atom and $\mu(A_{m-1})=\mu(A)$, this measure must be $\mu(A)$. Set then $A_m \df B(p_{n_m},2^{-m-1})\cap A_{m-1}$ and note that all required properties are satisfied.

  Consider now the set $B \df \bigcap_{m\in\mathbb{N}} A_m$. Since $\mu(B) = \lim_{m\to\infty} \mu(A_m) = \mu(A) > 0$, it follows that $B$ is non-empty. On the other hand, we know that $\operatorname{diam}(B) = 0$, so $B$ must be of the form $\{x\}$ for some $x\in X$ and we get $\mu(\{x\}) > 0$.

  \medskip

  Finally, let us prove that in this case the diagonal $D = \set{(x,x)\in X^2}{x\in X}$ has measure~0
  w.r.t.\ the product measure $\mu^2$. First note that since the topology is metrizable (and hence Hausdorff), the diagonal is a
  closed set, hence measurable. On the other hand, by Fubini's Theorem, it is enough to show that for every
  $x\in X$, the section $D(x) = \{y\in X \mid (x,y)\in D\}$ has measure~0. But this is indeed the case as
  $D(x) = \{x\}$ for every $x\in X$ and $\mu$ is atomless.
\end{proofof}

To prove Theorem~\ref{thm:measiso}, we first need a technical lemma.

\begin{lemma}\label{lem:zerocontinuum}
  If $A\subseteq [0,1]$ is a Lebesgue measurable set with $\lambda(A) > 0$, then there exists a set $K\subseteq A$ of zero Lebesgue measure and cardinality of the continuum.
\end{lemma}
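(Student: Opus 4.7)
The plan is to construct a Cantor-like subset of $A$ using the Lebesgue density theorem together with inner regularity of Lebesgue measure. The guiding idea is that a set of zero measure and continuum cardinality can be obtained as an intersection of ``shrinking binary trees'' of compact subsets of $A$, provided at each stage we carve out two disjoint compact pieces of positive measure and control the total diameter carefully.

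First I would use inner regularity to replace $A$ by a compact subset $K_\emptyset \subseteq A$ with $\lambda(K_\emptyset) > 0$; this is harmless since any $K \subseteq K_\emptyset$ will lie in $A$. Then I would build, by induction on $n$, a family of compact sets $\{K_s \mid s \in \{0,1\}^n\}$ with the following properties: (i) $K_s \subseteq K_{s'}$ whenever $s'$ is a prefix of $s$; (ii) $\lambda(K_s) > 0$; (iii) $K_{s0}$ and $K_{s1}$ are disjoint; (iv) $\operatorname{diam}(K_s) \leq 4^{-n}$ for $s \in \{0,1\}^n$. The inductive step is where Lebesgue density theorem enters: given $K_s$ of positive measure, the set of Lebesgue density points of $K_s$ has full measure in $K_s$ (Proposition~\ref{prop:Ldensityae}), hence is uncountable, so in particular it contains two distinct points $x_0 < x_1$. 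Choosing $\delta > 0$ small enough that $[x_0 - \delta, x_0 + \delta] \cap [x_1 - \delta, x_1 + \delta] = \emptyset$ and $2\delta \leq 4^{-(n+1)}$, the sets $K_{s\epsilon} \df K_s \cap [x_\epsilon - \delta, x_\epsilon + \delta]$ are compact, disjoint, and of positive measure (since $x_\epsilon$ is a density point of $K_s$), completing the induction.

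Finally, I would define
\[
K \df \bigcap_{n \in \mathbb{N}} \bigcup_{s \in \{0,1\}^n} K_s.
\]
Since each $\bigcup_{|s|=n} K_s$ can be covered by $2^n$ sets of diameter $\leq 4^{-n}$, we have $\lambda(K) \leq 2^n \cdot 4^{-n} = 2^{-n}$ for every $n$, so $\lambda(K) = 0$. For each $\sigma \in \{0,1\}^{\mathbb{N}}$, the nested sequence $(K_{\sigma|n})_{n \in \mathbb{N}}$ consists of nonempty compact sets with diameter tending to zero, hence has a unique common point $x_\sigma \in K$. Property (iii) guarantees that $\sigma \mapsto x_\sigma$ is injective, so $|K| \geq 2^{\aleph_0}$, and the reverse inequality is automatic since $K \subseteq [0,1]$. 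As $K \subseteq K_\emptyset \subseteq A$, this is the desired set.

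There is no serious obstacle: the only delicate point is arranging simultaneously the nesting, the disjointness, the positivity of measure, and the diameter bound, but all four are achieved in one stroke by the density-point argument above. The $4^{-n}$ bound on diameter is more than enough to force $\lambda(K) = 0$; any decay rate faster than $2^{-n}$ would do.
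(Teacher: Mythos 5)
Your proof is correct. Both you and the paper build a Cantor-type set: a binary tree of nested compact subsets of $A$, each of positive measure, with siblings disjoint, so that the intersection along each infinite branch is nonempty, giving continuum-many distinct points, while the measure of the union across each level tends to zero. The difference lies in how the bisection step is performed. The paper uses the continuity of the distribution function $p \mapsto \lambda([0,p]\cap I_\alpha \cap A)$ to cut each piece into two subintervals each capturing exactly one third of the parent's $A$-measure; this makes the measure at level $n$ equal to $\lambda(A)\cdot(2/3)^n$ directly, with no control on diameters (the continuum argument there uses only compactness and disjointness, not uniqueness of the branch point). You instead take two density points of $K_s$ (invoking Proposition~\ref{prop:Ldensityae}, which the paper has already set up) and intersect with tiny disjoint closed balls around them, trading exact measure control for a crude $2^n \cdot 4^{-n}$ bound via diameter decay. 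Your route is a little slicker but leans on the Lebesgue density theorem, a comparatively heavy tool; the paper's route is more elementary, relying only on inner regularity and the fact that singletons have Lebesgue measure zero. Both are fully rigorous; all the details you flag as delicate (simultaneous nesting, disjointness, positivity, diameter bound) do indeed go through as you describe.
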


\begin{proof}
 By possibly replacing $A$ with a closed set $F\subseteq A$ such that $\lambda(F)>\lambda(A)/2>0$, we may suppose that $A$ is closed.
We construct a Cantor subset in $A$.

More precisely, let us define closed sets $F_n$ inductively with $F_{n+1}\subseteq F_{n}$ and such that $F_n$ is a union
  of $2^n$ disjoint closed intervals contained in $[0,1]$, each such interval $I$ satisfying $\lambda(I\cap A)
  = \lambda(A)/3^n$. It will be convenient to index the intervals by finite strings over $\{0,1\}$, with the ones indexed by $\{0,1\}^n$ corresponding to $F_n$.

We start with $F_0 = I_\epsilon = [0,1]$, where $\epsilon$ is the empty string. Suppose by induction that we have constructed $F_n = \mathop{\stackrel\cdot\bigcup}_{\alpha\in\{0,1\}^n} I_\alpha$. For $\alpha\in\{0,1\}^n$, we let
  \begin{align*}
    \ell_\alpha
    & \df
    \sup\left\{p\in I_\alpha\cap A \middle\vert
    \lambda([0,p]\cap I_\alpha\cap A) \leq \frac{\lambda(A)}{3^{n+1}}\right\};
    \\
    r_\alpha
    & \df
    \sup\left\{p\in I_\alpha\cap A \middle\vert
    \lambda([0,p]\cap I_\alpha\cap A) \leq \frac{2\lambda(A)}{3^{n+1}}\right\};
    \\
    I_{\alpha 0} & \df \{p\in I_\alpha \mid p \leq \ell_\alpha\};
    \\
    I_{\alpha 1} & \df \{p\in I_\alpha \mid p \geq r_\alpha\}.
  \end{align*}
Clearly $I_{\alpha 0}$ and $I_{\alpha 1}$ are disjoint closed intervals contained in $I_\alpha$ and since
  all singletons have Lebesgue measure zero, it follows that $\lambda(I_{\alpha 0}\cap A) = \lambda(I_{\alpha
    1}\cap A) = \lambda(A)/3^{n+1}$.
Setting $F_{n+1} \df \bigcup_{\alpha\in\{0,1\}^{n+1}} I_\alpha$ concludes the construction.

  Let then $K \df \bigcap_{n\in\mathbb{N}} (F_n\cap A)$ and note that since $F_{n+1}\subseteq F_{n}$ and $\lambda(F_n\cap A) = \lambda(A)\cdot (2/3)^n$, we have $\lambda(K) = 0$.
For every infinite string $\alpha\in\{0,1\}^{\mathbb{N}_+}$, the decreasing family of bounded closed sets
  \begin{align*}
    \{A\cap I_{\alpha_1\alpha_2\cdots\alpha_n} \mid n\in\mathbb{N}_+\}
  \end{align*}
contains at least one point $p_\alpha\in K$. Moreover, since $I_\alpha\cap I_\beta=\emptyset$ for any pair $\alpha\neq\beta$ of the same length, all these points are pairwise different and hence $K$ has the cardinality of the continuum.
\end{proof}

\begin{proofof}{Theorem~\ref{thm:measiso}}
  For the backward implication, $([0,1],\mathcal{L}_1,\lambda^1)$ is the completion of the space $([0,1], \mathcal B,\lambda^1)$
  satisfying Assumption~P, and Assumption~P is clearly invariant under measure-isomorphisms.

  For the forward implication, suppose $\Omega=(X,\mathcal{A},\mu)$ is a space satisfying Assumption~P and
  let $\overline{\Omega}=(X,\overline{\mathcal{A}},\overline{\mu})$ be its completion. By
  Theorem~\ref{thm:bogachev}, we know that $\Omega$ is measure-isomorphic modulo~0
  to $([0,1],\mathcal{B}_1,\lambda^1)$, that is, there exist $B\in\mathcal{B}_1$ and $A\in\mathcal{A}$
  with $\lambda^1([0,1]\setminus B)=\mu(X\setminus A)=0$ and there exists a
  measure-isomorphism $f\function{(B,\mathcal{B}_1\vert_B,\lambda^1\vert_B)}{(A,\mathcal{A}\vert_A,\mu\vert_A)}$.

  Since every measurable set $C$ in $\overline{\mathcal{A}}$ is of the form $C = C'\cup C''$ for
  some $C'\in\mathcal{A}$ and some $C''$ contained in a zero $\mu$-measure set and the same holds
  for $\mathcal{L}_1$, it follows that $f$ is also a measure-isomorphism
  between $(B,\mathcal{L}_1\vert_B,\lambda^1\vert_B)$
  and $(A,\overline{\mathcal{A}}\vert_A,\overline{\mu}\vert_A)$.

  By possibly replacing $B$ with $B\setminus Y$ for a set $Y\subseteq B$ such
  that $\lambda^1(Y)=0$ and $Y$ has cardinality of the continuum, whose existence is guaranteed by
  Lemma~\ref{lem:zerocontinuum}, we may suppose that $[0,1]\setminus B$ (and hence also $X\setminus A$) has
  cardinality of the continuum. Note that this conclusion does {\em not} use either the axiom of choice or
  the continuum hypothesis, only the Cantor--Schr\"oder--Bernstein Theorem.

  Then we can extend $f$ to a bijection between $[0,1]$ and $X$ arbitrarily and the resulting function is a
  measure-isomorphism between $([0,1],\mathcal{L}_1,\lambda^1)$
  and $(X,\overline{\mathcal{A}},\overline{\mu})$ since these measure spaces are complete.
\end{proofof}

\section{Closed convex sets in the Hilbert Cube}
\label{sec:convex}

\begin{proofof}{Lemma~\ref{lem:closedconvex}}
  For every $n\in\mathbb{N}$, let $\pi_n\function{[0,1]^{\mathbb{N}}}{[0,1]^n}$ be the projection to the first $n$ coordinates. Note that since $[0,1]^{\mathbb{N}}$ is compact, the projection $\pi_n$ is a closed map.

  Clearly we have $C\subseteq\bigcap_{n\in\mathbb{N}}\pi_n^{-1}(\pi_n(C))$. On the other hand, if $x\in\bigcap_{n\in\mathbb{N}}\pi_n^{-1}(\pi_n(C))$, then since $\{\pi_n^{-1}(U) \mid n\in\mathbb{N}\land\pi_n(x)\in U\land U\subseteq[0,1]^n\text{ open}\}$ is a basis of neighborhoods of $x$ in $[0,1]^{\mathbb{N}}$ and every such $\pi_n^{-1}(U)$ must have at least one point $y$ of $C$ satisfying $\pi_n(y)=\pi_n(x)$, we get $x\in\overline{C}=C$.

  Therefore, we get
  \begin{align}\label{eq:Ccapproj}
    C & = \bigcap_{n\in\mathbb{N}}\pi_n^{-1}(\pi_n(C)).
  \end{align}
  But (since $\pi_n$ is a closed map) $\pi_n(C)$ is a closed convex set in $[0,1]^n$, hence it can be described as the set of points $p\in[0,1]^n$ satisfying (possibly uncountably many) linear inequalities involving the coordinates of $p$, that is, we have
  \begin{align*}
    \pi_n(C)
    & =
    \left\{p = (p_j)_{j=1}^n\in[0,1]^n \middle\vert \forall i\in I, a_{i,0} + \sum_{j=1}^n a_{i,j} p_j \geq 0\right\},
  \end{align*}
  where $a_{i,j}\in\mathbb{R}$ for every $i\in I$ and $j\in\{0,1,\ldots,n\}$ and $I$ is some (possibly uncountable) set.

  Now, for each $i\in I$ and each $j\in\{0,1,\ldots,n\}$, we let $(q_{i,j,m})_{m\in\mathbb{N}}$ be a sequence of rational numbers such that $q_{i,j,m}\geq a_{i,j}$ and $\lim_{m\to\infty}q_{i,j,m} = a_{i,j}$. Then we have
  \begin{align*}
    \pi_n(C)
    & =
    \left\{p = (p_j)_{j=1}^n\in[0,1]^n \middle\vert
    \forall i\in I, \forall m\in\mathbb{N}, q_{i,0,m} + \sum_{j=1}^n q_{i,j,m} p_j \geq 0\right\}.
  \end{align*}
  But note now that there are at most $\lvert\mathbb{Q}^{n+1}\rvert$ distinct $(q_{i,j,m})_{j=0}^n$, i.e., countably many. Therefore $\pi_n(C)$ can be described by countably many linear inequalities. Equation~\eqref{eq:Ccapproj} then finishes the proof as the intersection is countable.
\end{proofof}

\end{document}